\newcommand{\Ibb}[1]{ {\rm I\ifmmode\mkern -3.6mu\else\kern -.2em\fi#1}}
\newcommand{\ibb}[1]{\leavevmode\hbox{\kern.3em\vrule
     height 1.2ex depth -.3ex width .2pt\kern-.3em\rm#1}}
\newcommand{\Cl}{\mathbb{C}} 
\newcommand{\Rl}{\mathbb{R}} 
\newcommand{\Nl}{\mathbb{N}} 
\newcommand{\Zl}{\mathbb{Z}}
\definecolor{lightgray}{rgb}{0.8,0.8,0.8}
\newcommand{\IntSymbol}{\underline{\Symbol}}
\newcommand{\te}{\theta}
\newcommand{\la}{\lambda}
\newcommand{\eps}{\varepsilon}
\newcommand{\A}{\mathcal{A}}
\newcommand{\Hil}{\mathcal{H}}
\newcommand{\E}{\mathcal{E}}
\newcommand{\F}{\mathcal{F}}
\newcommand{\Ss}{\mathscr{S}}   
\def\bm{\boldsymbol{m}}
\def\brho{\boldsymbol{\rho}}
\newcommand{\qn}{{\rm q}}
\newcommand{\rn}{{\rm r}}
\newcommand{\GL}{\text{GL}}
\newcommand{\supp}{\text{supp}\,}
\renewcommand{\mathbb}[1]{\mathbbm{#1}}
\newcommand{\refitem}[1] {~\textit{\ref{#1}.)}}
\numberwithin{equation}{section}
\newcounter{comment}
\newtheorem{lemma}{Lemma}[section]
\newtheorem{proposition}[lemma]{Proposition}
\newtheorem{theorem}[lemma]{Theorem}
\newtheorem{corollary}[lemma]{Corollary}
\newtheorem{definition}[lemma]{Definition}
\newtheorem{remark}[lemma]{Remark}
\newcommand\qedsymbol{\hbox{$\boxempty$}}
\newcommand\qed{\relax\ifmmode\boxempty\else
  {\unskip\nobreak\hfil\penalty50\hskip1em\null\nobreak\hfil\qedsymbol
  \parfillskip=\z@\finalhyphendemerits=0\endgraf}\fi}
\newcommand\subqedsymbol{\hbox{$\triangledown$}}
\newcommand\subqed{\relax\ifmmode\triangledown\else
  {\unskip\nobreak\hfil\penalty50\hskip1em\null\nobreak\hfil\subqedsymbol
  \parfillskip=\z@\finalhyphendemerits=0\endgraf}\fi}
\newenvironment{proof}[1][{}]{\par\noindent Proof{#1}. }{\qed\medskip}
\newenvironment{remarklist}{\begin{compactenum}[\itshape i.)]}{\end{compactenum}}
\newenvironment{lemmalist}{\begin{compactenum}[\itshape i.)]}{\end{compactenum}}
\newenvironment{theoremlist}{\begin{compactenum}[\itshape i.)]}{\end{compactenum}}
\newenvironment{propositionlist}{\begin{compactenum}[\itshape i.)]}{\end{compactenum}}
\newenvironment{definitionlist}{\begin{compactenum}[\itshape i.)]}{\end{compactenum}}
\newenvironment{corollarylist}{\begin{compactenum}[\itshape i.)]}{\end{compactenum}}
\newcommand{\D}          {\operatorname{\mathrm{d}}}
\newcommand{\RE}         {\mathsf{Re}}
\newcommand{\IM}         {\mathsf{Im}}
\newcommand{\Unit}       {\mathbb{1}}
\newcommand{\at}[1]      {\big|_{#1}}
\newcommand{\argument}   {\,\cdot\,}
\newcommand{\id}         {\operatorname{\mathsf{id}}}
\newcommand{\Trans}      {{\mathrm{\scriptscriptstyle{T}}}}
\newcommand{\SP}[1]      {\left\langle{#1}\right\rangle}
\DeclareMathSymbol\dAlembert  {\mathord}{AMSa}{"03}
\newcommand{\Fun}[1][{k}] {\mathscr{C}^{#1}}
\newcommand{\Cinfty}     {\Fun[\infty]}
\newcommand{\group}[1]   {\mathrm{#1}}
\newcommand{\norm}[1]    {\left\|{#1}\right\|}
\newcommand{\supnorm}[1] {\left\|{#1}\right\|_\infty}
\newcommand{\seminorm}[1][{}] {\operatorname{\mathrm{p}}_{#1}}
\newcommand{\halbnorm}[1]   {\operatorname{\mathrm{#1}}}
\newcommand{\Stetig}         {\mathscr{C}}
\newcommand{\Ball}           {\mathrm{B}}
\newcommand{\Schwartz}       {\mathscr{S}}
\newcommand{\order}[1]       {\boldsymbol{#1}}
\newcommand{\type}[1]        {\boldsymbol{#1}}
\newcommand{\Symbol}         {\mathrm{S}}
\newcommand{\symnorm}[1]     {\norm{#1}}
\newlength{\dinwidth}
\newlength{\dinmargin}
\numberwithin{equation}{section}
\title{Strict deformation quantization of locally convex algebras and modules}
\author{Gandalf Lechner\footnote{University of Vienna, Department of
    Physics, Boltzmanngasse 5, A-1090 Vienna, Austria, e-mail:
    gandalf.lechner@univie.ac.at}\; and \addtocounter{footnote}{5}
  Stefan Waldmann\footnote{
    University of Freiburg, Physikalisches Institut, Hermann Herder Str. 3, 
    D-71904 Freiburg, Germany, e-mail: stefan.waldmann@physik.uni-freiburg.de}
}
\date{September 2011}
\begin{document}
\maketitle

\begin{abstract}
    In this work various symbol spaces with values in a sequentially
    complete locally convex vector space are introduced and discussed.
    They are used to define vector-valued oscillatory integrals
    which allow to extend Rieffel's strict deformation quantization
    to the framework of sequentially complete locally convex
    algebras and modules with separately continuous products and module
    structures, making use of polynomially bounded actions of $\mathbb{R}^n$.
    Several well-known integral formulas for star products are
    shown to fit into this general setting, and a new class of examples
    involving compactly supported $\mathbb{R}^n$-actions on $\mathbb{R}^n$
    is constructed.
\end{abstract}

\tableofcontents

\section{Introduction}

Deformation quantization as introduced in \cite{bayen.et.al:1978a}
comes in several different flavours: in formal deformation
quantization one deforms the commutative pointwise product of the
Poisson algebra of smooth functions on a Poisson manifold into a
noncommutative \emph{star product} as a formal associative deformation
in the sense of Gerstenhaber \cite{gerstenhaber:1964a} with
deformation parameter $\hbar$. Here the general existence and
classification for arbitrary Poisson manifolds is known and follows
from Kontsevich's formality theorem \cite{kontsevich:2003a}, see
\cite{waldmann:2007a} for a introductory textbook.

However, for many reasons formal deformations are not sufficient: for
the original application to quantum mechanics one has to treat $\hbar$
as a positive number and not just as a formal parameter. But also
applications beyond quantum theory require a more analytic framework.
In particular, deformation quantization provides fundamental examples
in noncommutative geometry where a $C^*$-algebraic formulation is
needed. 

In \cite{rieffel:1993a}, Rieffel introduced a very general way to
construct $C^*$-algebraic deformations based on a strongly continuous
action of $\mathbb{R}^d$ on a $C^*$-algebra $\mathcal{A}$. For the
smooth vectors $\mathcal{A}^\infty$ with respect to the action a
product formula based on an oscillatory integral was established,
generalizing the well-known Weyl quantization of $\mathbb{R}^{2n}$. In
a second step, a matching $C^*$-norm is constructed leading to a
continuous field of $C^*$-algebras over the parameter space of $\hbar
\in \mathbb{R}$. This construction and variants of it have by now
found many applications in noncommutative geometry \cite{connes:1994a,
  rieffel:1993a, GayralGraciaBondiaIochumSchuckerVarilly:2004} and
quantum physics, in particular in the context of quantum field theory
on noncommutative spacetimes
\cite{doplicher.fredenhagen.roberts:1995a, bahns.waldmann:2007a,
  heller.neumaier.waldmann:2007a, GrosseLechner:2008,
  BuchholzLechnerSummers:2010}.

While for the construction of deformed $C^*$-algebras
Rieffel's work is sufficient, it turns out that the first step of
deforming the smooth vectors $\mathcal{A}^\infty$ is of interest
already for it's own sake: Rieffel worked with a Fr\'echet algebra
with an isometric action.

It is this situation which we want to generalize in various directions
in the present work. First the restriction to a Fr\'echet algebra has
to be overcome as there are several examples of interest which do not
fall into this class. When interested in noncommutative spacetimes, a
smooth structure in form of a deformation of the smooth functions is
needed for many reasons. Thus we are interested in e.g.\ deformations
of $\Cinfty_0(M)$. Moreover, together with a deformation of algebras
one is interested in a possible deformation of modules as well. In the
above example one might also be interested in a corresponding
deformation of the distribution spaces $\Cinfty_0(M)'$. Hence we
clearly have to pass beyond a Fr\'echet situation. Here we face
several new phenomena: first of all products or module structures may
be separately continuous without being continuous. In fact, there are
many natural examples like this. Second, sequentially complete locally
convex spaces need not be complete, with the distribution spaces as
the most prominent examples. Third, the restriction to isometric
actions, which is natural in the original $C^*$-setting, seems to be
too restrictive in a more general locally convex framework. Again,
many examples of interest show that one has to overcome this
restriction.

As is well known, scalar-valued oscillatory integrals can be defined
for more general functions than smooth
functions with bounded derivatives -- here the Hörmander symbols are a
natural candidate. Thus we will adapt the notion of a symbol to the
vector-valued case and study oscillatory integrals. These will be
needed to handle actions of $\mathbb{R}^n$ which are not isometric
but satisfy certain polynomial growth conditions. Compared to the
scalar case the new feature is that for every continuous seminorm (of
a defining system of seminorms) of the target space we have to allow
for a specific growth. The examples show that we cannot expect to have
a uniform growth for all seminorms.

The main result of this work is the construction of a Rieffel
deformation for a sequentially complete locally convex algebra with a
separately continuous product with respect to a smooth polynomially
bounded action of $\mathbb{R}^n$ by automorphisms. Analogously, we
give the corresponding deformation for a sequentially complete locally
convex module with separately continuous module structure, provided
the module structure is covariant for the $\mathbb{R}^n$-action.  To
this end we introduce the relevant symbol spaces and their oscillatory
integrals based on a Riemann integral as we want to include
sequentially complete spaces as well. This part is clearly of
independent interest.  We discuss several known examples within this
framework and provide one new example of an action of $\mathbb{R}^n$
with compact support. A priori one can only guarantee exponential
bounds for the derivatives of such an action, but by a particular
construction we achieve polynomial growth behaviour. Actions of this
type are needed in models of locally noncommutative spacetimes as
introduced in \cite{bahns.waldmann:2007a,
  heller.neumaier.waldmann:2007a}. In fact, the wish to have a smooth
version of \cite{heller.neumaier.waldmann:2007a} was one of the main
motivations to develop the above generalization of Rieffel's original
work as a compactly supported action cannot be expected to be
isometric for the seminorms of smooth functions. In the diploma thesis
\cite[Sect.~6.2]{heller:2006a} some aspects of the vector-valued
oscillatory integrals were already anticipated.

It should be mentioned that there are still generalizations
possible. One important step beyond Rieffel's original setting is to
include actions of other Lie groups than $\mathbb{R}^n$. Here one
first needs to find an analogue of Weyl quantization which then serves
as universal deformation formula. This point of view was taken in the
works of Bieliavsky et al., see
e.g.~\cite{bieliavsky.detournay.spindel:2009a,
  bieliavsky.bonneau.maeda:2007a, bieliavsky:2002a,
  bieliavsky.massar:2001b}. While these works mainly deal with the
$C^*$-algebraic deformation, in a more recent work
\cite{bieliavsky.gayral:2011a}, Bieliavsky and Gayral discuss also
deformation aspects of Fr\'echet algebras based on symbol spaces and
oscillatory integrals similar to ours. We leave it to a future
investigation of whether their construction can be extended beyond the
Fr\'echet case: in principal this looks very promising.

The paper is organized as follows. In
Section~\ref{sec:VectorValuedSymbols} we introduce vector-valued
symbols in the spirit of Hörmander symbols. However, the order as well
as the type of the symbol may depend on the seminorm of the target
space, a generalization needed to deal with the examples discussed
later. We introduce in detail various symbol spaces, investigate the
continuity properties of the usual algebraic manipulations, and show
that the affine symmetries of the domain give continuous group actions
on the symbol spaces. In particular, the translations act smoothly. In
Section~\ref{section:integrals} we discuss the oscillatory
integrals. Our approach follows the usual scalar case with the
technical complication that we have to deal with many seminorms on the
target instead of one. Thus a careful investigation of the polynomial
growth is presented. The integrals are based on a Riemann integral for
the smooth compactly supported functions as we want to include targets
which are only sequentially complete. After this preparation,
Section~\ref{section:rieffel} is devoted to the deformation
program. Based on the developed oscillatory integrals we extend
Rieffel's construction to actions of $\mathbb{R}^n$ by automorphisms
on sequentially complete locally convex algebras with separately
continuous products and their modules. In case where the products are
continuous also the resulting deformed products are
continuous. Finally, Section~\ref{section:examples} contains the
examples. First we discuss the usual action of $\mathbb{R}^{2n}$ on
itself by translations and the induced action on various function
spaces. Here in particular the scalar symbol spaces, the Schwartz
space, and certain distribution spaces are discussed. This way we show
that the well-known Weyl product formula, being defined pointwise for
these spaces, can be understood as resulting from the oscillatory
integral formulas. This is a nontrivial statement as in all cases the
action is \emph{not} isometric. The second example will be used in a
future project for the construction of locally noncommutative
spacetimes and corresponding quantum field theory models. It provides
 an action of $\mathbb{R}^n$ on $\mathbb{R}^n$
with compact support inside a given compact subset such that the
induced action on the smooth functions is polynomially bounded. The
difficulty is to pass from a trivially given exponential growth of the
derivatives to a polynomial growth.
\\
%
%
\\
\textbf{Acknowledgements:} It is a pleasure to thank Pierre
Bieliavsky, Victor Gayral, and Ryszard Nest for various
discussions. We gratefully acknowledge the hospitality and
support extended to us by the Erwin-Schr\"odinger Institute (S.W.)
and the University of Freiburg (G.L.) during different stages of
this work. Finally, we would like to thank the participants of
the Scalea conference 2011, where the results have been presented,
for fruitful discussions. The work of G.L. is supported by the
FWF project P22929--N16 ``Deformations of Quantum Field Theories''.

\section{Vector-valued symbols}
\label{sec:VectorValuedSymbols}

In this section we introduce vector-valued symbols as smooth functions
$F:\Rl^n\to V$ which take values in a sequentially complete locally
convex vector space $V$ over $\mathbb{C}$ and satisfy polynomial
growth conditions for all their derivatives. At the present stage, all
definitions also make sense for a $V$ being a real vector space only,
but as soon as we discuss the oscillatory integral, complex phases
will enter the game. In the case of scalar functions, with target
space $V=\Cl$, these spaces are closely related to Hörmander's symbol
classes \cite[Section~7.8]{Hormander:1990}, see also
\cite{Hormander:1971, GrigisSjostrand:1994}.
\begin{definition}
    \label{definition:SymbolSeminorm}%
    Let $V$ be a sequentially complete locally convex space and let $F
    \in \Fun(\mathbb{R}^n, V)$ where $k \in \mathbb{N}_0 \cup
    \{+\infty\}$. Then for every continuous seminorm $\halbnorm{q}$ on
    $V$, every multiindex $\mu \in \mathbb{N}_0^n$ with $|\mu| \le k$,
    and $m, \rho \in \mathbb{R}$, we define
    \begin{equation}
        \label{eq:SymbolSeminorm}
        \symnorm{F}_{\halbnorm{q}, \mu}^{m, \rho}
        :=
        \sup_{x \in \mathbb{R}^n}
        \left(1 + \norm{x}^2\right)^{-\frac{1}{2}(m - \rho|\mu|)}
        \halbnorm{q}\left(
            \frac{\partial^{|\mu|}F}{\partial x^\mu}(x)
        \right)
        \in
        [0, +\infty].
    \end{equation}
\end{definition}

The quantity $\symnorm{F}_{\halbnorm{q}, \mu}^{m, \rho}$ controls how
the $\mu$-th derivative of $F$ grows at infinity with respect to the
seminorm $\halbnorm{q}$, compared to a polynomial of order $m$: A
polynomial $P\in V[x_1, \ldots, x_n]$ of order $m$ clearly satisfies
$\symnorm{P}^{m,1}_{\qn,\mu}<\infty$ for all multiindices $\mu$, and
$\symnorm{F}^{m,\rho}_{\qn,\mu}<\infty$ with $\rho>1$ (respectively
$\rho<1$) indicates that the derivatives of $F$ grow slower
(respectively faster) than those of a polynomial.

In order to define the symbol classes we now fix a defining system
$\mathcal{Q}$ of continuous seminorms on $V$. The canonical choice is
of course to take all continuous seminorms, but sometimes it will be
advantageous to take only a small and manageable system. The following
definitions will formally depend on this choice, but the effect is
only minor. Later we will see that the oscillatory integrals are
independent of the particular choice of $\mathcal{Q}$.

We assign to every $\halbnorm{q} \in \mathcal{Q}$ real numbers
$\order{m}(\qn)$ and $\type{\rho}(\qn)$. The corresponding map
\begin{equation}
    \label{eq:OrderMap}
    \order{m}\colon \mathcal{Q} \ni \halbnorm{q}
    \; \mapsto \;
    \order{m}(\halbnorm{q}) \in \mathbb{R}
\end{equation}
will be called an \emph{order} for $\mathcal{Q}$ and the map
\begin{equation}
    \label{eq:TypeMap}
    \type{\rho}\colon \mathcal{Q} \ni \halbnorm{q}
    \; \mapsto \;
    \type{\rho}(\halbnorm{q}) \in \mathbb{R}
\end{equation}
is referred to as a \emph{type} for $\mathcal{Q}$.

The natural ordering of $\mathbb{R}$ induces one for the set of all
orders as well as for the set of all types. For two orders
$\order{m}$, $\order{m}'$ we write
\begin{equation}
    \label{eq:OrdermKleinerOrdermPrime}
    \order{m} \le \order{m}'
    \quad
    \textrm{if}
    \quad
    \order{m}(\halbnorm{q}) \le \order{m}'(\halbnorm{q})
\end{equation}
for all $\halbnorm{q}\in\mathcal{Q}$.  Then ``$\le$'' makes the set of
all orders a directed set, and we also write $\order{m} < \order{m}'$
if $\order{m}(\halbnorm{q}) < \order{m}'(\halbnorm{q})$ for all
$\halbnorm{q} \in \mathcal{Q}$.

If we set $\order{m}(\halbnorm{q}) := m \in \mathbb{R}$ for all
$\halbnorm{q} \in \mathcal{Q}$ we get an order, called the
\emph{constant order}, and analogously the constant type
$\type{\rho}(\halbnorm{q}) = \rho \in \mathbb{R}$. It will turn out
that this is usually too restrictive and we need more freedom in
choosing an order and a type. More generally, an order $\order{m}$ is
called \emph{bounded} from above or from below by some number $\alpha$
or $\beta$ if for all $\halbnorm{q} \in
\mathcal{Q}$ one has $\order{m}(\halbnorm{q}) \le \alpha$ or
$\order{m}(\halbnorm{q}) \ge \beta$, respectively.

In the following it will be reasonable to ask for the condition
\begin{equation}
    \label{eq:orderOnMultipleSeminorm}
    \order{m}(c\halbnorm{q}) = \order{m}(\halbnorm{q})
\end{equation}
whenever $\halbnorm{q}, c\halbnorm{q} \in \mathcal{Q}$ for a constant
$c > 0$. In particular, for a Banach space $V$ we usually take only
the constant orders by specifying their value on the norm. If $V$ is a
Fr\'echet space, we will usually take a countable system
$\mathcal{Q}$, and often consider unbounded orders.

\begin{definition}[Symbols]
    \label{definition:Symbols}%
    Let $V$ be a sequentially complete locally convex space with
    defining system of seminorms $\mathcal{Q}$, and let $\order{m}$
    and $\type{\rho}$ be an order and a type for $\mathcal{Q}$.
    \begin{definitionlist}
    \item \label{item:SymbolOrdermTyperho} A function $F \in
        \Cinfty(\mathbb{R}^n, V)$ is called a symbol of order
        $\order{m}$ and type $\type{\rho}$ if for all
        $\halbnorm{q}\in\mathcal{Q}$ and $\mu \in \mathbb{N}_0^n$ one
        has
        \begin{equation}
            \label{eq:SymbolNorm}
            \symnorm{F}^{\order{m}, \type{\rho}}_{\halbnorm{q}, \mu}
            := \symnorm{F}^{\order{m}(\halbnorm{q}),
              \type{\rho}(\halbnorm{q})}_{\halbnorm{q}, \mu}
            < \infty.
        \end{equation}
    \item \label{item:SymbolSpace} The set of all symbols of order
        $\order{m}$ and type $\type{\rho}$ is denoted by
        $\Symbol^{\order{m}, \type{\rho}}(\mathbb{R}^n, V)$.
    \end{definitionlist}
\end{definition}

Sometimes we will abbreviate the space of symbols simply by
$\Symbol^{\order{m}, \type{\rho}}(V)$ or even by $\Symbol^{\order{m},
  \type{\rho}}$ if $V$ is clear from the context. However, note that
$\Symbol^{\order{m}, \type{\rho}}(\mathbb{R}^n, V)$ still depends on
the choice of $\mathcal{Q}$.

It is clear that the $\symnorm{\,\cdot\,}_{\halbnorm{q},
  \mu}^{\order{m}, \type{\rho}}$ are seminorms on
$\Symbol^{\order{m},\type{\rho}}(\mathbb{R}^n, V)$. We will endow the
space of symbols with the corresponding locally convex topology,
called the \emph{$\Symbol^{\order{m}, \type{\rho}}$-topology}. This
makes $\Symbol^{\order{m}, \type{\rho}}(\mathbb{R}^n, V)$ a
\emph{Hausdorff} locally convex space since $V$ is Hausdorff and the
prefactor $(1 + \norm{x}^2)^{-\frac{1}{2}(\order{m}(\halbnorm{q}) -
  \type{\rho}(\halbnorm{q})|\mu|)}$ is nowhere vanishing.

In the following, the smooth functions $\Cinfty(\Rl^n,V)$ will always
be equipped with the topology determined by all seminorms
\begin{align}\label{eq:CinftyNorms}
    \halbnorm{p}_{K,\ell,\qn}(F)
    :=
    \sup_{x\in K\atop |\mu|\leq \ell}
    \qn(\partial_x^\mu F(x)),
\end{align}
where $K$ runs over compact subsets of $\Rl^n$, $l\in\Nl_0$, and
$\qn\in\mathcal{Q}$, and the smooth functions of compact support
$\Cinfty_0(\Rl^n,V)$ carry their usual inductive limit topology.
\begin{proposition}
    \label{proposition:SymbolFirstProperties}%
    Let $V$ be a sequentially complete locally convex space with a
    defining system of seminorms $\mathcal{Q}$, and let $\order{m}$ and
    $\type{\rho}$ be an order and a type for $\mathcal{Q}$.
    \begin{propositionlist}
    \item \label{item:SymbolInclusion} We have continuous inclusions
        \begin{equation}
            \label{eq:SymbolInclusions}
            \Cinfty_0(\mathbb{R}^n, V)
            \longrightarrow
            \Symbol^{\order{m}, \type{\rho}}(\mathbb{R}^n, V)
            \longrightarrow
            \Cinfty(\mathbb{R}^n, V).
        \end{equation}
    \item \label{item:SymbolsDenseInCinfty} The symbols
        $\Symbol^{\order{m}, \type{\rho}}(\mathbb{R}^n, V)$ are dense
        in $\Cinfty(\mathbb{R}^n, V)$.
    \item \label{item:SymbolsComplete} The symbols
        $\Symbol^{\order{m}, \type{\rho}}(\mathbb{R}^n, V)$ are
        sequentially complete and even complete if $V$ is complete.
    \item \label{item:SymbolsInSymbols} For $\order{m} \le \order{m}'$
        and $\type{\rho} \ge \type{\rho}'$ we have the continuous
        inclusion
        \begin{equation}
            \label{eq:SymbolInSymbol}
            \Symbol^{\order{m}, \type{\rho}}(\mathbb{R}^n, V)
            \longrightarrow
            \Symbol^{\order{m}', \type{\rho}'}(\mathbb{R}^n, V).
        \end{equation}
        More precisely, we have for all $F \in \Symbol^{\order{m},
          \type{\rho}}$, all $\halbnorm{q} \in \mathcal{Q}$, and all
        $\mu \in \mathbb{N}_0^n$
        \begin{equation}
            \label{eq:SymbolNormViaSymbolNorm}
            \symnorm{F}^{\order{m}', \type{\rho}'}_{\halbnorm{q}, \mu}
            \le
            \symnorm{F}^{\order{m}, \type{\rho}}_{\halbnorm{q}, \mu}.
        \end{equation}
    \end{propositionlist}
\end{proposition}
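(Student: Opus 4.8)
The plan is to prove the four items essentially in the order stated, since each one is a short estimate on the defining seminorms, and the later items reuse the earlier ones.

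For item \refitem{item:SymbolInclusion}, I would establish the two inclusions by direct seminorm estimates. For the right-hand inclusion $\Symbol^{\order{m},\type{\rho}} \to \Cinfty$, fix a compact $K$, an $\ell\in\Nl_0$ and $\qn\in\Q$; on $K$ one has $(1+\norm{x}^2)^{\frac12(\order{m}(\qn)-\type{\rho}(\qn)|\mu|)}\le C_{K,\ell,\qn}$ uniformly for $|\mu|\le\ell$, so $\halbnorm{p}_{K,\ell,\qn}(F)\le C_{K,\ell,\qn}\max_{|\mu|\le\ell}\symnorm{F}^{\order{m},\type{\rho}}_{\qn,\mu}$, which is the desired continuity. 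For the left-hand inclusion $\Cinfty_0\to\Symbol^{\order{m},\type{\rho}}$, I would use the definition of the inductive limit topology: it suffices to show that for each compact $K$ the inclusion of $\Cinfty_K(\Rl^n,V)$ (functions supported in $K$) into $\Symbol^{\order{m},\type{\rho}}$ is continuous. For $F$ supported in $K$, $\frac{\partial^{|\mu|}F}{\partial x^\mu}(x)$ vanishes for $x\notin K$, and for $x\in K$ the prefactor $(1+\norm{x}^2)^{-\frac12(\order{m}(\qn)-\type{\rho}(\qn)|\mu|)}$ is bounded by a constant $C'_{K,\qn,\mu}$, whence $\symnorm{F}^{\order{m},\type{\rho}}_{\qn,\mu}\le C'_{K,\qn,\mu}\,\halbnorm{p}_{K,|\mu|,\qn}(F)$.

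Item \refitem{item:SymbolsDenseInCinfty} follows by a standard cutoff argument: given $F\in\Cinfty(\Rl^n,V)$, pick $\chi\in\Cinfty_0(\Rl^n,\Rl)$ with $\chi\equiv 1$ near $0$, set $\chi_\varepsilon(x):=\chi(\varepsilon x)$, and note $\chi_\varepsilon F\in\Cinfty_0(\Rl^n,V)\subseteq\Symbol^{\order{m},\type{\rho}}$ by \refitem{item:SymbolInclusion}. By the Leibniz rule and the fact that all derivatives of $\chi_\varepsilon$ are uniformly bounded (in $\varepsilon\le 1$) and supported where $\varepsilon\norm x$ is bounded, one checks $\chi_\varepsilon F\to F$ in the $\Cinfty$-topology as $\varepsilon\to 0$, i.e.\ uniformly with all derivatives on each compact set. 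Item \refitem{item:SymbolsInSymbols} is the easiest: the inequality \eqref{eq:SymbolNormViaSymbolNorm} is immediate from the definition once one observes that $\order{m}\le\order{m}'$ and $\type{\rho}\ge\type{\rho}'$ give $\order{m}(\qn)-\type{\rho}(\qn)|\mu|\le\order{m}'(\qn)-\type{\rho}'(\qn)|\mu|$ for every $\qn$ and $\mu$, so the prefactor with $(\order{m}',\type{\rho}')$ is pointwise $\le$ the one with $(\order{m},\type{\rho})$ (both numerator exponents are applied to $1+\norm x^2\ge 1$ with a minus sign); continuity of the inclusion is then just the statement that each target seminorm is dominated by the corresponding source seminorm.

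The real work is item \refitem{item:SymbolsComplete}. I would take a Cauchy sequence $(F_j)$ in $\Symbol^{\order{m},\type{\rho}}$ (or a Cauchy net if $V$ is complete). By the continuous inclusion into $\Cinfty(\Rl^n,V)$ from \refitem{item:SymbolInclusion} and the sequential completeness of $\Cinfty(\Rl^n,V)$ — which holds because $V$ is sequentially complete and $\Cinfty(\Rl^n,V)$ is a projective limit of such spaces — the sequence converges to some $F\in\Cinfty(\Rl^n,V)$, uniformly with all derivatives on compacta. It remains to show $F\in\Symbol^{\order{m},\type{\rho}}$ and $F_j\to F$ in the symbol topology. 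For the first, fix $\qn$ and $\mu$; since $(F_j)$ is Cauchy, $M:=\sup_j\symnorm{F_j}^{\order{m},\type{\rho}}_{\qn,\mu}<\infty$, so $(1+\norm x^2)^{-\frac12(\order{m}(\qn)-\type{\rho}(\qn)|\mu|)}\qn\big(\partial^\mu F_j(x)\big)\le M$ for all $j,x$; letting $j\to\infty$ and using $\partial^\mu F_j(x)\to\partial^\mu F(x)$ in $V$ together with continuity of $\qn$ gives the bound for $F$, hence $\symnorm{F}^{\order{m},\type{\rho}}_{\qn,\mu}\le M<\infty$. For convergence in the symbol topology, fix $\varepsilon>0$ and $\qn,\mu$; choose $N$ with $\symnorm{F_j-F_k}^{\order{m},\type{\rho}}_{\qn,\mu}<\varepsilon$ for $j,k\ge N$; for each fixed $x$ let $k\to\infty$ to get $(1+\norm x^2)^{-\frac12(\cdots)}\qn\big(\partial^\mu(F_j-F)(x)\big)\le\varepsilon$, and take the supremum over $x$ to conclude $\symnorm{F_j-F}^{\order{m},\type{\rho}}_{\qn,\mu}\le\varepsilon$ for $j\ge N$. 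The main obstacle here is purely a matter of bookkeeping: one must be careful that the limit in $V$ is taken pointwise in $x$ \emph{before} the supremum over $x$, so that only the continuity of the fixed seminorm $\qn$ (not any uniformity) is used; the completeness-versus-sequential-completeness dichotomy is handled identically by replacing ``sequence'' with ``net'' throughout, which is legitimate since $\Cinfty(\Rl^n,V)$ is complete when $V$ is.
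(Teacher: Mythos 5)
Your proposal is correct and follows essentially the same route as the paper: direct seminorm comparisons on compacta for the two inclusions in \refitem{item:SymbolInclusion}, density via the already-dense $\Cinfty_0(\mathbb{R}^n,V)$ for \refitem{item:SymbolsDenseInCinfty}, passage to the $\Cinfty$-limit of a Cauchy sequence (or net) followed by a pointwise-in-$x$ limit before the supremum for \refitem{item:SymbolsComplete}, and the pointwise prefactor comparison for \refitem{item:SymbolsInSymbols}. The only cosmetic differences are that you use a uniform-boundedness argument where the paper uses a $2\epsilon$ triangle inequality in part \refitem{item:SymbolsComplete} (for the net case note that only the tail of a Cauchy net is bounded, which suffices), and that you spell out the cutoff argument the paper merely cites.
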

\begin{proof}
    Clearly, we have a set-theoretic inclusion in
    \eqref{eq:SymbolInclusions} as compactly supported functions decay
    fast enough to have finite symbol norms \eqref{eq:SymbolSeminorm}
    for any choices of the orders or types. With a compact set $K
    \subseteq\Rl^n$, and $F \in\Cinfty_K(\mathbb{R}^n, V)$, we get
    \begin{align*}
        \symnorm{F}^{\order{m}, \type{\rho}}_{\halbnorm{q}, \mu}
        &=
        \max_{x \in K}
        \left(
            1 + \norm{x}^2
        \right)^{-\frac{1}{2}
          (\order{m}(\halbnorm{q}) - \type{\rho}(\halbnorm{q})|\mu|)
        }
        \halbnorm{q}\left(
            \frac{\partial^{|\mu|} F}{\partial x^\mu}(x)
        \right)
	 \\
        &\le
          \max_{x \in K}
          \left(
              1 + \norm{x}^2
          \right)^{-\frac{1}{2}
            (\order{m}(\halbnorm{q}) - \type{\rho}(\halbnorm{q})|\mu|)}
       \cdot
        \seminorm[K, |\mu|, \halbnorm{q}](F),
    \end{align*}
    with the seminorm \eqref{eq:CinftyNorms}. Since the maximum over
    the first factor is finite, we see that for every compact subset
    $K$, the inclusion
    \[
    \Cinfty_K(\Rl^n, V)
    \longrightarrow
    \Symbol^{\order{m}, \type{\rho}}(\mathbb{R}^n, V)
    \]
    is continuous. By the universal property of the inductive limit
    topology of $\Cinfty_0(\mathbb{R}^n, V)$, this is equivalent to
    the continuity of the first inclusion in
    \eqref{eq:SymbolInclusions}.  For the second inclusion, we fix a
    compact subset $K \subseteq \mathbb{R}^n$ as well as $\ell \in
    \mathbb{N}_0$ and $\halbnorm{q} \in \mathcal{Q}$. Then for a
    symbol $F \in \Symbol^{\order{m}, \type{\rho}}(\mathbb{R}^n, V)$
    we have
    \begin{align*}
        \seminorm[K, \ell, \halbnorm{q}](F)
        &=
        \max_{\substack{x \in K \\ |\mu| \le \ell}}
        \halbnorm{q}\left(
            \frac{\partial^{|\mu|} F}{\partial x^\mu}(x)
        \right) \\
        &\le
          \max_{\substack{x \in K \\ |\mu| \le \ell}}
          \left(
              1 + \norm{x}^2
          \right)^{\frac{1}{2}
            (\order{m}(\halbnorm{q}) - \type{\rho}(\halbnorm{q})|\mu|)
          }
        \sup_{\substack{x \in \mathbb{R}^n \\ |\mu| \le \ell}}
        \left(
            1 + \norm{x}^2
        \right)^{-\frac{1}{2}
          (\order{m}(\halbnorm{q}) - \type{\rho}(\halbnorm{q})|\mu|)
        }
        \halbnorm{q}\left(
            \frac{\partial^{|\mu|} F}{\partial x^\mu}(x)
        \right) \\
        &=
        c \max_{|\mu| \le \ell}
        \symnorm{F}^{\order{m}, \type{\rho}}_{\halbnorm{q}, \mu},
    \end{align*}
    with a constant $c>0$, where we used the fact that the continuous
    function $x \mapsto (1 +
    \norm{x}^2)^{-\frac{1}{2}(\order{m}(\halbnorm{q}) -
      \type{\rho}(\halbnorm{q})|\mu|)}$ is nowhere vanishing and hence
    has a locally bounded inverse. This shows the continuity of the
    second inclusion in \eqref{eq:SymbolInclusions}. But then the
    second part is clear since already $\Cinfty_0(\mathbb{R}^n, V)$ is
    dense in $\Cinfty(\mathbb{R}^n, V)$. In order to show sequential
    completeness, let $(F_i)_{i \in \mathbb{N}}$ be a Cauchy sequence
    in $\Symbol^{\order{m}, \type{\rho}}(\mathbb{R}^n, V)$. Since the
    $\Cinfty$-topology is coarser than the $\Symbol^{\order{m},
      \type{\rho}}$-topology by the first part, and since
    $\Cinfty(\mathbb{R}^n, V)$ is sequentially complete, we
    get convergence $F_i \longrightarrow F$ to some smooth function $F
    \in \Cinfty(\mathbb{R}^n, V)$ in the $\Cinfty$-topology. We have
    to show that $F \in \Symbol^{\order{m}, \type{\rho}}(\mathbb{R}^n,
    V)$ and $F_i \longrightarrow F$ in the $\Symbol^{\order{m},
      \type{\rho}}$-topology. Thus let $\epsilon > 0$, $\halbnorm{q}
    \in \mathcal{Q}$, and $\mu \in \mathbb{N}_0^n$ be given. Then fix
    $N \in \mathbb{N}_0$ such that $i, j \ge N$ gives $\symnorm{F_i -
      F_j}^{\order{m}, \type{\rho}}_{\halbnorm{q}, \mu} < \epsilon$ by
    the Cauchy condition. For a point $x \in \mathbb{R}^n$ we have by
    the pointwise convergence $\frac{\partial^{|\mu|}F_j}{\partial
      x^\mu}(x) \longrightarrow \frac{\partial^{|\mu|} F}{\partial
      x^\mu}(x)$ a $N' \ge N$ depending on $x$ such that
    \[
    \left(
        1 + \norm{x}^2
    \right)^{-\frac{1}{2}
      (\order{m}(\halbnorm{q}) - \type{\rho}(\halbnorm{q})|\mu|)
    }
    \halbnorm{q}\left(
        \frac{\partial^{|\mu|} F_j}{\partial x^\mu}(x)
        -
        \frac{\partial^{|\mu|} F}{\partial x^\mu}(x)
    \right)
    < \epsilon
    \tag{$*$}
    \]
    for all $j \ge N'$. Thus for $i \ge N$ we get
    \begin{align*}
    &\left(
        1 + \norm{x}^2
    \right)^{-\frac{1}{2}
      (\order{m}(\halbnorm{q}) - \type{\rho}(\halbnorm{q})|\mu|)
    }
    \halbnorm{q}\left(
        \frac{\partial^{|\mu|} (F-F_i)}{\partial x^\mu}(x)
    \right) \\
    &\qquad\le
    \left(
        1 + \norm{x}^2
    \right)^{-\frac{1}{2}
      (\order{m}(\halbnorm{q}) - \type{\rho}(\halbnorm{q})|\mu|)
    }
    \left(
        \halbnorm{q}\left(
            \frac{\partial^{|\mu|} (F-F_j)}{\partial x^\mu}(x)
        \right)
        +
        \halbnorm{q}\left(
            \frac{\partial^{|\mu|} (F_j-F_i)}{\partial x^\mu}(x)
        \right)
    \right) \\
    &\qquad\stackrel{(*)}{\le}
    \epsilon
    +
    \symnorm{F_i-F_j}^{\order{m}, \type{\rho}}_{\halbnorm{q}, \mu}
    \\
    &\qquad\le 2\epsilon.
    \end{align*}
    Since this estimate can be done for all $x \in \mathbb{R}^n$, we
    can take the supremum over all $x \in \mathbb{R}^n$ and get
    $\symnorm{F-F_i}^{\order{m}, \type{\rho}}_{\halbnorm{q}, \mu} \le
    2\epsilon$. Hence $F -F_i \in \Symbol^{\order{m},
      \type{\rho}}(\mathbb{R}^n, V)$ for $i \ge N$ and thus also $F
    \in \Symbol^{\order{m}, \type{\rho}}(\mathbb{R}^n, V)$. Moreover,
    we conclude that $F_i \longrightarrow F$ in the
    $\Symbol^{\order{m}, \type{\rho}}$-topology. Clearly, if $V$ is
    even complete we can repeat the argument with nets instead of
    sequences. For the last part, it is sufficient to show the
    estimate \eqref{eq:SymbolNormViaSymbolNorm}. Since for
    $\order{m}(\halbnorm{q}) \le \order{m}'(\halbnorm{q})$ and
    $\type{\rho}(\halbnorm{q}) \ge \type{\rho}'(\halbnorm{q})$ we have
    \[
    \left(
        1 + \norm{x}^2
    \right)^{-\frac{1}{2}
      (\order{m}(\halbnorm{q}) - \type{\rho}(\halbnorm{q})|\mu|)
    }
    \ge
    \left(
        1 + \norm{x}^2
    \right)^{-\frac{1}{2}
      (\order{m}'(\halbnorm{q}) - \type{\rho}'(\halbnorm{q})|\mu|)
    }
    \]
    for every point $x \in \mathbb{R}^n$ and every $\mu \in
    \mathbb{N}_0^n$, the estimate \eqref{eq:SymbolNormViaSymbolNorm}
    follows.
\end{proof}

In case $V$ is a Banach space, we choose just its norm $\|\cdot\|$ in
order to define the space of symbols. In this case, the order
$m:=\order{m}(\|\cdot\|)$ and the type $\rho:=\type{\rho}(\|\cdot\|)$
are just single numbers, and we write $\|\cdot\|^{m,\rho}_\mu$ instead
of $\|\cdot\|^{m,\rho}_{\|\cdot\|,\mu}$. However, $\Symbol^{m,
  \rho}(\mathbb{R}^n, V)$ is no longer a Banach space but a Fréchet
space since we have to take care of countably many
differentiations. For a Fréchet space $V$, we take a countable
defining system of seminorms and hence an order is determined by
fixing countably many numbers $\order{m}(\halbnorm{q}_n)$. Thus, in
this situation the symbols are again a Fréchet space.

Note that the inclusion
$\Cinfty_0(\Rl^n,V)\subset\Symbol^{\bm,\brho}(\Rl^n,V)$ is in general
not (sequentially) dense in the
$\Symbol^{\bm,\brho}$-topology. However, we will show later
(Proposition~\ref{proposition:ApproximateSymbols},
\refitem{item:CinftyDenseInSymbols}), that it is dense in a weaker
topology. As a preparation for this, we need to study the
multiplication properties of symbols.

\subsection{Operations with symbols}

In this subsection we discuss several operations one can perform with
symbols, like differentiation, multiplication, composition with linear
maps, and restriction. We begin by showing that the topologies of the
symbol spaces are compatible with differentiation.
\begin{proposition}
    \label{proposition:PartialDerivativesOnSymbols}%
    Let $V$ be a sequentially complete locally convex space with a
    defining system of seminorms $\mathcal{Q}$, and $\bm,\brho$ an
    order and a type for $\mathcal Q$. Then the partial derivatives
    are continuous linear maps
    \begin{equation}
        \label{eq:PartialDerivativesContinuousSymbol}
        \frac{\partial^{|\nu|}}{\partial x^\nu}\colon
        \Symbol^{\order{m}, \type{\rho}}(\mathbb{R}^n, V)
        \longrightarrow
        \Symbol^{\order{m} - \type{\rho}|\nu|, \type{\rho}}
        (\mathbb{R}^n, V).
    \end{equation}
    More precisely, we have for all $\mu \in \mathbb{N}_0^n$ and $F
    \in \Symbol^{\order{m}, \type{\rho}}(\mathbb{R}^n, V)$
    \begin{equation}
        \label{eq:SymnormOfDerivative}
        \symnorm{
          \frac{\partial^{|\nu|} F}{\partial x^\nu}
        }^{
          \order{m} - \type{\rho}|\nu|, \type{\rho}
        }_{\halbnorm{q}, \mu}
        =
        \symnorm{F}^{\order{m}, \type{\rho}}_{\halbnorm{q}, \mu + \nu}.
    \end{equation}
\end{proposition}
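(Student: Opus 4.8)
The plan is to prove the identity \eqref{eq:SymnormOfDerivative} directly from the definition of the symbol seminorms, and then read off continuity and the target space membership as formal consequences.

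First I would unwind the left-hand side. By Definition~\ref{definition:SymbolSeminorm}, applied to the function $\partial^{|\nu|}F/\partial x^\nu$, to the seminorm $\halbnorm{q}$, the multiindex $\mu$, and the order/type values $\order{m}(\halbnorm{q}) - \type{\rho}(\halbnorm{q})|\nu|$ and $\type{\rho}(\halbnorm{q})$, we get
\[
\symnorm{\partial^{|\nu|}F/\partial x^\nu}^{\order{m}-\type{\rho}|\nu|,\type{\rho}}_{\halbnorm{q},\mu}
=
\sup_{x\in\Rl^n}
\left(1+\norm{x}^2\right)^{-\frac12\left(\order{m}(\halbnorm{q})-\type{\rho}(\halbnorm{q})|\nu|-\type{\rho}(\halbnorm{q})|\mu|\right)}
\halbnorm{q}\!\left(\frac{\partial^{|\mu|}}{\partial x^\mu}\frac{\partial^{|\nu|}F}{\partial x^\nu}(x)\right).
\]
Now I would use the two elementary facts: $\partial^{|\mu|}/\partial x^\mu\circ\partial^{|\nu|}/\partial x^\nu = \partial^{|\mu+\nu|}/\partial x^{\mu+\nu}$ (Schwarz, since $F$ is smooth) and $|\mu+\nu| = |\mu|+|\nu|$, so the exponent $-\tfrac12(\order{m}(\halbnorm{q}) - \type{\rho}(\halbnorm{q})(|\mu|+|\nu|))$ is exactly $-\tfrac12(\order{m}(\halbnorm{q}) - \type{\rho}(\halbnorm{q})|\mu+\nu|)$. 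Comparing with Definition~\ref{definition:SymbolSeminorm} for $F$ with the multiindex $\mu+\nu$ and unchanged order $\order{m}$, type $\type{\rho}$, this is precisely $\symnorm{F}^{\order{m},\type{\rho}}_{\halbnorm{q},\mu+\nu}$, giving \eqref{eq:SymnormOfDerivative}. This part is purely bookkeeping; there is no real obstacle, only care with the definitions.

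Finally I would deduce the Proposition's statement. Since $F\in\Symbol^{\order{m},\type{\rho}}(\Rl^n,V)$ means $\symnorm{F}^{\order{m},\type{\rho}}_{\halbnorm{q},\mu+\nu}<\infty$ for all $\halbnorm{q}$ and all multiindices, the identity shows $\symnorm{\partial^{|\nu|}F/\partial x^\nu}^{\order{m}-\type{\rho}|\nu|,\type{\rho}}_{\halbnorm{q},\mu}<\infty$ for all $\halbnorm{q}$ and all $\mu$, hence $\partial^{|\nu|}F/\partial x^\nu\in\Symbol^{\order{m}-\type{\rho}|\nu|,\type{\rho}}(\Rl^n,V)$; note the new order $\order{m}-\type{\rho}|\nu|$ is still a legitimate order for $\mathcal Q$ (it satisfies \eqref{eq:orderOnMultipleSeminorm} whenever $\order{m}$ does, as $|\nu|$ is a fixed integer). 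Linearity of $\partial^{|\nu|}/\partial x^\nu$ is clear, and continuity follows because \eqref{eq:SymnormOfDerivative} bounds each defining seminorm of the target by a single defining seminorm of the source — indeed it is an equality, so the map is even isometric in this sense. If one wishes to be explicit, one checks that the family $\{\symnorm{\cdot}^{\order{m}-\type{\rho}|\nu|,\type{\rho}}_{\halbnorm{q},\mu}\}_{\halbnorm{q}\in\mathcal Q,\,\mu\in\Nl_0^n}$ pulled back along the derivative equals the subfamily $\{\symnorm{\cdot}^{\order{m},\type{\rho}}_{\halbnorm{q},\mu+\nu}\}$ of the source seminorms, which is dominated by the full defining family of $\Symbol^{\order{m},\type{\rho}}$, establishing continuity. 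The only point requiring a moment's thought is verifying that $\order{m}-\type{\rho}|\nu|$ genuinely qualifies as an order and that the exponent arithmetic $|\mu+\nu|=|\mu|+|\nu|$ is applied consistently on both sides; everything else is immediate.
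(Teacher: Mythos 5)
Your proof is correct and follows the same route the paper takes: the seminorm identity \eqref{eq:SymnormOfDerivative} is verified directly from Definition~\ref{definition:SymbolSeminorm} using $\partial^\mu\partial^\nu=\partial^{\mu+\nu}$ and $|\mu+\nu|=|\mu|+|\nu|$, and continuity then follows since each target seminorm is pulled back to a single source seminorm. The paper compresses all of this into the remark that the second statement is clear from the definition, so your write-up simply makes the same bookkeeping explicit.
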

\begin{proof}
    We only have to show the second statement which is clear from the
    definition.
\end{proof}

For a general discussion of multiplication of symbols, we now consider
three sequentially complete locally convex spaces $V$, $W$, and $U$
together with a bilinear map
\begin{equation}
    \label{eq:mVWU}
    \mu\colon V \times W \longrightarrow U.
\end{equation}
For simplicity, we require that $\mu$ is continuous and not just
separately continuous or sequentially continuous. In many
applications, this will be the case. Now we fix a defining system
$\mathcal{R}$ of seminorms on $U$ and filtrating defining systems of
seminorms $\mathcal{Q}$ and $\mathcal{Q}'$ on $V$ and $W$,
respectively. Then by continuity of $\mu$ we get for every
$\halbnorm{r} \in \mathcal{R}$ seminorms $\halbnorm{q} \in
\mathcal{Q}$ and $\halbnorm{q}' \in \mathcal{Q}'$ and a constant $c$
such that
\begin{equation}
    \label{eq:ContinuityOfProductm}
    \halbnorm{r}(\mu(v, w))
    \le
    c \halbnorm{q}(v) \halbnorm{q}'(w)
      \,,\qquad v\in V\,,w\in W\,.
\end{equation}
For two orders $\order{m}$ and $\order{m}'$ on $V$ and $W$ we consider
an order $\order{m}''$ on $U$ such that for all $\halbnorm{r} \in
\mathcal{R}$ we have $\halbnorm{q} \in \mathcal{Q}$ and $\halbnorm{q}'
\in \mathcal{Q}'$ such that \eqref{eq:ContinuityOfProductm} holds
\emph{and}
\begin{equation}
    \label{eq:mrGroessermqmq}
    \order{m}''(\halbnorm{r})
    \ge
    \order{m}(\halbnorm{q})
    +
    \order{m}'(\halbnorm{q}').
\end{equation}
In this case, we symbolically write $\order{m}'' \ge \order{m} +
\order{m}'$ by some slight abuse of notation. Note that we relate here
orders on \emph{different} sets of seminorms (even on different
spaces). Clearly, for given orders $\order{m}$ and $\order{m}'$ we can
construct an order $\order{m}''$ with this property by fixing a choice
of seminorms $\halbnorm{q}(\halbnorm{r})$ and
$\halbnorm{q}'(\halbnorm{r})$ satisfying
\eqref{eq:ContinuityOfProductm} and setting
\begin{equation}
    \label{eq:ordermppDefinition}
    \order{m}''(\halbnorm{r})
    =
    \order{m}(\halbnorm{q}(\halbnorm{r}))
    +
    \order{m}''(\halbnorm{q}'(\halbnorm{r}))
\end{equation}
for all $\halbnorm{r} \in \mathcal{R}$. In the same spirit we write
$\type{\rho}'' \le \min(\type{\rho}, \type{\rho}')$ for types
$\type{\rho}$ on $V$, $\type{\rho}'$ on $W$, and $\type{\rho}''$ on
$U$, again with respect to the continuous bilinear map $\mu$.
With these conventions in mind, we can prove the following
statement.
\begin{proposition}
    \label{proposition:SymbolProducts}%
    Let $V$, $W$, and $U$ be sequentially complete locally convex
    spaces, $\mathcal{R}$ a defining system of seminorms on $U$, and
    $\mathcal{Q}$, $\mathcal{Q}'$ filtrating defining systems of
    seminorms on $V$, $W$ respectively. Furthermore, let $\mu\colon V
    \times W \longrightarrow U$ be a continuous bilinear map.
    \begin{propositionlist}
    \item \label{item:ProductInOneVariable} Pointwise evaluation of
        $\mu$ gives a continuous bilinear map
        \begin{align}
            \label{eq:ProductOfSymbols}
            \mu\colon
            \Symbol^{\order{m}, \type{\rho}}(\mathbb{R}^n, V)
            \times
            \Symbol^{\order{m}', \type{\rho}'}(\mathbb{R}^n, W)
            &
            \longrightarrow
            \Symbol^{\order{m}'', \type{\rho}''}(\mathbb{R}^n, U)
            \,,
            \\
            \mu(F,G)(x) &:= \mu(F(x),G(x))\,,
        \end{align}
        whenever $\order{m}'' \ge \order{m} + \order{m}'$ and
        $\type{\rho}'' \le \min(\type{\rho}, \type{\rho}')$ with
        respect to $\mu$. More precisely, for $F \in
        \Symbol^{\order{m}, \type{\rho}}(\mathbb{R}^n, V)$ and $G \in
        \Symbol^{\order{m}', \type{\rho}'}(\mathbb{R}^n, W)$ we get
        \begin{equation}
            \label{eq:EstimateProductSymbol}
            \symnorm{\mu(F,G)}^{
              \order{m}'', \type{\rho}''
            }_{\halbnorm{r}, \kappa}
            \le
            2^{|\mu|} c
            \,
            \max_{\nu \le \kappa}
            \symnorm{F}^{\order{m}, \type{\rho}}_{\halbnorm{q}, \nu}
            \,
            \max_{\nu' \le \kappa}
            \symnorm{G}^{\order{m}', \type{\rho}'}_{\halbnorm{q}', \nu'}
        \end{equation}
        whenever $\halbnorm{r}$, $\halbnorm{q}$, and $\halbnorm{q}'$
        satisfy the continuity property
        \eqref{eq:ContinuityOfProductm} with respect to $\mu$.
    \item \label{item:ProductInTwoVariables} For $F\in
        \Symbol^{\bm,\brho}(\Rl^n,V)$, $G\in
        \Symbol^{\bm',\brho'}(\Rl^{n'},W)$, let
        \begin{align}
            \underline{\mu}(F,G)\colon
            \Rl^n\oplus\Rl^{n'}\to U,
            \qquad
            \underline{\mu}(F,G)(x,y):=\mu(F(x),G(y))\,.
        \end{align}
        Then we have a continuous bilinear map
        \begin{align}
            \underline{\mu} &\colon
            \Symbol^{\bm,\brho}(\Rl^n,V)\times
            \Symbol^{\bm',\brho'}(\Rl^{n'},W)
            \longrightarrow
            \Symbol^{\bm'',\brho''}(\Rl^{n}\oplus\Rl^{n'},U)
        \end{align}
        whenever $\bm''\geq\max\{\bm,\bm',\bm+\bm'\}$ and
        $\brho''\leq\min\{0,\brho,\brho'\}$ with respect to
        $\mu$. Explicitly, for $F\in \Symbol^{\bm,\brho}(\Rl^n,V)$,
        $G\in \Symbol^{\bm',\brho'}(\Rl^{n'},W)$, we get,
        $\nu\in\Rl^n$, $\nu'\in\Rl^{n'}$,
        \begin{align}\label{mu-otimes-bound}
            \|\underline{\mu}(F,G)\|^{\order{m}'',\type{\rho}''}_{\rn,\nu\oplus\nu'}
            \leq
            c\,\|F\|^{\order{m},\type{\rho}}_{\qn,\nu}\,\|G\|^{\order{m}',\type{\rho}'}_{\qn',\nu'}
        \end{align}
        whenever $\rn$, $\qn$, $\qn'$ satisfy the continuity property
        \eqref{eq:ContinuityOfProductm} with respect to $\mu$.
    \end{propositionlist}
\end{proposition}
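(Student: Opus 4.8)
The plan is to treat both parts by the same scheme: differentiate the composite by Leibniz's rule, apply the continuity estimate \eqref{eq:ContinuityOfProductm} for $\mu$ and then the defining bound \eqref{eq:SymbolSeminorm} for each of the two symbol factors; the only genuine work is the bookkeeping of the exponents of the weights $1+\norm{\argument}^2$. In both parts the composite is smooth, being the composition of a smooth map into $V\times W$ with the continuous bilinear map $\mu$, so it suffices to establish the norm estimates \eqref{eq:EstimateProductSymbol} resp.\ \eqref{mu-otimes-bound}: these give finiteness of all symbol seminorms (hence membership in the asserted symbol space) and continuity of the bilinear map on the generating seminorms.

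For part~\refitem{item:ProductInOneVariable}, fix $\halbnorm{r}\in\mathcal R$ together with $\halbnorm{q}\in\mathcal Q$, $\halbnorm{q}'\in\mathcal Q'$ satisfying \eqref{eq:ContinuityOfProductm}, \eqref{eq:mrGroessermqmq} and the corresponding condition on the types, and fix $\kappa\in\Nl_0^n$. By the Leibniz rule and bilinearity, $\partial_x^\kappa\,\mu(F(x),G(x))=\sum_{\nu\leq\kappa}\binom{\kappa}{\nu}\,\mu\bigl(\partial_x^\nu F(x),\partial_x^{\kappa-\nu}G(x)\bigr)$, and applying $\halbnorm{r}$, then \eqref{eq:ContinuityOfProductm}, then \eqref{eq:SymbolSeminorm} for $F$ and $G$ bounds the $\halbnorm{r}$-seminorm of the $\nu$-th summand by $c\binom{\kappa}{\nu}(1+\norm{x}^2)^{\frac12(\bm(\halbnorm{q})-\brho(\halbnorm{q})|\nu|)+\frac12(\bm'(\halbnorm{q}')-\brho'(\halbnorm{q}')|\kappa-\nu|)}\symnorm{F}^{\bm,\brho}_{\halbnorm{q},\nu}\symnorm{G}^{\bm',\brho'}_{\halbnorm{q}',\kappa-\nu}$. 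Multiplying by the weight $(1+\norm{x}^2)^{-\frac12(\bm''(\halbnorm{r})-\brho''(\halbnorm{r})|\kappa|)}$ and using $|\kappa|=|\nu|+|\kappa-\nu|$, the total exponent of $1+\norm{x}^2$ equals $\tfrac12\bigl[(\bm(\halbnorm{q})+\bm'(\halbnorm{q}')-\bm''(\halbnorm{r}))+(\brho''(\halbnorm{r})-\brho(\halbnorm{q}))|\nu|+(\brho''(\halbnorm{r})-\brho'(\halbnorm{q}'))|\kappa-\nu|\bigr]\leq0$ by \eqref{eq:mrGroessermqmq} and the type condition, so each weighted summand is $\leq 1$ since $1+\norm{x}^2\geq1$. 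Summing over $\nu$ with $\sum_{\nu\leq\kappa}\binom{\kappa}{\nu}=2^{|\kappa|}$, bounding the individual symbol seminorms by the maxima over $\nu\leq\kappa$, and taking the supremum over $x$ yields \eqref{eq:EstimateProductSymbol} (with constant $2^{|\kappa|}c$).

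For part~\refitem{item:ProductInTwoVariables} the crucial simplification is that, since $F$ does not depend on $y$ nor $G$ on $x$, there are \emph{no} mixed Leibniz terms: $\partial_x^\nu\partial_y^{\nu'}\,\mu(F(x),G(y))=\mu\bigl(\partial_x^\nu F(x),\partial_y^{\nu'}G(y)\bigr)$, a single term with coefficient $1$, which is why the constant in \eqref{mu-otimes-bound} is just $c$. Proceeding as before with $\norm{(x,y)}^2=\norm{x}^2+\norm{y}^2$, everything reduces to the elementary inequality
\[
A^{E}B^{F}\;\leq\;C^{D},\qquad\text{where}\quad A:=1+\norm{x}^2,\ \ B:=1+\norm{y}^2,\ \ C:=1+\norm{x}^2+\norm{y}^2,
\]
with $E:=\tfrac12(\bm(\halbnorm{q})-\brho(\halbnorm{q})|\nu|)$, $F:=\tfrac12(\bm'(\halbnorm{q}')-\brho'(\halbnorm{q}')|\nu'|)$ and $D:=\tfrac12(\bm''(\halbnorm{r})-\brho''(\halbnorm{r})(|\nu|+|\nu'|))$. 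This is the one place where a short argument is needed, and I expect it to be the (mild) main obstacle, because the weight $C$ no longer factors through single-variable degrees as in part~\refitem{item:ProductInOneVariable} and $E,F$ may be of either sign. One combines the elementary facts $1\leq\max\{A,B\}\leq C\leq AB$ with the consequences $D\geq E$, $D\geq F$, $D\geq E+F$ of the hypotheses $\bm''\geq\max\{\bm,\bm',\bm+\bm'\}$ and $\brho''\leq\min\{0,\brho,\brho'\}$ (with respect to $\mu$), by a four-way case distinction on the signs of $E,F$: if $E,F\geq0$ then $A^{E}B^{F}\leq C^{E}C^{F}=C^{E+F}\leq C^{D}$; if $E\geq0>F$ then $A^{E}B^{F}\leq C^{E}\cdot 1\leq C^{D}$, and symmetrically for $F\geq0>E$; and if $E,F<0$, say $E\leq F$, then $A^{E}B^{F}=A^{E-F}(AB)^{F}\leq(AB)^{F}\leq C^{F}\leq C^{D}$. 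Precisely the extra entries $\bm,\bm'$ in the maximum and the entry $0$ in the minimum make all four cases close. Taking the supremum over $(x,y)$ gives \eqref{mu-otimes-bound}, hence $\underline{\mu}(F,G)\in\Symbol^{\bm'',\brho''}(\Rl^{n}\oplus\Rl^{n'},U)$ and the continuity of $\underline{\mu}$.
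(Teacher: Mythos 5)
Your proof is correct and follows essentially the same route as the paper: Leibniz rule plus the continuity estimate \eqref{eq:ContinuityOfProductm} and exponent bookkeeping for part \refitem{item:ProductInOneVariable}, and the single-term derivative plus the weight inequality $(1+\norm{x}^2)^{E}(1+\norm{y}^2)^{F}\leq(1+\norm{x}^2+\norm{y}^2)^{D}$ for part \refitem{item:ProductInTwoVariables}, which is exactly the paper's inequality \eqref{eq:kkkInequality}. The only difference is that you actually prove that elementary inequality by the four-way sign case distinction, whereas the paper merely asserts it; your verification is correct and, if anything, fills in a detail the paper leaves to the reader.
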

\begin{proof}
    For \refitem{item:ProductInOneVariable}, even though the
    formulation looks rather technical, this is essentially just the
    Leibniz rule. Let $\halbnorm{r}\in\mathcal{R}$, choose
    corresponding seminorms $\halbnorm{q}\in\mathcal{Q}$ and
    $\halbnorm{q}'\in\mathcal{Q}'$ satisfying
    \eqref{eq:ContinuityOfProductm}, and $\kappa\in\Nl_0^n$. Then we
    have
    \begin{align*}
        &\symnorm{\mu(F, G)}^{
          \order{m}'', \type{\rho}''}_{\halbnorm{r}, \kappa}\\
        &\quad=
        \sup_{x \in \mathbb{R}^n}
        \left(
            1 + \norm{x}^2
        \right)^{
          -\frac{1}{2}
          \left(
              \order{m}''(\halbnorm{r})
              - \type{\rho}''(\halbnorm{r})|\kappa|
          \right)
        }
        \halbnorm{r}\left(
            \frac{\partial^{|\kappa|}\mu(F, G)}{\partial x^\kappa}(x)
        \right) \\
        &\quad=
        \sup_{x \in \mathbb{R}^n}
        \left(
            1 + \norm{x}^2
        \right)^{
          -\frac{1}{2}
          \left(
              \order{m}''(\halbnorm{r})
              - \type{\rho}''(\halbnorm{r})|\kappa|
          \right)
        }
        \halbnorm{r}\left(
            \sum_{\nu + \nu' = \kappa} \binom{\kappa}{\nu}
            \mu
            \left(
                \frac{\partial^{|\nu|}F}{\partial x^\nu}(x),
                \frac{\partial^{|\nu'|}G}{\partial x^{\nu'}}(x)
            \right)
        \right) \\
        &\quad\le
        \sum_{\nu + \nu' = \kappa} \binom{\kappa}{\nu}
        c
        \,
        \sup_{x \in \mathbb{R}^n}
        \left(
            1 + \norm{x}^2
        \right)^{
          -\frac{1}{2}
          \left(
              \order{m}(\halbnorm{q})
              - \type{\rho}(\halbnorm{q})|\nu|
          \right)
        }
        \halbnorm{q}\left(
            \frac{\partial^{|\nu|}F}{\partial x^\nu}(x)
        \right) \\
        &\quad\qquad
        \sup_{x \in \mathbb{R}^n}
        \left(
            1 + \norm{x}^2
        \right)^{
          -\frac{1}{2}
          \left(
              \order{m}'(\halbnorm{q}')
              - \type{\rho'}(\halbnorm{q}')|\nu'|
          \right)
        }
        \halbnorm{q}'\left(
            \frac{\partial^{|\nu'|}G}{\partial x^{\nu'}}(x)
        \right) \\
        &\quad\le
        2^{|\kappa|} c
        \,
        \max_{\nu \le \kappa}
        \symnorm{F}^{\order{m}, \type{\rho}}_{\halbnorm{q}, \nu}
        \,
        \max_{\nu' \le \kappa}
        \symnorm{G}^{\order{m}', \type{\rho}'}_{\halbnorm{q}', \nu'},
    \end{align*}
    since $|\kappa| = |\nu| + |\nu'|$. This shows
    \eqref{eq:EstimateProductSymbol}, which implies the continuity of
    the map \eqref{eq:ProductOfSymbols}.  For
    \refitem{item:ProductInTwoVariables}, it is clear that for any
    $F\in \Symbol^{\order{m},\type{\rho}}(\Rl^n,V)$, $G\in
    \Symbol^{\order{m}',\type{\rho}'}(\Rl^{n'},W)$, we have
    $\underline{\mu}(F,G) \in \Cinfty(\Rl^n\oplus\Rl^{n'},U)$, and
    that $\underline{\mu}$ is bilinear. To prove the continuity of
    this map, we have to verify the bound \eqref{mu-otimes-bound}. The
    estimate necessary for this is based on the fact that given
    $k,k'\in\Rl$, there holds for all $a,b\in\Rl$
    \begin{align}\label{eq:kkkInequality}
        (1+a^2+b^2)^{-K} \leq
        (1+a^2)^{-k}(1+b^2)^{-k'}\,\;\;\text{if}\;\;\,K\geq\max\{k,k',k+k'\}\,.
    \end{align}
    In the situation at hand, we pick seminorms $\rn$, $\qn$, $\qn'$
    satisfying \eqref{eq:ContinuityOfProductm}, multiindices
    $\nu\in\Rl^n$, $\nu'\in\Rl^{n'}$, and set
    $K:=\frac{1}{2}(\bm''(\rn)-\brho''(\rn)|\nu\oplus\nu'|)$,
    $k:=\frac{1}{2}(\bm(\qn)-\brho(\qn)|\nu|)$,
    $k':=\frac{1}{2}(\bm'(\qn')-\brho'(\qn')|\nu'|)$. The assumptions
    on $\bm''$, $\brho''$ then guarantee that $K\geq\max\{k,k',k+k'\}$
    holds, and we can use \eqref{eq:kkkInequality} to estimate
    \begin{align*}
        \|\underline{\mu}(F,G)\|^{\order{m}'',\type{\rho}''}_{\rn,\nu\oplus\nu'}
        &=
        \sup_{x\in\Rl^n\atop{y\in\Rl^{n'}}} \frac{\rn(\partial_x^\nu\partial_y^{\nu'}\mu(F(x),G(y)))}{(1+\|x\|^2+\|y\|^2)^{\frac{1}{2}(\order{m}(\rn)''-\type{\rho}''(\rn)|\nu\oplus\nu'|)}}
        \\
        &\leq
        \sup_{x\in\Rl^n\atop{y\in\Rl^{n'}}} \frac{c\,\qn(\partial_x^\nu
          F(x))\,\qn'(\partial_y^{\nu'}
          G(y))}{(1+\|x\|^2)^{\frac{1}{2}(\bm(\qn)-\brho(\qn)|\nu|)}(1+\|y\|^2)^{\frac{1}{
              2}(\bm'(\qn')-\brho'(\qn')|\nu'|)}}
        \\
        &\leq
        c\,\|F\|^{\bm,\rho}_{\qn,\nu}\,\|G\|^{\bm',\brho'}_{\qn',\nu'}\,.
    \end{align*}
\end{proof}

For continuous {\em linear} maps, a similar result holds.
\begin{proposition}
    \label{proposition:LinearMapsOnSymbols}%
    Let $A\colon V \longrightarrow W$ be a continuous linear map
    between sequentially complete locally convex spaces $V, W$ with
    defining systems of seminorms $\mathcal{Q}$, $\mathcal{Q}'$, and
    let $\mathcal{Q}$ be filtrating. Furthermore, let orders
    $\order{m}$ and $\order{m}'$ and types $\type{\rho}$ and
    $\type{\rho}'$ for $\mathcal{Q}$ and $\mathcal{Q}'$ be
    given. Suppose for every seminorm $\halbnorm{q}' \in \mathcal{Q}'$
    we find a seminorm $\halbnorm{q} \in \mathcal{Q}$ such that
    \begin{equation}
        \label{eq:qprimeAvqvSymbolStuff}
        \halbnorm{q}'(Av) \le c \halbnorm{q}(v),
        \quad
        \order{m}(\halbnorm{q})
        \le
        \order{m}'(\halbnorm{q}'),
        \quad
        \textrm{and}
        \quad
        \type{\rho}(\halbnorm{q}) \ge \type{\rho}'(\halbnorm{q}')
    \end{equation}
    for some $c > 0$ and all $v \in V$. Then pointwise evaluation of
    $A$ gives a continuous linear map
    \begin{equation}
        \label{eq:ALinearMapOnSymbols}
        A\colon
        \Symbol^{\order{m}, \type{\rho}}(\mathbb{R}^n, V)
        \longrightarrow
        \Symbol^{\order{m}', \type{\rho}'}(\mathbb{R}^n, W).
    \end{equation}
    More precisely, for every $F \in \Symbol^{\order{m},
      \type{\rho}}(\mathbb{R}^n, V)$ and every $\mu \in
    \mathbb{N}_0^n$, we have
    \begin{equation}
        \label{eq:SymbolNormLinearMap}
        \symnorm{A F}^{\order{m}', \type{\rho}'}_{\halbnorm{q}', \mu}
        \le
        c
        \symnorm{F}^{\order{m}, \type{\rho}}_{\halbnorm{q}, \mu}.
    \end{equation}
\end{proposition}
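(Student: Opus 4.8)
The plan is to follow the pattern of the first part of Proposition~\ref{proposition:SymbolProducts}, which in the present linear (rather than bilinear) situation becomes considerably simpler, since no Leibniz expansion is needed. The first step is to observe that $AF \in \Cinfty(\mathbb{R}^n, W)$ whenever $F \in \Cinfty(\mathbb{R}^n, V)$, and that $A$ commutes with partial derivatives, $\partial^{|\mu|}(AF)/\partial x^\mu = A\big(\partial^{|\mu|}F/\partial x^\mu\big)$ for all $\mu \in \mathbb{N}_0^n$. This follows by induction on $|\mu|$: it suffices to treat a single partial derivative, where the difference quotients of $F$ converge in $V$ and $A$, being continuous and linear, carries this limit to the corresponding limit in $W$. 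Consequently the symbol seminorms $\symnorm{AF}^{\order{m}',\type{\rho}'}_{\halbnorm{q}',\mu}$ are meaningful, and it remains only to bound them.

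The heart of the matter is a single estimate. Given $\halbnorm{q}' \in \mathcal{Q}'$ and $\mu \in \mathbb{N}_0^n$, I would choose the seminorm $\halbnorm{q} \in \mathcal{Q}$ provided by the hypothesis~\eqref{eq:qprimeAvqvSymbolStuff}. Its order/type conditions give $\order{m}(\halbnorm{q}) - \type{\rho}(\halbnorm{q})|\mu| \le \order{m}'(\halbnorm{q}') - \type{\rho}'(\halbnorm{q}')|\mu|$, and since $1 + \norm{x}^2 \ge 1$ this yields the pointwise weight inequality $(1+\norm{x}^2)^{-\frac{1}{2}(\order{m}'(\halbnorm{q}') - \type{\rho}'(\halbnorm{q}')|\mu|)} \le (1+\norm{x}^2)^{-\frac{1}{2}(\order{m}(\halbnorm{q}) - \type{\rho}(\halbnorm{q})|\mu|)}$ --- precisely the monotonicity trick already used in Proposition~\ref{proposition:SymbolFirstProperties}\refitem{item:SymbolsInSymbols}. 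Inserting $\halbnorm{q}'(Av) \le c\,\halbnorm{q}(v)$ with $v = \partial^{|\mu|}F/\partial x^\mu(x)$, then replacing the weight by the larger one, and taking the supremum over $x \in \mathbb{R}^n$ produces $\symnorm{AF}^{\order{m}',\type{\rho}'}_{\halbnorm{q}',\mu} \le c\,\symnorm{F}^{\order{m},\type{\rho}}_{\halbnorm{q},\mu}$, which is exactly~\eqref{eq:SymbolNormLinearMap}.

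From this inequality the remaining claims are immediate: the right-hand side is finite because $F$ is a symbol, so $AF \in \Symbol^{\order{m}',\type{\rho}'}(\mathbb{R}^n, W)$ and the map~\eqref{eq:ALinearMapOnSymbols} is well defined and obviously linear; and since each generating seminorm of the target is dominated by $c$ times a generating seminorm of the source, it is continuous. I do not expect a genuine obstacle here: the only point needing more than a one-line justification is the interchange of $A$ with derivatives of a vector-valued smooth function, and that is routine; everything else is bookkeeping with the symbol seminorms.
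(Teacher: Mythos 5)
Your proposal is correct and follows essentially the same route as the paper's proof: choose $\halbnorm{q}$ from the hypothesis, use the monotonicity of $t\mapsto t^a$ for $t\ge 1$ to compare the weights, insert $\halbnorm{q}'(Av)\le c\,\halbnorm{q}(v)$ on the derivatives, and take the supremum. The only difference is that you spell out the (routine) commutation of $A$ with partial derivatives, which the paper leaves implicit.
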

\begin{proof}
    Note that the first condition $\halbnorm{q}'(Av) \le
    c\halbnorm{q}(v)$ can always be satisfied since $\mathcal{Q}$ was
    assumed to be filtrating and $A$ is continuous. Thus assume that
    the other two requirements in \eqref{eq:qprimeAvqvSymbolStuff} are
    fulfilled as well. Then we have for $F \in \Symbol^{\order{m},
      \type{\rho}}(\mathbb{R}^n, V)$
    \begin{align*}
        \symnorm{AF}^{\order{m}', \type{\rho}'}_{\halbnorm{q}', \mu}
        &=
        \sup_{x \in \mathbb{R}^n}
        \left(
            1 + \norm{x}^2
        \right)^{
          -\frac{1}{2}
          (\order{m}'(\halbnorm{q}') -
          \type{\rho}'(\halbnorm{q}')|\mu|)
        }
        \halbnorm{q}'\left(
            \frac{\partial^{|\mu|} AF}{\partial x^\mu}(x)
        \right) \\
        &\leq
        \sup_{x \in \mathbb{R}^n}
        \left(
            1 + \norm{x}^2
        \right)^{
          -\frac{1}{2}
          (\order{m}(\halbnorm{q}) -
          \type{\rho}(\halbnorm{q})|\mu|)
        }
        \halbnorm{q}'\left(
            A \frac{\partial^{|\mu|} F}{\partial x^\mu}(x)
        \right) \\
        &\le
        c
        \symnorm{F}^{\order{m}, \type{\rho}}_{\halbnorm{q}, \mu}.
    \end{align*}
    Since the seminorms $\symnorm{\argument}^{\order{m}',
      \type{\rho}'}_{\halbnorm{q}', \mu}$ determine the
    $\Symbol^{\order{m}', \type{\rho}'}$-topology, this yields the
    continuity of \eqref{eq:ALinearMapOnSymbols}.
\end{proof}

The main application of Proposition~\ref{proposition:SymbolProducts},
\refitem{item:ProductInOneVariable}, is to multiply vector-valued
symbols with scalar symbols: Choosing one target space just to be
$\mathbb{C}$, with seminorms just the absolute value, we get for every
order $m \in \mathbb{R}$ and every type $\rho \in \mathbb{R}$ the
space of scalar symbols
\begin{equation}
    \label{eq:ScalarSymbols}
    \Symbol^{m, \rho}(\mathbb{R}^n)
    =
    \Symbol^{m, \rho}(\mathbb{R}^n, \mathbb{C}).
\end{equation}
Note that here the order and the type are indeed just single
numbers. We now formulate two corollaries about multiplications of
symbols. Their proofs follow immediately from
Proposition~\ref{proposition:SymbolProducts} and are therefore
omitted.
\begin{corollary}
    \label{corollary:SymbolModuleStuff}%
    Let $V$ be a sequentially complete locally convex space with a
    defining system of seminorms $\mathcal{Q}$.
    \begin{corollarylist}
    \item \label{item:SymbolModule} For all orders $\order{m}$ and
        types $\type{\rho}$ for $\mathcal{Q}$, and all $m, \rho \in
        \mathbb{R}$, the pointwise multiplication gives a continuous
        bilinear map
        \begin{equation}
            \label{eq:SymbolModule}
            \Symbol^{m, \rho}(\mathbb{R}^n, \mathbb{C})
            \times
            \Symbol^{\order{m}, \order{\rho}}(\mathbb{R}^n, V)
            \longrightarrow
            \Symbol^{\order{m}+m, \min(\order{\rho},\rho)}
            (\mathbb{R}^n, V).
        \end{equation}
    \item \label{item:BoundedTypeModule} In particular, if the type
        $\type{\rho}$ is bounded by $\rho \in \mathbb{R}$, then
        \begin{equation}
            \label{eq:BoundedTypeModule}
            \Symbol^{m, \rho}(\mathbb{R}^n, \mathbb{C})
            \times
            \Symbol^{\order{m}, \order{\rho}}(\mathbb{R}^n, V)
            \longrightarrow
            \Symbol^{\order{m}+m, \order{\rho}}
            (\mathbb{R}^n, V)
        \end{equation}
	is a continuous bilinear map.
    \item \label{item:SymbolsFrechetAlgebra} If $m \le 0$ then
        $\Symbol^{m, \type{\rho}}(\mathbb{R}^n, \mathbb{C})$ is a
        Fréchet algebra and $\Symbol^{\order{m},
          \type{\rho}}(\mathbb{R}^n, V)$ is a continuous module over
        it for all bounded $\type{\rho} \le \rho$.
    \end{corollarylist}
\end{corollary}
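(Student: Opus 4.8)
The plan is to obtain all three assertions from Proposition~\ref{proposition:SymbolProducts}, \refitem{item:ProductInOneVariable}, applied to the (jointly) continuous bilinear scalar multiplication $\mu\colon\mathbb{C}\times V\to V$, $\mu(\lambda,v):=\lambda v$, together with the continuous inclusions between symbol spaces of different orders and types from Proposition~\ref{proposition:SymbolFirstProperties}, \refitem{item:SymbolsInSymbols}. No new estimate beyond \eqref{eq:EstimateProductSymbol} is needed; the only work is to match the per-seminorm orders and types occurring in Proposition~\ref{proposition:SymbolProducts} to the \emph{constant} order $m$ and type $\rho$ carried by the scalar factor.

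For \refitem{item:SymbolModule} I would specialise Proposition~\ref{proposition:SymbolProducts}, \refitem{item:ProductInOneVariable}, by taking, in the notation there, the first space to be $\mathbb{C}$ with its single seminorm $|\argument|$, the second space to be $V$ with the given system $\mathcal{Q}$, and the target to be $V$ with $\mathcal{R}:=\mathcal{Q}$. For each $\halbnorm{q}\in\mathcal{Q}$ one has $\halbnorm{q}(\mu(\lambda,v))=|\lambda|\,\halbnorm{q}(v)$, so the continuity requirement \eqref{eq:ContinuityOfProductm} holds with $c=1$ and with $\halbnorm{r}:=\halbnorm{q}$ on the target and the same $\halbnorm{q}$ on the $V$-factor; since these choices are canonical, the filtration hypothesis on $\mathcal{Q}$ in Proposition~\ref{proposition:SymbolProducts} is never actually used. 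On the scalar factor the order and type are the constant ones $m$ and $\rho$, so the abstract relations $\order{m}''\ge\order{m}+\order{m}'$ and $\type{\rho}''\le\min(\type{\rho},\type{\rho}')$ from the paragraph preceding Proposition~\ref{proposition:SymbolProducts} become, for the target order $\order{m}''$ and type $\type{\rho}''$ and every $\halbnorm{q}\in\mathcal{Q}$, the pointwise inequalities $\order{m}''(\halbnorm{q})\ge\order{m}(\halbnorm{q})+m$ and $\type{\rho}''(\halbnorm{q})\le\min(\order{\rho}(\halbnorm{q}),\rho)$. Choosing $\order{m}''=\order{m}+m$ and $\type{\rho}''=\min(\order{\rho},\rho)$ realises these with equality, and Proposition~\ref{proposition:SymbolProducts} then delivers precisely the continuous bilinear map \eqref{eq:SymbolModule}.

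Part \refitem{item:BoundedTypeModule} should then be immediate: if the type $\order{\rho}$ of the $V$-valued symbols satisfies $\order{\rho}(\halbnorm{q})\le\rho$ for all $\halbnorm{q}$, then $\min(\order{\rho},\rho)=\order{\rho}$, so the map of \refitem{item:SymbolModule} already lands in $\Symbol^{\order{m}+m,\order{\rho}}(\mathbb{R}^n,V)$, which is \eqref{eq:BoundedTypeModule}. For \refitem{item:SymbolsFrechetAlgebra} I would first note that for $V=\mathbb{C}$ with the single seminorm $|\argument|$ the space $\Symbol^{m,\rho}(\mathbb{R}^n,\mathbb{C})$ is Hausdorff (Definition~\ref{definition:Symbols} and the remark after it), sequentially complete (Proposition~\ref{proposition:SymbolFirstProperties}, \refitem{item:SymbolsComplete}), and has its topology generated by the countably many seminorms $\symnorm{\argument}^{m,\rho}_\mu$, $\mu\in\mathbb{N}_0^n$, hence is a Fréchet space. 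Then \refitem{item:SymbolModule} with $V=\mathbb{C}$, $\order{m}=m$, $\order{\rho}=\rho$ makes pointwise multiplication a continuous bilinear map $\Symbol^{m,\rho}(\mathbb{R}^n,\mathbb{C})\times\Symbol^{m,\rho}(\mathbb{R}^n,\mathbb{C})\to\Symbol^{2m,\rho}(\mathbb{R}^n,\mathbb{C})$; as $m\le0$ gives $2m\le m$, Proposition~\ref{proposition:SymbolFirstProperties}, \refitem{item:SymbolsInSymbols}, supplies a continuous inclusion $\Symbol^{2m,\rho}\hookrightarrow\Symbol^{m,\rho}$, and composing the two shows that multiplication maps $\Symbol^{m,\rho}(\mathbb{R}^n,\mathbb{C})$ continuously into itself; associativity and commutativity hold pointwise, so it is a Fréchet algebra. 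For the module statement, given any order $\order{m}$ and any type $\order{\rho}\le\rho$, I would invoke \refitem{item:BoundedTypeModule} for continuous bilinearity of the action into $\Symbol^{\order{m}+m,\order{\rho}}(\mathbb{R}^n,V)$ and then, since $m\le0$ forces $\order{m}+m\le\order{m}$, compose with the continuous inclusion $\Symbol^{\order{m}+m,\order{\rho}}\hookrightarrow\Symbol^{\order{m},\order{\rho}}$ of Proposition~\ref{proposition:SymbolFirstProperties}, \refitem{item:SymbolsInSymbols}; the module axioms are inherited pointwise from the $\mathbb{C}$-module structure of $V$.

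There is no substantial obstacle here: the work is entirely the bookkeeping translation between the per-seminorm orders and types of Proposition~\ref{proposition:SymbolProducts} and the two constant numbers $m,\rho$ on the scalar factor, plus the trivial remarks that $m\le0$ makes $2m\le m$ and $\order{m}+m\le\order{m}$, so that the relevant symbol inclusions point in the needed direction. One minor point of interpretation is that in \refitem{item:SymbolsFrechetAlgebra} the symbol $\type{\rho}$ should be read as a single real number $\rho$ (the type of the scalar Fréchet algebra) which at the same time bounds the admissible types $\order{\rho}$ of the $V$-valued module.
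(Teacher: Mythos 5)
Your argument is exactly the route the paper intends: the authors omit the proof, stating that it ``follows immediately from Proposition~\ref{proposition:SymbolProducts}'', and your specialisation of that proposition to the scalar multiplication $\mathbb{C}\times V\to V$ (with $c=1$, $\halbnorm{r}=\halbnorm{q}'=\halbnorm{q}$), together with the inclusions of Proposition~\ref{proposition:SymbolFirstProperties}, \refitem{item:SymbolsInSymbols}, for parts \refitem{item:BoundedTypeModule} and \refitem{item:SymbolsFrechetAlgebra}, is precisely that bookkeeping carried out correctly. No gaps.
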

\begin{corollary}
    \label{corollary:SymbolAlgebra}%
    Let $\mathcal{A}$ be a sequentially complete locally convex
    algebra with a defining system of seminorms $\mathcal{Q}$. Then
    the multiplication induces a continuous product
    \begin{equation}
        \label{eq:ProductForAlgebraSymbols}
        \Symbol^{\order{m}, \order{\rho}}
        (\mathbb{R}^n, \mathcal{A})
        \times
        \Symbol^{\order{m}', \order{\rho}}
        (\mathbb{R}^n, \mathcal{A})
        \longrightarrow
        \Symbol^{\order{m} + \order{m}', \order{\rho}}
        (\mathbb{R}^n, \mathcal{A}).
    \end{equation}
    In particular, for $\order{m} \le 0$ the symbols
    $\Symbol^{\order{m}, \order{\rho}}(\mathbb{R}^n, \mathcal{A})$
    form a sequentially complete locally convex algebra themselves and
    any $\Symbol^{\order{m}', \order{\rho}}(\mathbb{R}^n,
    \mathcal{A})$ is a sequentially complete locally convex continuous
    module over them.
\end{corollary}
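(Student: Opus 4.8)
The plan is to obtain everything as a specialisation of Proposition~\ref{proposition:SymbolProducts}, \refitem{item:ProductInOneVariable}, taken with $V = W = U = \mathcal{A}$, with $\mu$ the (continuous) product of $\mathcal{A}$, and with $\mathcal{R} = \mathcal{Q} = \mathcal{Q}'$. First I would record that the continuity of the product of $\mathcal{A}$ supplies, for each $\halbnorm{q} \in \mathcal{Q}$, seminorms in $\mathcal{Q}$ and a constant for which an estimate of the form \eqref{eq:ContinuityOfProductm} holds; after passing to the filtrating hull of $\mathcal{Q}$ -- which changes neither the topology of $\mathcal{A}$ nor, once the order and type are extended by maxima respectively minima over finite subsets, the symbol spaces $\Symbol^{\order{m}, \order{\rho}}(\Rl^n, \mathcal{A})$ -- one may use a single seminorm $\halbnorm{q}$ on both factors and the target, so that $\halbnorm{q}(ab) \le c\,\halbnorm{q}(a)\,\halbnorm{q}(b)$; for a submultiplicative system of seminorms this refinement is unnecessary. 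Choosing $\order{m}'' := \order{m} + \order{m}'$ and $\type{\rho}'' := \order{\rho}$, the requirements ``$\order{m}'' \ge \order{m} + \order{m}'$'' and ``$\type{\rho}'' \le \min(\order{\rho}, \order{\rho})$'' of Proposition~\ref{proposition:SymbolProducts} with respect to $\mu$ are then met, so \refitem{item:ProductInOneVariable} yields that pointwise multiplication is the continuous bilinear map \eqref{eq:ProductForAlgebraSymbols}, with the explicit bound coming from \eqref{eq:EstimateProductSymbol}.

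For the ``in particular'' statement, suppose $\order{m} \le 0$. Then $\order{m} + \order{m} = 2\order{m} \le \order{m}$, so composing \eqref{eq:ProductForAlgebraSymbols} (with $\order{m}' = \order{m}$) with the continuous inclusion $\Symbol^{2\order{m}, \order{\rho}}(\Rl^n, \mathcal{A}) \hookrightarrow \Symbol^{\order{m}, \order{\rho}}(\Rl^n, \mathcal{A})$ of Proposition~\ref{proposition:SymbolFirstProperties}, \refitem{item:SymbolsInSymbols}, endows $\Symbol^{\order{m}, \order{\rho}}(\Rl^n, \mathcal{A})$ with a continuous product; associativity is inherited pointwise from $\mathcal{A}$, and sequential completeness (completeness, if $\mathcal{A}$ is complete) is Proposition~\ref{proposition:SymbolFirstProperties}, \refitem{item:SymbolsComplete}, so $\Symbol^{\order{m}, \order{\rho}}(\Rl^n, \mathcal{A})$ is a sequentially complete locally convex algebra. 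Similarly, for an arbitrary order $\order{m}'$ one has $\order{m} + \order{m}' \le \order{m}'$, so composing \eqref{eq:ProductForAlgebraSymbols} with the inclusion $\Symbol^{\order{m} + \order{m}', \order{\rho}}(\Rl^n, \mathcal{A}) \hookrightarrow \Symbol^{\order{m}', \order{\rho}}(\Rl^n, \mathcal{A})$ turns $\Symbol^{\order{m}', \order{\rho}}(\Rl^n, \mathcal{A})$ into a continuous (left, and symmetrically right) module over $\Symbol^{\order{m}, \order{\rho}}(\Rl^n, \mathcal{A})$, the module axioms again holding pointwise and sequential completeness again being Proposition~\ref{proposition:SymbolFirstProperties}, \refitem{item:SymbolsComplete}.

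The only point that is not purely formal is the first one: matching the ``with respect to $\mu$'' ordering condition of Proposition~\ref{proposition:SymbolProducts} -- which a priori couples two possibly different seminorms and a constant -- with the literal pointwise sum $\order{m} + \order{m}'$ written in \eqref{eq:ProductForAlgebraSymbols}. This is immediate when the seminorms of $\mathcal{A}$ are submultiplicative and is otherwise handled by the routine filtrating-hull argument indicated above; the rest goes through exactly as for the preceding corollaries, whose proofs were omitted for the same reason.
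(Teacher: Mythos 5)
Your proposal takes exactly the route the paper intends: the paper omits the proof, remarking just before Corollary~\ref{corollary:SymbolModuleStuff} that both corollaries ``follow immediately from Proposition~\ref{proposition:SymbolProducts}'', and your specialisation $V=W=U=\mathcal{A}$ with $\mu$ the product, combined with the inclusions of Proposition~\ref{proposition:SymbolFirstProperties}, \refitem{item:SymbolsInSymbols}, and the completeness statement \refitem{item:SymbolsComplete} for the ``in particular'' part, is that argument.

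One intermediate claim is wrong, however, and it sits precisely at the point you yourself single out as ``not purely formal''. Passing to the filtrating hull $\tilde{\mathcal{Q}}$ does \emph{not} allow you to take a single seminorm with $\halbnorm{q}(ab)\le c\,\halbnorm{q}(a)\,\halbnorm{q}(b)$. Filtration only guarantees that for each $\halbnorm{r}$ there is a common dominating $\halbnorm{q}$ with $\halbnorm{r}(ab)\le c\,\halbnorm{q}(a)\,\halbnorm{q}(b)$, where in general $\halbnorm{q}\neq\halbnorm{r}$; forcing $\halbnorm{q}=\halbnorm{r}$ is the locally multiplicatively convex condition, which is strictly stronger than (joint) continuity of the product and is not assumed here. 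The correct resolution is the one the paper builds into its notation: ``$\order{m}+\order{m}'$'' in \eqref{eq:ProductForAlgebraSymbols} is to be read as an order $\order{m}''$ with $\order{m}''\ge\order{m}+\order{m}'$ \emph{with respect to} $\mu$ in the symbolic sense of \eqref{eq:mrGroessermqmq}, for instance the one constructed in \eqref{eq:ordermppDefinition}; with that convention the hypothesis of Proposition~\ref{proposition:SymbolProducts}, \refitem{item:ProductInOneVariable}, holds by construction and no submultiplicativity is needed. The same convention should be carried into the ``in particular'' part: the inequality you use, $\order{m}+\order{m}'\le\order{m}'$, is automatic for the literal pointwise sum but for the symbolic sum requires choosing $\order{m}''$ so that $\order{m}(\halbnorm{q}(\halbnorm{r}))+\order{m}'(\halbnorm{q}'(\halbnorm{r}))\le\order{m}'(\halbnorm{r})$, which is how the corollary is meant to be read (and is immediate in the constant-order or submultiplicative cases). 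Apart from this bookkeeping issue, which the paper itself glosses over, your argument is complete and matches the intended proof.
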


For later applications, we also note the following simple lemma about
powers of scalar symbols.
\begin{lemma}
    \label{lemma:ScalarSymbolPower}%
    Let $f \in \Symbol^{m, \rho}(\mathbb{R}^n, \mathbb{C})$ be a
    scalar symbol of order $m$ and type $\rho$ with $f(x) \in
    \mathbb{C} \setminus [0, - \infty)$ for all $x \in
    \mathbb{R}^n$. Then for all $\alpha \in \mathbb{C}$ with
    $\RE(\alpha)\geq0$ we have $f^\alpha \in \Symbol^{\RE(\alpha)m,
      \rho}(\mathbb{R}^n, \mathbb{C})$.
\end{lemma}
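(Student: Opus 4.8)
The plan is to realise $f^\alpha$ through the principal branch of the logarithm and then to reduce the symbol estimates for $f^\alpha$ to those for $f$ by the higher chain rule. Since by hypothesis $f$ takes values in $\mathbb{C}\setminus[0,-\infty)$, the plane slit along the non-positive real axis, the principal logarithm is holomorphic on the range of $f$; hence $\log f\in\Cinfty(\Rl^n,\mathbb{C})$ and $f^\alpha:=\exp(\alpha\log f)\in\Cinfty(\Rl^n,\mathbb{C})$ is well defined. Taking real parts, $\RE\bigl(\alpha\log f(x)\bigr)=\RE(\alpha)\log|f(x)|-\IM(\alpha)\arg f(x)$ with $\arg f(x)\in(-\pi,\pi)$, so
\[
  |f^\alpha(x)|=|f(x)|^{\RE(\alpha)}\,e^{-\IM(\alpha)\arg f(x)}\le e^{\pi|\IM(\alpha)|}\,|f(x)|^{\RE(\alpha)},\qquad x\in\Rl^n.
\]
Since $\RE(\alpha)\ge0$, raising the order-$m$ bound $|f(x)|\le\symnorm{f}^{m,\rho}_{0}\,(1+\norm{x}^2)^{m/2}$ to the $\RE(\alpha)$-th power already settles the case $\mu=0$: $\symnorm{f^\alpha}^{\RE(\alpha)m,\rho}_{0}\le e^{\pi|\IM(\alpha)|}\bigl(\symnorm{f}^{m,\rho}_{0}\bigr)^{\RE(\alpha)}<\infty$.

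For $\mu\neq0$ I would differentiate $f^\alpha=h\circ f$ with $h(z)=z^\alpha$, $h^{(k)}(z)=\alpha(\alpha-1)\cdots(\alpha-k+1)\,z^{\alpha-k}$, by the Fa\`{a} di Bruno formula. This writes $\partial^\mu(f^\alpha)$ as a finite sum of terms of the form $c_{k,\nu_1,\ldots,\nu_k}(\alpha)\,f^{\alpha-k}\prod_{i=1}^{k}\partial^{\nu_i}f$ with $1\le k\le|\mu|$, $\nu_i\neq0$, $\nu_1+\cdots+\nu_k=\mu$, and coefficients depending polynomially on $\alpha$. Using $|\nu_1|+\cdots+|\nu_k|=|\mu|$, the product of derivatives of $f$ is dominated by $\bigl(\prod_{i=1}^{k}\symnorm{f}^{m,\rho}_{\nu_i}\bigr)(1+\norm{x}^2)^{\frac12(km-\rho|\mu|)}$, while $|f^{\alpha-k}(x)|\le e^{\pi|\IM(\alpha)|}|f(x)|^{\RE(\alpha)-k}$ as above. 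It then remains to absorb $|f(x)|^{\RE(\alpha)-k}$ into a power of $1+\norm{x}^2$: if $\RE(\alpha)\ge k$ one uses the upper bound $|f|\le\symnorm{f}^{m,\rho}_{0}(1+\norm{x}^2)^{m/2}$, and if $\RE(\alpha)<k$ one uses a lower bound $|f(x)|\ge c\,(1+\norm{x}^2)^{m/2}$; either way $|f(x)|^{\RE(\alpha)-k}\le C(1+\norm{x}^2)^{\frac{m}{2}(\RE(\alpha)-k)}$. Since $\tfrac{m}{2}(\RE(\alpha)-k)+\tfrac12(km-\rho|\mu|)=\tfrac12\bigl(\RE(\alpha)m-\rho|\mu|\bigr)$, each term is $\le C'(1+\norm{x}^2)^{\frac12(\RE(\alpha)m-\rho|\mu|)}$, and summing the finitely many terms gives $\symnorm{f^\alpha}^{\RE(\alpha)m,\rho}_{\mu}<\infty$ for every $\mu$, i.e.\ $f^\alpha\in\Symbol^{\RE(\alpha)m,\rho}(\Rl^n,\mathbb{C})$.

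The step I expect to be the main obstacle is precisely this last absorption in the regime $\RE(\alpha)<k$: there a \emph{negative} power of $|f|$ must be bounded by a polynomial, so one has to know that $f$ does not decay too rapidly — a lower bound of the form $|f(x)|\ge c\,(1+\norm{x}^2)^{m/2}$. For the positive elliptic symbols that occur in the later applications, such as $f=1+\norm{\argument}^2$, this holds trivially, and it is the piece of non-degeneracy one should keep track of in general. All the remaining work is the Leibniz/chain-rule bookkeeping of orders of derivatives, completely parallel to the proof of Proposition~\ref{proposition:SymbolProducts}.
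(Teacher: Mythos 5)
Your route is the same as the paper's: define $f^\alpha=\exp(\alpha\log f)$ via the principal branch, expand $\partial^\mu(f^\alpha)$ by Fa\`a di Bruno into terms $c\,f^{\alpha-k}\prod_i\partial^{\nu_i}f$, and absorb each term into the weight $(1+\norm{x}^2)^{\frac{1}{2}(\RE(\alpha)m-\rho|\mu|)}$. The obstacle you single out is genuine, and in fact the paper's own proof steps over it: in the last line of its estimate it replaces $\sup_x\bigl((1+\norm{x}^2)^{-m/2}|f(x)|\bigr)^{\RE(\alpha)-k}$ by $\bigl(\symnorm{f}^{m,\rho}_0\bigr)^{\RE(\alpha)-k}$, which is legitimate only when $\RE(\alpha)\ge k$; for $\RE(\alpha)<k$ the supremum of a negative power is controlled by the \emph{infimum} of $(1+\norm{x}^2)^{-m/2}|f(x)|$, which the hypotheses say nothing about. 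The statement is in fact false without such a lower bound: take $f(x)=i\,e^{-\norm{x}^2}$, which lies in $\Symbol^{0,1}(\Rl^n,\Cl)$ and avoids the slit, and $\alpha=i$, so $\RE(\alpha)=0$. Then $f^{i}=e^{-\pi/2}e^{-i\norm{x}^2}$ has first derivatives of modulus $2e^{-\pi/2}|x_j|$, so $\symnorm{f^i}^{0,1}_{e_j}=\sup_x(1+\norm{x}^2)^{1/2}\,2e^{-\pi/2}|x_j|=\infty$ and $f^i\notin\Symbol^{\RE(\alpha)\cdot 0,1}(\Rl^n,\Cl)$. Your repair, the ellipticity bound $|f(x)|\ge c\,(1+\norm{x}^2)^{m/2}$, closes the gap and makes the term-by-term estimate go through exactly as you write it; it is satisfied in every use of the lemma later in the paper ($f=1+\norm{\argument}^2$ in Lemma~\ref{lemma:TranslationsActOnSymbols}, and the non-vanishing factors $(i+x_k)$ building $P$ in Section~\ref{section:integrals}), so nothing downstream is affected. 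In short: your proof is the paper's proof done carefully, and the extra hypothesis you ask for is one the paper's argument also silently needs.
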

\begin{proof}
    Since $f$ takes values only in the complex plane without the
    closed negative real half axis, we can use the (smooth) principal
    branch of the logarithm to define the powers $f^\alpha =
    \exp(\alpha \log(f)) \in \Cinfty(\mathbb{R}^n, \mathbb{C})$ for
    all $\alpha \in \mathbb{C}$. Note that we do not have to take care
    of the values $\alpha \in \mathbb{N}_0$ as these particular cases
    are already settled by Corollary~\ref{corollary:SymbolAlgebra} by
    induction. By the chain rule we get
    \[
    \frac{\partial^{|\mu|} f^\alpha}{\partial x^\mu}
    =
    \sum_{
      \substack{1 \le k \le |\mu| \\ \nu_1,\ldots, \nu_k\in\mathbb{N}_0^n\\ |\nu_1|+\cdots+|\nu_k|=|\mu|}
    }
    C^{k,\alpha}_{\nu_1,\ldots,\nu_k}
    f^{\alpha - k}
    \,
    \frac{\partial^{|\nu_1|} f}{\partial x^{\nu_1}}
    \cdots
    \frac{\partial^{|\nu_k|} f}{\partial x^{\nu_k}}
    \]
    with some coefficients $C^{k,\alpha}_{\nu}\in\Cl$. We first note
    that for $f^\alpha$ without derivatives we get the estimate
    \begin{align*}
        \symnorm{f^\alpha}^{\RE(\alpha)m, \rho}_0
        &= \sup_{x \in \mathbb{R}^n}
        \left(
            1 + \norm{x}^2
        \right)^{-\frac{1}{2} \RE(\alpha)m}
        |f^\alpha(x)| \\
        &\le
        e^{\pi|\IM(\alpha)|}
	 \sup_{x\in\Rl^n}
        \left(
            \left(
                1 + \norm{x}^2
            \right)^{-\frac{m}{2}}
            |f(x)|
        \right)^{\RE(\alpha)} \\
        &=
        e^{\pi|\IM(\alpha)|}
        \left(\symnorm{f}^{m, \rho}_0\right)^{\RE(\alpha)}
    \end{align*}
    since for any complex number $z \in \mathbb{C} \setminus [0,
    -\infty)$ we have $|z^\alpha| \le |z|^{\RE(\alpha)}
    e^{\pi|\IM(\alpha)|}$, and since $\RE(\alpha)\geq 0$. Thus we need
    the prefactor $(1 + \norm{x}^2)^{-\frac{1}{2}\RE(\alpha)m}$ to
    compensate the growth of $f^\alpha$. To estimate the derivatives,
    we get
    \begin{align*}
        &\symnorm{f^\alpha}^{\RE(\alpha)m, \rho}_\mu \\
        &=
        \sup_{x \in \mathbb{R}^n}
        \left(
            1 + \norm{x}^2
        \right)^{-\frac{1}{2} (\RE(\alpha)m - \rho|\mu|)}
        \left|
            \frac{\partial^{|\mu|} f^\alpha}{\partial x^\mu}(x)
        \right| \\
        &\le
        \sup_{x \in \mathbb{R}^n}
        \left(
            1 + \norm{x}^2
        \right)^{-\frac{1}{2} (\RE(\alpha)m - \rho|\mu|)}
        \sum_{
	\substack{1 \le k \le |\mu| \\ \nu_1,\ldots,\nu_k\in\mathbb{N}_0^n\\ |\nu_1|+\cdots+|\nu_k|=|\mu|}
	}
        |C^{k,\alpha}_{\nu_1,\ldots,\nu_k}|
        \left|f^{\alpha - k}(x)\right|
        \left|\frac{\partial^{|\nu_1|} f}{\partial x^{\nu_1}}\right|
        \cdots
        \left|\frac{\partial^{|\nu_k|} f}{\partial x^{\nu_k}}\right|
        \\
        &\le
	\sum_{
	\substack{1 \le k \le |\mu| \\ \nu_1,\ldots, \nu_k\in\mathbb{N}_0^n\\ |\nu_1|+\cdots+|\nu_k|=|\mu|}
	}
        |C^{k,\alpha}_{\nu_1,\ldots,\nu_k}|
        \sup_{x \in \mathbb{R}^n}
        \left(
            1 + \norm{x}^2
        \right)^{-\frac{1}{2}(\RE(\alpha)m - km)}
        \left|f^{\alpha - k}(x)\right| \\
        &\qquad
        \sup_{x \in \mathbb{R}^n}
        \left(
            1 + \norm{x}^2
        \right)^{-\frac{1}{2} (m - \rho|\nu_1|)}
        \left|\frac{\partial^{|\nu_1|} f}{\partial x^{\nu_1}}\right|
        \cdots
        \sup_{x \in \mathbb{R}^n}
        \left(
            1 + \norm{x}^2
        \right)^{-\frac{1}{2} (m - \rho|\nu_k|)}
        \left|\frac{\partial^{|\nu_k|} f}{\partial x^{\nu_k}}\right|
        \\
        &\le
	\sum_{
	\substack{1 \le k \le |\mu| \\ \nu_1,\ldots,\nu_k\in\mathbb{N}_0^n\\ |\nu_1|+\cdots+|\nu_k|=|\mu|}
	}
        |C^{k,\alpha}_{\nu_1,\ldots,\nu_k}|
	e^{\pi|\IM(\alpha)|}
        \left(\symnorm{f}^{m, \rho}_0\right)^{\RE(\alpha) - k}
        \symnorm{f}^{m, \rho}_{\nu_1}
        \cdots
        \symnorm{f}^{m, \rho}_{\nu_k},
    \end{align*}
    which proves that $f^\alpha \in \Symbol^{\RE(\alpha)m,
      \rho}(\mathbb{R}^n, \mathbb{C})$ as claimed.
\end{proof}

We come now to the approximation of symbols by compactly supported
functions, which will be important for our subsequent construction of
an oscillatory integral. As usual, balls in $\Rl^n$ will be denoted
$\Ball_r(0):=\{x\in\Rl^n\,:\,\|x\|\leq r\}$.
\begin{proposition}
    \label{proposition:ApproximateSymbols}%
    Let $\chi \in \Cinfty_0(\mathbb{R}^n)$ be a compactly supported
    smooth function with
    \begin{equation}
        \label{eq:NiceCutOffFunction}
        \chi_{\Ball_r(0)} = 1
        \quad
        \textrm{and}
        \quad
        \supp \chi \subseteq  \Ball_R(0),
    \end{equation}
    where $0 < r < R$ and let $\chi_\epsilon \in
    \Cinfty_0(\mathbb{R}^n)$ be defined by $\chi_\epsilon(x) =
    \chi(\epsilon x)$ for $\epsilon > 0$.
    \begin{propositionlist}
    \item \label{item:ChiEpsilonSymbol} One has $\chi_\epsilon - 1 \in
        \Symbol^{0, \rho}(\mathbb{R}^n, \mathbb{C})$ for all $\rho \in
        \mathbb{R}$.
    \item \label{item:ChiEpsilonToEins} One has
        \begin{equation}
            \label{eq:LimChiEpsilonEins}
            \lim_{\epsilon \longrightarrow 0} \chi_\epsilon = 1
        \end{equation}
        in the $\Symbol^{m, \rho}$-topology for all $m > 0$ and $\rho
        \le 1$.
    \item \label{item:LimChiEpsilonVectorValued} For all $F \in
        \Symbol^{\order{m}, \type{\rho}}(\mathbb{R}^n, V)$ we have
        \begin{equation}
            \label{eq:LimChiEpsilonVectorValued}
            \lim_{\epsilon \longrightarrow 0} \chi_\epsilon F = F
        \end{equation}
        in the $\Symbol^{\order{m}', \type{\rho}'}$-topology for all
        $\order{m}' > \order{m}$ and $\type{\rho}' \le \min(1,
        \type{\rho})$.
    \item \label{item:CinftyDenseInSymbols} For all $\order{m}' >
        \order{m}$ and $\type{\rho}' \le \min(1, \type{\rho})$, the
        compactly supported smooth functions $\Cinfty_0(\mathbb{R}^n,
        V)$ are sequentially dense in $\Symbol^{\order{m},
          \type{\rho}}(\mathbb{R}^n, V)$ with respect to the
        $\Symbol^{\order{m}', \type{\rho}'}$-topology.
    \end{propositionlist}
\end{proposition}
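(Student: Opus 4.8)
The plan is to deduce all four parts from the elementary behaviour of the cut-off $\chi_\epsilon-1$ together with the weighted estimates defining the symbol seminorms; beyond this only the inclusion $\Cinfty_0(\Rl^n,V)\hookrightarrow\Symbol^{\order m,\type\rho}(\Rl^n,V)$ and the monotonicity of the symbol spaces in order and type (Proposition~\ref{proposition:SymbolFirstProperties}~\refitem{item:SymbolInclusion} and~\refitem{item:SymbolsInSymbols}) are used. Part~\refitem{item:ChiEpsilonSymbol} is immediate: the constant function $1$ lies in $\Symbol^{0,\rho}(\Rl^n,\Cl)$ for every $\rho\in\Rl$ (its derivatives of order $\geq1$ vanish, and the weight for $\mu=0$ is identically $1$), and $\chi_\epsilon\in\Cinfty_0(\Rl^n,\Cl)\subseteq\Symbol^{0,\rho}(\Rl^n,\Cl)$, so $\chi_\epsilon-1\in\Symbol^{0,\rho}(\Rl^n,\Cl)$. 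For the remaining parts I would record two facts about $\chi_\epsilon$ once: first, $\chi_\epsilon-1$ vanishes on $\Ball_{r/\epsilon}(0)$ and is bounded by $1+\supnorm{\chi}$; second, for $|\mu|\geq1$ one has $\partial^\mu\chi_\epsilon(x)=\epsilon^{|\mu|}(\partial^\mu\chi)(\epsilon x)$, which is supported in the annulus $\{x:r/\epsilon\leq\norm{x}\leq R/\epsilon\}$ (since $\partial^\mu\chi$ vanishes on the open ball of radius $r$ and is supported in $\Ball_R(0)$) and is there bounded by $\epsilon^{|\mu|}\supnorm{\partial^\mu\chi}$.

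For~\refitem{item:ChiEpsilonToEins} I would estimate $\symnorm{\chi_\epsilon-1}^{m,\rho}_\mu$ seminorm by seminorm. For $\mu=0$ the supremum runs over $\norm{x}>r/\epsilon$, where $(1+\norm{x}^2)^{-m/2}\leq(r/\epsilon)^{-m}$, giving $\symnorm{\chi_\epsilon-1}^{m,\rho}_0\leq(1+\supnorm{\chi})\,r^{-m}\epsilon^{m}$. For $|\mu|\geq1$ one restricts to the annulus from the second fact; since $t\mapsto(1+t)^{\frac12(\rho|\mu|-m)}$ is monotone, the weight is maximal at the inner radius $r/\epsilon$ when $\rho|\mu|-m\leq0$ and at the outer radius $R/\epsilon$ when $\rho|\mu|-m>0$, and in both cases the powers of $\epsilon$ combine to $\symnorm{\chi_\epsilon-1}^{m,\rho}_\mu\leq C_\mu\,\epsilon^{m+|\mu|(1-\rho)}$ for all small $\epsilon$. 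Since $m>0$ and $\rho\leq1$ the exponent $m+|\mu|(1-\rho)$ is strictly positive, so every seminorm tends to $0$ and \eqref{eq:LimChiEpsilonEins} follows.

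The substantial step is~\refitem{item:LimChiEpsilonVectorValued}. Write $\chi_\epsilon F-F=(\chi_\epsilon-1)F$, fix $\halbnorm{q}\in\mathcal{Q}$ and $\mu\in\Nl_0^n$, and set $m:=\order{m}(\halbnorm{q})$, $m':=\order{m}'(\halbnorm{q})$, $\rho:=\type{\rho}(\halbnorm{q})$, $\rho':=\type{\rho}'(\halbnorm{q})$, $\eta:=m'-m>0$. Expanding $\partial^\mu((\chi_\epsilon-1)F)$ by the Leibniz rule and bounding $\halbnorm{q}(\partial^{\mu-\nu}F(x))\leq(1+\norm{x}^2)^{\frac12(m-\rho|\mu-\nu|)}\,\symnorm{F}^{\order{m},\type{\rho}}_{\halbnorm{q},\mu-\nu}$, every summand of $\symnorm{(\chi_\epsilon-1)F}^{\order{m}',\type{\rho}'}_{\halbnorm{q},\mu}$ is $\binom{\mu}{\nu}\,\symnorm{F}^{\order{m},\type{\rho}}_{\halbnorm{q},\mu-\nu}$ times $\sup_x(1+\norm{x}^2)^{P_\nu}\,|\partial^\nu(\chi_\epsilon-1)(x)|$ with $P_\nu=\tfrac12(-\eta+\rho'|\mu|-\rho|\mu-\nu|)$. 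Using the annular support for $\nu\neq0$ and the exterior support for $\nu=0$, exactly as in~\refitem{item:ChiEpsilonToEins}, this last supremum is $\leq\const\cdot\epsilon^{\,|\nu|-2P_\nu}$ for small $\epsilon$. The identity $|\nu|-2P_\nu=\eta+|\nu|(1-\rho)+|\mu|(\rho-\rho')$, together with $0\leq|\nu|\leq|\mu|$ and $\rho'\leq\min(1,\rho)$, yields $|\nu|-2P_\nu\geq\eta>0$: if $\rho\leq1$ both extra terms are nonnegative, while if $\rho>1$ one uses $|\nu|(1-\rho)\geq|\mu|(1-\rho)$ and $|\mu|(1-\rho)+|\mu|(\rho-\rho')=|\mu|(1-\rho')\geq0$. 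Hence each summand is $O(\epsilon^{\eta})$ and $\symnorm{(\chi_\epsilon-1)F}^{\order{m}',\type{\rho}'}_{\halbnorm{q},\mu}\to0$, which is \eqref{eq:LimChiEpsilonVectorValued}. Note that only the pointwise strict inequality $\order{m}'(\halbnorm{q})>\order{m}(\halbnorm{q})$ at the fixed $\halbnorm{q}$ enters, so no uniform gap between $\order{m}'$ and $\order{m}$ is needed.

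Finally,~\refitem{item:CinftyDenseInSymbols} follows at once: for $F\in\Symbol^{\order{m},\type{\rho}}(\Rl^n,V)$ the functions $\chi_{1/k}F$ lie in $\Cinfty_0(\Rl^n,V)$ and, by~\refitem{item:LimChiEpsilonVectorValued}, converge to $F$ in the $\Symbol^{\order{m}',\type{\rho}'}$-topology. The main obstacle is precisely the exponent bookkeeping in~\refitem{item:LimChiEpsilonVectorValued}: one must combine the polynomial weight from the prefactor, the weight from the symbol bound on $F$, and the localization of the derivatives of $\chi_\epsilon$ to an annulus whose radii scale like $\epsilon^{-1}$, and then verify positivity of the resulting power of $\epsilon$ in every case — the case $\rho>1$ being exactly what forces the hypothesis $\type{\rho}'\leq\min(1,\type{\rho})$ rather than merely $\type{\rho}'\leq\type{\rho}$.
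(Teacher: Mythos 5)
Your proof is correct and follows essentially the same route as the paper's: parts \refitem{item:ChiEpsilonSymbol}, \refitem{item:ChiEpsilonToEins} and \refitem{item:CinftyDenseInSymbols} are identical, and for part \refitem{item:LimChiEpsilonVectorValued} you merely unwind the product estimate of Proposition~\ref{proposition:SymbolProducts}, \refitem{item:ProductInOneVariable} (which the paper invokes directly) into an explicit Leibniz computation with the same annulus localization and the same power counting, arriving at the same conditions $\order{m}'(\halbnorm{q})>\order{m}(\halbnorm{q})$ pointwise and $\type{\rho}'\le\min(1,\type{\rho})$. The only point you leave implicit is that for the $\nu=0$ term the exponent $P_0=\tfrac12(-\eta+|\mu|(\rho'-\rho))$ is strictly negative (which follows from $\eta>0$ and $\rho'\le\rho$), so that the supremum over the unbounded region $\norm{x}\ge r/\epsilon$ is finite; this is exactly the $\mu=0$ case of part \refitem{item:ChiEpsilonToEins} that you cite.
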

\begin{proof}
    Clearly, $1 \in \Symbol^{0, \rho}(\mathbb{R}^n, \mathbb{C})$ for
    all $\rho \in \mathbb{R}$ and $\chi_\epsilon \in
    \Cinfty_0(\mathbb{R}^n) \subseteq \Symbol^{m, \rho}(\mathbb{R}^n,
    \mathbb{C})$ for all $m, \rho \in \mathbb{R}$ by
    Proposition~\ref{proposition:SymbolFirstProperties},
    \refitem{item:SymbolInclusion}. For the second part, we clearly
    have pointwise convergence and even convergence in the
    $\Cinfty$-topology. For $\mu = 0$ we have
    \begin{align*}
        \symnorm{\chi_\epsilon - 1}_0^{m, \rho}
        &=
        \sup_{x \in \mathbb{R}^n}
        \left(1 + \norm{x}^2\right)^{-\frac{1}{2}m}
        |\chi_\epsilon (x) - 1| \\
        &=
        \sup_{\norm{x} \ge \frac{r}{\epsilon}}
        \left(1 + \norm{x}^2\right)^{-\frac{1}{2}m}
        |\chi_\epsilon (x) - 1| \\
        &\le
        c \sup_{\norm{x} \ge \frac{r}{\epsilon}}
        \left(1 + \norm{x}^2\right)^{-\frac{1}{2}m} \\
        &=
        c \left(
            \frac{r^2 + \epsilon^2}{\epsilon^2}
        \right)^{-\frac{1}{2} m},
    \end{align*}
    where $c = \supnorm{\chi_\epsilon - 1} < \infty$ by the compact
    support of $\chi_\epsilon$. This converges to zero since
    $m > 0$. For $\mu \ne 0$ we have
    \[
    \frac{\partial^{|\mu|}\chi_\epsilon}{\partial x^\mu}(x)
    =
    \epsilon^{|\mu|}
    \frac{\partial^{|\mu|}\chi}{\partial x^\mu}(\epsilon x)
    \]
    and hence
    \begin{align*}
        \symnorm{\chi_\epsilon - 1}_\mu^{m, \rho}
        &=
        \sup_{x \in \mathbb{R}^n}
        \left(1 + \norm{x}^2\right)^{-\frac{1}{2}(m - \rho|\mu|)}
        \left|
            \frac{\partial^{|\mu|}\chi_\epsilon}{\partial x^\mu}(x)
        \right| \\
        &\le
        \sup_{\frac{r}{\epsilon} \le \norm{x} \le \frac{R}{\epsilon}}
        \left(1 + \norm{x}^2\right)^{-\frac{1}{2}(m - \rho|\mu|)}
        \epsilon^{|\mu|} c_\mu,
    \end{align*}
    where $c_\mu =
    \supnorm{\partial_x^\mu \chi_\epsilon(x)}
    < \infty$, again thanks to the compact support. Now either $m -
    \rho|\mu| \ge 0$, then the supremum is taken at the smallest
    possible $\norm{x} = \frac{r}{\epsilon}$, or $m - \rho|\mu| < 0$,
    then the supremum is taken at the largest possible $\norm{x} =
    \frac{R}{\epsilon}$. Thus we get in the first case for $\epsilon
    \le 1$
    \[
    \symnorm{\chi_\epsilon - 1}_\mu^{m, \rho}
    \le
    c_\mu \epsilon^{|\mu|}
    \left(
        \frac{r^2 + \epsilon^2}{\epsilon^2}
    \right)^{-\frac{1}{2}(m - \rho|\mu|)}
    \le
    c_\mu' \epsilon^{m + (1-\rho)|\mu|},
    \]
    and in the second case we get the same estimate with a different
    numerical constant $c_\mu''$ instead of $c_\mu$. For the behaviour
    under $\epsilon \longrightarrow 0$ these factors do not play any
    role but the sign of $m + (1-\rho)|\mu|$ does: If $\rho > 1$ then
    for large enough $|\mu|$ we get divergence and hence
    $\symnorm{\chi_\epsilon - 1}_\mu^{m, \rho}$ does not converge to
    zero. If, on the other hand $\rho \le 1$, then $m + (1-\rho)|\mu|$
    is always strictly positive. In this case we have convergence
    $\symnorm{\chi_\epsilon - 1}_\mu^{m, \rho} \longrightarrow 0$ for
    all $\mu$. This explains the condition $\rho \le 1$ and proves the
    second part. For the third part we rely on the estimates proved in
    Proposition~\ref{proposition:SymbolProducts},
    \refitem{item:ProductInOneVariable}: for $F \in
    \Symbol^{\order{m}, \type{\rho}}(\mathbb{R}^n, V)$ and a fixed
    seminorm $\halbnorm{q}$ from the defining system $\mathcal{Q}$ we
    get the estimate
    \[
    \symnorm{
      (\chi_\epsilon - 1)F
    }_{\halbnorm{q}, \mu}^{\order{m}', \type{\rho}'}
    \le
    2^{|\mu|}
    \max_{\nu \le \mu} \symnorm{\chi_\epsilon - 1}_\nu^{m, \rho}
    \max_{\nu' \le \mu}
    \symnorm{F}_{\halbnorm{q}, \nu}^{\order{m}, \type{\rho}}
    \]
    for every $m$ and $\order{m}'$ provided $\order{m}'(\halbnorm{q})
    \ge \order{m}(\halbnorm{q}) + m$, and every $\rho$ and
    $\type{\rho}'$ provided $\type{\rho}'(\halbnorm{q}) \le \min(\rho,
    \type{\rho}(\halbnorm{q}))$. Now from the second part we know that
    $\symnorm{\chi_\epsilon - 1}_\nu^{m, \rho}$ converges to zero
    whenever $\rho \le 1$ and $m > 0$. This means that for the fixed
    seminorm $\halbnorm{q}$ we get $\symnorm{(\chi_\epsilon -
      1)F}_{\halbnorm{q}, \mu}^{\order{m}', \type{\rho}'}
    \longrightarrow 0$ whenever $\order{m}'(\halbnorm{q}) >
    \order{m}(\halbnorm{q})$ and $\type{\rho}'(\halbnorm{q}) \le
    \min(1, \type{\rho}(\halbnorm{q}))$. Since this is the condition
    for every $\halbnorm{q} \in \mathcal{Q}$ we get the third
    part. Note that we are allowed to make the parameter $m$ depend on
    $\halbnorm{q}$ as long as we have $m > 0$. Thus
    $\order{m}'(\halbnorm{q}) > \order{m}(\halbnorm{q})$ does
    \emph{not} have to be uniformly satisfied. The last part is now
    clear as it suffices to take $\epsilon = \frac{1}{n}$ as usual.
\end{proof}

As the last operation to be discussed, we consider the restriction of
a symbol to a subspace of its domain of definition. To this end, we
take symbols $F:\Rl^{n_1}\oplus\Rl^{n_2}\to V$ depending on two
variables $(x_1,x_2)\in\Rl^{n_1}\oplus\Rl^{n_2}$, and introduce the
embeddings $\iota_j:\Rl^{n_j}\to\Rl^{n_1}\oplus\Rl^{n_2}$, $j=1,2$,
defined as $\iota_1(x_1):=(x_1,0)$ and $\iota_2(x_2):=(0,x_2)$. For a
symbol $F\in\Symbol^{\bm,\brho}(\Rl^{n_1}\oplus\Rl^{n_2},V)$, we write
\begin{align}
    \label{def:Fj}
    \iota_j^* F &:=  F \circ \iota_j\colon \Rl^{n_j} \to V\,,\qquad j=1,2\,.
\end{align}
\begin{lemma}\label{lemma:RestrictionOfSymbols}
    Let $\bm,\brho$ be an order and a type for $\mathcal Q$. Then the
    restriction maps
    \begin{align}
        \iota_j^*\colon
        \Symbol^{\bm,\brho}(\Rl^{n_1}\oplus\Rl^{n_2},V) &\longrightarrow
        \Symbol^{\bm,\brho}(\Rl^{n_j},V)
    \end{align}
    are linear and continuous, $j=1,2$.
\end{lemma}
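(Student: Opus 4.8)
The plan is to reduce the claim to the elementary observation that restricting a function to a coordinate subspace does not increase symbol seminorms. I would begin by noting that for a symbol $F\in\Symbol^{\bm,\brho}(\Rl^{n_1}\oplus\Rl^{n_2},V)$, the composition $\iota_j^*F=F\circ\iota_j$ is manifestly smooth (as a composition of a smooth map with a linear embedding) and $\Rl^{n_j}$-valued in $V$, so $\iota_j^*F\in\Cinfty(\Rl^{n_j},V)$. Linearity of $\iota_j^*$ is immediate from the pointwise definition. The substance is the seminorm estimate, and the key point is that for a multiindex $\mu\in\Nl_0^{n_1}$ (treating $j=1$; the case $j=2$ is symmetric) one has, by the chain rule applied to the linear embedding $\iota_1$,
\begin{equation*}
    \frac{\partial^{|\mu|}(\iota_1^*F)}{\partial x_1^\mu}(x_1)
    =
    \frac{\partial^{|\mu|}F}{\partial x^{(\mu,0)}}(x_1,0)\,,
\end{equation*}
where $(\mu,0)\in\Nl_0^{n_1}\oplus\Nl_0^{n_2}$ is the multiindex on $\Rl^{n_1}\oplus\Rl^{n_2}$ with first block $\mu$ and second block zero; in particular $|(\mu,0)|=|\mu|$.

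Next I would carry out the estimate. Fix $\halbnorm{q}\in\mathcal Q$ and $\mu\in\Nl_0^{n_1}$. Then
\begin{align*}
    \symnorm{\iota_1^*F}^{\bm,\brho}_{\halbnorm{q},\mu}
    &=
    \sup_{x_1\in\Rl^{n_1}}
    \bigl(1+\norm{x_1}^2\bigr)^{-\frac{1}{2}(\order{m}(\halbnorm{q})-\type{\rho}(\halbnorm{q})|\mu|)}
    \halbnorm{q}\Bigl(
        \frac{\partial^{|\mu|}F}{\partial x^{(\mu,0)}}(x_1,0)
    \Bigr)
    \\
    &\le
    \sup_{(x_1,x_2)\in\Rl^{n_1}\oplus\Rl^{n_2}}
    \bigl(1+\norm{(x_1,x_2)}^2\bigr)^{-\frac{1}{2}(\order{m}(\halbnorm{q})-\type{\rho}(\halbnorm{q})|\mu|)}
    \halbnorm{q}\Bigl(
        \frac{\partial^{|\mu|}F}{\partial x^{(\mu,0)}}(x_1,x_2)
    \Bigr)
    \\
    &=
    \symnorm{F}^{\bm,\brho}_{\halbnorm{q},(\mu,0)}\,,
\end{align*}
where the inequality uses that $\norm{(x_1,0)}^2=\norm{x_1}^2\le\norm{x_1}^2+\norm{x_2}^2=\norm{(x_1,x_2)}^2$ and that the exponent $-\frac{1}{2}(\order{m}(\halbnorm{q})-\type{\rho}(\halbnorm{q})|\mu|)$ applied to $1+\norm{\cdot}^2$ yields a monotone function (enlarging the argument $\norm{\cdot}^2$ can only be compensated by passing from an equality to a supremum over a larger set). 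Since $\symnorm{F}^{\bm,\brho}_{\halbnorm{q},(\mu,0)}<\infty$ by hypothesis, this shows $\iota_1^*F\in\Symbol^{\bm,\brho}(\Rl^{n_1},V)$, and the bound $\symnorm{\iota_1^*F}^{\bm,\brho}_{\halbnorm{q},\mu}\le\symnorm{F}^{\bm,\brho}_{\halbnorm{q},(\mu,0)}$ expresses each defining seminorm of the target in terms of a single defining seminorm of the source, hence yields continuity of $\iota_1^*$. The argument for $j=2$ is identical with the roles of the two blocks exchanged.

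There is no real obstacle here; the only point requiring a moment's care is the bookkeeping with the sign of the exponent when comparing $\norm{(x_1,0)}$ to $\norm{(x_1,x_2)}$. When the exponent $\order{m}(\halbnorm{q})-\type{\rho}(\halbnorm{q})|\mu|$ is positive the prefactor $(1+\norm{\cdot}^2)^{-\frac12(\cdots)}$ is decreasing in $\norm{\cdot}$, so restricting to the slice $x_2=0$ could a priori only help; when it is negative the prefactor is increasing, but then one still has the trivial inequality since the supremum on the right is over a set containing all the points $(x_1,0)$. In either case the supremum over the full space dominates the supremum over the slice, which is all that is needed; no sign distinction is actually required, and the proof is a one-line consequence of enlarging the domain of the supremum together with the chain rule for the linear embedding.
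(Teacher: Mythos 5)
Your proof is correct and follows essentially the same route as the paper: restriction plus the chain rule identifies $\partial^\mu(\iota_1^*F)(x_1)$ with $\partial^{\mu\oplus 0}F(x_1,0)$, and the seminorm bound $\symnorm{\iota_1^*F}^{\bm,\brho}_{\halbnorm{q},\mu}\le\symnorm{F}^{\bm,\brho}_{\halbnorm{q},\mu\oplus 0}$ follows because the supremum over the slice $x_2=0$ is a supremum over a subset of the terms appearing on the right, with $\norm{(x_1,0)}=\norm{x_1}$. Your closing remark correctly identifies that no case distinction on the sign of the exponent is needed, which is exactly how the paper argues.
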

\begin{proof}
    Let $F\in\Symbol^{\bm,\brho}(\Rl^{n_1}\oplus\Rl^{n_2},V)$. It is
    clear that $\iota_j^*F$ is a smooth map from $\Rl^{n_j}$ to $V$,
    and that $\iota_j^*$ is linear. For $j=1$, we estimate with
    $\qn\in\mathcal Q$, $\mu\in\Nl_0^{n_1}$,
    \begin{align*}
        \|\iota_1^*F\|^{\bm,\brho}_{\qn,\mu}
        &=
        \sup_{x_1\in\Rl^{n_1}} \frac{\qn(\partial_{x_1}^{\mu}
          F(x_1,0))}{(1+\|(x_1,0)\|^2)^{\frac{1}{2}
            (\bm(\qn)-\brho(\qn)|\mu|) } }
        \\
        &\leq
        \sup_{x\in\Rl^{n_1}\oplus\Rl^{n_2}} \frac{\qn(\partial_x^{\mu\oplus0}
          F(x))}{(1+\|x\|^2)^{\frac{1}{2}
            (\bm(\qn)-\brho(\qn)|\mu|) } }
        \\
        &=
        \|F\|^{\bm,\brho}_{\qn,\mu\oplus0}
        \,.
    \end{align*}
    This shows that $\iota_1^* F\in\Symbol^{\bm,\brho}(\Rl^{n_1},V)$,
    and that $\iota_1^*$ is continuous. The case $j=2$ is completely
    analogous.
\end{proof}

\subsection{Symbol spaces}

In this subsection, we introduce various spaces of symbols of
arbitrary order and a vector-valued Schwartz space as suitable unions
respectively intersections of the $\Symbol^{\bm,\brho}(\Rl^n,V)$. To
show that these are intrinsic definitions, we will first discuss how
our definition of the spaces $\Symbol^{\bm,\brho}(\Rl^n,V)$ depends on
the choice of the defining system of seminorms $\mathcal{Q}$.  To this
end, we shall proceed in two steps: First we show how one can pass
from an arbitrary system to a filtrating one, then we compare two
filtrating systems.

Now suppose $\mathcal{Q}$ is an arbitrary defining system of
continuous seminorms for $V$. Then we consider the larger system
\begin{equation}
    \label{eq:DefiningSystemWithMax}
    \tilde{\mathcal{Q}}
    :=
    \left\{
        \halbnorm{q} = \max\{\halbnorm{q}_1, \ldots, \halbnorm{q}_n\}
        \; \big| \;
        n \in \mathbb{N}
        \;
        \textrm{and}
        \;
        \halbnorm{q}_1, \ldots, \halbnorm{q}_n
        \in \mathcal{Q}
    \right\}
\end{equation}
which is filtrating. Suppose now that $\order{m}$ is an order with
respect to $\mathcal{Q}$. Then we want to extend $\order{m}$
to an order on $\tilde{\mathcal{Q}}$ as follows. We define
\begin{equation}
    \label{eq:MaxOrder}
    \order{m}_{\max}(\max\{\halbnorm{q}_1, \ldots, \halbnorm{q}_n\})
    :=
    \max\{
    \order{m}(\halbnorm{q}_1), \ldots, \order{m}(\halbnorm{q}_n)
    \}.
\end{equation}
Clearly, this gives an order on $\tilde{\mathcal {Q}}$ which extends
$\order{m}$. Analogously, for a type $\type{\rho}$ with respect to
$\mathcal{Q}$ we define a type $\type{\rho}_{\min}$ with respect to
$\tilde{\mathcal{Q}}$ extending $\type{\rho}$ by taking the minimum of
the types $\type{\rho}(\halbnorm{q}_i)$ instead of the maximum.
\begin{proposition}
    \label{proposition:ExtendSymbolMaxMinSystem}%
    Let $\mathcal{Q}$ be a defining system of continuous seminorms on
    $V$ and $\tilde{\mathcal{Q}}$ the corresponding filtrating system
    of finite maxima. Then for every order $\order{m}$ and every type
    $\type{\rho}$ with respect to $\mathcal{Q}$ and their
    corresponding extensions $\order{m}_{\max}$ and
    $\type{\rho}_{\min}$ to $\tilde{\mathcal{Q}}$ we have
    \begin{equation}
        \label{eq:SymbolOrderTypeMaxMinEqual}
        \Symbol^{\order{m}, \type{\rho}}
        (\mathbb{R}^n, V)
        =
        \Symbol^{\order{m}_{\max}, \type{\rho}_{\min}}
        (\mathbb{R}^n, V)
    \end{equation}
    as locally convex spaces.
\end{proposition}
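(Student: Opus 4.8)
The plan is to show that the two symbol spaces have the \emph{same} underlying set and that their defining families of seminorms are mutually dominated, which immediately gives equality as locally convex spaces; equivalently, I will establish that the identity map is a continuous bijection in both directions. Recall that $\tilde{\mathcal Q}$ is a defining, filtrating system of seminorms on $V$ (stated just above), so both sides make sense. The only tool needed is the elementary monotonicity of the prefactor: since $1+\norm{x}^2\ge 1$, we have $(1+\norm{x}^2)^{-a/2}\ge(1+\norm{x}^2)^{-b/2}$ whenever $a\le b$.

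For the inclusion $\Symbol^{\order{m},\type{\rho}}(\Rl^n,V)\hookrightarrow\Symbol^{\order{m}_{\max},\type{\rho}_{\min}}(\Rl^n,V)$, fix $\halbnorm{q}=\max\{\halbnorm{q}_1,\dots,\halbnorm{q}_k\}\in\tilde{\mathcal Q}$ and $\mu\in\Nl_0^n$. For every $x\in\Rl^n$ choose $j=j(x)$ with $\halbnorm{q}\bigl(\partial_x^\mu F(x)\bigr)=\halbnorm{q}_j\bigl(\partial_x^\mu F(x)\bigr)$. By the definitions \eqref{eq:MaxOrder} and of $\type{\rho}_{\min}$ one has $\order{m}_{\max}(\halbnorm{q})-\type{\rho}_{\min}(\halbnorm{q})|\mu|\ge\order{m}(\halbnorm{q}_j)-\type{\rho}(\halbnorm{q}_j)|\mu|$, so the monotonicity above yields
\begin{align*}
&\left(1+\norm{x}^2\right)^{-\frac{1}{2}\left(\order{m}_{\max}(\halbnorm{q})-\type{\rho}_{\min}(\halbnorm{q})|\mu|\right)}\halbnorm{q}\bigl(\partial_x^\mu F(x)\bigr)\\
&\qquad\le\left(1+\norm{x}^2\right)^{-\frac{1}{2}\left(\order{m}(\halbnorm{q}_j)-\type{\rho}(\halbnorm{q}_j)|\mu|\right)}\halbnorm{q}_j\bigl(\partial_x^\mu F(x)\bigr)
\le\max_{1\le i\le k}\symnorm{F}^{\order{m},\type{\rho}}_{\halbnorm{q}_i,\mu}.
\end{align*}
Taking the supremum over $x$ gives $\symnorm{F}^{\order{m}_{\max},\type{\rho}_{\min}}_{\halbnorm{q},\mu}\le\max_{i}\symnorm{F}^{\order{m},\type{\rho}}_{\halbnorm{q}_i,\mu}$, which is finite for $F\in\Symbol^{\order{m},\type{\rho}}(\Rl^n,V)$; hence $F$ lies in $\Symbol^{\order{m}_{\max},\type{\rho}_{\min}}(\Rl^n,V)$ and, since each $\tilde{\mathcal Q}$-seminorm is dominated by a finite maximum of $\mathcal Q$-seminorms, the inclusion is continuous.

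For the reverse inclusion, note that $\mathcal Q\subseteq\tilde{\mathcal Q}$, since each $\halbnorm{q}\in\mathcal Q$ is the one-element maximum $\max\{\halbnorm{q}\}$, and on such seminorms $\order{m}_{\max}$ and $\type{\rho}_{\min}$ restrict to $\order{m}$ and $\type{\rho}$ respectively. Therefore for any $F\in\Symbol^{\order{m}_{\max},\type{\rho}_{\min}}(\Rl^n,V)$, any $\halbnorm{q}\in\mathcal Q$, and any $\mu\in\Nl_0^n$ one simply has the identity $\symnorm{F}^{\order{m},\type{\rho}}_{\halbnorm{q},\mu}=\symnorm{F}^{\order{m}_{\max},\type{\rho}_{\min}}_{\halbnorm{q},\mu}<\infty$, so $F\in\Symbol^{\order{m},\type{\rho}}(\Rl^n,V)$ and the inclusion is continuous as well, in fact identifying the generating seminorms of the source with a subfamily of those of the target. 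Combining the two inclusions proves \eqref{eq:SymbolOrderTypeMaxMinEqual} as locally convex spaces. There is no serious obstacle here; the only point requiring attention is the orientation of the prefactor inequality — a larger order together with a smaller type increases $\order{m}-\type{\rho}|\mu|$ and hence \emph{decreases} the negative-power prefactor — which is precisely the mechanism already used in Proposition~\ref{proposition:SymbolFirstProperties}~\refitem{item:SymbolsInSymbols}.
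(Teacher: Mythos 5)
Your proof is correct and follows essentially the same route as the paper: dominate each $\tilde{\mathcal Q}$-seminorm of $\Symbol^{\order{m}_{\max},\type{\rho}_{\min}}$ by the corresponding finitely many $\mathcal Q$-seminorms using the monotonicity of the prefactor, and observe that the reverse inclusion is trivial (indeed isometric) because the $\mathcal Q$-seminorms form a subfamily of the $\tilde{\mathcal Q}$-seminorms. The only cosmetic difference is that you bound by $\max_i\symnorm{F}^{\order{m},\type{\rho}}_{\halbnorm{q}_i,\mu}$ via a pointwise choice of the maximizing index, whereas the paper bounds by the sum $\sum_i\symnorm{F}^{\order{m},\type{\rho}}_{\halbnorm{q}_i,\mu}$; both yield the same continuity conclusion.
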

\begin{proof}
    First let $F \in \Symbol^{\order{m}, \type{\rho}}(\mathbb{R}^n,
    V)$ and let $\halbnorm{q}_1, \ldots, \halbnorm{q}_n \in
    \mathcal{Q}$ be given. We set $\halbnorm{q} :=
    \max\{\halbnorm{q}_1, \ldots, \halbnorm{q}_n\}$. For $\mu \in
    \mathbb{N}_0^n$ we have the estimate
    \begin{align*}
        \symnorm{F}^{
          \order{m}_{\max}, \type{\rho}_{\min}
        }_{
          \halbnorm{q}, \mu
        }
        &=
        \sup_{x \in \mathbb{R}^n}
        \left(
            1 + \norm{x}^2
        \right)^{
          -\frac{1}{2}\left(
              \order{m}_{\max}(\halbnorm{q})
              -
              \type{\rho}_{\min}(\halbnorm{q})|\mu|
          \right)
        }
        \halbnorm{q} \left(
            \frac{\partial^{|\mu|} F}{\partial x^\mu}(x)
        \right) \\
        &\le
        \sum_{i=1}^n
        \sup_{x \in \mathbb{R}^n}
        \left(
            1 + \norm{x}^2
        \right)^{
          -\frac{1}{2}
          \left(
              \order{m}(\halbnorm{q}_i)
              -
              \type{\rho}(\halbnorm{q}_i)|\mu|
          \right)
        }
        \halbnorm{q}_i \left(
            \frac{\partial^{|\mu|} F}{\partial x^\mu}(x)
        \right) \\
        &=
        \sum_{i=1}^n
        \symnorm{F}^{\order{m}, \type{\rho}}_{\halbnorm{q}_i, \mu}.
    \end{align*}
    This shows $F \in \Symbol^{\order{m}_{\max},
      \type{\rho}_{\min}}(\mathbb{R}^n, V)$ as well as the continuity
    of the inclusion map
    \[
    \Symbol^{\order{m}, \type{\rho}}(\mathbb{R}^n, V)
    \longrightarrow
    \Symbol^{\order{m}_{\max}, \type{\rho}_{\min}}
    (\mathbb{R}^n, V).
    \]
    Conversely, let $F \in \Symbol^{\order{m}_{\max},
      \type{\rho}_{\min}}(\mathbb{R}^n, V)$ be given.  Then $F \in
    \Symbol^{\order{m}, \type{\rho}}(\mathbb{R}^n, V)$ since all the
    seminorms $\symnorm{\argument}^{\order{m},
      \type{\rho}}_{\halbnorm{q}, \mu}$ of the $\Symbol^{\order{m},
      \type{\rho}}$-topology appear also as seminorms of the
    $\Symbol^{\order{m}_{\max}, \type{\rho}_{\min}}$-topology, since
    $\mathcal{Q} \subseteq \tilde{\mathcal{Q}}$ and the order and type
    are extended to the larger system of seminorms. With respect to
    these seminorms $\symnorm{\argument}^{\order{m},
      \type{\rho}}_{\halbnorm{q}, \mu}$, the reverse inclusion
    \[
    \Symbol^{\order{m}_{\max}, \type{\rho}_{\min}}(\mathbb{R}^n, V)
    \longrightarrow
    \Symbol^{\order{m}, \type{\rho}}(\mathbb{R}^n, V)
    \]
    is even isometric and hence continuous, too. Thus we have mutually
    inverse continuous inclusions proving the claim.
\end{proof}

Next we consider two defining systems of seminorms $\mathcal{Q}$ and
$\mathcal{Q}'$ on $V$ where we can assume that they are already
filtrating. Thus for every $\halbnorm{q} \in \mathcal{Q}$ we find a
$\halbnorm{q}' \in \mathcal{Q}'$ with $\halbnorm{q} \le c
\halbnorm{q}'$ for some positive $c > 0$, and vice versa. In this
situation we have the following statement:
\begin{proposition}
    \label{proposition:QQprimeBothFiltratingSymbols}%
    Let $\mathcal{Q}$ and $\mathcal{Q}'$ be defining systems of
    seminorms for $V$ with $\mathcal{Q}'$ being filtrating. Moreover,
    let $\order{m}$, $\order{m}'$ be orders and $\type{\rho},
    \type{\rho}'$ be types for $\mathcal{Q}$, $\mathcal{Q}'$,
    respectively.  If for every $\halbnorm{q} \in \mathcal{Q}$ there
    exists a $\halbnorm{q}' \in \mathcal{Q}'$ such that
    \begin{equation}
        \label{eq:OrderTypeComparing}
        \halbnorm{q} \le c\halbnorm{q}',
        \quad
        \order{m}(\halbnorm{q}) \ge \order{m}'(\halbnorm{q}'),
        \quad
        \textrm{and}
        \quad
        \type{\rho}(\halbnorm{q}) \le \type{\rho}'(\halbnorm{q}'),
    \end{equation}
    then one has a continuous inclusion
    \begin{equation}
        \label{eq:SymbolInclusionDifferentSystems}
        \Symbol^{\order{m}', \type{\rho}'}(\mathbb{R}^n, V)
        \subseteq
        \Symbol^{\order{m}, \type{\rho}}(\mathbb{R}^n, V).
    \end{equation}
\end{proposition}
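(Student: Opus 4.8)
The plan is to prove the inclusion \eqref{eq:SymbolInclusionDifferentSystems} by a direct seminorm estimate: I would show that every defining seminorm $\symnorm{\argument}^{\order{m},\type{\rho}}_{\halbnorm{q},\mu}$ of the $\Symbol^{\order{m},\type{\rho}}$-topology is dominated by a constant multiple of one defining seminorm $\symnorm{\argument}^{\order{m}',\type{\rho}'}_{\halbnorm{q}',\mu}$ of the $\Symbol^{\order{m}',\type{\rho}'}$-topology. This single estimate simultaneously shows that each $F\in\Symbol^{\order{m}',\type{\rho}'}(\Rl^n,V)$ has all symbol norms with respect to $\mathcal{Q}$ finite, hence lies in $\Symbol^{\order{m},\type{\rho}}(\Rl^n,V)$, and that the inclusion map is continuous.

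Concretely, I would fix $F\in\Symbol^{\order{m}',\type{\rho}'}(\Rl^n,V)$, a seminorm $\halbnorm{q}\in\mathcal{Q}$ and a multiindex $\mu\in\Nl_0^n$, and use the hypothesis to pick $\halbnorm{q}'\in\mathcal{Q}'$ with $\halbnorm{q}\le c\halbnorm{q}'$, $\order{m}(\halbnorm{q})\ge\order{m}'(\halbnorm{q}')$ and $\type{\rho}(\halbnorm{q})\le\type{\rho}'(\halbnorm{q}')$. Inside the supremum defining $\symnorm{F}^{\order{m},\type{\rho}}_{\halbnorm{q},\mu}$, the pointwise bound $\halbnorm{q}\bigl(\partial^{|\mu|}F/\partial x^\mu(x)\bigr)\le c\,\halbnorm{q}'\bigl(\partial^{|\mu|}F/\partial x^\mu(x)\bigr)$ replaces $\halbnorm{q}$ by $c\,\halbnorm{q}'$.

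The one elementary point to record is the monotonicity of the polynomial weight: since $1+\norm{x}^2\ge 1$ for all $x$, the map $t\mapsto(1+\norm{x}^2)^{-t/2}$ is non-increasing, so it suffices to check $\order{m}(\halbnorm{q})-\type{\rho}(\halbnorm{q})|\mu|\ge\order{m}'(\halbnorm{q}')-\type{\rho}'(\halbnorm{q}')|\mu|$. This is immediate from $\order{m}(\halbnorm{q})\ge\order{m}'(\halbnorm{q}')$ together with $-\type{\rho}(\halbnorm{q})|\mu|\ge-\type{\rho}'(\halbnorm{q}')|\mu|$, using $|\mu|\ge 0$. Hence the weight in the $\mathcal{Q}$-norm is pointwise bounded by the weight in the $\mathcal{Q}'$-norm, and combining the two estimates yields $\symnorm{F}^{\order{m},\type{\rho}}_{\halbnorm{q},\mu}\le c\,\symnorm{F}^{\order{m}',\type{\rho}'}_{\halbnorm{q}',\mu}<\infty$. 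Since the seminorms $\symnorm{\argument}^{\order{m},\type{\rho}}_{\halbnorm{q},\mu}$ generate the $\Symbol^{\order{m},\type{\rho}}$-topology, this proves both the set inclusion and its continuity.

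I do not expect a real obstacle here; the argument is essentially the monotonicity of the weight combined with the comparison inequalities that are already built into the hypothesis \eqref{eq:OrderTypeComparing}. The filtrating assumption on $\mathcal{Q}'$ is not used in this direction beyond making the statement of the hypothesis natural. The only point to phrase carefully is that the constant $c$ depends on the chosen pair $(\halbnorm{q},\halbnorm{q}')$ but neither on $F$ nor on $\mu$, which is exactly what is required for continuity of the inclusion.
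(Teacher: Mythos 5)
Your proposal is correct and follows essentially the same route as the paper's proof: choose $\halbnorm{q}'$ from the hypothesis, bound $\halbnorm{q}\le c\halbnorm{q}'$ pointwise, and use that $\order{m}(\halbnorm{q})-\type{\rho}(\halbnorm{q})|\mu|\ge\order{m}'(\halbnorm{q}')-\type{\rho}'(\halbnorm{q}')|\mu|$ together with $1+\norm{x}^2\ge 1$ to compare the weights, giving $\symnorm{F}^{\order{m},\type{\rho}}_{\halbnorm{q},\mu}\le c\,\symnorm{F}^{\order{m}',\type{\rho}'}_{\halbnorm{q}',\mu}$. Your explicit remark on the monotonicity of the weight and on the role of the filtrating hypothesis only makes explicit what the paper leaves implicit.
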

\begin{proof}
    Let $\halbnorm{q} \in \mathcal{Q}$ be given and choose
    $\halbnorm{q}'$ according to \eqref{eq:OrderTypeComparing}. Then
    for every $\mu \in \mathbb{N}_0^n$ we have
    \begin{align*}
        \symnorm{F}^{\order{m}, \type{\rho}}_{\halbnorm{q}, \mu}
        &=
        \sup_{x \in \mathbb{R}^n}
        \left(
            1 + \norm{x}^2
        \right)^{
          -\frac{1}{2}
            \left(
                \order{m}(\halbnorm{q})
                -
                \type{\rho}(\halbnorm{q})|\mu|
            \right)
        }
        \halbnorm{q} \left(
            \frac{\partial^{|\mu|} F}{\partial x^\mu}(x)
        \right) \\
        &\le
        c
        \sup_{x \in \mathbb{R}^n}
        \left(
            1 + \norm{x}^2
        \right)^{
          -\frac{1}{2}
          \left(
              \order{m}'(\halbnorm{q}')
              -
              \type{\rho}'(\halbnorm{q}')|\mu|
          \right)
        }
        \halbnorm{q}' \left(
            \frac{\partial^{|\mu|} F}{\partial x^\mu}(x)
        \right) \\
        &=
        c \symnorm{F}^{\order{m}', \type{\rho}'}_{\halbnorm{q}', \mu},
    \end{align*}
    which shows the claim.
\end{proof}
\begin{corollary}
    \label{corollary:DifferentSystemsSameSymbols}%
    Let $\mathcal{Q}$ and $\mathcal{Q}'$ be defining systems of
    continuous seminorms on $V$. Moreover, let $\order{m}'$ and
    $\type{\rho}'$ be an order and a type for $\mathcal{Q}'$. Then
    there exists an order $\order{m}$ and a type $\type{\rho}$ for
    $\mathcal{Q}$ such that
    \begin{equation}
        \label{eq:SymbolQprimeInSymbolQ}
        \Symbol^{\order{m}', \type{\rho}'} (\mathbb{R}^n, V)
        \subseteq
        \Symbol^{\order{m}, \type{\rho}} (\mathbb{R}^n, V)
    \end{equation}
    is continuously included. If in addition $\order{m}'$ or
    $\type{\rho}'$ are bounded then $\order{m}$ and $\type{\rho}$ can
    be chosen to satisfy the same bounds, respectively.
\end{corollary}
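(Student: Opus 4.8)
The plan is to reduce the statement to Proposition~\ref{proposition:QQprimeBothFiltratingSymbols}, whose hypothesis only requires the \emph{source} system of seminorms to be filtrating, not the target one. I would therefore proceed in two steps: first make $\mathcal{Q}'$ filtrating without changing the symbol space, then compare the resulting system with the arbitrary system $\mathcal{Q}$ and simply read off an order and a type on $\mathcal{Q}$.

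For the first step I would apply Proposition~\ref{proposition:ExtendSymbolMaxMinSystem}: replacing $\mathcal{Q}'$ by its filtrating enlargement $\widetilde{\mathcal{Q}'}$ of finite maxima from \eqref{eq:DefiningSystemWithMax} and extending $\order{m}'$, $\type{\rho}'$ to the order $\order{m}'_{\max}$ and type $\type{\rho}'_{\min}$ on $\widetilde{\mathcal{Q}'}$ as in \eqref{eq:MaxOrder} and its analogue for types, that proposition gives $\Symbol^{\order{m}', \type{\rho}'}(\mathbb{R}^n, V) = \Symbol^{\order{m}'_{\max}, \type{\rho}'_{\min}}(\mathbb{R}^n, V)$ as locally convex spaces. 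I would also record here that a finite maximum of reals all $\le \alpha$ (resp.\ all $\ge \beta$) is again $\le \alpha$ (resp.\ $\ge \beta$), so $\order{m}'_{\max}$ inherits every upper or lower bound of $\order{m}'$, and dually $\type{\rho}'_{\min}$ inherits every upper or lower bound of $\type{\rho}'$; this observation is exactly what will yield the last assertion of the corollary.

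For the second step, $\widetilde{\mathcal{Q}'}$ is filtrating and still generates the topology of $V$, so for each $\halbnorm{q}\in\mathcal{Q}$ I can fix one seminorm $\halbnorm{q}'(\halbnorm{q})\in\widetilde{\mathcal{Q}'}$ and a constant $c(\halbnorm{q})>0$ with $\halbnorm{q}\le c(\halbnorm{q})\,\halbnorm{q}'(\halbnorm{q})$, and then simply \emph{define} $\order{m}(\halbnorm{q}):=\order{m}'_{\max}(\halbnorm{q}'(\halbnorm{q}))$ and $\type{\rho}(\halbnorm{q}):=\type{\rho}'_{\min}(\halbnorm{q}'(\halbnorm{q}))$, which is an order and a type for $\mathcal{Q}$. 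With these choices the three requirements in \eqref{eq:OrderTypeComparing} for the pair $(\mathcal{Q},\widetilde{\mathcal{Q}'})$ hold automatically — domination by the choice of $c(\halbnorm{q})$, and the order and type inequalities with equality — so Proposition~\ref{proposition:QQprimeBothFiltratingSymbols} delivers the continuous inclusion $\Symbol^{\order{m}'_{\max}, \type{\rho}'_{\min}}(\mathbb{R}^n, V)\subseteq\Symbol^{\order{m}, \type{\rho}}(\mathbb{R}^n, V)$. Chaining this with the equality from the first step proves \eqref{eq:SymbolQprimeInSymbolQ}; and since every value of $\order{m}$ (resp.\ $\type{\rho}$) is a value of $\order{m}'_{\max}$ (resp.\ $\type{\rho}'_{\min}$), the bounds recorded in the first step pass to $\order{m}$ and $\type{\rho}$, which settles the final clause.

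I do not expect a genuine obstacle here; the one thing to keep straight is the bookkeeping of which order/type lives on which system and the direction of the inequalities in \eqref{eq:OrderTypeComparing} — namely that the given data $\order{m}',\type{\rho}'$ sit on the \emph{source} space, so one enlarges the source system and then chooses the target order and type to match, not the other way around. (One could alternatively also filtrate $\mathcal{Q}$ first via Proposition~\ref{proposition:ExtendSymbolMaxMinSystem}, but since Proposition~\ref{proposition:QQprimeBothFiltratingSymbols} does not need the target system to be filtrating, this step is unnecessary.)
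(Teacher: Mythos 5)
Your proposal is correct and follows essentially the same route as the paper: first pass to the filtrating enlargement of $\mathcal{Q}'$ via Proposition~\ref{proposition:ExtendSymbolMaxMinSystem}, then define $\order{m}$ and $\type{\rho}$ on $\mathcal{Q}$ by pulling back along a choice map $\halbnorm{q}\mapsto\halbnorm{q}'(\halbnorm{q})$ and invoke Proposition~\ref{proposition:QQprimeBothFiltratingSymbols}. Your explicit remark that finite maxima/minima preserve the bounds is exactly the point the paper dismisses with ``the statement about the bounds is then clear,'' so nothing is missing.
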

\begin{proof}
    By Proposition~\ref{proposition:ExtendSymbolMaxMinSystem} we can
    pass to a filtrating system without changing the symbol space on
    the left hand side. Thus we can assume that $\mathcal{Q}'$ is
    already filtrating without restriction.  Let $\halbnorm{q} \in
    \mathcal{Q}$, then we fix a particular choice
    $\halbnorm{q}'(\halbnorm{q}) \in \mathcal{Q}'$ with $\halbnorm{q}
    \le c \halbnorm{q}'$ for some appropriate $c > 0$. This defines a
    map $\mathcal{Q} \longrightarrow \mathcal{Q}'$, existing thanks to
    the fact that $\mathcal{Q}'$ is filtrating. Now we define
    $\order{m}(\halbnorm{q}) :=
    \order{m}'(\halbnorm{q}'(\halbnorm{q}))$ and
    $\type{\rho}(\halbnorm{q}) :=
    \type{\rho}(\halbnorm{q}'(\halbnorm{q}))$. Then clearly the
    condition \eqref{eq:OrderTypeComparing} from
    Proposition~\ref{proposition:QQprimeBothFiltratingSymbols} is
    satisfied and we get \eqref{eq:SymbolQprimeInSymbolQ}. The
    statement about the bounds is then clear.
\end{proof}
\begin{corollary}
    \label{corollary:SymbolsIndependentOfSystems}%
    Let $\mathcal{Q}$ and $\mathcal{Q}'$ be two defining systems of
    seminorms for $V$ and let $F\in \Cinfty(\mathbb{R}^n, V)$ be a
    smooth function. Then $F$ is a symbol of some (bounded) order
    $\order{m}$ and some (bounded) type $\type{\rho}$ for
    $\mathcal{Q}$ iff $F$ is a symbol of some (bounded) order
    $\order{m}'$ and some (bounded) type $\type{\rho}'$ for
    $\mathcal{Q}'$ (and the same bounds).
\end{corollary}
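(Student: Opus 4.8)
The plan is to deduce this immediately from Corollary~\ref{corollary:DifferentSystemsSameSymbols}, which already contains all the substance: it shows that the property of being a symbol of \emph{some} order and \emph{some} type, together with the boundedness of that order or type, depends only on $V$ and not on the particular defining system of seminorms chosen. Since the assertion to be proved is manifestly symmetric in $\mathcal{Q}$ and $\mathcal{Q}'$, I would only prove one of the two implications and then invoke symmetry for the other.

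Concretely, I would argue as follows. Assume $F\in\Symbol^{\order{m},\type{\rho}}(\Rl^n,V)$ for some order $\order{m}$ and type $\type{\rho}$ with respect to $\mathcal{Q}$. Then apply Corollary~\ref{corollary:DifferentSystemsSameSymbols} with the two defining systems interchanged, i.e.\ with the system called ``$\mathcal{Q}'$'' there played by our $\mathcal{Q}$ and the one called ``$\mathcal{Q}$'' there played by our $\mathcal{Q}'$; this produces an order $\order{m}'$ and a type $\type{\rho}'$ for $\mathcal{Q}'$ together with a continuous inclusion $\Symbol^{\order{m},\type{\rho}}(\Rl^n,V)\subseteq\Symbol^{\order{m}',\type{\rho}'}(\Rl^n,V)$, whence $F$ is a symbol for $\mathcal{Q}'$ as well. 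For the bounded case, I would appeal to the last sentence of Corollary~\ref{corollary:DifferentSystemsSameSymbols}: if $\order{m}$ (respectively $\type{\rho}$) is bounded from above or below, then $\order{m}'$ (respectively $\type{\rho}'$) can be chosen to satisfy the same bound. The reverse implication is obtained verbatim by exchanging the roles of $\mathcal{Q}$ and $\mathcal{Q}'$.

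The only point requiring attention — and essentially the sole ``obstacle'' — is the asymmetric formulation of Corollary~\ref{corollary:DifferentSystemsSameSymbols}: it constructs an order and type for its \emph{first} system out of data given on its \emph{second}, so one must be careful to feed the two systems in the correct order for each of the two implications, and to observe that its concluding clause about bounds is exactly what is needed to transport ``bounded order'' and ``bounded type'' in both directions. Beyond this bookkeeping there is nothing further to estimate or compute.
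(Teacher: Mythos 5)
Your proposal is correct and follows essentially the same route as the paper: both reduce the statement to Corollary~\ref{corollary:DifferentSystemsSameSymbols} applied in both directions (with the roles of $\mathcal{Q}$ and $\mathcal{Q}'$ swapped for the two implications), using its final clause to transport the bounds. The paper additionally remarks that the passage to filtrating systems via Proposition~\ref{proposition:ExtendSymbolMaxMinSystem} preserves the bounds, but since that reduction is already built into the proof of Corollary~\ref{corollary:DifferentSystemsSameSymbols}, your bookkeeping is sufficient.
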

\begin{proof}
    By Proposition~\ref{proposition:ExtendSymbolMaxMinSystem} we can
    assume to have filtrating systems from the beginning. Since the
    extension of the order and the type according to that Proposition
    clearly preserves the bounds,
    Corollary~\ref{corollary:DifferentSystemsSameSymbols} can be
    applied in both directions.
\end{proof}

We can now use the last corollaries to speak about the space of
\emph{all} symbols: there are two alternatives whether or not we allow
for bounded orders only:
\begin{definition}
    \label{definition:SymbolsOfInfiniteOrder}%
    Let $V$ be a sequentially complete locally convex space. Then we
    define for a given type $\type{\rho}$ for a defining
    system of seminorms $\mathcal{Q}$
    \begin{equation}
        \label{eq:SymbolInftyRho}
        \Symbol^{\infty, \type{\rho}}(\mathbb{R}^n, V)
        :=
        \bigcup_{\order{m} \; \textrm{bounded}}
        \Symbol^{\order{m}, \type{\rho}}(\mathbb{R}^n, V)
    \end{equation}
    and
    \begin{equation}
        \label{eq:SymbolInftyPlusRho}
        \Symbol^{\infty+, \type{\rho}}(\mathbb{R}^n, V)
        :=
        \bigcup_{\order{m}}
        \Symbol^{\order{m}, \type{\rho}}(\mathbb{R}^n, V).
    \end{equation}
    Moreover, we set
    \begin{equation}
        \label{eq:SymbolInfty}
        \Symbol^\infty(\mathbb{R}^n, V)
        :=
        \Symbol^{\infty, 1}(\mathbb{R}^n, V)
        \,,\qquad
        \Symbol^{\infty+}(\mathbb{R}^n, V)
        :=
        \Symbol^{\infty+, 1}(\mathbb{R}^n, V)
    \end{equation}
   and
   \begin{equation}
	\label{eq:IntegrableSymbols}
   	\underline{\Symbol}(\mathbb{R}^n, V)
        :=
        \bigcup_{-1<\brho\leq 1}\Symbol^{\infty+, \brho}(\mathbb{R}^n, V)\,.
   \end{equation}
\end{definition}
It follows that for another defining system of seminorms
$\mathcal{Q}'$ we get the same spaces $\Symbol^\infty(\mathbb{R}^n,
V)$, $\Symbol^{\infty+}(\mathbb{R}^n, V)$,
$\underline{\Symbol}(\Rl^n,V)$, which are therefore intrinsically
defined. The space $\IntSymbol(\Rl^n,V)$ will be relevant in the
context of oscillatory integrals, presented in
Section~\ref{section:integrals}.

Note that for a Banach space $V$ with
$\mathcal{Q}$ consisting just of the norm itself we have
$\Symbol^{\infty, \rho}(\mathbb{R}^n, V) = \Symbol^{\infty+,
  \rho}(\mathbb{R}^n, V)$ for all types $\rho \in
\mathbb{R}$. However, in general we have a proper inclusion
\begin{equation}
    \label{eq:SymbolInftyInSymbolInftyPlus}
    \Symbol^{\infty, \type{\rho}}(\mathbb{R}^n, V)
    \subset
    \Symbol^{\infty+, \type{\rho}}(\mathbb{R}^n, V).
\end{equation}

Also the intersection of all the symbol spaces is of interest: here we
get an analog of the usual Schwartz space. First we note the following
simple facts:
\begin{lemma}
    \label{lemma:SchwartzVectorValued}%
    Let $V$ be a sequentially complete locally convex space and $F\in
    \Cinfty(\mathbb{R}^n, V)$. Then the following statements are
    equivalent:
    \begin{lemmalist}
    \item \label{item:SchwartzSeminormqFinite} For all continuous
        seminorms $\halbnorm{q}$ of a defining system $\mathcal{Q}$,
        for all $\mu \in \mathbb{N}_0^n$ and all $m \in \mathbb{N}_0$
        one has
        \begin{equation}
            \label{eq:SchwartzSeminormqmmu}
            \halbnorm{q}_{m, \mu} (F)
            =
            \sup_{x \in \mathbb{R}^n}
            \left(1 + \norm{x}^2\right)^{\frac{m}{2}}
            \halbnorm{q}\left(
                \frac{\partial^{|\mu|} F}{\partial x^\mu}(x)
            \right)
            < \infty.
        \end{equation}
    \item \label{item:AllOrdersAllTypesSchwartz} For all orders
        $\order{m}$ and all types $\type{\rho}$ for a
        given defining system $\mathcal{Q}$ of continuous seminorms
        one has $F \in \Symbol^{\order{m}, \type{\rho}}(\mathbb{R}^n,
        V)$.
    \item \label{item:AllOrdersOneTypeSchwartz} For all orders
        $\order{m}$ and one type $\type{\rho}$ for a
        given defining system $\mathcal{Q}$ of continuous seminorms
        one has $F \in \Symbol^{\order{m}, \type{\rho}}(\mathbb{R}^n,
        V)$.
    \end{lemmalist}
\end{lemma}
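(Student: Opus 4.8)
The plan is to prove the cycle \refitem{item:SchwartzSeminormqFinite}$\;\Rightarrow\;$\refitem{item:AllOrdersAllTypesSchwartz}$\;\Rightarrow\;$\refitem{item:AllOrdersOneTypeSchwartz}$\;\Rightarrow\;$\refitem{item:SchwartzSeminormqFinite}. The implication \refitem{item:AllOrdersAllTypesSchwartz}$\;\Rightarrow\;$\refitem{item:AllOrdersOneTypeSchwartz} is trivial, since ``all orders and all types'' in particular includes ``all orders and one type''. The real content lies in the two remaining implications, and the guiding observation for both is that an order and a type are \emph{arbitrary} real-valued functions on $\mathcal{Q}$ (subject only to the harmless normalisation \eqref{eq:orderOnMultipleSeminorm}), so that for any single datum $(\halbnorm{q},\mu)$ one can tune them to prescribe exactly the growth one wants.

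For \refitem{item:SchwartzSeminormqFinite}$\;\Rightarrow\;$\refitem{item:AllOrdersAllTypesSchwartz} I would fix an order $\order{m}$, a type $\type{\rho}$, a seminorm $\halbnorm{q}\in\mathcal{Q}$ and a multiindex $\mu$, put $a:=-\tfrac{1}{2}(\order{m}(\halbnorm{q})-\type{\rho}(\halbnorm{q})|\mu|)\in\Rl$, and pick $m\in\Nl_0$ with $2a\le m$. Since $1+\norm{x}^2\ge 1$ everywhere, raising to a larger exponent can only increase the quantity, so $(1+\norm{x}^2)^{a}\le(1+\norm{x}^2)^{m/2}$ for all $x\in\Rl^n$, whence
\[
\symnorm{F}^{\order{m},\type{\rho}}_{\halbnorm{q},\mu}
=\sup_{x\in\Rl^n}\bigl(1+\norm{x}^2\bigr)^{a}\,
\halbnorm{q}\left(\frac{\partial^{|\mu|}F}{\partial x^\mu}(x)\right)
\le \halbnorm{q}_{m,\mu}(F)<\infty
\]
by \eqref{eq:SchwartzSeminormqmmu}. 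As $\halbnorm{q}$ and $\mu$ were arbitrary this gives $F\in\Symbol^{\order{m},\type{\rho}}(\Rl^n,V)$.

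The remaining implication \refitem{item:AllOrdersOneTypeSchwartz}$\;\Rightarrow\;$\refitem{item:SchwartzSeminormqFinite} is where the flexibility in the choice of order is used. Let $\type{\rho}_0$ be the type for which \refitem{item:AllOrdersOneTypeSchwartz} is assumed, and fix $\halbnorm{q}\in\mathcal{Q}$, $\mu\in\Nl_0^n$ and $m\in\Nl_0$; the task is to show $\halbnorm{q}_{m,\mu}(F)<\infty$. I would define an order $\order{m}'$ on $\mathcal{Q}$ by $\order{m}'(\halbnorm{q}'):=\type{\rho}_0(\halbnorm{q}')\,|\mu|-m$; this is an admissible order (if one insists on \eqref{eq:orderOnMultipleSeminorm}, it holds for $\order{m}'$ because it holds for $\type{\rho}_0$). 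With this choice $-\tfrac{1}{2}(\order{m}'(\halbnorm{q})-\type{\rho}_0(\halbnorm{q})|\mu|)=\tfrac{m}{2}$, so the symbol seminorm $\symnorm{\argument}^{\order{m}',\type{\rho}_0}_{\halbnorm{q},\mu}$ is literally the seminorm $\halbnorm{q}_{m,\mu}$ of \eqref{eq:SchwartzSeminormqmmu}. Since \refitem{item:AllOrdersOneTypeSchwartz} gives $F\in\Symbol^{\order{m}',\type{\rho}_0}(\Rl^n,V)$, we conclude $\halbnorm{q}_{m,\mu}(F)=\symnorm{F}^{\order{m}',\type{\rho}_0}_{\halbnorm{q},\mu}<\infty$, and as $\halbnorm{q},\mu,m$ were arbitrary, \refitem{item:SchwartzSeminormqFinite} follows.

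I do not expect a serious obstacle: no completeness of $V$ and no filtering property of $\mathcal{Q}$ enter, and the only mildly delicate point — the last implication — amounts to nothing more than reading off an order from $(\halbnorm{q},\mu,m)$ so that the abstract symbol estimate coincides with the concrete one.
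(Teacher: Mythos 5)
Your proposal is correct and follows essentially the same route as the paper: the nontrivial implications are handled by bounding the symbol seminorm $\symnorm{F}^{\order{m},\type{\rho}}_{\halbnorm{q},\mu}$ by a suitable $\halbnorm{q}_{m,\mu}(F)$ with $m$ large enough, and conversely by choosing, for fixed $(\halbnorm{q},\mu,m)$, an order with $\order{m}'(\halbnorm{q})-\type{\rho}_0(\halbnorm{q})|\mu|\le -m$ so that the Schwartz seminorm is dominated by (in your version, equals) a symbol seminorm. The only difference is cosmetic: you get an exact identification of seminorms where the paper settles for an inequality.
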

\begin{proof}
    First we note that if \refitem{item:SchwartzSeminormqFinite} holds
    for a defining system of seminorms $\mathcal{Q}$ then it also
    holds for \emph{all} continuous seminorms of $V$. This is
    clear. Thus assume \refitem{item:SchwartzSeminormqFinite} and let
    $\mathcal{Q}$ be a defining system of seminorms. Moreover, fix an
    order $\order{m}$ and a type $\type{\rho}$ for this system
    $\mathcal{Q}$. Then for $\mu \in \mathbb{N}_0^n$ we have
    \[
    \symnorm{F}_{\halbnorm{q}, \mu}^{\order{m}, \type{\rho}}
    =
    \sup_{x \in \mathbb{R}^n}
    \left(
        1 + \norm{x}^2
    \right)^{-\frac{1}{2}\left(
          \order{m}(\halbnorm{q}) - \type{\rho}(\halbnorm{q})|\mu|
      \right)
    }
    \halbnorm{q}\left(
        \frac{\partial^{|\mu|}F}{\partial x^\mu}(x)
    \right)
    \le
    \halbnorm{q}_{m, \mu}(F),
    \]
    with $m$ being any integer larger than $- \order{m}(\halbnorm{q})
    + \type{\rho}(\halbnorm{q})|\mu|$. This shows that $F \in
    \Symbol^{\order{m}, \type{\rho}}(\mathbb{R}^n, V)$. The
    implication \refitem{item:AllOrdersAllTypesSchwartz} $\Rightarrow$
    \refitem{item:AllOrdersOneTypeSchwartz} is trivial. Thus assume
    \refitem{item:AllOrdersOneTypeSchwartz} and hence let $F \in
    \Symbol^{\order{m}, \type{\rho}}(\mathbb{R}^n, V)$ for all orders
    $\order{m}$ and a fixed type $\type{\rho}$. Then let $m \in
    \mathbb{N}_0$ and $\mu \in \mathbb{N}_0^n$ be given. We have
    \begin{align*}
        \halbnorm{q}_{m, \mu}(F)
        &=
        \sup_{x \in \mathbb{R}^n}
        \left(1 + \norm{x}^2\right)^{\frac{m}{2}}
        \halbnorm{q}\left(
            \frac{\partial^{|\mu|}F}{\partial x^\mu}(x)
        \right) \\
        &\le
        \sup_{x \in \mathbb{R}^n}
        \left(
            1 + \norm{x}^2
        \right)^{-\frac{1}{2}\left(
              \order{m}(\halbnorm{q}) - \type{\rho}(\halbnorm{q})|\mu|
          \right)
        }
        \halbnorm{q}\left(
            \frac{\partial^{|\mu|}F}{\partial x^\mu}(x)
        \right) \\
        &=
        \symnorm{F}_{\halbnorm{q}, \mu}^{\order{m}, \type{\rho}},
    \end{align*}
    where we have to choose an order $\order{m}$ such that
    $\order{m}(\halbnorm{q}) - \type{\rho}(\halbnorm{q})|\mu| \le -
    m$. This is clearly possible as we can e.g.\ take the constant
    order with $\order{m} = - m +
    \type{\rho}(\halbnorm{q})|\mu|$. Thus
    \refitem{item:SchwartzSeminormqFinite} follows.
\end{proof}

Thus for the intersection of all symbol spaces the type $\type{\rho}$
does not play any role any more. Also the dependence on the chosen
system of seminorms $\mathcal{Q}$ disappears. This motivates the
following definition:
\begin{definition}[Vector-valued Schwartz space]
    \label{definition:VectorValuedSchwartzSpace}%
    Let $V$ be a sequentially complete locally convex space. Then we
    define the symbols of order $-\infty$ by
    \begin{equation}
        \label{eq:OrderMinusInftySymbols}
        \Symbol^{-\infty}(\mathbb{R}^n, V)
        :=
        \bigcap_{\order{m}, \type{\rho}}
        \Symbol^{\order{m}, \type{\rho}}(\mathbb{R}^n, V).
    \end{equation}
    We also use the notation
    \begin{equation}
        \label{eq:VectorValuedSchwartzSpaceDef}
        \Schwartz(\mathbb{R}^n, V)
        :=
        \Symbol^{-\infty}(\mathbb{R}^n, V)
    \end{equation}
    and call $\Schwartz(\mathbb{R}^n, V)$ the space of $V$-valued
    Schwartz functions.
\end{definition}

Clearly, the $V$-valued Schwartz functions are a straightforward
generalization of the scalar case. The Schwartz space
$\Schwartz(\mathbb{R}^n, V)$ will always be endowed with the locally
convex topology determined by the seminorms $\halbnorm{q}_{m, \mu}$ as
in \eqref{eq:SchwartzSeminormqmmu}. We call this the
$\Symbol^{-\infty}$- or the Schwartz topology of
$\Schwartz(\mathbb{R}^n, V)$. We collect now some easy properties of
the Schwartz space:
\begin{proposition}
    \label{proposition:VectorValuedSchwartz}%
    Let $V$ be a sequentially complete locally convex space with a
    defining system of seminorms $\mathcal{Q}$.
    \begin{propositionlist}
    \item \label{item:SchwartzVectorComplete} The Schwartz space
        $\Schwartz(\mathbb{R}^n, V)$ is sequentially complete and even
        complete when $V$ is complete.
    \item \label{item:SchwartzInSymbols} We have continuous inclusions
        \begin{equation}
            \label{eq:SchwartzInSymbols}
            \Cinfty_0(\mathbb{R}^n, V)
            \longrightarrow
            \Schwartz(\mathbb{R}^n, V)
            \longrightarrow
            \Symbol^{\order{m}, \type{\rho}}(\mathbb{R}^n, V)
        \end{equation}
        for all orders $\order{m}$ and all types $\type{\rho}$ for
        $\mathcal{Q}$.
    \item \label{item:CinftyNullDenseInSchwartz}
        $\Cinfty_0(\mathbb{R}^n, V)$ is sequentially dense in
        $\Schwartz(\mathbb{R}^n, V)$ and $\Schwartz(\mathbb{R}^n, V)$
        is sequentially dense in $\Symbol^{\order{m},
          \type{\rho}}(\mathbb{R}^n, V)$ in the $\Symbol^{\order{m}',
          \type{\rho}'}$-topology whenever $\order{m}' > \order{m}$
        and $\type{\rho'} \le \min(1, \type{\rho})$.
    \item \label{item:PartialDerivativesSchwartz} The partial
        derivatives are continuous linear maps
        \begin{equation}
            \label{eq:PartialDerivativeSchwartz}
            \frac{\partial^{|\nu|}}{\partial x^\nu} \colon
            \Schwartz(\mathbb{R}^n, V)
            \longrightarrow
            \Schwartz(\mathbb{R}^n, V)
        \end{equation}
        satisfying the estimate (equality)
        \begin{equation}
            \label{eq:SchwartzEstimatePartialDerivatives}
            \halbnorm{q}_{m, \mu}
            \left(
                \frac{\partial^{|\nu|} F}{\partial x^\nu}
            \right)
            =
            \halbnorm{q}_{m, \mu + \nu}(F).
        \end{equation}
    \item \label{item:BilinearSchwartz} If $W$ and $U$ are two other
        sequentially complete locally convex spaces and $\mu\colon V
        \times W \longrightarrow U$ is a continuous bilinear map then
        it induces continuous bilinear maps
        \begin{equation}
            \label{eq:SchwartzSchwartzSchwartz}
           \mu\colon
            \Schwartz(\mathbb{R}^n, V)
            \times
            \Schwartz(\mathbb{R}^n, W)
            \longrightarrow
            \Schwartz(\mathbb{R}^n, U),
        \end{equation}
        \begin{equation}
            \label{eq:SymbolSchwartzSchwartz}
            \mu\colon
            \Symbol^{\order{m}, \type{\rho}}(\mathbb{R}^n, V)
            \times
            \Schwartz(\mathbb{R}^n, W)
            \longrightarrow
            \Schwartz(\mathbb{R}^n, U),
        \end{equation}
        and
        \begin{equation}
            \label{eq:SchwartzSymbolSchwartz}
            \mu\colon
            \Schwartz(\mathbb{R}^n, V)
            \times
            \Symbol^{\order{m}', \type{\rho}'}(\mathbb{R}^n, W)
            \longrightarrow
            \Schwartz(\mathbb{R}^n, U)
        \end{equation}
        for all orders $\order{m}$ and types $\type{\rho}$ for
        $\mathcal{Q}$ and all orders $\order{m}'$ and types
        $\type{\rho}'$ for some defining system of seminorms
        $\mathcal{Q}'$ for $W$.
    \item \label{item:SchwartzModule} For all orders $m \in
        \mathbb{R}$ and types $\rho \in \mathbb{R}$ the pointwise
        multiplication
        \begin{equation}
            \label{eq:SchwartzModule}
            \Symbol^{m, \rho}(\mathbb{R}^n,\Cl)
            \times
            \Schwartz(\mathbb{R}^n, V)
            \longrightarrow
            \Schwartz(\mathbb{R}^n, V)
        \end{equation}
        is a continuous bilinear map.
    \end{propositionlist}
\end{proposition}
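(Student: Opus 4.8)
The plan is to deduce all six assertions from the characterization of $\Schwartz(\Rl^n,V)=\Symbol^{-\infty}(\Rl^n,V)$ in Lemma~\ref{lemma:SchwartzVectorValued} together with the already established properties of the spaces $\Symbol^{\order{m},\type{\rho}}(\Rl^n,V)$; most of it is purely formal. For \refitem{item:SchwartzInSymbols}, the set-theoretic inclusion $\Schwartz(\Rl^n,V)\subseteq\Symbol^{\order{m},\type{\rho}}(\Rl^n,V)$ is Lemma~\ref{lemma:SchwartzVectorValued}, and continuity follows from the estimate $\symnorm{F}^{\order{m},\type{\rho}}_{\halbnorm{q},\mu}\le\halbnorm{q}_{m,\mu}(F)$, valid for any integer $m>-\order{m}(\halbnorm{q})+\type{\rho}(\halbnorm{q})|\mu|$, which was already noted inside the proof of that lemma; thus every defining $\Symbol^{\order{m},\type{\rho}}$-seminorm is dominated by a single Schwartz seminorm. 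For $\Cinfty_0(\Rl^n,V)\to\Schwartz(\Rl^n,V)$ I would observe that on $\Cinfty_K(\Rl^n,V)$ one has $\halbnorm{q}_{m,\mu}(F)\le\bigl(1+\sup_{x\in K}\norm{x}^2\bigr)^{m/2}\seminorm[K,|\mu|,\halbnorm{q}](F)$ (the prefactor $\le 1$ if $m<0$), so each inclusion $\Cinfty_K(\Rl^n,V)\to\Schwartz(\Rl^n,V)$ is continuous, and the universal property of the inductive limit topology finishes it. Item \refitem{item:PartialDerivativesSchwartz} is then immediate: the identity $\halbnorm{q}_{m,\mu}\bigl(\partial^{|\nu|}F/\partial x^\nu\bigr)=\halbnorm{q}_{m,\mu+\nu}(F)$ is read off \eqref{eq:SchwartzSeminormqmmu} and simultaneously gives $\Schwartz$-invariance, the stated equality, and continuity, exactly as in Proposition~\ref{proposition:PartialDerivativesOnSymbols}.

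For \refitem{item:SchwartzVectorComplete} I would take a Cauchy sequence $(F_i)$ in $\Schwartz(\Rl^n,V)$; by \refitem{item:SchwartzInSymbols} it is Cauchy in the coarser $\Cinfty$-topology, hence converges to some $F\in\Cinfty(\Rl^n,V)$, and the $\epsilon$/$2\epsilon$-argument from the proof of Proposition~\ref{proposition:SymbolFirstProperties}~\refitem{item:SymbolsComplete} applies verbatim with the seminorms $\halbnorm{q}_{m,\mu}$ in place of $\symnorm{\argument}^{\order{m},\type{\rho}}_{\halbnorm{q},\mu}$, showing $F\in\Schwartz(\Rl^n,V)$ and $F_i\to F$ there; if $V$ is complete one runs the same with nets. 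For \refitem{item:CinftyNullDenseInSchwartz}, the statement that $\Schwartz(\Rl^n,V)$ is sequentially dense in $\Symbol^{\order{m},\type{\rho}}(\Rl^n,V)$ in the $\Symbol^{\order{m}',\type{\rho}'}$-topology is immediate from Proposition~\ref{proposition:ApproximateSymbols}~\refitem{item:CinftyDenseInSymbols} together with the chain $\Cinfty_0(\Rl^n,V)\subseteq\Schwartz(\Rl^n,V)\subseteq\Symbol^{\order{m},\type{\rho}}(\Rl^n,V)$. For the first half, given $F\in\Schwartz(\Rl^n,V)$ I take the cut-offs $\chi_\epsilon$ of Proposition~\ref{proposition:ApproximateSymbols}, note that $\partial^{|\nu|}(\chi_\epsilon-1)/\partial x^\nu$ is supported in $\{\norm{x}\ge r/\epsilon\}$ with $\supnorm{\partial^{|\nu|}(\chi_\epsilon-1)/\partial x^\nu}$ bounded uniformly for $\epsilon\le 1$, and estimate via Leibniz, for a fixed integer $M>m$,
\[
\halbnorm{q}_{m,\mu}\bigl((\chi_\epsilon-1)F\bigr)
\le\sum_{\nu+\nu'=\mu}\binom{\mu}{\nu}\,
\bigl(1+(r/\epsilon)^2\bigr)^{\frac{m-M}{2}}\,
\supnorm{\tfrac{\partial^{|\nu|}(\chi_\epsilon-1)}{\partial x^\nu}}\,
\halbnorm{q}_{M,\nu'}(F).
\]
Since $\halbnorm{q}_{M,\nu'}(F)<\infty$ and $(1+(r/\epsilon)^2)^{(m-M)/2}\to 0$, this gives $\chi_{1/k}F\to F$ in the Schwartz topology.

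For \refitem{item:BilinearSchwartz}, fix $\halbnorm{r}$ in the chosen defining system on $U$ and pick $\halbnorm{q},\halbnorm{q}'$ with $\halbnorm{r}(\mu(v,w))\le c\,\halbnorm{q}(v)\halbnorm{q}'(w)$. For $F\in\Schwartz(\Rl^n,V)$, $G\in\Schwartz(\Rl^n,W)$ the Leibniz rule and this estimate give
\[
\halbnorm{r}_{m,\kappa}\bigl(\mu(F,G)\bigr)
\le c\sum_{\nu+\nu'=\kappa}\binom{\kappa}{\nu}\,
\Bigl(\sup_{x}(1+\norm{x}^2)^{m/2}\halbnorm{q}\bigl(\tfrac{\partial^{|\nu|}F}{\partial x^\nu}(x)\bigr)\Bigr)
\Bigl(\sup_{x}\halbnorm{q}'\bigl(\tfrac{\partial^{|\nu'|}G}{\partial x^{\nu'}}(x)\bigr)\Bigr)
\le 2^{|\kappa|}c\,\max_{\nu\le\kappa}\halbnorm{q}_{m,\nu}(F)\,\max_{\nu'\le\kappa}\halbnorm{q}'_{0,\nu'}(G),
\]
which is finite and yields \eqref{eq:SchwartzSchwartzSchwartz} with its continuity. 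For \eqref{eq:SymbolSchwartzSchwartz} I replace $F$ by a symbol, bounding $\halbnorm{q}(\partial^{|\nu|}F(x))\le\symnorm{F}^{\order{m},\type{\rho}}_{\halbnorm{q},\nu}(1+\norm{x}^2)^{\frac{1}{2}(\order{m}(\halbnorm{q})-\type{\rho}(\halbnorm{q})|\nu|)}$ and $\halbnorm{q}'(\partial^{|\nu'|}G(x))\le\halbnorm{q}'_{m',\nu'}(G)(1+\norm{x}^2)^{-m'/2}$; choosing $m'$ large enough, and the same for all $\nu\le\kappa$ (finitely many), makes every remaining power of $(1+\norm{x}^2)$ non-positive, so the supremum is $\le 1$ and $\halbnorm{r}_{m,\kappa}(\mu(F,G))$ is dominated by finitely many symbol seminorms of $F$ and Schwartz seminorms of $G$. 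Equation \eqref{eq:SchwartzSymbolSchwartz} is symmetric, and \refitem{item:SchwartzModule} is the special case of \eqref{eq:SymbolSchwartzSchwartz} with $V=\Cl$ (so the order/type become arbitrary single numbers $m,\rho$), $W=U$ the given space, and $\mu$ the jointly continuous scalar multiplication $\Cl\times V\to V$.

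The one point that needs care — the ``obstacle'', such as it is — occurs in \refitem{item:BilinearSchwartz}: one must check that the auxiliary exponent $m'$ (and likewise $M$ in the density estimate) can be fixed once and for all for a given target seminorm $\halbnorm{r}$ and a given pair $(m,\kappa)$, so that the output is a genuine continuous-seminorm bound rather than merely a pointwise one. This is exactly where the ``faster than any polynomial'' decay of Schwartz functions enters essentially, and it explains why the type plays no role in $\Schwartz(\Rl^n,V)$.
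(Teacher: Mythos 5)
Your proof is correct and follows essentially the same route as the paper's: everything is reduced to the explicit seminorms $\halbnorm{q}_{m,\mu}$ and to the estimates already established for the spaces $\Symbol^{\order{m},\type{\rho}}$ (completeness via the $\Cinfty$-limit argument, the inclusion estimates from Lemma~\ref{lemma:SchwartzVectorValued}, Leibniz plus the continuity of $\mu$ for part \refitem{item:BilinearSchwartz}). The only differences are cosmetic: you spell out the cut-off density estimate that the paper dismisses with ``as in the scalar case'', and you prove \eqref{eq:SchwartzSchwartzSchwartz} directly rather than deducing it from \eqref{eq:SymbolSchwartzSchwartz} as the paper does --- both are fine.
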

\begin{proof}
    The first statement can most easily be checked using the explicit
    seminorms $\halbnorm{q}_{m, \mu}$ in the same spirit as the proof
    of Proposition~\ref{proposition:SymbolFirstProperties},
    \refitem{item:SymbolsComplete}. Then also the second part is clear
    since we get a continuous inclusion of $\Cinfty_K(\mathbb{R}^n,
    V)$ into $\Schwartz(\mathbb{R}^n, V)$ with estimates like
    \[
    \halbnorm{q}_{m, \mu}(F)
    =
    \symnorm{F}^{m, 0}_{\halbnorm{q}, \mu}
    \le
    c_K \halbnorm{p}_{K, |\mu|, \halbnorm{q}}(F)
    \]
    as in the proof of
    Proposition~\ref{proposition:SymbolFirstProperties},
    \refitem{item:SymbolInclusion}, for every compact subset $K
    \subseteq \mathbb{R}^n$. The second inclusion is continuous thanks
    to the estimate $\symnorm{F}^{\order{m},
      \type{\rho}}_{\halbnorm{q}, \mu}(F) \le \halbnorm{q}_{m,
      \mu}(F)$ for $m$ an integer being at least $-
    \order{m}(\halbnorm{q}) + \type{\rho}(\halbnorm{q})|\mu|$, which
    we have established in the proof of
    Lemma~\ref{lemma:SchwartzVectorValued}. The density of
    $\Cinfty_0(\mathbb{R}^n, V)$ in $\Schwartz(\mathbb{R}^n, V)$ is
    checked directly as in the scalar case. The second statement of
    part~\refitem{item:CinftyNullDenseInSchwartz} is clear as
    $\Cinfty_0(\mathbb{R}^n, V)$ has this property by
    Proposition~\ref{proposition:ApproximateSymbols},
    \refitem{item:CinftyDenseInSymbols}. The fourth part is clear
    since the estimate \eqref{eq:SchwartzEstimatePartialDerivatives}
    is obvious by definition. For part~\refitem{item:BilinearSchwartz}
    we first consider the case \eqref{eq:SymbolSchwartzSchwartz}. Thus
    let $F \in \Symbol^{\order{m}, \type{\rho}}(\mathbb{R}^n, V)$ and
    $G \in \Schwartz(\mathbb{R}^n, W)$ be given. The continuity of $\mu$
    means that for a continuous seminorm $\halbnorm{r}$ on $U$ we find
    $\halbnorm{q} \in \mathcal{Q}$ and a continuous seminorm
    $\halbnorm{q}'$ on $W$ such that
    \[
    \halbnorm{r}(\mu(v, w)) \le \halbnorm{q}(v) \halbnorm{q}'(w)
    \]
    for all $v \in V$ and $w \in W$ since we can assume that the
    defining system $\mathcal{Q}$ on $V$ is already filtrating by
    Proposition~\ref{proposition:ExtendSymbolMaxMinSystem}.  Then for
    $m \in \mathbb{N}_0$ and $\kappa\in \mathbb{N}_0^n$ we estimate
    using the Leibniz rule
    \begin{align*}
        \halbnorm{r}_{m, \kappa}(\mu(F,G))
        &=
        \sup_{x \in \mathbb{R}^n}
        \left(1 + \norm{x}^2\right)^{\frac{m}{2}}
        \halbnorm{r}\left(
            \frac{\partial^{|\kappa|} \mu(F, G)}{\partial x^\kappa} (x)
        \right) \\
        &=
        \sup_{x \in \mathbb{R}^n}
        \left(1 + \norm{x}^2\right)^{\frac{m}{2}}
        \halbnorm{r}\left(
            \sum_{\nu  +\nu' = \kappa}
            \binom{\kappa}{\nu}
            \mu \left(
                \frac{\partial^{|\nu|} F}{\partial x^\nu} (x),
                \frac{\partial^{|\nu'|} G}{\partial x^{\nu'}} (x)
            \right)
        \right) \\
        &\le
        2^{|\kappa|}
	\sup_{x\in\mathbb{R}^n}
        \sum_{\nu  +\nu' = \kappa}
        \left(1 + \norm{x}^2\right)^{\frac{m}{2}}
        \halbnorm{q}\left(
            \frac{\partial^{|\nu|} F}{\partial x^\nu}(x)
        \right)
        \halbnorm{q}'\left(
            \frac{\partial^{|\nu'|} G}{\partial x^{\nu'}}(x)
        \right) \\
        &=
        2^{|\kappa|}
        \sup_{x\in\mathbb{R}^n}
        \sum_{\nu  +\nu' = \kappa}
        \left(
            1 + \norm{x}^2
        \right)^{
          -\frac{1}{2}(\order{m}(\halbnorm{q}) -
          \type{\rho}(\halbnorm{q})|\nu|)
        }
        \halbnorm{q}\left(
            \frac{\partial^{|\nu|} F}{\partial x^\nu}(x)
        \right) \\
        &\qquad\qquad
        \left(
            1 + \norm{x}^2
        \right)^{\frac{m}{2}
          +\frac{1}{2}(\order{m}(\halbnorm{q}) -
          \type{\rho}(\halbnorm{q})|\nu|)
        }
        \halbnorm{q}'\left(
            \frac{\partial^{|\nu'|} G}{\partial x^{\nu'}}(x)
        \right) \\
        &\le
        2^{|\kappa|}
        \sum_{\nu  +\nu' = \kappa}
        \symnorm{F}^{\order{m}, \type{\rho}}_{\halbnorm{q}, \nu}
        \halbnorm{q}'_{m', \nu'} (G),
    \end{align*}
    with $m' = m + \order{m}(\halbnorm{q}) -
    \type{\rho}(\halbnorm{q})|\nu|$. This shows the continuity of
    \eqref{eq:SymbolSchwartzSchwartz} and
    \eqref{eq:SchwartzSymbolSchwartz} is analogous. But then
    \eqref{eq:SchwartzSchwartzSchwartz} follows as well since
    $\Schwartz(\mathbb{R}^n, V) \longrightarrow \Symbol^{\order{m},
      \type{\rho}}(\mathbb{R}^n, V)$ is continuous. In fact, one can
    also estimate the bilinear expression directly. The last part is
    then clear.
\end{proof}
\begin{corollary}
    \label{corollary:SchwartzAlgebra}%
    Let $\mathcal{A}$ be a (sequentially) complete locally convex
    algebra and let $\mathcal{M}$ be a (sequentially) complete locally
    convex topological module over $\mathcal{A}$. Then
    $\Schwartz(\mathbb{R}^n, \mathcal{A})$ is a (sequentially)
    complete locally convex algebra and $\Schwartz(\mathbb{R}^n,
    \mathcal{M})$ is a (sequentially) complete locally convex
    topological module over $\Schwartz(\mathbb{R}^n, \mathcal{A})$.
\end{corollary}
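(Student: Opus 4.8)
The plan is to read this off directly from Proposition~\ref{proposition:VectorValuedSchwartz}: the analytic content --- continuity of the induced bilinear maps and (sequential) completeness of the Schwartz spaces --- is already available there, so that only the purely algebraic identities remain to be checked, and those hold pointwise.

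First I would recall that a locally convex algebra $\mathcal{A}$ is, by definition, a locally convex space together with a jointly continuous associative bilinear multiplication $m_{\mathcal A}\colon\mathcal A\times\mathcal A\to\mathcal A$, and a topological module $\mathcal M$ over $\mathcal A$ is a locally convex space with a jointly continuous bilinear action $m_{\mathcal M}\colon\mathcal A\times\mathcal M\to\mathcal M$ obeying the usual module axiom. Applying Proposition~\ref{proposition:VectorValuedSchwartz}, \refitem{item:BilinearSchwartz}, equation~\eqref{eq:SchwartzSchwartzSchwartz}, with $V=W=U=\mathcal A$ and $\mu=m_{\mathcal A}$, yields a continuous bilinear map
\[
    \Schwartz(\mathbb{R}^n,\mathcal A)\times\Schwartz(\mathbb{R}^n,\mathcal A)
    \longrightarrow
    \Schwartz(\mathbb{R}^n,\mathcal A),
    \qquad
    (F,G)\longmapsto\bigl(x\mapsto m_{\mathcal A}(F(x),G(x))\bigr),
\]
and the same proposition applied with $V=\mathcal A$, $W=U=\mathcal M$, $\mu=m_{\mathcal M}$ gives a continuous bilinear map $\Schwartz(\mathbb{R}^n,\mathcal A)\times\Schwartz(\mathbb{R}^n,\mathcal M)\to\Schwartz(\mathbb{R}^n,\mathcal M)$, again defined pointwise.

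Next I would verify the algebraic axioms. Associativity of the pointwise product on $\Schwartz(\mathbb{R}^n,\mathcal A)$ and the module axiom for the pointwise action on $\Schwartz(\mathbb{R}^n,\mathcal M)$ are immediate: evaluating at any $x\in\mathbb{R}^n$ reduces them to the corresponding identities in $\mathcal A$, respectively $\mathcal M$, and two $V$-valued functions agreeing at every point are equal. Finally, (sequential) completeness of $\Schwartz(\mathbb{R}^n,\mathcal A)$ and $\Schwartz(\mathbb{R}^n,\mathcal M)$ is precisely Proposition~\ref{proposition:VectorValuedSchwartz}, \refitem{item:SchwartzVectorComplete}, applied to $V=\mathcal A$ and $V=\mathcal M$, which also yields completeness (as opposed to mere sequential completeness) in the complete case.

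There is essentially no obstacle here; the only point that deserves a remark is that ``topological'' in ``topological module'' is to be understood as ``with jointly continuous module structure'', since Proposition~\ref{proposition:VectorValuedSchwartz}, \refitem{item:BilinearSchwartz}, is formulated for continuous --- not merely separately continuous --- bilinear maps. A version of the corollary for separately continuous algebra and module structures would first require a corresponding strengthening of that proposition, which we do not pursue here.
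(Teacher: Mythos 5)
Your proposal is correct and matches the paper's (implicit) argument: the corollary is stated without proof precisely because it follows immediately from Proposition~\ref{proposition:VectorValuedSchwartz}, parts \refitem{item:SchwartzVectorComplete} and \refitem{item:BilinearSchwartz}, combined with the pointwise verification of the algebra and module axioms, exactly as you describe. Your closing remark that joint continuity of the product and module structure is needed to invoke \eqref{eq:SchwartzSchwartzSchwartz} is a fair and accurate reading of how the corollary's hypotheses must be interpreted.
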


\subsection{Affine symmetries and symbol-valued symbols}
\label{subsec:FurtherPropertiesSymbols}

In this subsection we investigate the action of the affine group of
$\mathbb{R}^n$ on the symbol spaces. For $A\in
\group{GL}_n(\mathbb{R})$ and a translation $y \in \mathbb{R}^n$, we
denote their pullback by $(A^*F)(x) = F(Ax)$ and $(\tau_y^* F)(x) =
F(x + y)$ as usual. We start with the following basic observations:
\begin{lemma}
    \label{lemma:GlnActsOnSymbols}%
    Let $\halbnorm{q}$ be a continuous seminorm on $V$ and $m, \rho
    \in \mathbb{R}$. Then for $F \in \Cinfty(\mathbb{R}^n, V)$ we have
    for all $\mu \in \mathbb{N}_0^n$ and all $A \in
    \group{GL}_n(\mathbb{R})$
    \begin{equation}
        \label{eq:symnormAstarf}
        \symnorm{A^*F}^{m, \rho}_{\halbnorm{q}, \mu}
        \le
        c^{m, \rho}_\mu(A)
	\sum_{\substack{\nu\in\mathbb{N}^n_0 \\ |\nu| = |\mu| }}
        \symnorm{F}^{m, \rho}_{\halbnorm{q}, \nu}
    \end{equation}
    with some $c^{m, \rho}_\mu(A) > 0$ depending continuously on $A$
    and satisfying $c^{m, \rho}_\mu(\Unit) = 1$.
\end{lemma}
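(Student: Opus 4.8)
The plan is to reduce the estimate to the chain rule together with a change of variables in the supremum that defines $\symnorm{\,\cdot\,}^{m,\rho}_{\halbnorm{q},\mu}$. First I would work out the effect of $A^*$ on derivatives: since $(A^*F)(x)=F(Ax)$ with $A$ linear, one has $\partial_{x_i}(A^*F)(x)=\sum_j A_{ji}\,(\partial_{x_j}F)(Ax)$, and iterating this (an easy induction on $|\mu|$) gives
\[
\frac{\partial^{|\mu|}(A^*F)}{\partial x^\mu}(x)
=\sum_{\substack{\nu\in\Nl_0^n\\ |\nu|=|\mu|}}P_{\mu\nu}(A)\,\frac{\partial^{|\nu|}F}{\partial x^\nu}(Ax),
\]
where each $P_{\mu\nu}$ is a polynomial in the entries of $A$, homogeneous of degree $|\mu|$. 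The point I would emphasise is that only derivatives of $F$ of the \emph{same} total order $|\mu|$ appear, and that at $A=\Unit$ the expansion degenerates to $\partial^\mu F$, so $P_{\mu\mu}(\Unit)=1$ and $P_{\mu\nu}(\Unit)=0$ for $\nu\neq\mu$.

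Next I would compare the polynomial weights before and after the substitution $y=Ax$. Using $\norm{Ax}\le\norm{A}\,\norm{x}$ and $\norm{x}=\norm{A^{-1}(Ax)}\le\norm{A^{-1}}\,\norm{Ax}$ (operator norms), one gets for all $x\in\Rl^n$
\[
c_1(A)(1+\norm{x}^2)\le 1+\norm{Ax}^2\le c_2(A)(1+\norm{x}^2),\qquad c_1(A):=\min\{1,\norm{A^{-1}}^{-2}\},\ \ c_2(A):=\max\{1,\norm{A}^2\},
\]
with $c_1,c_2$ continuous and strictly positive on $\GL_n(\Rl)$ and equal to $1$ at $\Unit$. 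Hence, for the fixed number $s:=-\tfrac12(m-\rho|\mu|)$, I would conclude $(1+\norm{x}^2)^{s}\le C(A)(1+\norm{Ax}^2)^{s}$ with $C(A):=\max\{c_1(A)^{-s},c_2(A)^{-s}\}$, valid for every $x$ irrespective of the sign of $s$; again $C$ is continuous and $C(\Unit)=1$.

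Combining the two ingredients and changing variables $y=Ax$ (a bijection of $\Rl^n$ since $A$ is invertible), the estimate falls out:
\begin{align*}
\symnorm{A^*F}^{m,\rho}_{\halbnorm{q},\mu}
&=\sup_{x\in\Rl^n}(1+\norm{x}^2)^{s}\,\halbnorm{q}\!\Big(\tfrac{\partial^{|\mu|}(A^*F)}{\partial x^\mu}(x)\Big)\\
&\le C(A)\sum_{\substack{\nu\in\Nl_0^n\\ |\nu|=|\mu|}}|P_{\mu\nu}(A)|\,\sup_{x\in\Rl^n}(1+\norm{Ax}^2)^{s}\,\halbnorm{q}\!\Big(\tfrac{\partial^{|\nu|}F}{\partial x^\nu}(Ax)\Big)\\
&=C(A)\sum_{\substack{\nu\in\Nl_0^n\\ |\nu|=|\mu|}}|P_{\mu\nu}(A)|\,\symnorm{F}^{m,\rho}_{\halbnorm{q},\nu}.
\end{align*}
Then I would set $c^{m,\rho}_\mu(A):=C(A)\cdot\max_{|\nu|=|\mu|}|P_{\mu\nu}(A)|$, which is continuous in $A$ as a product of continuous functions and satisfies $c^{m,\rho}_\mu(\Unit)=1\cdot 1=1$ by the normalisations noted above; this gives \eqref{eq:symnormAstarf}.

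I do not expect a genuine obstacle here. The only step requiring more than a one-line check is the claim that differentiating $F\circ A$ reproduces only derivatives of $F$ of unchanged total order, which is a straightforward induction on $|\mu|$; the remaining inputs — the identity $\sup_x g(Ax)=\sup_y g(y)$ for invertible $A$, and the elementary two-sided comparison of $1+\norm{x}^2$ with $1+\norm{Ax}^2$ — are immediate.
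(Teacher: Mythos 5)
Your proof is correct and follows essentially the same route as the paper's: the chain rule reduces $\partial^\mu(A^*F)$ to same-order derivatives of $F$ at $Ax$, the weights $(1+\norm{x}^2)^{s}$ and $(1+\norm{Ax}^2)^{s}$ are compared via the operator norms of $A$ and $A^{-1}$, and the substitution $y=Ax$ in the supremum finishes the estimate. The only differences are presentational — you package the sign cases of $s$ into a single two-sided comparison and keep the chain-rule coefficients $P_{\mu\nu}(A)$ explicit where the paper bounds them by $\norm{A}^{|\mu|}$ and runs a four-case analysis — and your normalisations $C(\Unit)=1$, $P_{\mu\nu}(\Unit)=\delta_{\mu\nu}$ correctly yield $c^{m,\rho}_\mu(\Unit)=1$.
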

\begin{proof}
    As usual, this is to be understood as an inequality in $[0,
    +\infty]$. First we note that with the operator norm of $A$ we
    have
    \[
    \halbnorm{q}\left(
        \frac{\partial^{|\mu|} (A^*F)}{\partial x^\mu}(x)
    \right)
    \le
    \norm{A}^{|\mu|}
    \sum_{\substack{\nu\in\mathbb{N}^n_0 \\ |\nu| = |\mu| }}
    \halbnorm{q}\left(
        \frac{\partial^{|\nu|} F}{\partial x^\nu}(Ax)
    \right)
    \]
    for all $x \in \mathbb{R}^n$ by the chain rule. Next we recall
    that
    \[
    \frac{1}{\norm{A}} \norm{y}
    \le
    \norm{A^{-1} y}
    \le
    \norm{A^{-1}} \norm{y}
    \]
    for an invertible $A \in \group{GL}_n(\mathbb{R})$. We have to
    distinguish a few cases. Depending on the sign of $m - \rho|\mu|$
    we first get
    \[
    \left(
        1 + \norm{A^{-1}y}^2
    \right)^{-\frac{1}{2}(m - \rho|\mu|)}
    \le
    \begin{cases}
        \left(
            1 + \norm{A^{-1}}^2 \norm{y}^2
        \right)^{-\frac{1}{2}(m - \rho|\mu|)}
        & \textrm{for} \; m - \rho|\mu| < 0 \\
        \left(
            1 + \norm{A}^{-2} \norm{y}^2
        \right)^{-\frac{1}{2}(m - \rho|\mu|)}
        & \textrm{for} \; m - \rho|\mu| \ge 0.
    \end{cases}
    \]
    In the case $\norm{A^{-1}} \le 1$ and hence $\norm{A} \ge 1$ we
    can continue the estimate by
    \[
    \left(
        1 + \norm{A^{-1}y}^2
    \right)^{-\frac{1}{2}(m - \rho|\mu|)}
    \le
    \begin{cases}
        \left(
            1 + \norm{y}^2
        \right)^{-\frac{1}{2}(m - \rho|\mu|)}
        & \textrm{for} \; m - \rho|\mu| < 0 \\
        \norm{A}^{m - \rho|\mu|}
        \left(
            1 + \norm{y}^2
        \right)^{-\frac{1}{2}(m - \rho|\mu|)}
        & \textrm{for} \; m - \rho|\mu| \ge 0.
    \end{cases}
    \tag{$*$}
    \]
    Conversely, in the case $\norm{A^{-1}} \ge 1$ we get
    \[
    \left(
        1 + \norm{A^{-1}y}^2
    \right)^{-\frac{1}{2}(m - \rho|\mu|)}
    \le
    \norm{A^{-1}}^{- m + \rho|\mu|}
    \left(
        1 + \norm{y}^2
    \right)^{-\frac{1}{2}(m - \rho|\mu|)}
    \tag{$**$}
    \]
    for the case $m - \rho|\mu| < 0$. For $m - \rho|\mu| \ge 0$ we
    still have to distinguish the two possibilities $\norm{A} \le 1$
    and $\norm{A} \ge 1$. For $\norm{A} \le 1$ we have
    \[
    \left(
        1 + \norm{A^{-1}y}^2
    \right)^{-\frac{1}{2}(m - \rho|\mu|)}
    \le
    \left(
        1 + \norm{y}^2
    \right)^{-\frac{1}{2}(m - \rho|\mu|)},
    \tag{$\star$}
    \]
    and in the case $\norm{A} > 1$ we finally get
    \[
    \left(
        1 + \norm{A^{-1}y}^2
    \right)^{-\frac{1}{2}(m - \rho|\mu|)}
    \le
    \norm{A}^{m - \rho|\mu|}
    \left(
        1 + \norm{y}^2
    \right)^{-\frac{1}{2}(m - \rho|\mu|)}.
    \tag{$\star\star$}
    \]
    Combining the four cases ($*$), ($**$), ($\star$), and
    ($\star\star$) we get the estimate
    \[
    \norm{A}^{|\mu|}
    \left(
        1 + \norm{A^{-1}y}^2
    \right)^{-\frac{1}{2}(m - \rho|\mu|)}
    \le
    c^{m, \rho}_\mu(A)
    \left(
        1 + \norm{y}^2
    \right)^{-\frac{1}{2}(m - \rho|\mu|)}
    \]
    where
    \[
    c^{m, \rho}_\mu(A)
    =
    \norm{A}^{|\mu|}
    \max\left\{
        1, \norm{A}^{m - \rho|\mu|}, \norm{A^{-1}}^{-m + \rho|\mu|}
    \right\}.
    \]
    Since $A \mapsto A^{-1}$ is continuous
    and since the operator norm is continuous as well this constant
    depends continuously on $A$ and clearly satisfies $c^{m,
      \rho}_\mu(\Unit) = 1$. It is now easy to see that we get the
    estimate \eqref{eq:symnormAstarf}.
\end{proof}
\begin{lemma}
    \label{lemma:TranslationsActOnSymbols}%
    Let $\halbnorm{q}$ be a continuous seminorm on $V$ and $m, \rho
    \in \mathbb{R}$. Then for $F\in \Cinfty(\mathbb{R}^n, V)$ we have
    for all $\mu \in \mathbb{N}_0^n$ and all $y \in \mathbb{R}^n$
    \begin{equation}
        \label{eq:EstimateTranslationSymbolNorm}
        \symnorm{\tau_y^* F}^{m, \rho}_{\halbnorm{q}, \mu}
        \le
        c^{m, \rho}_{\mu}(y) \norm{F}^{m, \rho}_{\halbnorm{q}, \mu},
    \end{equation}
    with some positive $c^{m, \rho}_\mu (y) > 0$ being a scalar symbol
    $c^{m, \rho}_\mu \in \Symbol^{|m - \rho|\mu||, 1}(\mathbb{R}^n)$.
\end{lemma}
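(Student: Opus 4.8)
The plan is to reduce the statement to a Peetre-type comparison of the weights $(1+\|\argument\|^2)^s$ under translation, and then to recognise the resulting prefactor as a power of the polynomial $1+\|y\|^2$ via Lemma~\ref{lemma:ScalarSymbolPower}. As in Lemma~\ref{lemma:GlnActsOnSymbols}, all inequalities are to be read in $[0,+\infty]$.

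First I would use that differentiation commutes with the pullback by a translation, so that $\partial_x^\mu(\tau_y^*F)(x)=(\partial_x^\mu F)(x+y)$, and then substitute $z=x+y$ in the supremum defining $\symnorm{\tau_y^*F}^{m,\rho}_{\halbnorm{q},\mu}$. Writing $\sigma:=m-\rho|\mu|$ for brevity, this leaves
\[
\symnorm{\tau_y^*F}^{m,\rho}_{\halbnorm{q},\mu}
=\sup_{z\in\Rl^n}\bigl(1+\|z-y\|^2\bigr)^{-\sigma/2}\,\halbnorm{q}\!\left(\frac{\partial^{|\mu|}F}{\partial x^\mu}(z)\right),
\]
so that the whole task is to bound the shifted weight $(1+\|z-y\|^2)^{-\sigma/2}$ by a $z$-independent multiple of $(1+\|z\|^2)^{-\sigma/2}$.

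The second step is the elementary two-sided estimate
\[
\frac{1+\|z\|^2}{2(1+\|y\|^2)}\;\le\;1+\|z-y\|^2\;\le\;2(1+\|y\|^2)(1+\|z\|^2),
\]
which follows from $\|z\|^2\le 2\|z-y\|^2+2\|y\|^2$ and $\|z-y\|^2\le 2\|z\|^2+2\|y\|^2$ together with $1\le 2$. Raising this to the power $-\sigma/2$ — using the upper bound when $\sigma<0$ and the lower bound when $\sigma\ge 0$, since the sign of the exponent flips accordingly — gives in both cases
\[
\bigl(1+\|z-y\|^2\bigr)^{-\sigma/2}\le \bigl(2(1+\|y\|^2)\bigr)^{|\sigma|/2}\bigl(1+\|z\|^2\bigr)^{-\sigma/2}.
\]
Inserting this into the supremum then yields \eqref{eq:EstimateTranslationSymbolNorm} with the manifestly positive constant
\[
c^{m,\rho}_\mu(y):=\bigl(2(1+\|y\|^2)\bigr)^{|m-\rho|\mu||/2}.
\]

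It remains to check that $y\mapsto c^{m,\rho}_\mu(y)$ lies in $\Symbol^{|m-\rho|\mu||,1}(\Rl^n)$. For this I would note that $y\mapsto 1+\|y\|^2$ is a polynomial of order $2$, hence an element of $\Symbol^{2,1}(\Rl^n)$ which takes values in $[1,\infty)\subseteq\Cl\setminus[0,-\infty)$; Lemma~\ref{lemma:ScalarSymbolPower} applied with the real exponent $\alpha=\tfrac{1}{2}|m-\rho|\mu||\ge 0$ then gives $(1+\|\argument\|^2)^{|m-\rho|\mu||/2}\in\Symbol^{|m-\rho|\mu||,1}(\Rl^n)$, and multiplication by the positive constant $2^{|m-\rho|\mu||/2}$ keeps us in the same space. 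I do not expect a genuine obstacle here; the only point requiring care is tracking the sign of $\sigma=m-\rho|\mu|$ so that the correct half of the weight comparison is invoked, and observing that the exponent $|\sigma|$ produced this way is precisely the order $|m-\rho|\mu||$ claimed for the symbol $c^{m,\rho}_\mu$.
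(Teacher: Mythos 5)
Your proposal is correct and follows essentially the same route as the paper: commute the derivative with the translation, shift the variable in the supremum, compare the weights $(1+\|\argument\|^2)^{-\sigma/2}$ via a Peetre-type inequality, and identify the resulting prefactor as an element of $\Symbol^{|m-\rho|\mu||,1}(\mathbb{R}^n)$ through Lemma~\ref{lemma:ScalarSymbolPower}. The only (cosmetic) difference is that you package the weight comparison as a single two-sided inequality covering both signs of $\sigma=m-\rho|\mu|$ at once, whereas the paper treats the cases $m-\rho|\mu|\ge 0$ and $m-\rho|\mu|<0$ separately with slightly different elementary estimates.
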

\begin{proof}
    We proceed similar as in the previous lemma. First it is clear
    that
    \[
    \halbnorm{q}\left(
        \frac{\partial^{|\mu|} (\tau_y^* F)}{\partial x^\mu}(x)
    \right)
    =
    \halbnorm{q}\left(
        \frac{\partial^{|\mu|} F}{\partial x^\mu}(x+y)
    \right)
    \tag{$*$}
    \]
    by the chain rule. For the prefactor in the definition of the
    seminorm we first consider the following elementary estimate:
    There is a constant $c > 0$ such that for all $x, y \ge 0$ we have
    \begin{align}\label{eq:QuadraticBound}
        \frac{1}{1 + (x-y)^2} \le c \frac{1+y^2}{1+x^2}.
        \tag{$**$}
    \end{align}
    We now use this inequality to consider first the case $m - \rho|\mu| \ge
    0$. There we have
    \begin{align*}
        \left(
            1 + \norm{x-y}^2
        \right)^{-\frac{1}{2}(m - \rho|\mu|)}
        &\le
        \frac{1}{
          \left(
              1 +
              \left|
                  \norm{x} - \norm{y}
              \right|^2
          \right)^{\frac{1}{2}(m - \rho|\mu|)}
        } \\
        &\stackrel{(**)}{\le}
        c^{\frac{1}{2}(m - \rho|\mu|)}
        \left(
            \frac{1 + \norm{y}^2}{1 + \norm{x}^2}
        \right)^{\frac{1}{2}(m - \rho|\mu|)}.
    \end{align*}
    This gives the estimate
    \begin{align*}
        \symnorm{\tau_y^* F}^{m, \rho}_{\halbnorm{q}, \mu}
        &\stackrel{\mathcal{(*)}}{=}
        \sup_{x \in \mathbb{R}^n}
        \left(
            1 + \norm{x}^2
        \right)^{-\frac{1}{2}(m - \rho|\mu|)}
        \halbnorm{q}\left(
            \frac{\partial^{|\mu|} F}{\partial x^\mu}
            (x + y)
        \right) \\
        &=
        \sup_{x \in \mathbb{R}^n}
        \left(
            1 + \norm{x- y}^2
        \right)^{-\frac{1}{2}(m - \rho|\mu|)}
        \halbnorm{q}\left(
            \frac{\partial^{|\mu|} F}{\partial x^\mu}
            (x)
        \right) \\
        &\le
        c^{\frac{1}{2}(m - \rho|\mu|)}
        \sup_{x \in \mathbb{R}^n}
        \left(
            \frac{1 + \norm{y}^2}{1 + \norm{x}^2}
        \right)^{\frac{1}{2}(m - \rho|\mu|)}
        \halbnorm{q}\left(
            \frac{\partial^{|\mu|} F}{\partial x^\mu}
            (x)
        \right) \\
        &=
        c^{\frac{1}{2}(m - \rho|\mu|)}
        \left(
            1 + \norm{y}^2
        \right)^{\frac{1}{2}(m - \rho|\mu|)}
        \symnorm{F}^{m, \rho}_{\halbnorm{q}, \mu}.
    \end{align*}
    The case $m - \rho|\mu| < 0$ is even simpler. Here we have
    $-\frac{1}{2}(m - \rho|\mu|) \ge 0$ and hence
    \begin{align*}
        \left(
            1 + \norm{x-y}^2
        \right)^{-\frac{1}{2}(m - \rho|\mu|)}
        &\le
        \left(
            1 + 2\norm{x}^2 + 2\norm{y}^2
        \right)^{-\frac{1}{2}(m - \rho|\mu|)} \\
        &\le
        2^{-\frac{1}{2}(m - \rho|\mu|)}
        \left(
            1 + \norm{x}^2 + \norm{y}^2
        \right)^{-\frac{1}{2}(m - \rho|\mu|)} \\
        &\le
        2^{-\frac{1}{2}(m - \rho|\mu|)}
        \left(
            1 + \norm{x}^2
        \right)^{-\frac{1}{2}(m - \rho|\mu|)}
        \left(
            1 + \norm{y}^2
        \right)^{-\frac{1}{2}(m - \rho|\mu|)}.
    \end{align*}
    By an analogous argument as for the previous case this results in
    the estimate
    \[
    \symnorm{\tau_y^* F}^{m, \rho}_{\halbnorm{q}, \mu}
    \le
    2^{-\frac{1}{2}(m - \rho|\mu|)}
    \left(
        1 + \norm{y}^2
    \right)^{-\frac{1}{2}(m - \rho|\mu|)}
    \symnorm{F}^{m, \rho}_{\halbnorm{q}, \mu}.
    \]
    So for each $m,\rho\in\Rl$, $\mu\in\Nl_0^n$, we find a constant
    $C_\mu^{m,\rho}>0$ such that $\|\tau_y^* F\|^{m,
      \rho}_{\halbnorm{q}, \mu} \le
    C_\mu^{m,\rho}(1+\norm{y}^2)^{\frac{1}{2}|m - \rho|\mu||}
    \symnorm{F}^{m, \rho}_{\halbnorm{q}, \mu}$ for all $y\in\Rl^n$. As
    the function $y \mapsto 1 + \norm{y}^2$ is clearly in
    $\Symbol^{2,1}(\mathbb{R}^n, \mathbb{C})$, an application of
    Lemma~\ref{lemma:ScalarSymbolPower} yields $y\mapsto
    (1+\norm{y}^2)^{\frac{1}{2}|m - \rho|\mu||}\in\Symbol^{|m -
      \rho|\mu||,1}(\Rl^n,\Cl)$, and the proof is finished.
\end{proof}
\begin{remark}
    \label{remark:TauyPositiveOrderSymbol}%
    Note that for the translations $\tau_y$ the pre-factor $c^{m,
      \rho}_\mu(y)$ in \eqref{eq:EstimateTranslationSymbolNorm} is
    always a symbol of \emph{non-negative} order, even if $m -
    \rho|\mu|$ was negative. Thus the bounds in
    \eqref{eq:EstimateTranslationSymbolNorm} typically \emph{grow}
    with $y$ and are also growing with increasing differentiations
    $\mu$ unless $\rho = 0$.
\end{remark}
As an easy consequence of the two lemmas we get the affine invariance
of the symbol spaces:
\begin{proposition}
    \label{proposition:AffineInvarianceOfSymbols}%
    Let $V$ be a sequentially complete locally convex space and
    $\mathcal{Q}$ a defining system of seminorms. Moreover, let
    $\order{m}$ and $\type{\rho}$ be an order and a type for
    $\mathcal{Q}$. Then the affine group $\group{GL}_n(\mathbb{R})
    \ltimes \mathbb{R}^n$ of $\mathbb{R}^n$ acts on the symbols
    $\Symbol^{\order{m}, \type{\rho}}(\mathbb{R}^n, V)$ via pull-backs
    by continuous endomorphisms.
\end{proposition}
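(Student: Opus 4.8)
The plan is to reduce the statement to the two preceding lemmas by factoring an arbitrary affine transformation into its linear part followed by a translation. Writing an element of $\GL_n(\mathbb{R}) \ltimes \mathbb{R}^n$ as the map $g\colon x \mapsto Ax + y$ with $A \in \GL_n(\mathbb{R})$ and $y \in \mathbb{R}^n$, one checks directly from the definitions $(A^*F)(x) = F(Ax)$ and $(\tau_y^*F)(x) = F(x+y)$ that $g = \tau_y\circ A$ as maps $\mathbb{R}^n\to\mathbb{R}^n$, hence $g^* = A^* \circ \tau_y^*$ on all of $\Cinfty(\mathbb{R}^n, V)$. So it suffices to show that $A^*$ and $\tau_y^*$ each restrict to continuous linear endomorphisms of $\Symbol^{\order{m},\type{\rho}}(\mathbb{R}^n,V)$; well-definedness and continuity of $g^*$ then follow by composition.

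For the translation part I would fix $F \in \Symbol^{\order{m},\type{\rho}}(\mathbb{R}^n,V)$, a seminorm $\halbnorm{q} \in \mathcal{Q}$, and $\mu \in \mathbb{N}_0^n$, and apply Lemma~\ref{lemma:TranslationsActOnSymbols} with the single numbers $m := \order{m}(\halbnorm{q})$ and $\rho := \type{\rho}(\halbnorm{q})$. This yields $\symnorm{\tau_y^*F}^{\order{m},\type{\rho}}_{\halbnorm{q},\mu} \le c^{\order{m}(\halbnorm{q}),\type{\rho}(\halbnorm{q})}_\mu(y)\,\symnorm{F}^{\order{m},\type{\rho}}_{\halbnorm{q},\mu} < \infty$, which shows both that $\tau_y^*F$ is again a symbol of order $\order{m}$ and type $\type{\rho}$ and, since the prefactor does not depend on $F$, that $\tau_y^*$ is continuous for the $\Symbol^{\order{m},\type{\rho}}$-topology. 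For the linear part the argument is analogous using Lemma~\ref{lemma:GlnActsOnSymbols}: for the same data one gets $\symnorm{A^*F}^{\order{m},\type{\rho}}_{\halbnorm{q},\mu} \le c^{\order{m}(\halbnorm{q}),\type{\rho}(\halbnorm{q})}_\mu(A)\sum_{|\nu|=|\mu|}\symnorm{F}^{\order{m},\type{\rho}}_{\halbnorm{q},\nu} < \infty$, again giving $A^*F \in \Symbol^{\order{m},\type{\rho}}$ together with continuity of $A^*$. Composing, $g^* = A^*\circ\tau_y^*$ is a continuous linear endomorphism of $\Symbol^{\order{m},\type{\rho}}(\mathbb{R}^n,V)$.

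It remains to note that this really is a group action. From the functoriality of pull-backs, checked on function values, one has $(g_1 g_2)^* = g_2^*\circ g_1^*$ and $\Unit^* = \id$, so $g\mapsto g^*$ is an anti-homomorphism of the affine group into the continuous endomorphisms; equivalently $g\mapsto(g^{-1})^*$ is a homomorphism, which is the asserted action, and each $g^*$ is in fact an automorphism with inverse $(g^{-1})^*$. I do not expect any real obstacle: all the analytic content has been isolated in Lemmas~\ref{lemma:GlnActsOnSymbols} and~\ref{lemma:TranslationsActOnSymbols}, and the only point that requires care is to invoke them with the scalar parameters $m,\rho$ set equal to the \emph{values} $\order{m}(\halbnorm{q}),\type{\rho}(\halbnorm{q})$ of the order and type at each individual seminorm $\halbnorm{q}$, so that the resulting seminorm estimates are exactly the ones defining the $\Symbol^{\order{m},\type{\rho}}$-topology.
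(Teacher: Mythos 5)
Your proposal is correct and follows essentially the same route as the paper: the paper's proof likewise consists of invoking Lemma~\ref{lemma:GlnActsOnSymbols} and Lemma~\ref{lemma:TranslationsActOnSymbols} (applied seminorm-by-seminorm with $m=\order{m}(\halbnorm{q})$, $\rho=\type{\rho}(\halbnorm{q})$) and then noting that pull-backs give a (right) group action. Your extra care in spelling out the factorization $g^*=A^*\circ\tau_y^*$ and the anti-homomorphism property is a faithful expansion of what the paper leaves as "clear."
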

\begin{proof}
    The pull-backs with $A \in \group{GL}_n(\mathbb{R})$ or with a
    translation by $y \in \mathbb{R}^n$ map $\Symbol^{\order{m},
      \type{\rho}}(\mathbb{R}^n, V)$ continuously into itself
    according to Lemma~\ref{lemma:GlnActsOnSymbols} and
    Lemma~\ref{lemma:TranslationsActOnSymbols}, respectively. The fact
    that this gives a (right) group action is clear.
\end{proof}

In a next step we want to refine this statement for the translations:
we want to show that the map $y \mapsto \tau_y^*F$ is actually
smooth. We begin with the following observation:
\begin{lemma}
    \label{lemma:tauySymbolContinuous}%
    Let $F \in \Symbol^{\order{m}, \type{\rho}}(\mathbb{R}^n,
    V)$. Then the map
    \begin{equation}
        \label{eq:TranslationMapSymbol}
        \mathbb{R}^n \ni y
        \; \mapsto \;
        \tau_y^* F \in \Symbol^{\order{m}, \type{\rho}}(\mathbb{R}^n, V)
    \end{equation}
    is continuous at zero provided $\type{\rho} \ge 0$.
\end{lemma}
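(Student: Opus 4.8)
The plan is to show continuity at zero by estimating the symbol seminorms $\symnorm{\tau_y^* F - F}^{\order{m},\type{\rho}}_{\halbnorm{q},\mu}$ and showing each one tends to $0$ as $y \to 0$. The naive approach — writing $\tau_y^* F - F = \int_0^1 \tfrac{d}{dt}\tau_{ty}^* F\,dt$ and bounding the integrand via Lemma~\ref{lemma:TranslationsActOnSymbols} — runs into the obstacle flagged in Remark~\ref{remark:TauyPositiveOrderSymbol}: the prefactor $c^{m,\rho}_\mu(y)$ there has \emph{non-negative} order, so it does not go to zero with $y$, and worse, the derivative $\tfrac{d}{dt}\tau_{ty}^*F$ involves one more differentiation of $F$, whose symbol seminorm is controlled only at order $\order{m}$, not $\order{m}-\type{\rho}$. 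So a direct global estimate is too lossy. This mismatch — that translating costs growth in $y$ while we want smallness in $y$ — is the main obstacle, and it is exactly why the hypothesis $\type{\rho}\ge 0$ enters and why the statement is only continuity \emph{at zero}.

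The way around it is a splitting into a compact core and a tail, exploiting that for $\type{\rho}\ge 0$ the prefactor $(1+\norm{x}^2)^{-\frac12(\order{m}(\halbnorm{q}) - \type{\rho}(\halbnorm{q})|\mu|)}$ in \eqref{eq:SymbolSeminorm}, while possibly growing, grows at most polynomially of a \emph{fixed} degree independent of further structure, and that $F$ itself has a symbol bound one order better after one differentiation (Proposition~\ref{proposition:PartialDerivativesOnSymbols}). Concretely: fix $\halbnorm{q}\in\mathcal Q$, $\mu\in\mathbb N_0^n$, and $\epsilon>0$. For $\norm{x}\ge R$ with $R$ large, I would bound $\halbnorm{q}\bigl(\partial^\mu(\tau_y^*F - F)(x)\bigr) \le \halbnorm{q}(\partial^\mu F(x+y)) + \halbnorm{q}(\partial^\mu F(x))$ directly and use the symbol estimate for $F$ together with the elementary inequality \eqref{eq:QuadraticBound} (as in the proof of Lemma~\ref{lemma:TranslationsActOnSymbols}) to see that, for $\norm{y}\le 1$, the contribution of the tail $\{\norm{x}\ge R\}$ to $\symnorm{\tau_y^* F - F}^{\order{m},\type{\rho}}_{\halbnorm{q},\mu}$ is $\le c\,R^{-\delta}$ for some $\delta>0$ — here is where $\type{\rho}\ge 0$ makes the order of $\partial^\mu F$ effectively no worse than the order appearing in the prefactor, keeping $\delta$ positive — and then choose $R$ so this is $<\epsilon/2$. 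On the compact core $\{\norm{x}\le R\}$, I would use uniform continuity: $\partial^\mu F$ is continuous, hence uniformly continuous on the compact set $\Ball_{R+1}(0)$, so $\sup_{\norm{x}\le R}\halbnorm{q}\bigl(\partial^\mu F(x+y) - \partial^\mu F(x)\bigr)\to 0$ as $y\to 0$; multiplying by the prefactor, which is bounded on $\Ball_R(0)$, shows the core contribution is $<\epsilon/2$ for $\norm{y}$ small enough.

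Combining the two estimates gives $\symnorm{\tau_y^* F - F}^{\order{m},\type{\rho}}_{\halbnorm{q},\mu}<\epsilon$ for all sufficiently small $y$, which is precisely continuity of \eqref{eq:TranslationMapSymbol} at zero. The only point requiring care is the uniform bound on the prefactor over $\Ball_R(0)$ — but this is immediate since $x\mapsto(1+\norm{x}^2)^{-\frac12(\order{m}(\halbnorm{q}) - \type{\rho}(\halbnorm{q})|\mu|)}$ is continuous on the compact set $\Ball_R(0)$ — and the verification that $\delta>0$ in the tail estimate, which is where the hypothesis $\type{\rho}\ge 0$ is genuinely used (if $\type{\rho}(\halbnorm{q})<0$ the prefactor could grow fast enough to overwhelm the decay of $\partial^\mu F$). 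I expect no further subtleties; the argument is the standard "compact core plus small tail" dominated-convergence-style estimate adapted to the vector-valued symbol seminorms.
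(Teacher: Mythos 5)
Your tail estimate is broken, and it is the heart of the argument. On $\{\norm{x}\ge R\}$ you bound $\halbnorm{q}\bigl(\partial^\mu(\tau_y^*F-F)(x)\bigr)$ by the \emph{sum} $\halbnorm{q}(\partial^\mu F(x+y))+\halbnorm{q}(\partial^\mu F(x))$ and claim that, after multiplying by the prefactor, this contributes at most $c\,R^{-\delta}$ with $\delta>0$. But the symbol hypothesis gives exactly $\halbnorm{q}(\partial^\mu F(x))\le(1+\norm{x}^2)^{\frac12(m-\rho|\mu|)}\symnorm{F}^{m,\rho}_{\halbnorm{q},\mu}$ — the growth of $\partial^\mu F$ matches the prefactor \emph{exactly}, with no margin, so the weighted tail supremum is bounded by roughly $2\symnorm{F}^{m,\rho}_{\halbnorm{q},\mu}$ and does \emph{not} decay as $R\to\infty$. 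A concrete counterexample: take $n=1$, $m=\rho=0$, $F(x)=\sin(x)\,v$ with $\halbnorm{q}(v)=1$; then $\sup_{|x|\ge R}\bigl(|\sin(x+y)|+|\sin(x)|\bigr)=2$ for every $R$. The only mechanism that makes the tail small is cancellation between $\partial^\mu F(x+y)$ and $\partial^\mu F(x)$, i.e.\ precisely the mean value theorem you discarded at the outset.

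Your reason for discarding it is also backwards. Writing $\partial^\mu(\tau_y^*F-F)(x)=\int_0^1(\partial_{x^i}\partial^\mu F)(x+ty)\,dt\,y^i$ produces an explicit factor $\norm{y}$, and the extra derivative is controlled at order $\order{m}-\type{\rho}|\mu|-\type{\rho}$ (Proposition~\ref{proposition:PartialDerivativesOnSymbols}), i.e.\ \emph{better} by $\type{\rho}$, not worse. Comparing the prefactor $(1+\norm{x}^2)^{-\frac12(m-\rho|\mu|)}$ with the bound $(1+\norm{x+ty}^2)^{\frac12(m-\rho|\mu|-\rho)}$ via the translation estimate \eqref{eq:QuadraticBound} leaves a residual factor $(1+\norm{x}^2)^{-\rho/2}$, which is bounded exactly when $\type{\rho}\ge0$ — that is where the hypothesis enters. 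One then gets $\symnorm{\tau_y^*F-F}^{\order{m},\type{\rho}}_{\halbnorm{q},\mu}\le c'\norm{y}(1+\norm{y}^2)^{\frac12|m-\rho|\mu||}$, and the explicit $\norm{y}$ beats the bounded-on-$\norm{y}\le1$ translation constant, giving continuity at zero. This is the paper's proof; your core/tail decomposition cannot replace it, because without the derivative there is no source of smallness in $y$ on the tail.
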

\begin{proof}
    We have to show that $\tau_y^* F \longrightarrow F$ in the
    $\Symbol^{\order{m}, \type{\rho}}$-topology for $y \longrightarrow
    0$. Let $x \in \mathbb{R}^n$ be given. Then we have for $y \in
    \mathbb{R}^n$ and $\halbnorm{q} \in \mathcal{Q}$ by virtue of the
    mean value theorem
    \begin{align*}
        \halbnorm{q}\left(
            \frac{\partial^{|\mu|} (\tau_y^* F)}{\partial x^\mu} (x)
            -
            \frac{\partial^{|\mu|} F}{\partial x^\mu} (x)
        \right)
        &=
        \halbnorm{q}\left(
            \int_0^1
            \left(
                \frac{\partial}{\partial x^i}
                \frac{\partial^{|\mu|} F}{\partial x^\mu}
            \right)(x + t y) \D t y^i
        \right) \\
        &\le
        \sqrt{n}
        \sup_{\substack{t \in [0, 1] \\ i = 1, \ldots, n}}
        \halbnorm{q}
        \left(
            \left(
                \frac{\partial}{\partial x^i}
                \frac{\partial^{|\mu|} F}{\partial x^\mu}
            \right)(x + t y)
        \right)
        \norm{y}
        \tag{$*$}
      .
    \end{align*}
    Now we use the fact that $F$ is a symbol. This means that the
    $(\mu + e_i)$-th derivative of $F$ satisfies
    \[
    \halbnorm{q}
    \left(
        \left(
            \frac{\partial}{\partial x^i}
            \frac{\partial^{|\mu|} F}{\partial x^\mu}
        \right)(x + t y)
    \right)
    \le
    \left(
        1 + \norm{x + ty}^2
    \right)^{\frac{1}{2}(m - \rho|\mu| - \rho)} \symnorm{F}^{m,
      \rho}_{\halbnorm{q}, \mu + e_i},
    \tag{$**$}
    \]
    where for abbreviation we have set $m = \order{m}(\halbnorm{q})$
    and $\rho = \type{\rho}(\halbnorm{q})$. Then we get
    \begin{align*}
        &\symnorm{\tau_y^* F - F}^{\order{m}, \type{\rho}
        }_{\halbnorm{q}, \mu} \\
        &\quad=
        \sup_{x \in \mathbb{R}^n}
        \left(
            1 + \norm{x}^2
        \right)^{-\frac{1}{2}(m - \rho|\mu|)}
        \halbnorm{q}\left(
            \frac{\partial^{|\mu|} (\tau_y^* F)}{\partial x^\mu} (x)
            -
            \frac{\partial^{|\mu|} F}{\partial x^\mu} (x)
        \right) \\
        &\quad\stackrel{\mathclap{(*), (**)}}{\le}
        \quad
        \sqrt{n}
        \norm{y}
        \sup_{\substack{x \in \mathbb{R}^n \\
            t \in [0, 1] \\
            i = 1, \ldots, n}
        }
        \left(
            1 + \norm{x}^2
        \right)^{-\frac{1}{2}(m - \rho|\mu|)}
        \left(
            1 + \norm{x + ty}^2
        \right)^{\frac{1}{2}(m - \rho|\mu| - \rho)}
        \symnorm{F}^{\order{m}, \type{\rho}}_{\halbnorm{q}, \mu + e_i}
        \\
        &\quad=
        \sqrt{n}
        \norm{y}
        \sup_{\substack{x \in \mathbb{R}^n \\
            t \in [0, 1] \\
            i = 1, \ldots, n}
        }
        \left(
            1 + \norm{x - ty}^2
        \right)^{-\frac{1}{2}(m - \rho|\mu|)}
        \left(
            1 + \norm{x}^2
        \right)^{\frac{1}{2}(m - \rho|\mu| - \rho)}
        \symnorm{F}^{\order{m}, \type{\rho}}_{\halbnorm{q}, \mu + e_i}.
    \end{align*}
    We can again estimate the first factor by the same techniques as
    in the Lemma~\ref{lemma:TranslationsActOnSymbols}: we get a
    constant $c$ (depending on $m$, $\rho$, and $\mu$ but not on $ty$
    or $x$) such that we can continue our estimate and get
    \begin{align*}
        &\symnorm{\tau_y^* F - F}^{\order{m}, \type{\rho}
        }_{\halbnorm{q}, \mu} \\
        &\quad\le
        c \norm{y}
        \sup_{\substack{x \in \mathbb{R}^n \\
            t \in [0, 1] \\
            i = 1, \ldots, n}
        }
        \left(
            1 + \norm{ty}^2
        \right)^{\frac{1}{2}\left|m - \rho|\mu|\right|}
        \left(
            1 + \norm{x}^2
        \right)^{-\frac{1}{2}(m - \rho|\mu|) + \frac{1}{2}(m - \rho|\mu| - \rho)}
        \symnorm{F}^{\order{m}, \type{\rho}}_{\halbnorm{q}, \mu + e_i}
        \\
        &\quad\le
        c \norm{y}
        \left(
            1 + \norm{y}^2
        \right)^{\frac{1}{2}\left|m - \rho|\mu|\right|}
        \sup_{x \in \mathbb{R}^n}
        \left(
            1 + \norm{x}^2
        \right)^{-\frac{\rho}{2}}
        \max_{i = 1, \ldots, n}
        \symnorm{F}^{\order{m}, \type{\rho}}_{\halbnorm{q}, \mu + e_i}.
    \end{align*}
    Now if $\rho \ge 0$ then the supremum over all $x \in
    \mathbb{R}^n$ exists and hence we get an estimate of the form
    \[
    \symnorm{\tau_y^* F - F}^{\order{m}, \type{\rho}
    }_{\halbnorm{q}, \mu}
    \le
    c' \norm{y}
    \left(
        1 + \norm{y}^2
    \right)^{\frac{1}{2}\left|m - \rho|\mu|\right|},
    \]
    from which the continuity at $y = 0$ follows.
\end{proof}
\begin{lemma}
    \label{lemma:TauySymbolDiffAtyNull}%
    Let $F \in \Symbol^{\order{m}, \type{\rho}}(\mathbb{R}^n, V)$ be
    given with $\type{\rho} \ge 0$. Then we have
    \begin{equation}
        \label{eq:DifferentiateSymbolsByDiffQuotient}
        \lim_{\epsilon \longrightarrow 0}
        \frac{\tau_{\epsilon e_i}^* F - F}{\epsilon}
        =
        \frac{\partial F}{\partial x^i}
    \end{equation}
    in the $\Symbol^{\order{m}, \type{\rho}}$-topology for all $i = 1,
    \ldots, n$.
\end{lemma}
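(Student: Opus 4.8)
The plan is to reduce the difference quotient to its second-order Taylor remainder, estimate everything pointwise in $x$ and seminorm by seminorm, and then absorb the extra polynomial weight coming from the shift $x\mapsto x+ue_i$ by exactly the quadratic-comparison technique already used in Lemma~\ref{lemma:TranslationsActOnSymbols} and Lemma~\ref{lemma:tauySymbolContinuous}. The hypothesis $\type{\rho}\ge0$ will be needed at, and only at, the point where the two polynomial weights are combined.

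\textbf{Main estimate.} Fix $\halbnorm{q}\in\mathcal Q$ and $\mu\in\Nl_0^n$, abbreviate $m:=\order{m}(\halbnorm{q})$ and $\rho:=\type{\rho}(\halbnorm{q})\ge0$, and put $H:=\partial^{|\mu|}F/\partial x^\mu$, a smooth $V$-valued function. Applying the fundamental theorem of calculus twice (here sequential completeness of $V$ enters, so that the $V$-valued integrals exist) gives for $0<|\epsilon|\le1$ and all $x\in\Rl^n$
\begin{equation*}
    \frac{\partial^{|\mu|}}{\partial x^\mu}\!\left(\frac{\tau_{\epsilon e_i}^*F-F}{\epsilon}-\frac{\partial F}{\partial x^i}\right)\!(x)
    =\frac1\epsilon\int_0^\epsilon\!\!\int_0^s\frac{\partial^{|\mu|+2}F}{\partial x^\mu\,\partial(x^i)^2}(x+ue_i)\,\D u\,\D s .
\end{equation*}
Applying $\halbnorm{q}$, pulling it inside the integrals, using the symbol bound $\halbnorm{q}\bigl(\partial^{|\mu+2e_i|}F/\partial x^{\mu+2e_i}(z)\bigr)\le(1+\norm{z}^2)^{\frac12(m-\rho|\mu|-2\rho)}\,\symnorm{F}^{m,\rho}_{\halbnorm{q},\mu+2e_i}$ at $z=x+ue_i$, and noting $\int_0^\epsilon\!\int_0^s\D u\,\D s=\epsilon^2/2$ and $|u|\le|\epsilon|\le1$, one obtains the pointwise estimate
\begin{equation*}
    \halbnorm{q}\!\left(\frac{\partial^{|\mu|}}{\partial x^\mu}\!\left(\frac{\tau_{\epsilon e_i}^*F-F}{\epsilon}-\frac{\partial F}{\partial x^i}\right)\!(x)\right)
    \le\frac{|\epsilon|}{2}\,\symnorm{F}^{m,\rho}_{\halbnorm{q},\mu+2e_i}\sup_{|u|\le1}\bigl(1+\norm{x+ue_i}^2\bigr)^{\frac12(m-\rho|\mu|-2\rho)} .
\end{equation*}

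\textbf{Combining the weights.} Multiplying by the prefactor $(1+\norm{x}^2)^{-\frac12(m-\rho|\mu|)}$ from the definition of $\symnorm{\argument}^{\order{m},\type{\rho}}_{\halbnorm{q},\mu}$ and taking $\sup_x$, it remains to bound
\begin{equation*}
    C:=\sup_{x\in\Rl^n,\;|u|\le1}\bigl(1+\norm{x}^2\bigr)^{-\frac12(m-\rho|\mu|)}\bigl(1+\norm{x+ue_i}^2\bigr)^{\frac12(m-\rho|\mu|-2\rho)} .
\end{equation*}
Writing $b:=m-\rho|\mu|-2\rho$ and distinguishing the sign of $b$, one uses $1+\norm{x+ue_i}^2\le2(1+\norm{ue_i}^2)(1+\norm{x}^2)\le4(1+\norm{x}^2)$ when $b\ge0$, and $1+\norm{x}^2\le4(1+\norm{x+ue_i}^2)$ when $b<0$; in either case the product is bounded by $4^{|b|/2}(1+\norm{x}^2)^{-\rho}$, and since $\rho\ge0$ this is bounded by $4^{|b|/2}$. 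Hence $C<\infty$, and combining the displays yields, for every $\halbnorm{q}$ and $\mu$,
\begin{equation*}
    \symnorm{\frac{\tau_{\epsilon e_i}^*F-F}{\epsilon}-\frac{\partial F}{\partial x^i}}^{\order{m},\type{\rho}}_{\halbnorm{q},\mu}
    \le\frac{|\epsilon|}{2}\,C\,\symnorm{F}^{m,\rho}_{\halbnorm{q},\mu+2e_i}\xrightarrow{\ \epsilon\to0\ }0 ,
\end{equation*}
which is precisely convergence \eqref{eq:DifferentiateSymbolsByDiffQuotient} in the $\Symbol^{\order{m},\type{\rho}}$-topology.

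\textbf{Remarks on the obstacle and an alternative.} The only genuinely technical point is the uniform finiteness of $C$, i.e.\ the interplay of the two polynomial weights; this is the same sign-case analysis as in Lemma~\ref{lemma:TranslationsActOnSymbols}, and it is here (as in Lemma~\ref{lemma:tauySymbolContinuous}) that $\type{\rho}\ge0$ is used. A slightly slicker route would be to write $\tfrac1\epsilon(\tau_{\epsilon e_i}^*F-F)-\partial F/\partial x^i=\tfrac1\epsilon\int_0^\epsilon\bigl(\tau_{se_i}^*(\partial F/\partial x^i)-\partial F/\partial x^i\bigr)\D s$ as a $\Symbol^{\order{m},\type{\rho}}$-valued integral, which is legitimate because $\partial F/\partial x^i\in\Symbol^{\order{m}-\type{\rho},\type{\rho}}\subseteq\Symbol^{\order{m},\type{\rho}}$ by Proposition~\ref{proposition:PartialDerivativesOnSymbols} and Proposition~\ref{proposition:SymbolFirstProperties}, \refitem{item:SymbolsInSymbols}, this space is sequentially complete by \refitem{item:SymbolsComplete}, and Lemma~\ref{lemma:tauySymbolContinuous} makes the integrand small uniformly for small $s$; since this requires first developing $\Symbol$-valued integration, I would present the direct estimate above.
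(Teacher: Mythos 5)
Your argument is correct and follows essentially the same route as the paper's proof: express the difference quotient minus $\partial F/\partial x^i$ as a second‑order Taylor remainder via a double integral, bound the second derivative $\partial^{\mu+2e_i}F$ by its symbol seminorm, and absorb the shifted weight $(1+\norm{x+ue_i}^2)^{\frac12(m-\rho|\mu|-2\rho)}$ into $(1+\norm{x}^2)^{-\frac12(m-\rho|\mu|)}$, using $\type{\rho}\ge0$ to kill the leftover factor $(1+\norm{x}^2)^{-\rho}$. The only differences are cosmetic (parametrization of the double integral and the explicit constant $4^{|b|/2}$ in place of the constant imported from Lemma~\ref{lemma:TranslationsActOnSymbols}), so nothing further is needed.
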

\begin{proof}
    We proceed analogously to the continuity statement. Let again
    $\halbnorm{q} \in \mathcal{Q}$ and set $m =
    \order{m}(\halbnorm{q})$ and $\rho = \type{\rho}(\halbnorm{q})$
    for abbreviation. First we note that for all $x \in \mathbb{R}^n$
    and $\mu \in \mathbb{N}_0^n$ we have by repeated use of the mean
    value theorem
    \begin{align*}
        &\halbnorm{q}\left(
            \frac{1}{\epsilon}\left(
                \frac{\partial^{|\mu|} (\tau_{\epsilon e_i}^* F)}
                {\partial x^\mu}(x)
                -
                \frac{\partial^{|\mu|} F}{\partial x^\mu}(x)
            \right)
            -
            \frac{\partial}{\partial x^i}
            \frac{\partial^{|\mu|} F}{\partial x^\mu}(x)
        \right) \\
        &\qquad=
        \halbnorm{q}\left(
            \frac{1}{\epsilon}\left(
                \frac{\partial^{|\mu|} F}
                {\partial x^\mu}(x + \epsilon e_i)
                -
                \frac{\partial^{|\mu|} F}{\partial x^\mu}(x)
            \right)
            -
            \frac{\partial}{\partial x^i}
            \frac{\partial^{|\mu|} F}{\partial x^\mu}(x)
        \right) \\
        &\qquad=
        \halbnorm{q}\left(
            \int_0^1
            \frac{\partial}{\partial x^i}
            \frac{\partial^{|\mu|} F}{\partial x^\mu}
            (x + t \epsilon e_i)
            \D t
            -
            \frac{\partial}{\partial x^i}
            \frac{\partial^{|\mu|} F}{\partial x^\mu}(x)
        \right) \\
        &\qquad=
        \halbnorm{q}\left(
            \int_0^1
            \int_0^1
            \frac{\partial}{\partial x^i}
            \frac{\partial}{\partial x^i}
            \frac{\partial^{|\mu|} F}{\partial x^\mu}
            (x + ts\epsilon e_i) t \epsilon \D s \D t
        \right) \\
        &\qquad\le
        \epsilon
        \sup_{s \in [0, 1]}
        \halbnorm{q}\left(
            \frac{\partial^2}{\partial (x^i)^2}
            \frac{\partial^{|\mu|} F}{\partial x^\mu}
            (x + s\epsilon e_i)
        \right) \\
        &\qquad\le
        \epsilon
        \sup_{s \in [0, 1]}
        \left(
            1 + \norm{x + s\epsilon e_i}^2
        \right)^{\frac{1}{2}(m - \rho|\mu| - 2\rho)}
        \symnorm{F}^{\order{m}, \type{\rho}}_{\halbnorm{q}, \mu + 2e_i},
    \end{align*}
    since by assumption $F \in \Symbol^{\order{m},
      \type{\rho}}(\mathbb{R}^n, V)$. Thus we get
    \begin{align*}
        &\symnorm{
          \frac{1}{\epsilon}\left(\tau_{\epsilon e_i}^* F - F\right)
          -
          \frac{\partial F}{\partial x^i}
        }^{\order{m}, \type{\rho}}_{\halbnorm{q}, \mu} \\
        &\qquad=
        \sup_{x \in \mathbb{R}^n}
        \left(
            1 + \norm{x}^2
        \right)^{-\frac{1}{2}(m - \rho|\mu|)}
        \halbnorm{q}\left(
            \frac{1}{\epsilon}\left(
                \frac{\partial^{|\mu|} (\tau_{\epsilon e_i}^* F)}
                {\partial x^\mu}(x)
                -
                \frac{\partial^{|\mu|}F}{\partial x^\mu}(x)
            \right)
            -
            \frac{\partial^{|\mu|}}{\partial x^\mu}
            \frac{\partial F}{\partial x^i}(x)
        \right) \\
        &\qquad\le
        \epsilon \sup_{\substack{x \in \mathbb{R}^n \\ s \in [0, 1]}}
        \left(
            1 + \norm{x}^2
        \right)^{-\frac{1}{2}(m - \rho|\mu|)}
        \left(
            1 + \norm{x + s\epsilon e_i}^2
        \right)^{\frac{1}{2}(m - \rho|\mu| - 2\rho)}
        \symnorm{F}^{\order{m}, \type{\rho}}_{\halbnorm{q}, \mu + 2e_i}
        \\
        &\qquad=
        \epsilon \sup_{\substack{x \in \mathbb{R}^n \\ s \in [0, 1]}}
        \left(
            1 + \norm{x - s\epsilon e_i}^2
        \right)^{-\frac{1}{2}(m - \rho|\mu|)}
        \left(
            1 + \norm{x}^2
        \right)^{\frac{1}{2}(m - \rho|\mu| - 2\rho)}
        \symnorm{F}^{\order{m}, \type{\rho}}_{\halbnorm{q}, \mu + 2e_i}
        \\
        &\qquad\le
        c \epsilon \sup_{\substack{x \in \mathbb{R}^n \\ s \in [0, 1]}}
        \left(
            1 + \norm{s\epsilon e_i}^2
        \right)^{\frac{1}{2}\left|m - \rho|\mu|\right|}
        \left(
            1 + \norm{x}^2
        \right)^{
          -\frac{1}{2}(m - \rho|\mu|)
          + \frac{1}{2}(m - \rho|\mu| - 2\rho)
        }
        \symnorm{F}^{\order{m}, \type{\rho}}_{\halbnorm{q}, \mu + 2e_i}
        \\
        &\qquad=
        c \epsilon
        (1 + \epsilon^2)^{\frac{1}{2}\left|m - \rho|\mu|\right|}
        \sup_{x \in \mathbb{R}^n}
        \left(
            1 + \norm{x}^2
        \right)^{- \rho}
        \symnorm{F}^{\order{m}, \type{\rho}}_{\halbnorm{q}, \mu + 2e_i}.
    \end{align*}
    Using again $\rho \ge 0$ shows that the remaining supremum is
    finite. Thanks to the pre-factor $\epsilon$ we get the desired
    limit \eqref{eq:DifferentiateSymbolsByDiffQuotient}.
\end{proof}

These two lemmas are now enough to conclude the following smoothness
statement of the action of the translations:
\begin{proposition}
    \label{proposition:TranslationActSmoothOnSymbols}%
    Let $V$ be a sequentially complete locally convex space and let
    $\mathcal{Q}$ be a defining system of seminorms for $V$. Let
    $\order{m}$ and $\type{\rho}$ be an order and a type for
    $\mathcal{Q}$ and assume $\type{\rho} \ge 0$. Then the action
    $\tau$ of $\mathbb{R}^n$ on $\Symbol^{\order{m},
      \type{\rho}}(\mathbb{R}^n, V)$ by translations is smooth, i.e.\
    for every $F \in \Symbol^{\order{m}, \type{\rho}}(\mathbb{R}^n,
    V)$ the map $\tau(F)\colon y \mapsto \tau_y^* F$ is a smooth
    map. The derivatives are explicitly given by
    \begin{equation}
        \label{eq:DerivativesTaufSymbol}
        \frac{\partial^{|\mu|}}{\partial y^\mu} \tau_y^*F
        =
        \tau_y^*
        \,
        \frac{\partial^{|\mu|} F}{\partial x^\mu}.
    \end{equation}
\end{proposition}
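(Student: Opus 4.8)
The plan is to reduce everything to the two lemmas already established for the behaviour of $\tau_y^*F$ near $y=0$ — Lemma~\ref{lemma:tauySymbolContinuous} (continuity at $0$) and Lemma~\ref{lemma:TauySymbolDiffAtyNull} (differentiability at $0$ with derivative $\partial F/\partial x^i$) — by exploiting the group law $\tau_{y+z}^*=\tau_z^*\circ\tau_y^*$, the continuity of each individual pull-back $\tau_y^*$ on symbol spaces (Proposition~\ref{proposition:AffineInvarianceOfSymbols}), and the fact that $\partial^{|\mu|}/\partial x^\mu$ commutes with $\tau_y^*$ and maps $\Symbol^{\order{m},\type{\rho}}$ continuously into $\Symbol^{\order{m}-\type{\rho}|\mu|,\type{\rho}}$ (Proposition~\ref{proposition:PartialDerivativesOnSymbols}).

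First I would upgrade Lemma~\ref{lemma:tauySymbolContinuous} from continuity at $0$ to continuity everywhere: for any $G\in\Symbol^{\order{m},\type{\rho}}(\Rl^n,V)$ with $\type{\rho}\ge0$ and any $y_0$, write $\tau_{y_0+z}^*G=\tau_z^*(\tau_{y_0}^*G)$; since $\tau_{y_0}^*G\in\Symbol^{\order{m},\type{\rho}}(\Rl^n,V)$ by Proposition~\ref{proposition:AffineInvarianceOfSymbols}, Lemma~\ref{lemma:tauySymbolContinuous} gives $\tau_z^*(\tau_{y_0}^*G)\to\tau_{y_0}^*G$ as $z\to0$, i.e.\ continuity at $y_0$. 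Next I would compute the first-order partials: fix $y_0$ and $i$ and set $G:=\tau_{y_0}^*F$. Lemma~\ref{lemma:TauySymbolDiffAtyNull} yields $\epsilon^{-1}(\tau_{\epsilon e_i}^*G-G)\to\partial G/\partial x^i$ in the $\Symbol^{\order{m},\type{\rho}}$-topology; since $\tau_{y_0+\epsilon e_i}^*F=\tau_{\epsilon e_i}^*G$ and, by the chain rule, $\partial G/\partial x^i=\tau_{y_0}^*(\partial F/\partial x^i)$, this shows that $\partial_{y^i}\tau_y^*F$ exists at every $y_0$ and equals $\tau_{y_0}^*(\partial F/\partial x^i)$. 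By Proposition~\ref{proposition:PartialDerivativesOnSymbols} the symbol $\partial F/\partial x^i$ lies in $\Symbol^{\order{m}-\type{\rho},\type{\rho}}(\Rl^n,V)$, again with type $\ge0$, so the first step applied in that space (followed by the continuous inclusion $\Symbol^{\order{m}-\type{\rho},\type{\rho}}(\Rl^n,V)\hookrightarrow\Symbol^{\order{m},\type{\rho}}(\Rl^n,V)$ of Proposition~\ref{proposition:SymbolFirstProperties}~\refitem{item:SymbolsInSymbols}) shows $y\mapsto\partial_{y^i}\tau_y^*F$ is continuous; hence $\tau(F)\colon y\mapsto\tau_y^*F$ is $C^1$.

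Finally I would iterate: for any multiindex $\nu$, $\partial^{|\nu|}F/\partial x^\nu\in\Symbol^{\order{m}-\type{\rho}|\nu|,\type{\rho}}(\Rl^n,V)$ by Proposition~\ref{proposition:PartialDerivativesOnSymbols}, still of type $\ge0$, so the previous step applies to it and produces the partial derivative $\tau_y^*(\partial^{|\nu|+1}F/\partial x^{\nu+e_i})$, which is again continuous in $y$. Using that the partial derivatives of the smooth map $F\colon\Rl^n\to V$ commute, an induction on $|\mu|$ then shows that all iterated partials of $y\mapsto\tau_y^*F$ exist, are continuous, and are given by \eqref{eq:DerivativesTaufSymbol}. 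Since a map from $\Rl^n$ into a locally convex space all of whose iterated partial derivatives exist and are continuous is smooth (the classical $C^1$-criterion plus induction), the proposition follows.

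I do not expect a genuine obstacle here, because the delicate seminorm estimates were already carried out in Lemmas~\ref{lemma:tauySymbolContinuous} and~\ref{lemma:TauySymbolDiffAtyNull}. The only points that need care are bookkeeping: tracking that each differentiation lowers the order by $\type{\rho}$ while preserving $\type{\rho}\ge0$ (so that the hypotheses of those lemmas remain valid under iteration), checking continuity of each partial derivative in the appropriate shifted symbol space before re-embedding it into $\Symbol^{\order{m},\type{\rho}}$, and invoking the standard characterization of smoothness of vector-valued functions on $\Rl^n$ via iterated partials.
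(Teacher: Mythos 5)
Your proposal is correct and follows essentially the same route as the paper: continuity everywhere and differentiability everywhere are both obtained from the corresponding statements at $y=0$ (Lemmas~\ref{lemma:tauySymbolContinuous} and~\ref{lemma:TauySymbolDiffAtyNull}) via the group law and the continuity of each $\tau_y^*$, and smoothness follows by iterating since $\partial F/\partial x^i$ remains a symbol of the same type with order lowered by $\type{\rho}\ge 0$. The only cosmetic difference is that you apply Lemma~\ref{lemma:TauySymbolDiffAtyNull} to $G=\tau_{y_0}^*F$ directly, whereas the paper exchanges $\tau_{y}^*$ with the limit; these are equivalent.
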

\begin{proof}
    This is a general argument about group actions of Lie groups: We
    know already that $\tau(F)$ is continuous at $y = 0$ by
    Lemma~\ref{lemma:tauySymbolContinuous}. Moreover, every map
    $\tau_y^*$ is continuous by
    Lemma~\ref{lemma:TranslationsActOnSymbols}. Thus we have by the
    group action property
    \[
    \lim_{y \longrightarrow y'}
    \tau_y^* F
    =
    \lim_{y \longrightarrow 0}
    \tau_{y' + y}^* F
    =
    \lim_{y \longrightarrow 0}
    \tau_y^* \tau_{y'}^*F
    =
    \tau_{y'}^* F
    \]
    in the $\Symbol^{\order{m}, \type{\rho}}$-topology since we have
    continuity at zero. This shows continuity everywhere. Moreover, by
    the same argument
    \[
    \lim_{\epsilon \longrightarrow 0}
    \frac{\tau_{y + \epsilon e_i}^* F- \tau_y^* F}{\epsilon}
    =
    \lim_{\epsilon \longrightarrow 0}
    \tau_y^*
    \,
    \frac{\tau_{\epsilon e_i}^* F - F}{\epsilon}
    =
    \tau_y^*
    \lim_{\epsilon \longrightarrow 0}
    \frac{\tau_{\epsilon e_i}^* F - F}{\epsilon}
    =
    \tau_y^*
    \,
    \frac{\partial F}{\partial x^i},
    \]
    using Lemma~\ref{lemma:TauySymbolDiffAtyNull} and the continuity
    of $\tau_y^*$. This shows that $\tau(F)$ has first partial
    derivatives everywhere given as in
    \eqref{eq:DerivativesTaufSymbol}. Now $\frac{\partial F}{\partial
      x^i} \in \Symbol^{\order{m} - \type{\rho},
      \type{\rho}}(\mathbb{R}^n, V) \subseteq \Symbol^{\order{m},
      \type{\rho}}(\mathbb{R}^n, V)$ thanks to $\type{\rho} \ge 0$ and
    Proposition~\ref{proposition:SymbolFirstProperties},
    \refitem{item:SymbolsInSymbols} as well as
    Proposition~\ref{proposition:PartialDerivativesOnSymbols}. Thus
    the partial derivatives $\frac{\partial}{\partial y^i} \tau(F) =
    \tau(\frac{\partial F}{\partial x^i})$ are again of the form as we
    started with. Hence they are continuous and thus $\tau(F)$ is
    $\Fun[1]$. This
    allows to iterate the above argument finishing the proof.
\end{proof}

As a first application of the affine invariance of the spaces
$\Symbol^{\order{m}, \type{\rho}}(\mathbb{R}^n, V)$ we get the
following generalization of the approximation from
Proposition~\ref{proposition:ApproximateSymbols},
\refitem{item:LimChiEpsilonVectorValued}.
\begin{corollary}
    \label{corollary:ApproximateSymbolsSomehow}%
    Let $\chi \in \Cinfty_0(\mathbb{R}^n)$ satisfy
    $\chi\at{\Ball_r(0)} = 1$ for some $r > 0$. Consider $\tau_y^*
    \chi_\epsilon$ for $\epsilon > 0$ and $y \in \mathbb{R}^n$ where
    as usual $\chi_\epsilon(x) = \chi(\epsilon x)$. Then for every $F
    \in \Symbol^{\order{m}, \type{\rho}}(\mathbb{R}^n, V)$ we have
    \begin{equation}
        \label{eq:LimChiEpsilonTauyfIsf}
        \lim_{\epsilon \longrightarrow 0}
        (\tau_y^* \chi_\epsilon) F = F
    \end{equation}
    in the $\Symbol^{\order{m}', \type{\rho}'}(\mathbb{R}^n,
    V)$-topology provided $\type{\rho}' \le \min(1, \rho)$ and
    $\order{m}' > \order{m}$.
\end{corollary}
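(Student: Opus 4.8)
The plan is to reduce the claim to the approximation $\chi_\epsilon \to 1$ already established in Proposition~\ref{proposition:ApproximateSymbols}, \refitem{item:ChiEpsilonToEins}, combined with the translation bound of Lemma~\ref{lemma:TranslationsActOnSymbols} and the product estimate \eqref{eq:EstimateProductSymbol} of Proposition~\ref{proposition:SymbolProducts}, \refitem{item:ProductInOneVariable}.

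First I would use that $\tau_y^*$ is linear and fixes the constant function $1$, so that
\[
(\tau_y^* \chi_\epsilon) F - F
=
\big( \tau_y^*(\chi_\epsilon - 1) \big) F ,
\]
and it therefore suffices to show that $\big(\tau_y^*(\chi_\epsilon - 1)\big)F \longrightarrow 0$ in the $\Symbol^{\order{m}', \type{\rho}'}$-topology. Fix a seminorm $\halbnorm{q} \in \mathcal{Q}$ and a multiindex $\mu \in \mathbb{N}_0^n$. Since $\order{m}' > \order{m}$, the number $m := \order{m}'(\halbnorm{q}) - \order{m}(\halbnorm{q})$ is strictly positive. By Proposition~\ref{proposition:ApproximateSymbols}, \refitem{item:ChiEpsilonSymbol}, together with Proposition~\ref{proposition:SymbolFirstProperties}, \refitem{item:SymbolsInSymbols}, we have $\chi_\epsilon - 1 \in \Symbol^{m, 1}(\mathbb{R}^n, \mathbb{C})$, and Lemma~\ref{lemma:TranslationsActOnSymbols} then gives $\tau_y^*(\chi_\epsilon - 1) \in \Symbol^{m, 1}(\mathbb{R}^n, \mathbb{C})$ together with
\[
\symnorm{\tau_y^*(\chi_\epsilon - 1)}^{m, 1}_\nu
\le
c^{m, 1}_\nu(y)\, \symnorm{\chi_\epsilon - 1}^{m, 1}_\nu ,
\qquad \nu \in \mathbb{N}_0^n ,
\]
where the constant $c^{m,1}_\nu(y)$ is independent of $\epsilon$.

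Next I would apply the product estimate \eqref{eq:EstimateProductSymbol} to the scalar multiplication $\mathbb{C} \times V \to V$ (which is continuous with constant $1$, and for which one may take $\halbnorm{r} = \halbnorm{q}$), with the factors $\tau_y^*(\chi_\epsilon - 1)$, of constant order $m > 0$ and constant type $1$, and $F$, of order $\order{m}$ and type $\type{\rho}$. The compatibility conditions of Proposition~\ref{proposition:SymbolProducts}, \refitem{item:ProductInOneVariable}, are met: $\order{m}'(\halbnorm{q}) = \order{m}(\halbnorm{q}) + m$ by the choice of $m$, and $\type{\rho}'(\halbnorm{q}) \le \min(1, \type{\rho}(\halbnorm{q}))$ by hypothesis. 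Hence
\[
\symnorm{\big(\tau_y^*(\chi_\epsilon - 1)\big) F}^{\order{m}', \type{\rho}'}_{\halbnorm{q}, \mu}
\le
2^{|\mu|}\,
\max_{\nu \le \mu} \symnorm{\tau_y^*(\chi_\epsilon - 1)}^{m, 1}_\nu
\cdot
\max_{\nu' \le \mu} \symnorm{F}^{\order{m}, \type{\rho}}_{\halbnorm{q}, \nu'} .
\]
By Proposition~\ref{proposition:ApproximateSymbols}, \refitem{item:ChiEpsilonToEins}, applied with the admissible parameters $m > 0$ and type $1$, we have $\symnorm{\chi_\epsilon - 1}^{m,1}_\nu \longrightarrow 0$ as $\epsilon \longrightarrow 0$ for every $\nu$, so by the previous displayed inequality $\symnorm{\tau_y^*(\chi_\epsilon - 1)}^{m,1}_\nu \longrightarrow 0$, and consequently the right-hand side above tends to $0$. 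Since $\halbnorm{q} \in \mathcal{Q}$ and $\mu \in \mathbb{N}_0^n$ were arbitrary and the seminorms $\symnorm{\argument}^{\order{m}',\type{\rho}'}_{\halbnorm{q},\mu}$ determine the $\Symbol^{\order{m}',\type{\rho}'}$-topology, this yields \eqref{eq:LimChiEpsilonTauyfIsf}.

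The argument is mostly bookkeeping; the one point I expect to require care — and hence the main obstacle — is that the auxiliary order $m$ must be allowed to depend on $\halbnorm{q}$, exactly as in the remark closing the proof of Proposition~\ref{proposition:ApproximateSymbols}: the hypothesis $\order{m}' > \order{m}$ is strict at each seminorm but need not be uniform, so one cannot fix a single $m$ simultaneously for all of $\mathcal{Q}$. By contrast, the growth of the translation prefactor $c^{m,1}_\nu(y)$ in $y$ (Remark~\ref{remark:TauyPositiveOrderSymbol}) plays no role here, since $y$ is held fixed throughout.
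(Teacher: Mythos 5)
Your proof is correct. It differs from the paper's argument in the decomposition it uses. The paper writes $(\tau_y^*\chi_\epsilon)F = \tau_y^*\bigl(\chi_\epsilon\,(\tau_{-y}^*F)\bigr)$, notes that $\tau_{-y}^*F$ is again a symbol of the same order and type by Lemma~\ref{lemma:TranslationsActOnSymbols}, applies Proposition~\ref{proposition:ApproximateSymbols}, \refitem{item:LimChiEpsilonVectorValued}, to conclude $\chi_\epsilon\,(\tau_{-y}^*F)\to\tau_{-y}^*F$ in the $\Symbol^{\order{m}',\type{\rho}'}$-topology, and then uses the continuity of $\tau_y^*$ (Proposition~\ref{proposition:AffineInvarianceOfSymbols}) to pull the limit through and recover $F$. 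You instead write $(\tau_y^*\chi_\epsilon)F - F = \bigl(\tau_y^*(\chi_\epsilon-1)\bigr)F$ and rerun, seminorm by seminorm, the estimate from the proof of Proposition~\ref{proposition:ApproximateSymbols}, \refitem{item:LimChiEpsilonVectorValued}, with the translated cutoff in place of $\chi_\epsilon-1$; the only new ingredient is that the translation constant $c^{m,1}_\nu(y)$ of Lemma~\ref{lemma:TranslationsActOnSymbols} is uniform in $\epsilon$, so $\symnorm{\tau_y^*(\chi_\epsilon-1)}^{m,1}_\nu\to 0$ follows from $\symnorm{\chi_\epsilon-1}^{m,1}_\nu\to 0$. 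The paper's route is shorter because it reuses the earlier proposition as a black box; yours is more self-contained and makes explicit where the strict inequality $\order{m}'>\order{m}$ and the $\halbnorm{q}$-dependence of the auxiliary order $m$ enter, which you correctly identify as the only delicate point. Both arguments are complete.
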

\begin{proof}
    We have $(\tau_y^* \chi_\epsilon)F = \tau_y^* (\chi_\epsilon
    \tau_{-y} F)$ and then the continuity of $\tau_y^*$ according to
    Proposition~\ref{proposition:AffineInvarianceOfSymbols} allows to
    exchange $\tau_y^*$ with the limit. Then the result follows from
    Proposition~\ref{proposition:ApproximateSymbols},
    \refitem{item:LimChiEpsilonVectorValued}.
\end{proof}

The smoothness of the translations also allows to consider symbols
taking values in other symbol spaces, as we shall do now. Recall that
given an order $\bm$ and a type $\brho$ for a defining system of
seminorms $\mathcal Q$ on $V$, the symbol space
$\Symbol^{\bm,\brho}(\Rl^n,V)$ is a sequentially complete locally
convex space (Proposition~\ref{proposition:SymbolFirstProperties},
\refitem{item:SymbolsComplete}), which can therefore be used as a
target space instead of $V$. To define symbols taking values in it, we
first have to specify an order $\hat{\bm}$ and a type $\hat{\brho}$ on
the seminorms $\|\cdot\|^{\bm(\qn),\brho(\qn)}_{\qn,\mu}$ generating
the topology of $\Symbol^{\bm,\brho}(\Rl^n,V)$. For $\qn\in\mathcal Q$
and $\mu\in\Nl_0^{n}$, we put
\begin{align}\label{order-and-type-for-S}
    \hat{\bm}(\|\cdot\|^{\bm(\qn),\brho(\qn)}_{\qn,\mu})
    &:= \max\{0,\,\bm(\qn)\}
    ,\qquad
    \hat{\brho}(\|\cdot\|^{\bm(\qn),\brho(\qn)}_{\qn,\mu}) := \brho(\qn)
    \,.
\end{align}
\begin{proposition}\label{proposition:symbol-valued-symbols}
    Let $V$ be a sequentially complete locally convex space with
    defining system of seminorms $\mathcal Q$, and let $\bm$ be an
    order and $\brho\geq0$ a positive type for $\mathcal Q$. Moreover,
    let $F \in \Symbol^{\bm, \brho}(\Rl^{n_1} \oplus \Rl^{n_2}, V)$ be
    given.  Then
    \begin{align}\label{def:F1}
	F_1 &\colon
        \Rl^{n_1} \to \Symbol^{\bm,\brho}(\Rl^{n_2},V)\,,\qquad
        F_1(x_1)\colon x_2\mapsto F(x_1,x_2)
    \end{align}
    is a symbol in $\Symbol^{\hat{\bm},\hat{\brho}}
    (\Rl^{n_1},\,\Symbol^{\bm,\brho}(\Rl^{n_2},V))$ of order
    $\hat{\bm}$ and type $\hat{\brho}$ \eqref{order-and-type-for-S},
    and the map
    \begin{align}
        \Symbol^{\bm,\brho}(\Rl^{n_1}\oplus\Rl^{n_2},V)
        &\longrightarrow
        \Symbol^{\hat{\bm},\hat{\brho}} 
        (\Rl^{n_1},\,\Symbol^{\bm,\brho}(\Rl^{n_2}, V))\\
        F &\longmapsto F_1
    \end{align}
    is linear and continuous. Explicitly, one has the bound
    \begin{align}\label{bnd-double-symbol}
        \|F_1\|^{\hat{\bm},\hat{\brho}}_{\|\cdot\|^{\bm,\brho}_{\qn,\mu},\nu}
        \leq
        \|F\|^{\bm,\brho}_{\qn,\nu\oplus\mu}
        \,
    \end{align}
    for $\qn\in\mathcal Q$, $\nu\in\Nl_0^{n_1},\,\mu\in\Nl_0^{n_2}$.
    Completely analogous statements hold for the map
    \begin{align}
        F_2 &\colon \Rl^{n_2} \to \Symbol^{\bm,\brho}(\Rl^{n_1},V)\,,\qquad
        F_2(x_2)\colon x_1\mapsto F(x_1,x_2)
        \label{def:F2}
        \,.
    \end{align}
\end{proposition}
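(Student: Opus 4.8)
The plan is to split the statement into two parts: the smoothness of $F_1\colon\Rl^{n_1}\to\Symbol^{\bm,\brho}(\Rl^{n_2},V)$ (together with the formula for its derivatives), and the quantitative estimate \eqref{bnd-double-symbol}, which then immediately yields both that $F_1$ is a symbol of order $\hat{\bm}$ and type $\hat{\brho}$ and that $F\mapsto F_1$ is continuous. The observation connecting the statement to what is already available is the identity
\[
    F_1(x_1) = \iota_2^*\bigl(\tau_{(x_1,0)}^* F\bigr)\,,\qquad x_1\in\Rl^{n_1}\,,
\]
where $\tau_{(x_1,0)}^*$ is the pull-back by the translation by $(x_1,0)\in\Rl^{n_1}\oplus\Rl^{n_2}$ and $\iota_2^*$ is the restriction from Lemma~\ref{lemma:RestrictionOfSymbols}. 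Since $\tau_{(x_1,0)}^*F\in\Symbol^{\bm,\brho}(\Rl^{n_1}\oplus\Rl^{n_2},V)$ by Proposition~\ref{proposition:AffineInvarianceOfSymbols} and $\iota_2^*$ maps this space continuously into $\Symbol^{\bm,\brho}(\Rl^{n_2},V)$, the map $F_1$ is well defined with values in $\Symbol^{\bm,\brho}(\Rl^{n_2},V)$.

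For smoothness I would use Proposition~\ref{proposition:TranslationActSmoothOnSymbols} (here the hypothesis $\brho\geq0$ is needed), by which $y\mapsto\tau_y^*F$ is a smooth map $\Rl^{n_1}\oplus\Rl^{n_2}\to\Symbol^{\bm,\brho}(\Rl^{n_1}\oplus\Rl^{n_2},V)$. Restricting its argument to the linear subspace $\Rl^{n_1}\times\{0\}$ and composing with the continuous linear map $\iota_2^*$, one gets that $F_1$ is smooth, and the chain rule together with the derivative formula \eqref{eq:DerivativesTaufSymbol} gives
\[
    \frac{\partial^{|\nu|}F_1}{\partial x_1^\nu}(x_1) = \iota_2^*\Bigl(\tau_{(x_1,0)}^*\,\tfrac{\partial^{|\nu|}F}{\partial x_1^\nu}\Bigr)\,,
    \qquad\text{i.e.}\qquad
    \Bigl(\tfrac{\partial^{|\nu|}F_1}{\partial x_1^\nu}(x_1)\Bigr)(x_2) = \frac{\partial^{|\nu|}F}{\partial x_1^\nu}(x_1,x_2)\,.
\]
(Note that $\partial_{x_1}^\nu F\in\Symbol^{\bm-\brho|\nu|,\brho}\subseteq\Symbol^{\bm,\brho}$ by Propositions~\ref{proposition:PartialDerivativesOnSymbols} and~\ref{proposition:SymbolFirstProperties}\refitem{item:SymbolsInSymbols}, using $\brho\geq0$, so everything stays inside $\Symbol^{\bm,\brho}$.)

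It remains to prove \eqref{bnd-double-symbol}. Inserting the derivative formula, its left-hand side equals
\[
    \sup_{x_1\in\Rl^{n_1}}\sup_{x_2\in\Rl^{n_2}}
    \frac{\qn\bigl(\partial_{x_1}^\nu\partial_{x_2}^\mu F(x_1,x_2)\bigr)}
    {(1+\|x_1\|^2)^{\frac12(\max\{0,\bm(\qn)\}-\brho(\qn)|\nu|)}\,(1+\|x_2\|^2)^{\frac12(\bm(\qn)-\brho(\qn)|\mu|)}}\,,
\]
while $\|F\|^{\bm,\brho}_{\qn,\nu\oplus\mu}$ is the same supremum with the single denominator $(1+\|x_1\|^2+\|x_2\|^2)^{\frac12(\bm(\qn)-\brho(\qn)(|\nu|+|\mu|))}$. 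Since $\qn(\cdots)\geq0$, it suffices to prove, with $m=\bm(\qn)$, $\rho=\brho(\qn)\geq0$, $M=\max\{0,m\}$, the pointwise prefactor bound
\[
    (1+\|x_1\|^2)^{-\frac12(M-\rho|\nu|)}(1+\|x_2\|^2)^{-\frac12(m-\rho|\mu|)} \leq (1+\|x_1\|^2+\|x_2\|^2)^{-\frac12(m-\rho(|\nu|+|\mu|))}\,.
\]
This follows from \eqref{eq:kkkInequality} applied with the exponents negated: with $k_1=\tfrac12(M-\rho|\nu|)$, $k_2=\tfrac12(m-\rho|\mu|)$, $K=\tfrac12(m-\rho(|\nu|+|\mu|))$, one needs $K\leq\min\{k_1,k_2,k_1+k_2\}$, which reduces respectively to $M\geq m-\rho|\mu|$, to $\rho|\nu|\geq0$, and to $M\geq0$; all three hold because $M\geq m$, $\rho\geq0$, $M\geq0$ — the choices $\hat{\bm}=\max\{0,\bm(\qn)\}$ and $\hat{\brho}=\brho(\qn)$ in \eqref{order-and-type-for-S} being exactly what makes this work. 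This gives \eqref{bnd-double-symbol}; since $F$ is a symbol the right-hand side is finite for all $\qn,\nu,\mu$, so $F_1\in\Symbol^{\hat{\bm},\hat{\brho}}(\Rl^{n_1},\Symbol^{\bm,\brho}(\Rl^{n_2},V))$, and as the estimate bounds each defining seminorm of the target at $F_1$ by a defining seminorm of the source at $F$, the linear map $F\mapsto F_1$ is continuous. The claim for $F_2$ is the mirror image, interchanging $\Rl^{n_1}$ and $\Rl^{n_2}$ and using $\iota_1^*$. The only genuinely delicate point is getting $\hat{\bm}$ right — the $\max\{0,\cdot\}$ is precisely what secures $M\geq0$, hence $k_1+k_2\geq K$; everything else is bookkeeping of exponents plus the already-established smoothness of the translation action.
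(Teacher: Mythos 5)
Your proposal is correct and follows essentially the same route as the paper: the identity $F_1(x_1)=\iota_2^*(\tau_{x_1\oplus 0}^*F)$ combined with Proposition~\ref{proposition:TranslationActSmoothOnSymbols} and Lemma~\ref{lemma:RestrictionOfSymbols} for smoothness, and then the prefactor comparison via \eqref{eq:kkkInequality} (you state it with the signs of $k,k',K$ reversed relative to the paper, but the condition is the same) to obtain \eqref{bnd-double-symbol}. The explicit derivative formula you record is left implicit in the paper but is a harmless and correct addition.
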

\begin{proof}
    In terms of the embedding
    $\iota_2\colon\Rl^{n_2}\to\Rl^{n_1}\oplus\Rl^{n_2}$,
    $\iota_2(x_2):=(0,x_2)$, and the previously discussed translations
    $\tau$, the map $F_1$ reads
    $F_1(x_1)=\iota_2^*(\tau_{x_1\oplus\,0}^*F)$. But according to
    Proposition~\ref{proposition:TranslationActSmoothOnSymbols},
    $x_1\mapsto\tau_{x_1\oplus\,0}^*F$ is a smooth map from
    $\Rl^{n_1}$ to $\Symbol^{\bm,\brho}(\Rl^{n_1}\oplus\Rl^{n_2},V)$,
    and according to Lemma~\ref{lemma:RestrictionOfSymbols},
    $\iota_2^*\colon
    \Symbol^{\bm,\brho}(\Rl^{n_1}\oplus\Rl^{n_2},V)\to\Symbol^{\bm,\brho}
    (\Rl^{n_2},V)$ is linear and continuous. Hence
    $F_1\colon\Rl^{n_1}\to\Symbol^{\bm,\brho } (\Rl^{n_2},V)$ is
    smooth.  Since $F\mapsto F_1$ is clearly linear, it only remains
    to verify the estimate \eqref{bnd-double-symbol}. To this end, let
    $\qn\in\mathcal Q$, $\nu\in\Nl_0^{n_1}$, $\mu\in\Nl_0^{n_2}$, and
    put $\hat{\qn}:=\|\cdot\|^{\bm,\brho}_{\qn,\mu}$ for short. The
    seminorm in question is
    \begin{align*}
        \|F_1\|^{\hat{\bm},\hat{\brho}}_{\hat{\qn},\nu}
        &=
        \sup_{x_1\in\Rl^{n_1}}
        \frac{\|\partial_{x_1}^\nu
          F_1(x_1)\|^{\bm,\brho}_{\qn,\mu}}
        {(1+\|x_1\|^2)^{\frac{1}{2}(\hat{\bm}(\hat{\qn
            })-\hat{\brho}(\hat{\qn})|\nu|)}}
        \\
        &=
        \sup_{x_1\in\Rl^{n_1}}
        \frac{1}
        {(1+\|x_1\|^2)^{\frac{1}{2}(\hat{\bm}(\hat{\qn})-\hat{\brho}(\hat{\qn})|\nu|)}}
        \sup_{x_2\in\Rl^{n_2}}
        \frac{
          \qn(\partial_{x_2}^\mu \partial_{x_1}^\nu
          F_1(x_1)(x_2))
        }
        {
          (1+\|x_2\|^2)^{\frac{1}{2}(\bm(\qn)-\brho(\qn)|\mu|)}
        }
        \,.
    \end{align*}
    Note that by definition of $\hat{\bm}$, $\hat{\brho}$, the powers
    $k:=-\frac{1}{2}(\hat{\bm}(\hat{\qn})-\brho(\qn)|\nu|)$ and
    $k':=-\frac{1}{2}(\bm(\qn)-\brho(\qn)|\mu|)$ satisfy
    $\max\{k,k',k+k'\}\leq K$ with
    $K:=-\frac{1}{2}(\bm(\qn)-\brho(\qn)|\nu\oplus\mu|)$ for all
    $\mu,\nu$.  Hence we can use the inequality
    \eqref{eq:kkkInequality} to estimate
    \begin{align*}
	\|F_1\|^{\hat{\bm},\hat{\brho}}_{\hat{\qn},\nu}
        &\leq
        \sup_{x_1\in\Rl^{n_1} \atop x_2\in\Rl^{n_2}}
        \frac{
          \qn(\partial^{\nu\oplus\mu}
          F(x_1,x_2))
        }
        {
          (1+\|x_1\|^2+\|x_2\|^2)^{\frac{1}{2}(\bm(\qn)-\brho(\qn)|\nu\oplus\mu|)}
        }
        =
        \|F\|^{\bm,\brho}_{\qn,\nu\oplus\mu}
        \,.
    \end{align*}
    This establishes \eqref{bnd-double-symbol} and thus in particular
    the continuity of $F\mapsto F_1$. The arguments for $F_2$ are
    completely analogous.
\end{proof}

\section{Oscillatory integrals for vector-valued symbols}
\label{section:integrals}
\subsection{Construction of the integral map}

We now come to the definition of oscillatory integrals of
symbols. Again, we proceed in close analogy to the scalar case, see
\cite[Sect.~7.8]{Hormander:1990} as well as \cite{Hormander:1971}. The
essential idea is to use the Riemann integral for compactly supported
smooth functions and show that it enjoys a remarkable continuity
property with respect to the symbol topologies. We are here not
interested in the most general case, where oscillatory integrals are
used to define maps from test function spaces to distributions, as
discussed in \cite[Sect.~7.8]{Hormander:1990}. Instead we are just
interested in the values of the oscillatory integrals per se. To this
end, we endow $\mathbb{R}^n$ with a non-degenerate bilinear form
$\SP{\argument, \argument}$.  Then we consider for $F \in
\Stetig_0(\mathbb{R}^n, V)$ the integral with an oscillatory phase
\begin{equation}
    \label{eq:Inull}
    I_0 (F)
    :=
    \frac{1}{(2\pi)^n} \int_{\mathbb{R}^{2n}}
    dp\,dx\,
    e^{i \SP{p, x}} F(x, p)\,,
\end{equation}
which is a well-defined Riemann integral thanks to the continuity of
the integrand and the compact support of $F$. The integral defines a
linear map
\begin{equation}
    \label{eq:InullMap}
    I_0\colon \Stetig_0(\mathbb{R}^{2n}, V) \longrightarrow V,
\end{equation}
which is continuous in the $\Stetig_0$-topology. Since the
$\Stetig_0$-topology is coarser than every $\Fun_0$-topology for $k
\in \mathbb{N}_0 \cup \{+\infty\}$, we see that for all $k$ we have a
continuous map
\begin{equation}
    \label{eq:InullCkNullContinuous}
    I_0\colon \Fun_0(\mathbb{R}^{2n}, V) \longrightarrow V.
\end{equation}
Up to now, we have not used any particular properties of the phase
function besides its continuity. However, it turns out that the
continuity with respect to the $\Fun_0$-topologies is not the right
one to extend $I_0$ to the symbol spaces.

Instead we have to show the continuity of $I_0$ with respect to some
appropriate $\Symbol^{\order{m}, \type{\rho}}$-topology, and this step
will make use of more specific properties of the phase function.  We
begin with the following preparations. Consider the polynomial
\begin{equation}
    \label{eq:PolynomialP}
    P(x) := (i+ x_1) \cdots (i + x_n)
\end{equation}
on $\mathbb{R}^n$ which is clearly of degree $n$ and hence a
scalar symbol $P \in \Symbol^{n, 1}(\mathbb{R}^n, \mathbb{C})$.

Since each factor $(i + x_k)$ is non-vanishing, we can define
arbitrary powers $(i + x_k)^s$ for $s\in\mathbb{Z}$, which are symbols
of order $s$. For $s\geq0$, this follows from
Lemma~\ref{lemma:ScalarSymbolPower}, and for $s<0$ by explicit
differentiation. Note that given a symbol $F \in \Symbol^{\order{m},
  \type{\rho}}(\mathbb{R}^{2n}, V)$ of some order $\bm$ and type
$\brho\leq1$, the function
\begin{equation}
    \label{eq:PsPsFIsSymbol}
    \mathbb{R}^{2n} \ni (x, p)
    \; \mapsto \;
    P^s(x) P^s(p) F(x, p) \in V
\end{equation}
is a symbol of order $\order{m} + 2sn$ and type $\type{\rho}$. This
follows directly by application of
Corollary~\ref{corollary:SymbolModuleStuff},
\refitem{item:SymbolModule}, since $(x,p)\mapsto P^s(x)P^s(p)$ is of
order $2sn$ and type $1$, and $\type{\rho} \le 1$ by assumption.

We also note the well-known fact that given $s\in\Nl_0$, there exists
a differential operator
\begin{equation}
    \label{eq:Qs}
    Q_s = \sum_{|\mu|, |\nu| \le s} a^{\mu\nu}_s
    \frac{\partial^{|\mu|}}{\partial x^\mu}
    \frac{\partial^{|\nu|}}{\partial p^\nu}
\end{equation}
with constant coefficients $a^{\mu\nu}_s \in \mathbb{C}$ such that
\begin{equation}
    \label{eq:QsExpPsPsExp}
    Q_s e^{i \SP{p, x}} = P^s(x) P^s(p) e^{i\SP{p, x}}\,.
\end{equation}

After these preparatory remarks, we now derive the crucial estimate of
the integral $I_0$ with respect to the symbol topologies. The proof is
based on the usual technique of converting differentiability
properties of the integrand to damping factors by integration by parts
against $e^{i\SP{p,x}}$. As shown below, for this technique to work we
only have to make a restriction on the type, but not on the order.
\begin{lemma}
    \label{lemma:InullSymbolContinuity}%
    Let $\mathcal Q$ be a defining system of seminorms for $V$, with
    order $\order{m}$ and type $\type{\rho}$ such that $-1 <
    \type{\rho} \le 1$.  Then for every $\halbnorm{q} \in \mathcal{Q}$
    there exists a constant $c > 0$ and $N \in \mathbb{N}_0$ such that
    for all $F \in \Cinfty_0(\mathbb{R}^{2n}, V)$ we have
    \begin{equation}
        \label{eq:SymbolEstimateInull}
        \halbnorm{q}\left(I_0(F)\right)
        \le
        c
        \sum_{|\mu| \le N}
        \symnorm{F}^{\order{m}, \type{\rho}}_{\halbnorm{q}, \mu}.
    \end{equation}
\end{lemma}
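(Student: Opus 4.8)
The plan is to implement the standard device of trading differentiability of the integrand against decay, by integrating $I_0$ by parts against the oscillating factor $e^{i\SP{p,x}}$, and then using the symbol bound for $F$ to dominate the integrand by an integrable function on $\Rl^{2n}$. Fix $\halbnorm{q}\in\mathcal Q$; it suffices to produce $c>0$ and $N\in\Nl_0$ with \eqref{eq:SymbolEstimateInull} for all $F\in\Cinfty_0(\Rl^{2n},V)$. For $s\in\Nl_0$ I use the identity \eqref{eq:QsExpPsPsExp} together with the fact that $P(x)$ and $P(p)$ are nowhere vanishing — which is exactly why the polynomial $P(x)=(i+x_1)\cdots(i+x_n)$ was chosen, so that no cut-off near the (stationary) origin of the phase is needed — to rewrite $e^{i\SP{p,x}}=P^{-s}(x)P^{-s}(p)\,Q_s e^{i\SP{p,x}}$. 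Substituting this into \eqref{eq:Inull} and integrating by parts (legitimate since $F$ is smooth with compact support, so all boundary terms vanish), one moves $Q_s$ onto $P^{-s}(x)P^{-s}(p)F$ in the form of its formal transpose $\widetilde Q_s$, again a constant-coefficient differential operator of the same order:
\[
I_0(F)=\frac{1}{(2\pi)^n}\int_{\Rl^{2n}}e^{i\SP{p,x}}\,\widetilde Q_s\bigl[P^{-s}(x)P^{-s}(p)\,F(x,p)\bigr]\,dp\,dx .
\]

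Next I apply $\halbnorm{q}$, pull it inside the absolutely convergent Riemann integral, and expand $\widetilde Q_s$ by the Leibniz rule into finitely many terms of the shape $\partial_x^{\alpha_1}P^{-s}(x)\cdot\partial_p^{\beta_1}P^{-s}(p)\cdot\partial_x^{\alpha_2}\partial_p^{\beta_2}F(x,p)$, with $|\alpha_1|+|\alpha_2|$ and $|\beta_1|+|\beta_2|$ bounded by the order of $\widetilde Q_s$. The two scalar factors are estimated using Lemma~\ref{lemma:ScalarSymbolPower} (and explicit differentiation for the negative powers), which bounds them by a constant times $(1+\norm{x}^2)^{\ast}(1+\norm{p}^2)^{\ast}$ with explicit exponents; this is essentially the content of \eqref{eq:PsPsFIsSymbol} and Corollary~\ref{corollary:SymbolModuleStuff}~\refitem{item:SymbolModule}. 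The $V$-valued factor is estimated directly by the defining symbol bound,
\[
\halbnorm{q}\bigl(\partial_x^{\alpha_2}\partial_p^{\beta_2}F(x,p)\bigr)
\le
\symnorm{F}^{\order{m},\type{\rho}}_{\halbnorm{q},\alpha_2\oplus\beta_2}\,
\bigl(1+\norm{(x,p)}^2\bigr)^{\frac12(\order{m}(\halbnorm{q})-\type{\rho}(\halbnorm{q})(|\alpha_2|+|\beta_2|))}.
\]
Combining these, each Leibniz term is bounded by a fixed combinatorial constant times $\sum_{|\mu|\le N}\symnorm{F}^{\order{m},\type{\rho}}_{\halbnorm{q},\mu}$ times an explicit product of powers of $(1+\norm{x}^2)$ and $(1+\norm{p}^2)$, where $N$ is the order of $\widetilde Q_s$ (adjusted by the differentiations of the $P^{-s}$-factors).

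The heart of the matter — and the step I expect to be the main obstacle — is the bookkeeping of these exponents and the choice of $s=s(\halbnorm{q})$. Because $\type{\rho}(\halbnorm{q})\le 1$, the exponent produced by the Leibniz expansion is largest for the term in which \emph{all} derivatives land on $F$, so it is enough to render that one term integrable over $\Rl^{2n}$. Each derivative falling on $F$ costs (when $\type{\rho}(\halbnorm{q})<0$) a growth of $|\type{\rho}(\halbnorm{q})|$, while every unit of order extracted from the oscillatory factor supplies one unit of decay; hence the \emph{net} gain is governed by $1-|\type{\rho}(\halbnorm{q})|=1+\type{\rho}(\halbnorm{q})$ when $\type{\rho}(\halbnorm{q})\le 0$ (and is only more favourable when $\type{\rho}(\halbnorm{q})>0$). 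Thus the hypothesis $-1<\type{\rho}(\halbnorm{q})$ is precisely what forces this net gain to be strictly positive, so that for $s$ large enough the overall power of $(1+\norm{x}^2+\norm{p}^2)$ is sufficiently negative for absolute convergence — and no hypothesis on the order $\order{m}$ is required, as announced. Taking $N:=N(\halbnorm{q})$ to be the resulting number of derivatives and $c:=c(\halbnorm{q})$ the (finite) value of the dominating integral times the combinatorial constants yields \eqref{eq:SymbolEstimateInull}. The remaining points — justifying the integration by parts, interchanging $\halbnorm{q}$ with the Riemann integral, and the equality of the orders of $Q_s$ and $\widetilde Q_s$ — are routine and will be carried out in passing.
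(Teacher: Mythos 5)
Your proposal is correct and follows essentially the same route as the paper's proof: the same integration by parts against $e^{i\SP{p,x}}$ via the operator $Q_s$ and the nowhere-vanishing polynomial $P$, the same use of the symbol bound on the $F$-factor, and the same observation that $\type{\rho}(\halbnorm{q})>-1$ (together with $\type{\rho}\le 1$) makes the net decay per unit of $s$ strictly positive, so that a sufficiently large $s=s(\halbnorm{q})$ yields an integrable majorant independent of the support of $F$. The only cosmetic difference is that you expand $\widetilde Q_s[P^{-s}P^{-s}F]$ termwise by Leibniz, whereas the paper packages $F/(P^sP^s)$ as a single symbol of order $\order{m}-2sn$ and invokes Proposition~\ref{proposition:SymbolProducts} for its seminorms.
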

\begin{proof}
    Let $F \in \Cinfty_0(\mathbb{R}^{2n}, V)$ have compact support in
    a compact interval $K \subseteq \mathbb{R}^{2n}$. Then we compute
    using \eqref{eq:QsExpPsPsExp}
    \begin{align*}
        I_0(F)
        &=
        \frac{1}{(2\pi)^n} \int_{K}
        e^{i \SP{p, x}} F(x ,p) \D^n x \D^n p \\
        &=
        \frac{1}{(2\pi)^n} \int_{K}
        \frac{1}{P^s(x) P^s(p)} \left(Q_s e^{i \SP{p, x}}\right)
        F(x, p) \D^nx \D^np \\
        &=
        \frac{1}{(2\pi)^n} \int_{K}
        e^{i \SP{p, x}}
        Q_s^\Trans
        \frac{F(x, p)}{P^s(x) P^s(p)}
        \D^nx \D^np,
        \tag{$*$}
    \end{align*}
    where $Q_s^\Trans= \sum_{0 \le |\mu|, |\nu| \le s} (-1)^{|\mu| +
      |\nu|} a_s^{\mu\nu} \frac{\partial^{|\mu|}}{\partial x^\mu}
    \frac{\partial^{|\nu|}}{\partial p^\nu}$ denotes the transposed
    differential operator and $s \in \mathbb{N}_0$ is
    arbitrary. Indeed, the integration by parts is possible since $F$
    has compact support inside the interval $K$. Since ($*$) is valid
    for all $s \in \mathbb{N}_0$, the idea is to use a large enough
    $s$ which produces under the integral an integrable symbol on the
    right hand side, independent of $K$. Since $F$ has compact support
    it is a symbol for any order and any type. Thus also the function
    $(x, p) \mapsto \frac{F(x, p)}{P^s(x) P^s(p)}$ is a symbol, say of
    order $\order{m} - 2sn$ and type $\type{\rho}$. Thus for all $\mu,
    \nu \in \mathbb{N}_0^n$ we have the estimate
    \[
    \halbnorm{q}\left(
        \frac{\partial^{|\mu|}}{\partial x^\mu}
        \frac{\partial^{|\nu|}}{\partial p^\nu}
        \frac{F(x, p)}{P^s(x) P^s(p)}
    \right)
    \le
    \left(
        1 + \norm{(x, p)}^2
    \right)^{\frac{1}{2}\left(
          \order{m}(\halbnorm{q}) - 2sn -
          \type{\rho}(\halbnorm{q})|\mu \oplus \nu|
      \right)
    }
    \symnorm{\frac{F(\argument, \argument)}{P^s(\argument)
        P^s(\argument)}}^{\order{m} - 2sn, \type{\rho}}_{\halbnorm{q},
      \mu \oplus \nu}
    \]
    for all $s \in \mathbb{N}_0$. We know that $|\mu \oplus \nu| =
    |\mu| + |\nu| \le 2sn$ as the operator $Q_s$ is of order $2sn$
    only. Hence the condition $\type{\rho}(\halbnorm{q}) > -1$ shows
    that there is a $s \in \mathbb{N}_0$ such that for all $|\mu|,
    |\nu| \le sn$ we have
    \begin{align}\label{eq:BoundRhoM}
        \order{m}(\halbnorm{q})
        - 2sn
        - \type{\rho}(\halbnorm{q})|\mu \oplus \nu|
        <
        - 2 (n+1).
    \end{align}
    In fact, we get the left hand side as negative as we want by
    taking large enough $s$. Finally, by
    Proposition~\ref{proposition:SymbolProducts},
    \refitem{item:ProductInOneVariable}, we get the estimate
    \[
    \symnorm{\frac{F}{P^s(\argument)
        P^s(\argument)}}^{\order{m} - 2sn, \type{\rho}}_{\halbnorm{q},
      \mu \oplus \nu}
    \le
    2^{|\mu| + |\nu|}
    \max_{\mu' \oplus \nu' \le \mu \oplus \nu}
    \symnorm{\frac{1}{P^s(\argument)P^s(\argument)}}^{-2sn, 1}_{\mu'
      \oplus \nu'}
    \max_{\mu'' \oplus \nu'' \le \mu \oplus \nu}
    \symnorm{F}^{\order{m}, \type{\rho}}_{\halbnorm{q}, \mu'' \oplus
      \nu''},
    \]
    since we have $\type{\rho} \le 1$ and $\frac{1}{P^s(\argument)
      P^s(\argument)}$ is a symbol of order $-2sn$ and type $1$
    according to Lemma~\ref{lemma:ScalarSymbolPower}. Taking now $s$
    large enough so that \eqref{eq:BoundRhoM} is satisfied we get the
    estimate
    \begin{align*}
        \halbnorm{q}\left(I_0(F)\right)
        &=
        \halbnorm{q}\left(
            \frac{1}{(2\pi)^n} \int_K
            e^{i \SP{p, x}}
            Q_s^\Trans
            \frac{F(x, p)}{P^s(x)P^s(p)}
            \D^nx \D^np
        \right) \\
        &\le
        \frac{1}{(2\pi)^n}
        \int_K
        \halbnorm{q}\left(
            \sum_{0 \le \mu, \nu \le s}
            a^{\mu\nu}_s (-1)^{|\mu| + |\nu|}
            \frac{\partial^{|\mu|}}{\partial x^\mu}
            \frac{\partial^{|\nu|}}{\partial p^\nu}
            \frac{F(x, p)}{P^s(x)P^s(p)}
        \right)
        \D^nx\D^np \\
        &\le
        \frac{1}{(2\pi)^n}
        \sum_{0 \le \mu, \nu \le s}
        |a^{\mu\nu}_s|
        \int_K
        \left(1 + \norm{(x, p)}^2\right)^{-(n+1)}
        \D^nx \D^np
        \symnorm{
          \frac{F}{P^s(\argument)P^s(\argument)}
        }^{\order{m} - 2sn, \type{\rho}
        }_{\halbnorm{q}, \mu \oplus \nu}
        \\
        &\le
        c \sum_{0 \le |\mu|, |\nu| \le s}
        \symnorm{F}^{\order{m}, \type{\rho}
        }_{\halbnorm{q}, \mu\oplus\nu},
    \end{align*}
    with the constant
    \[
    c =
    \frac{2^{2sn}}{(2\pi)^n}
    \int_{\mathbb{R}^{2n}}
    \frac{\D^nx \D^np}{\left(1 + \norm{(x, p)}^2\right)^{-(n+1)}}
    \max_{0 \le |\mu|, |\nu| \le s} |a^{\mu\nu}_s|
    \max_{0 \le |\mu'|, |\nu'| \le s}
    \symnorm{
      \frac{1}{P^s(\argument)P^s(\argument)}
    }^{-2sn, 1}_{\mu'\oplus\nu'}
    < \infty.
    \]
    Note that the integral is finite indeed as we were able to make
    the exponent \eqref{eq:BoundRhoM} negative enough such that the
    dependence on the compact interval $K$ disappears.
\end{proof}

We now define oscillatory integrals for symbols $F\in
\Symbol^{\bm,\brho}(\Rl^{2n},V)$ of non-compact support by extending
the integral $I_0$ defined on $\Cinfty_0(\Rl^{2n},V)$. Doing so, we
will rely in an essential manner on the preceding lemma and
Proposition~\ref{proposition:ApproximateSymbols},
\refitem{item:CinftyDenseInSymbols}, and therefore restrict to types
$\brho$ with $-1<\brho\leq 1$. The order $\bm$ will be arbitrary.

To describe the extension procedure, we consider in addition to $\bm$ and
$-1<\brho\leq1$ an auxiliary order $\bm'>\bm$ and  type
$-1<\brho'\leq\brho$ for $\mathcal Q$, and the corresponding inclusions
\begin{align*}
    I_0\colon \Cinfty_0(\Rl^{2n},V) \subset
    \Symbol^{\bm,\brho}(\Rl^{2n},V) \subset
    \Symbol^{\bm',\brho'}(\Rl^{2n},V) \longrightarrow V \,.
\end{align*}
In general, $\Cinfty_0(\Rl^{2n},V) \subset
\Symbol^{\bm,\brho}(\Rl^{2n},V)$ is not (sequentially) dense in the
$\Symbol^{\bm,\brho}$-topology. But according to
Proposition~\ref{proposition:ApproximateSymbols},
\refitem{item:CinftyDenseInSymbols}, the sequential closure of
$\Cinfty_0(\Rl^{2n},V)$ in the weaker $\Symbol^{\bm',\brho'}$-topology
contains $\Symbol^{\bm,\brho}(\Rl^{2n},V)$. Moreover, according to the
bound \eqref{eq:SymbolEstimateInull},
$I_0\colon\Cinfty_0(\Rl^{2n},V)\longrightarrow V$ is a continuous
linear map in the $\Symbol^{\bm',\brho'}$-topology. We can thus extend
$I_0$ to a continuous linear map from the sequential completion of
$\Cinfty_0(\Rl^{2n},V)$ in the $\Symbol^{\bm',\brho'}$-topology to
$V$. The restriction of this extension to
$\Symbol^{\bm,\brho}(\Rl^{2n},V)$ is our definition of oscillatory
integral on $\Symbol^{\bm,\brho}(\Rl^{2n},V)$; it is denoted by
\begin{align}\label{eq:DefImrho}
    I^{\bm,\brho}_{\bm',\brho'}\colon
    \Symbol^{\bm,\brho}(\Rl^{2n},V)
    \longrightarrow V\,.
\end{align}
\begin{theorem}\label{Theorem:OscillatoryIntegral}
    Let $V$ be a sequentially complete locally convex space with
    defining system of seminorms $\mathcal Q$, and $\bm$,
    $-1<\brho\leq1$ an order and a type for $\mathcal Q$.
    \begin{theoremlist}
    \item \label{item:IntegralIndependent} The integrals $I^{\bm,\brho} :=
        I_{\bm',\brho'}^{\bm,\brho}$ \eqref{eq:DefImrho} are independent
        of the order and type $\bm'$, $\brho'$ as long as $\bm'>\bm$ and
        $-1<\brho'\leq\brho$.
    \item \label{item:IntegralContinuuosLinear} $I^{\bm,\brho}\colon
        \Symbol^{\bm,\brho}(\Rl^{2n},V) \longrightarrow V$ is linear
        and continuous.
    \item \label{item:IntegralIsIntegral} For $F\in
        \Cinfty_0(\Rl^{2n},V)$, we have $I^{\bm,\brho}(F)=I_0(F)$.
    \item \label{item:IntegralIsConsistent} For orders $\bm,\bm'$,
        types $-1<\brho,\brho'\leq1$, and $F\in
        \Symbol^{\bm,\brho}(\Rl^{2n},V) \cap
        \Symbol^{\bm',\brho'}(\Rl^{2n},V)$, we have
        \begin{align}\label{I-restricted-to-Sm}
            I^{\bm,\brho}(F) = I^{\bm',\brho'}(F) \,.
        \end{align}
    \end{theoremlist}
\end{theorem}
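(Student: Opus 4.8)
My plan is to reduce all four assertions to a single, universal approximating sequence. For $F\in\Symbol^{\bm,\brho}(\Rl^{2n},V)$ fix a cut-off $\chi$ as in Proposition~\ref{proposition:ApproximateSymbols} and put $F_i:=\chi_{1/i}F\in\Cinfty_0(\Rl^{2n},V)$. The key point is that, since $-1<\brho\leq1$ forces $\min(1,\brho)=\brho$, Proposition~\ref{proposition:ApproximateSymbols}~\refitem{item:LimChiEpsilonVectorValued} gives $F_i\to F$ in the $\Symbol^{\bm',\brho'}$-topology for \emph{every} order $\bm'>\bm$ and \emph{every} type $\brho'\leq\brho$ simultaneously, hence for every auxiliary pair admissible in the construction \eqref{eq:DefImrho}. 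Since that construction defines $I^{\bm,\brho}_{\bm',\brho'}$ as the continuous linear extension of $I_0$ along the sequential closure of $\Cinfty_0(\Rl^{2n},V)$ in the $\Symbol^{\bm',\brho'}$-topology (and $I_0$, being linear and $\Symbol^{\bm',\brho'}$-continuous, maps Cauchy sequences to Cauchy sequences in the sequentially complete space $V$), I obtain
\[
I^{\bm,\brho}_{\bm',\brho'}(F)=\lim_{i\to\infty}I_0(\chi_{1/i}F)\,.
\]
The right-hand side does not involve $(\bm',\brho')$, which is exactly part~\refitem{item:IntegralIndependent}. Part~\refitem{item:IntegralIsIntegral} then follows by taking the constant sequence: for $F\in\Cinfty_0(\Rl^{2n},V)$ one has $I^{\bm,\brho}(F)=I_0(F)$.

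Next I would treat part~\refitem{item:IntegralContinuuosLinear}. Linearity is inherited from $I_0$ and the linearity of the extension. For continuity, the one place that needs care is that $\Cinfty_0(\Rl^{2n},V)$ is in general \emph{not} dense in $\Symbol^{\bm,\brho}(\Rl^{2n},V)$ for its own topology, so the bound \eqref{eq:SymbolEstimateInull} cannot be propagated along the $\Symbol^{\bm,\brho}$-topology directly; instead I route through the weaker auxiliary topology. Fix $\bm'>\bm$ and $-1<\brho'\leq\brho$. Lemma~\ref{lemma:InullSymbolContinuity} applied with order $\bm'$ and type $\brho'$ yields, for each $\halbnorm{q}\in\mathcal Q$, constants $c>0$ and $N\in\Nl_0$ with $\halbnorm{q}(I_0(G))\leq c\sum_{|\mu|\leq N}\symnorm{G}^{\bm',\brho'}_{\halbnorm{q},\mu}$ for all $G\in\Cinfty_0(\Rl^{2n},V)$. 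Taking $G=\chi_{1/i}F$ and letting $i\to\infty$ (so that $\symnorm{\chi_{1/i}F}^{\bm',\brho'}_{\halbnorm{q},\mu}\to\symnorm{F}^{\bm',\brho'}_{\halbnorm{q},\mu}$ by the convergence above) gives $\halbnorm{q}(I^{\bm,\brho}(F))\leq c\sum_{|\mu|\leq N}\symnorm{F}^{\bm',\brho'}_{\halbnorm{q},\mu}$; finally, since $\bm\leq\bm'$ and $\brho\geq\brho'$, the monotonicity estimate \eqref{eq:SymbolNormViaSymbolNorm} from Proposition~\ref{proposition:SymbolFirstProperties}~\refitem{item:SymbolsInSymbols} gives $\symnorm{F}^{\bm',\brho'}_{\halbnorm{q},\mu}\leq\symnorm{F}^{\bm,\brho}_{\halbnorm{q},\mu}$, so that $\halbnorm{q}(I^{\bm,\brho}(F))\leq c\sum_{|\mu|\leq N}\symnorm{F}^{\bm,\brho}_{\halbnorm{q},\mu}$. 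As these are (up to a fixed $\halbnorm{q}$) among the seminorms defining the $\Symbol^{\bm,\brho}$-topology, this proves continuity.

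For the consistency statement~\refitem{item:IntegralIsConsistent} I would again feed one sequence into both integrals. Given $F\in\Symbol^{\bm,\brho}(\Rl^{2n},V)\cap\Symbol^{\bm',\brho'}(\Rl^{2n},V)$ with $-1<\brho,\brho'\leq1$, choose $(\bm'',\brho'')$ with $\bm''>\max\{\bm,\bm'\}$ and $-1<\brho''\leq\min\{\brho,\brho'\}$. This pair is admissible as auxiliary data both for $I^{\bm,\brho}$ and for $I^{\bm',\brho'}$, and Proposition~\ref{proposition:ApproximateSymbols}~\refitem{item:LimChiEpsilonVectorValued} applies from either side, so $\chi_{1/i}F\to F$ in the $\Symbol^{\bm'',\brho''}$-topology. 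Using part~\refitem{item:IntegralIndependent} twice,
\[
I^{\bm,\brho}(F)=I^{\bm,\brho}_{\bm'',\brho''}(F)=\lim_{i\to\infty}I_0(\chi_{1/i}F)=I^{\bm',\brho'}_{\bm'',\brho''}(F)=I^{\bm',\brho'}(F)\,.
\]

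Overall I expect the argument to be short once the universal sequence $\chi_{1/i}F$ is isolated; the only genuine obstacle is the continuity claim in~\refitem{item:IntegralContinuuosLinear}, where the failure of density of $\Cinfty_0(\Rl^{2n},V)$ in the $\Symbol^{\bm,\brho}$-topology must be circumvented by establishing the seminorm bound first in the weaker $\Symbol^{\bm',\brho'}$-topology and only then transferring it via \eqref{eq:SymbolNormViaSymbolNorm}.
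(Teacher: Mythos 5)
Your proposal is correct and follows essentially the same route as the paper: extension of $I_0$ by continuity along the weaker auxiliary $\Symbol^{\bm',\brho'}$-topology, using Lemma~\ref{lemma:InullSymbolContinuity}, Proposition~\ref{proposition:ApproximateSymbols}~\refitem{item:LimChiEpsilonVectorValued}, and the monotonicity estimate \eqref{eq:SymbolNormViaSymbolNorm}. The only (harmless) streamlining is that you fix the single canonical sequence $\chi_{1/i}F$, which converges in every admissible auxiliary topology and makes part~\refitem{item:IntegralIndependent} immediate, whereas the paper compares two arbitrary approximating sequences via a third, even weaker $\Symbol^{\bm''',\brho'''}$-topology; both arguments rest on the same well-definedness of the continuous extension.
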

\begin{proof}
    For the first part, let $\bm',\bm''$ be orders and
    $\brho',\brho''$ types for $\mathcal Q$, with $\bm',\bm''>\bm$ and
    $-1<\brho',\brho''\leq\brho$, and $F\in
    \Symbol^{\bm,\brho}(\Rl^{2n},V)$. We have to show $I
    _{\bm',\brho'}^{\bm,\brho}(F)=
    I_{\bm'',\brho''}^{\bm,\brho}(F)$. By the above construction of
    these maps, there exist sequences $\{F_n'\},\{F_n''\} \subset
    \Cinfty_0$ converging to $F$ in the topology of
    $\Symbol^{\bm',\brho'}(\Rl^{2n},V)$ and
    $\Symbol^{\bm'',\brho''}(\Rl^{2n},V)$, respectively, and
    \begin{align}\label{eq:Immprime}
        I _{\bm',\brho'}^{\bm,\brho}(F)=\lim_{n\to\infty}I_0(F_n')
        \,,\qquad
        I_{\bm'',\brho''}^{\bm,\brho}(F)=\lim_{n\to\infty}I_0(F_n'')
        \,.
    \end{align}
    To show that these limits coincide, let $\bm'''$, $\brho'''$ be an
    order and type with $\bm'''\geq\bm',\bm''$, and
    $-1<\brho'''\leq\brho',\brho''$. Fixing a seminorm $\qn\in\mathcal
    Q$, we can use the bound \eqref{eq:SymbolEstimateInull} and
    \eqref{eq:SymbolNormViaSymbolNorm} to estimate with some constants
    $c>0$, $N\in\Nl_0$,
    \begin{align*}
        \qn(I_0(F_n')-I_0(F_n''))
        &\leq
        c\sum_{|\mu|\leq N}
        \|F_n'-F_n''\|^{\bm''',\brho'''}_{\qn,\mu}
        \\
        &\leq
        c\sum_{|\mu|\leq N} \left(
            \|F_n'-F\|^{\bm''',\brho'''}_{\qn,\mu}
            +
            \|F-F_n''\|^{\bm''',\brho'''}_{\qn,\mu}
        \right)
        \\
        &\leq
        c\sum_{|\mu|\leq N} \left(
            \|F_n'-F\|^{\bm',\brho'}_{\qn,\mu}
            +
            \|F-F_n''\|^{\bm'',\brho''}_{\qn,\mu}
        \right)
        \,.
    \end{align*}
    In view of the approximation properties of the sequences
    $\{F_n'\}$, $\{F_n''\}$, the last expression converges to zero for
    $n\to\infty$, {\em i.e.}, we have $\qn(I_0(F_n')-I_0(F_n''))\to
    0$. Since $\qn $ was arbitary, \eqref{eq:Immprime} now gives $I
    _{\bm',\brho'}^{\bm,\brho}(F)=I_{\bm'',\brho''}^{\bm,\brho}(F)$. From
    now on, we write $I^{\bm,\brho}:=I_{\bm',\brho'}^{\bm,\brho}$ for
    this integral.  For part~\refitem{item:IntegralContinuuosLinear},
    by construction, $I^{\bm,\brho}\colon \Symbol^{\bm,\brho}\to V$ is
    a linear map which is continuous in the
    $\Symbol^{\bm',\brho'}$-topology for $\bm'>\bm$ and
    $-1<\brho'\leq\brho$. But as the topology of
    $\Symbol^{\bm,\brho}(\Rl^{2n},V)$ is stronger than that of
    $\Symbol^{\bm',\brho'}(\Rl^{2n},V)$, this map is continuous in the
    $\Symbol^{\bm,\brho}$-topology as well.  By the very definition of
    $I^{\bm,\brho}$, we have $I^{\bm,\brho}(F)=I_0(F)$ for $F\in
    \Cinfty_0(\Rl^{2n},V)$, {\em i.e.}
    \refitem{item:IntegralIsIntegral} holds. It remains to check
    \refitem{item:IntegralIsConsistent}, and to this end, we consider
    an order $\bm''>\bm,\bm'$ and type
    $-1<\brho''\leq\brho,\brho'$. Then for any $F\in
    \Symbol^{\bm,\brho}(\Rl^{2n},V)\cap
    \Symbol^{\bm',\brho'}(\Rl^{2n},V)$, there exists a sequence
    $\{F_n\}\subset \Cinfty_0(\Rl^{2n},V)$ converging to $F$ in the
    topology of $\Symbol^{\bm'',\brho''}(\Rl^{2n},V)$, and in view of
    \refitem{item:IntegralIndependent}, we have
    \begin{align}
        I^{\bm,\brho}(F)
        =
        I^{\bm,\brho}_{\bm'',\brho''}(F)
        =
        \lim_{n\to\infty}I_0(F_n)
        =
        I^{\bm',\brho'}_{\bm'',\brho''}(F)
        =
        I^{\bm',\brho'}(F)
        \,.
    \end{align}
    This proves \eqref{I-restricted-to-Sm}.
\end{proof}

The compatability \eqref{I-restricted-to-Sm} of the integral maps
$I^{\bm,\brho}$ with the structure of the symbol spaces allows us to
consistently define an oscillatory integral on the space
$\underline{\Symbol}(\Rl^{2n},V)$, see \eqref{eq:IntegrableSymbols},
consisting of all $\Symbol^{\bm,\brho}(\Rl^{2n},V)$, with arbitary
orders $\bm$ and types $-1<\brho\leq 1$.
\begin{definition}
    The oscillatory integral is the linear map $I\colon
    \underline{\Symbol}(\Rl^{2n},V)\to V$ uniquely determined by
    $I|_{\Symbol^{\bm,\brho}(\Rl^{2n},V)}:=I^{\bm,\brho}$,
    $-1<\brho\leq1$. If the target space or the domain of integration
    needs to be emphasized, we write more precisely $I_V$ or
    $I_{\Rl^{2n},V}$ instead of $I$. We also use the symbolic notation
    \begin{align}
        (2\pi)^{-n}\int_{\Rl^{2n}} dp\,dx\,e^{i\SP{p,x}}F(p,x)
        :=
        I(F)\,.
    \end{align}
\end{definition}

Note that according to the discussion in
Section~\ref{subsec:FurtherPropertiesSymbols}, the space
$\IntSymbol(\Rl^{2n},V)$ and the oscillatory integral $I$ do not
depend on a choice of defining system $\mathcal Q$ of seminorms, but
are intrinsically defined.

\subsection{Calculational rules for the oscillatory integral}

We now derive the main properties of the integral $I$. To begin with,
we note how oscillatory integrals can be computed in practice.
\begin{proposition}\label{proposition:integral-explicit}
    \begin{propositionlist}
    \item \label{item:IntegralExplicit} Let $F\in
        \underline{\Symbol}(\Rl^{2n},V)$, $p_0,x_0\in\Rl^n$, and
        $\chi\in \Cinfty_0(\Rl^{2n},\Rl)$ with $\chi(p,x)=1$ for
        $(p,x)$ in some open neighborhood of $(0,0)$. Then the
        oscillatory integral of $F$ is the limit of Riemann integrals
        \begin{align}\label{integral-explicit}
            I(F)
            &=
            (2\pi)^{-n}\lim_{\eps\to0}
            \int_{\mathbb{R}^{2n}}
            dp\, dx \, e^{i\SP{p,x}}\chi(\eps
            (p-p_0),\eps (x-x_0))F(p,x)\,.
        \end{align}
    \item \label{item:IntegralIsRiemann} Let $\mathcal Q$ be a
        defining system of seminorms on $V$, with order $\bm$ and type
        $\brho$ such that there exist constants $C_1,C_2\in\Rl$
        satisfying
        \begin{align}\label{eq:m-rho-bounded}
            \bm(\qn)\leq C_1\,,\qquad 1\geq \brho(\qn)\geq C_2 > -1
        \end{align}
        for all $\qn\in\mathcal Q$. Then there exists $s\in\Nl$, and
        $b_{\mu\nu}\in\Cl$, $\mu,\nu\in\Nl_0^n$, $|\mu|,|\nu|\leq s$,
        such that for all $F\in \Symbol^{\bm,\brho}(\Rl^{2n},V)$, the
        oscillatory integral is given by a convergent Riemann integral
        \begin{align}\label{I0-as-Riemann}
            I(F)
            &=
            \sum_{|\mu|,|\nu|\leq s}
            b_{\mu\nu}
            \int_{\mathbb{R}^{2n}}
            dp\, dx \, e^{i\SP{p,x}}
            \frac{\partial^{|\mu|}}{\partial p^\mu}\frac{\partial^{|\nu|}}{\partial x^\nu}
            \left(
                \frac{F(p,x)}{\prod_{k=1}^n (i+p_k)^s(i+x_k)^s}
            \right)
            .
        \end{align}
    \end{propositionlist}
\end{proposition}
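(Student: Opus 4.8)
The plan is to prove both parts by reducing the oscillatory integral back to honest Riemann integrals, using the density of $\Cinfty_0$ established in Proposition~\ref{proposition:ApproximateSymbols}, \refitem{item:CinftyDenseInSymbols}, together with the continuity of $I^{\bm,\brho}$ from Theorem~\ref{Theorem:OscillatoryIntegral}, \refitem{item:IntegralContinuuosLinear}, and the explicit integration-by-parts identity \eqref{eq:QsExpPsPsExp}.

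\medskip

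\noindent\textbf{Part \refitem{item:IntegralExplicit}.} First I would observe that the cut-off functions appearing in \eqref{integral-explicit} are of the form $\tau_{(p_0,x_0)}^*\chi_\eps$ in the notation of Corollary~\ref{corollary:ApproximateSymbolsSomehow}, after the obvious identification of $\Rl^{2n}$ with the domain. Picking any order $\bm'>\bm$ and type $-1<\brho'\leq\min(1,\brho)$ (which is possible since $\brho\leq1$), Corollary~\ref{corollary:ApproximateSymbolsSomehow} gives that $\tau_{(p_0,x_0)}^*\chi_\eps\cdot F \longrightarrow F$ in the $\Symbol^{\bm',\brho'}$-topology as $\eps\to0$. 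Each function $\tau_{(p_0,x_0)}^*\chi_\eps\cdot F$ lies in $\Cinfty_0(\Rl^{2n},V)$, so by Theorem~\ref{Theorem:OscillatoryIntegral}, \refitem{item:IntegralIsIntegral}, its oscillatory integral is the Riemann integral $I_0(\tau_{(p_0,x_0)}^*\chi_\eps\cdot F)$, which is exactly the integral appearing under the limit in \eqref{integral-explicit}. Since $I^{\bm,\brho}=I^{\bm,\brho}_{\bm',\brho'}$ is continuous in the $\Symbol^{\bm',\brho'}$-topology, we may pass to the limit and conclude $I(F)=\lim_{\eps\to0}I_0(\tau_{(p_0,x_0)}^*\chi_\eps\cdot F)$, which is \eqref{integral-explicit}. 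This part is essentially bookkeeping once one recognises the cut-offs as translated dilations.

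\medskip

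\noindent\textbf{Part \refitem{item:IntegralIsRiemann}.} Here the idea is to carry out the integration by parts \emph{outside} the compactly supported setting. Under the uniform bounds \eqref{eq:m-rho-bounded}, I would choose $s\in\Nl$ large enough (depending only on $C_1,C_2,n$) so that, for every $\qn\in\mathcal Q$ and all $|\mu|,|\nu|\leq 2sn$,
\begin{equation*}
    \bm(\qn) - 2sn - \brho(\qn)|\mu\oplus\nu| < -2(n+1),
\end{equation*}
which is possible uniformly because $\bm(\qn)\leq C_1$ and $\brho(\qn)\geq C_2>-1$; this is exactly the estimate \eqref{eq:BoundRhoM} used in the proof of Lemma~\ref{lemma:InullSymbolContinuity}. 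Set $P(y)=\prod_{k=1}^n(i+y_k)$ and let $Q_s^\Trans=\sum_{|\mu|,|\nu|\leq s}b_{\mu\nu}\partial_p^\mu\partial_x^\nu$ with $b_{\mu\nu}=(-1)^{|\mu|+|\nu|}a_s^{\mu\nu}$ be the transpose of the operator $Q_s$ from \eqref{eq:Qs}--\eqref{eq:QsExpPsPsExp}. For $F\in\Cinfty_0(\Rl^{2n},V)$ the identity $(*)$ in the proof of Lemma~\ref{lemma:InullSymbolContinuity} already gives
\begin{equation*}
    I_0(F)=\frac{1}{(2\pi)^n}\int_{\Rl^{2n}}dp\,dx\,e^{i\SP{p,x}}\,Q_s^\Trans\!\left(\frac{F(p,x)}{P^s(x)P^s(p)}\right),
\end{equation*}
i.e.\ the right-hand side of \eqref{I0-as-Riemann} evaluated on $F$. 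Now for general $F\in\Symbol^{\bm,\brho}(\Rl^{2n},V)$, both sides are to be compared: the left-hand side is $I(F)=I^{\bm,\brho}(F)$, and the right-hand side is an absolutely convergent Riemann integral because, by Corollary~\ref{corollary:SymbolModuleStuff}, \refitem{item:SymbolModule} and Lemma~\ref{lemma:ScalarSymbolPower}, the integrand $Q_s^\Trans(F/(P^sP^s))$ lies in $\Symbol^{\bm-2sn,\brho}(\Rl^{2n},V)$ — more precisely each $\qn$-seminorm of the integrand is bounded by $(1+\|(p,x)\|^2)^{-(n+1)}$ times a finite constant, by the choice of $s$ and the product estimate \eqref{eq:EstimateProductSymbol}. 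To transfer the identity from $\Cinfty_0$ to all of $\Symbol^{\bm,\brho}$, I would approximate: choose $\bm'>\bm$, $-1<\brho'\leq\min(1,\brho)$ still satisfying a bound of the form \eqref{eq:m-rho-bounded} with (slightly larger) constants, and a sequence $F_j\in\Cinfty_0(\Rl^{2n},V)$ with $F_j\to F$ in the $\Symbol^{\bm',\brho'}$-topology (Proposition~\ref{proposition:ApproximateSymbols}, \refitem{item:CinftyDenseInSymbols}). Then $I_0(F_j)=I^{\bm,\brho}_{\bm',\brho'}(F_j)\to I^{\bm,\brho}(F)=I(F)$ by continuity, while on the right-hand side the map $F\mapsto Q_s^\Trans(F/(P^sP^s))$ is continuous from $\Symbol^{\bm',\brho'}$ into a symbol space of order below $-2(n+1)$ (again by Proposition~\ref{proposition:PartialDerivativesOnSymbols}, Corollary~\ref{corollary:SymbolModuleStuff}, and the $s$-bound), and on that latter space the Riemann-integral map $G\mapsto\int e^{i\SP{p,x}}G$ is continuous because its integrand is dominated by an integrable function uniformly; hence the right-hand sides converge too, and to the right-hand side of \eqref{I0-as-Riemann} evaluated at $F$. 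Equality of the limits gives \eqref{I0-as-Riemann}.

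\medskip

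\noindent The main obstacle is the last convergence step: one must verify that the Riemann integral $G\mapsto\int_{\Rl^{2n}}e^{i\SP{p,x}}G(p,x)\,dp\,dx$ is continuous on the relevant symbol space of strongly negative order, uniformly in the seminorm $\qn$, so that $\int e^{i\SP{p,x}}Q_s^\Trans(F_j/(P^sP^s))\to\int e^{i\SP{p,x}}Q_s^\Trans(F/(P^sP^s))$ in $V$. This requires exactly the uniform bound \eqref{eq:m-rho-bounded}: without it, the order $\bm(\qn)$ could grow with $\qn$, no single $s$ would work for all seminorms, and the integrand need not be dominated by a fixed integrable function. With the uniform bound in hand, the dominated-convergence-type argument for Riemann integrals (using compactly supported approximants or simply the explicit majorant $\const\cdot(1+\|(p,x)\|^2)^{-(n+1)}$) goes through, and one should also remark that the resulting $s$ and coefficients $b_{\mu\nu}$ depend only on $C_1,C_2,n$ and not on the individual $F$, as claimed.
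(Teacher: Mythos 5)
Your proof is correct, and part \refitem{item:IntegralExplicit} is essentially identical to the paper's argument: recognize the cutoffs as $\tau_{(p_0,x_0)}^*\chi_\eps$, invoke Corollary~\ref{corollary:ApproximateSymbolsSomehow} for convergence in a weaker $\Symbol^{\bm',\brho'}$-topology, and use the defining continuity of $I^{\bm,\brho}_{\bm',\brho'}$. For part \refitem{item:IntegralIsRiemann} you use the same key ingredients as the paper — the uniform choice of $s$ made possible by \eqref{eq:m-rho-bounded} so that \eqref{eq:BoundRhoM} holds for all $\qn$ simultaneously, and the integration by parts against $Q_s$ from \eqref{eq:QsExpPsPsExp} — but you organize the limiting step differently. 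The paper sticks with the concrete approximants $\chi_\eps F$, expands $Q_s^\Trans$ by Leibniz' rule, observes that every term carrying a derivative of $\chi$ comes with a factor of $\eps$ and so vanishes, and handles the remaining term by dominated convergence. You instead prove that the right-hand side of \eqref{I0-as-Riemann} defines a continuous linear functional on the relevant symbol spaces (via the explicit majorant $(1+\|(p,x)\|^2)^{-(n+1)}$ on a space of uniformly negative order) and then pass to the limit along an arbitrary $\Cinfty_0$-approximating sequence. Both routes work; yours has the small advantage of making explicit that the auxiliary $\bm',\brho'$ must again satisfy uniform bounds of the form \eqref{eq:m-rho-bounded} for a single $s$ to serve all seminorms — a point the paper's proof passes over silently — at the cost of a slightly longer continuity verification. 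The only loose end is cosmetic: your $b_{\mu\nu}$ should absorb the normalization $(2\pi)^{-n}$ to match \eqref{I0-as-Riemann} as stated.
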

\begin{proof}
    Fixing a defining system of seminorms $\mathcal Q$ on $V$, we
    consider a symbol $F\in \Symbol^{\bm,\brho}(\Rl^{2n},V)$ for some
    order $\bm$ and type $-1<\brho\leq1$ for $\mathcal
    Q$. Furthermore, let $\bm',\brho'$ be an auxiliary order and type
    for $\mathcal Q$ such that $\bm'>\bm$ and $-1<\brho'\leq\brho$. It
    has been shown in
    Corollary~\ref{corollary:ApproximateSymbolsSomehow} that
    $(\chi_\eps F)(p,x):=\chi(\eps (p-p_0),\eps (x-x_0))F(p,x)$
    converges to $F$ in the topology of
    $\Symbol^{\bm',\brho'}(\Rl^{2n},V)$ as $\eps\to 0$. Since
    $\chi_\eps F \in \Cinfty_0(\Rl^{2n},V)$, the formula
    $I(F)=\lim_{\eps \to 0} I_0(\chi_\eps F)$
    \eqref{integral-explicit} holds by definition of $I$ as the
    $\Symbol^{\bm,\brho}$-continuous extension of $I_0$. This proves
    the first part.  The second part is basically a corollary of the
    proof of Lemma~\ref{lemma:InullSymbolContinuity}: One first checks
    that if \eqref{eq:m-rho-bounded} holds, then there exists
    $s\in\Nl_0$ such that the inequality \eqref{eq:BoundRhoM} is valid
    for all $\qn\in\mathcal Q$ for the same value of $s$.  Using a
    cutoff function $\chi$ as in the first part of this proposition,
    we can then apply the arguments in the proof of
    Lemma~\ref{lemma:InullSymbolContinuity} to $\chi_\eps
    F\in\Cinfty_0(\Rl^{2n},V)$ to conclude that there exist
    coefficients $b_{\mu\nu}$ such that
    \begin{align}\label{int1}
        I_0(\chi_\eps F)
        &=
        \sum_{|\mu|,|\nu|\leq s} b_{\mu\nu}\int_{\mathbb{R}^{2n}}
        dp\, dx\,e^{i\SP{p,x}}
        \frac{\partial^{|\mu|}}{\partial p^\mu}\frac{\partial^{|\nu|}}{\partial
          x^\nu} \frac{\chi(\eps p,\eps
          x)F(p,x)}{\prod_{k=1}^n (i+p_k)^s(i+x_k)^s}
        \,.
    \end{align}
    To control the limit $\eps\to0$, we again use the same arguments
    as in Lemma~\ref{lemma:InullSymbolContinuity}: For any seminorm
    $\qn$, we find an integrable upper bound to
    $(p,x)\mapsto\qn\left(\partial_p^{\mu}\partial_x^{\nu}
        P^{-s}(p)P^{-s}(x)F(p,x)\right)$.  This allows us to carry out
    the limit $\eps\to0$ in \eqref{int1}. Namely, applying Leibniz'
    rule, we see that all terms in \eqref{int1} which contain
    derivatives of $\chi$, and hence factors of $\eps$, converge to
    zero as $\eps\to0$ because the derivatives of $F$ and the damping
    factors are bounded in each seminorm $\qn$. Only the term with no
    derivatives on $\chi$ remains, and as this has an integrable upper
    bound, and $\chi(0,0)=1$, we obtain the claimed formula
    \eqref{I0-as-Riemann} for $I(F)=\lim_{\eps\to0}I_0(\chi_\eps F)$.
\end{proof}

If $V$ is a Banach space and $\mathcal Q$ consists of just its norm,
then \eqref{eq:m-rho-bounded} is clearly satisfied for any order $m$,
and any admissible type $-1<\rho\leq1$. So in this case, the
oscillatory integrals can always be reformulated as improper Riemann
integrals. But if $\mathcal Q$ is infinite, and $\bm$ unbounded, this
is no longer the case.  Nonetheless, also in this general situation,
oscillatory integrals exhibit many of the familiar properties of
Riemann integrals. In particular, they are compatible with continuous
linear maps, and the usual rules of substitution and integration by
parts still apply, as we now show in the following lemmas and
propositions.
\begin{lemma}\label{lemma:AinI}
    Let $V,U$ be sequentially complete locally convex spaces, $A\colon
    V\longrightarrow U$ a continuous linear map, and
    $F\in\underline{\Symbol}(\Rl^{2n},V)$. Then $AF\colon (p,x)\mapsto
    AF(p,x)$ is a symbol in $\underline{\Symbol}(\Rl^{2n},U)$, and
    \begin{align}
        A\, I_V(F) = I_U(A F)
        \,.
    \end{align}
\end{lemma}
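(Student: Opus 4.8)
The plan is to reduce everything to the already-established continuity of $A$ on symbol spaces and the defining approximation property of the oscillatory integral. First I would observe that, since $F \in \underline{\Symbol}(\Rl^{2n},V)$, there exist an order $\bm$ and a type $-1 < \brho \le 1$ with $F \in \Symbol^{\bm,\brho}(\Rl^{2n},V)$. Applying Proposition~\ref{proposition:LinearMapsOnSymbols}: given a defining system $\mathcal Q'$ of seminorms for $U$ (which I may take filtrating, using Proposition~\ref{proposition:ExtendSymbolMaxMinSystem}) and a defining filtrating system $\mathcal Q$ for $V$, continuity of $A$ gives for each $\halbnorm{q}' \in \mathcal Q'$ a $\halbnorm{q} \in \mathcal Q$ with $\halbnorm{q}'(Av) \le c\,\halbnorm{q}(v)$; defining the order and type on $U$ by $\bm'(\halbnorm{q}') := \bm(\halbnorm{q})$ and $\brho'(\halbnorm{q}') := \brho(\halbnorm{q})$ makes the hypothesis \eqref{eq:qprimeAvqvSymbolStuff} hold (with equalities in the last two requirements). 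Hence $A$ maps $\Symbol^{\bm,\brho}(\Rl^{2n},V)$ continuously into $\Symbol^{\bm',\brho'}(\Rl^{2n},U)$, and since $\brho'$ inherits $-1 < \brho' \le 1$ from $\brho$, the image $AF$ lies in $\underline{\Symbol}(\Rl^{2n},U)$. This settles the first assertion.

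For the identity $A\,I_V(F) = I_U(AF)$, I would first note it is trivial for $F \in \Cinfty_0(\Rl^{2n},V)$: there $I_V(F) = I_0(F)$ and $I_U(AF) = I_0(AF)$ are honest Riemann integrals, and by linearity and continuity of $A$ (which commutes with Riemann sums and their limits, $V$ being sequentially complete), $A\,I_0(F) = I_0(AF)$. For general $F \in \Symbol^{\bm,\brho}(\Rl^{2n},V)$, pick an auxiliary $\bm'' > \bm$, $-1 < \brho'' \le \brho$, and use Proposition~\ref{proposition:ApproximateSymbols}~\refitem{item:CinftyDenseInSymbols} (e.g.\ with the cutoffs $\chi_\eps$ of Corollary~\ref{corollary:ApproximateSymbolsSomehow}) to obtain a sequence $F_j \in \Cinfty_0(\Rl^{2n},V)$ with $F_j \to F$ in the $\Symbol^{\bm'',\brho''}$-topology. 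By Theorem~\ref{Theorem:OscillatoryIntegral}~\refitem{item:IntegralContinuuosLinear} (and \refitem{item:IntegralIndependent}), $I_V(F) = \lim_j I_0(F_j)$ in $V$. Applying the continuous map $A$ yields $A\,I_V(F) = \lim_j A\,I_0(F_j) = \lim_j I_0(AF_j)$. On the other hand, $AF_j \to AF$ in $\Symbol^{\bm'',\brho''}(\Rl^{2n},U)$ by continuity of the induced map $A$ on symbol spaces (the auxiliary order/type on $U$ being constructed from $\bm'',\brho''$ exactly as above), and $AF_j \in \Cinfty_0(\Rl^{2n},U)$, so by the definition of $I_U$ as the continuous extension of $I_0$, $I_U(AF) = \lim_j I_0(AF_j)$. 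Comparing the two limits gives $A\,I_V(F) = I_U(AF)$.

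The only mild subtlety — the "hard part," though it is really just bookkeeping — is matching up the auxiliary orders and types on $V$ and $U$ so that the same sequence $F_j$ simultaneously witnesses convergence in $\Symbol^{\bm'',\brho''}(\Rl^{2n},V)$ and makes $AF_j$ converge in an admissible symbol topology on $U$; this is handled by the construction $\bm'(\halbnorm{q}') = \bm(\halbnorm{q})$, $\brho'(\halbnorm{q}') = \brho(\halbnorm{q})$ above, which automatically transports the strict inequality $\bm'' > \bm$ and the bound $-1 < \brho'' \le \brho$ to the corresponding relations on $U$. One should also record that $I_U(AF)$ does not depend on the choice of auxiliary order/type, which is exactly Theorem~\ref{Theorem:OscillatoryIntegral}~\refitem{item:IntegralIndependent}, so the value computed via $\bm'',\brho''$ is the correct one. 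Everything else is a routine application of sequential continuity of $A$ and of the two oscillatory integrals.
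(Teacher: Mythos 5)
Your proposal is correct and follows essentially the same route as the paper, whose proof is exactly the one-line sketch ``the identity holds for compactly supported $F$, then extend by the usual continuity and approximation argument''; you have simply filled in the details (the transport of order and type to $U$ via Proposition~\ref{proposition:LinearMapsOnSymbols}, and the matching of auxiliary topologies so that one approximating sequence works on both sides). No gaps.
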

\begin{proof}
    First we note that the equation holds for compactly supported
    $F$. But then the usual continuity and approximation argument
    shows that the equation also holds for arbitrary $F$.
\end{proof}
\begin{remark}
    If we consider an {\em antilinear} continuous map $C:V\to U$
    instead, the only difference to the above described situation is
    that the oscillating factor $e^{i\SP{p,x}}$ has to be
    conjugated. This conjugation can be compensated by a variable
    substitution $p\to-p$ in the integrals. So in this case, we have,
    $F\in \IntSymbol(\Rl^{2n},V)$,
    \begin{align}\label{antilinear-map-in-I}
        C I_V(F) = I_U(C\, F_-)
        \quad\textrm{with}\quad
        F_-(p,x):=F(-p,x)\,.
    \end{align}
\end{remark}
\begin{lemma}\label{lemma:affine-substitutions}
    Let $q,y\in\Rl^n$, $A\in\GL(n,\Rl)$, and denote by $A^T$ the
    transpose of $A$ with respect to the chosen inner product on
    $\Rl^n$. Then, for any $F\in \IntSymbol(\Rl^{2n},V)$, the
    functions
    \begin{align}
        F_{q,y,A}(p,x) &:= e^{-i\SP{p,y}}F(Ap+q,x)\,,\\
        F^{q,y,A}(p,x) &:= e^{-i\SP{q,x}}F(p,A^Tx+y)\,,
    \end{align}
    are elements of $\IntSymbol(\Rl^{2n},V)$ as well, and
    \begin{align}
        I(F_{q,y,A}) = I(F^{q,y,A})\,.
    \end{align}
\end{lemma}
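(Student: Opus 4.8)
The statement asserts invariance of the oscillatory integral under affine changes of the integration variables, paired with a compensating change of the oscillatory phase. The natural strategy is the one already used for Lemmas~\ref{lemma:AinI} and its remark: first establish the identity on the dense subspace $\Cinfty_0(\Rl^{2n},V)\subset\IntSymbol(\Rl^{2n},V)$ by an honest change of variables in a genuine Riemann integral, and then pass to general symbols by continuity of $I$ together with the approximation results of Proposition~\ref{proposition:ApproximateSymbols}~\refitem{item:CinftyDenseInSymbols}. Before that, however, one must check that $F_{q,y,A}$ and $F^{q,y,A}$ actually lie in $\IntSymbol(\Rl^{2n},V)$.

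First I would verify the symbol property. Fix $F\in\Symbol^{\bm,\brho}(\Rl^{2n},V)$ with $-1<\brho\leq1$. The map $(p,x)\mapsto(Ap+q,x)$ is an affine transformation of $\Rl^{2n}$, so $(p,x)\mapsto F(Ap+q,x)$ is obtained from $F$ by a composition of a translation (by $(q,0)$) and a linear map in $\GL(2n,\Rl)$ of block-diagonal form $A\oplus\Unit$; by Lemma~\ref{lemma:GlnActsOnSymbols} and Lemma~\ref{lemma:TranslationsActOnSymbols} (equivalently Proposition~\ref{proposition:AffineInvarianceOfSymbols}) this lies again in $\Symbol^{\bm'',\brho}(\Rl^{2n},V)$ for a suitable order $\bm''$ with the \emph{same} type $\brho$ — the crucial point being that the type is unchanged, so that we stay in the range $-1<\brho\leq1$ and hence in $\IntSymbol$. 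The exponential prefactor $e^{-i\SP{p,y}}$ is a smooth function all of whose derivatives are bounded by polynomials (indeed its derivatives are polynomials in $y$ times $e^{-i\SP{p,y}}$, which is bounded), so it belongs to $\Symbol^{0,1}(\Rl^{2n},\Cl)$; multiplying by it preserves the symbol class and the type bound by Corollary~\ref{corollary:SymbolModuleStuff}~\refitem{item:SymbolModule}. Thus $F_{q,y,A}\in\IntSymbol(\Rl^{2n},V)$, and symmetrically $F^{q,y,A}\in\IntSymbol(\Rl^{2n},V)$.

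Next, the identity on $\Cinfty_0(\Rl^{2n},V)$. For compactly supported $F$ the integral $I_0$ is a bona fide Riemann integral, so I would compute directly:
\[
I_0(F_{q,y,A})
=
\frac{1}{(2\pi)^n}\int_{\Rl^{2n}} dp\,dx\; e^{i\SP{p,x}}e^{-i\SP{p,y}}F(Ap+q,x).
\]
Substituting $p'=Ap+q$, so $p=A^{-1}(p'-q)$ and $dp=|\det A|^{-1}dp'$, and using $\SP{p,x}-\SP{p,y}=\SP{p,x-y}=\SP{A^{-1}(p'-q),x-y}=\SP{p'-q,(A^{-1})^T(x-y)}=\SP{p'-q,(A^T)^{-1}(x-y)}$, one then substitutes $x'=(A^T)^{-1}(x-y)$... — but a cleaner route is to substitute $p'=Ap+q$ first, then $x'=x$ in a way chosen so that the surviving phase is $\SP{p',x'}$ with $F$ evaluated at $(p',A^Tx'+y)$; carrying this out, the Jacobians from the $p$- and $x$-substitutions are $|\det A|^{-1}$ and $|\det A^T|=|\det A|$ respectively and cancel, the residual exponential factor becomes $e^{-i\SP{q,x'}}$, and one lands exactly on $I_0(F^{q,y,A})$. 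The main (and only real) obstacle here is bookkeeping: getting the phase algebra and the pairing of Jacobians right, in particular using $(A^{-1})^T=(A^T)^{-1}$ and the self-adjointness of the bilinear form $\SP{\argument,\argument}$ to move $A$ across it. This is routine but must be done carefully.

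Finally, the passage to general $F\in\IntSymbol(\Rl^{2n},V)$. Pick an auxiliary order $\bm'>\bm''$ and a type $-1<\brho'\leq\brho$; by Proposition~\ref{proposition:ApproximateSymbols}~\refitem{item:CinftyDenseInSymbols} there is a sequence $F_n\in\Cinfty_0(\Rl^{2n},V)$ with $F_n\to F$ in the $\Symbol^{\bm',\brho'}$-topology, hence (multiplying by the fixed exponential symbols and applying the affine-transformation estimates of Lemmas~\ref{lemma:GlnActsOnSymbols} and~\ref{lemma:TranslationsActOnSymbols}, which are continuous) $(F_n)_{q,y,A}\to F_{q,y,A}$ and $(F_n)^{q,y,A}\to F^{q,y,A}$ in the appropriate symbol topologies. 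By Theorem~\ref{Theorem:OscillatoryIntegral}~\refitem{item:IntegralContinuuosLinear} the integral $I$ is continuous on each such symbol space and agrees with $I_0$ on $\Cinfty_0$, so
\[
I(F_{q,y,A})=\lim_{n\to\infty}I_0\big((F_n)_{q,y,A}\big)=\lim_{n\to\infty}I_0\big((F_n)^{q,y,A}\big)=I(F^{q,y,A}),
\]
where the middle equality is the already-established identity for compactly supported functions. This completes the argument; the only subtlety in this last step is making sure the transformed approximants converge in a symbol topology on which $I$ is known to be defined and continuous, which is guaranteed because the affine pullbacks and multiplication by the $\Symbol^{0,1}$ exponential factors are continuous operations preserving the admissible type range.
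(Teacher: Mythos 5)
Your proposal is correct and follows essentially the same strategy as the paper: verify membership in $\IntSymbol(\Rl^{2n},V)$ via the affine-invariance lemmas and multiplication by the scalar phase, reduce the identity to a change of variables in a genuine Riemann integral, and extend by approximation. The paper organizes the limiting step slightly differently --- it performs the substitution directly on the cutoff approximants $\chi_\eps F$ appearing in the explicit limit formula of Proposition~\ref{proposition:integral-explicit} and observes that the transformed cutoff $(p,x)\mapsto\chi(A^{-1}p,A^Tx)$ is again admissible --- whereas you transform arbitrary compactly supported approximants $F_n$ and invoke continuity of the pullback and multiplication operators; both mechanisms are sound and the needed continuity statements are all available. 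One small inaccuracy: $(p,x)\mapsto e^{-i\SP{p,y}}$ is \emph{not} in $\Symbol^{0,1}(\Rl^{2n},\Cl)$ for $y\neq 0$, since its derivatives are bounded but do not decay; it is a symbol of order $0$ and type $0$. Consequently multiplication by it sends $\Symbol^{\bm,\brho}$ into $\Symbol^{\bm,\min(\brho,0)}$ rather than preserving the type, but since $\min(\brho,0)$ still lies in $(-1,1]$ the conclusion $F_{q,y,A},F^{q,y,A}\in\IntSymbol(\Rl^{2n},V)$ and the rest of your argument go through unchanged.
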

\begin{proof}
    For $F\in\IntSymbol(\Rl^{2n},V)$, an application of
    Lemma~\ref{lemma:GlnActsOnSymbols} and
    Lemma~\ref{lemma:TranslationsActOnSymbols} shows that the
    functions $F_{q,0,A}$ and $F^{0,y,A}$ without the oscillating
    factors lie in $\IntSymbol(\Rl^{2n},V)$. But $(p,x)\mapsto
    e^{-i\SP{p,y}}$ and $(p,x)\mapsto e^{-i\SP{q,x}}$ are scalar
    symbols of type $0$ and order $0$, as is easily verified by
    differentiation. Hence Corollary~\ref{corollary:SymbolAlgebra}
    yields $F_{q,y,A},F^{q,y,A}\in\IntSymbol(\Rl^{2n},V)$.  To compare
    the oscillatory integrals of these functions, we pick $\chi\in
    \Cinfty_0(\Rl^{2n},\Rl)$ as in
    Proposition~\ref{proposition:integral-explicit}, and compute
    according to \eqref{integral-explicit}
    \begin{align*}
        (2\pi)^n I(F_{q,y,A})
        &=
        \lim_{\eps\to0}\int_{\mathbb{R}^{2n}}
        dp\,dx\,e^{i\SP{p,x}}e^{-i\SP{p,y}}\,
        \chi(\eps p,\eps x)F(Ap+q,x)
        \\
        &=
        \lim_{\eps\to0}
        |\det A|^{-1}
        \int_{\mathbb{R}^{2n}}
        dp\,dx\,e^{i\SP{A^{-1}(p-q),x}}\,
        \chi\left(\eps A^{-1}p,\eps(x+y)\right)F(p,x+y)
        \\
        &=
        \lim_{\eps\to0}
        \int_{\mathbb{R}^{2n}}
        dp\,dx\,e^{i\SP{p,x}}\,
        \chi\left(\eps A^{-1}p,\eps(A^T x+y)\right)
        \,e^{-i\SP{q,x}}F(p,A^T x+y)
        \,.
    \end{align*}
    Since also $(p,x)\mapsto \chi(A^{-1} p,A^Tx)$ is a smooth,
    compactly supported function which is equal to 1 on an open
    neighborhood of the origin, we can use
    Proposition~\ref{proposition:integral-explicit} again to conclude
    that the last line coincides with $(2\pi)^n I(F^{q,y,A})$.
\end{proof}

For the next statement, we represent the bilinear form used in the
oscillating factor as $\SP{p,x}=(p,Mx)$ with some $M\in\GL(n,\Rl)$,
$|\det M|=1$, and the standard Euclidean inner product
$(\,\cdot\,,\,\cdot\,)$ on $\Rl^n$.  The transpose of $M$ with respect
to this inner product will be denote $M^T$.
\begin{proposition}\label{proposition:integration-by-parts}
    Let $F\in \IntSymbol(\Rl^{2n},V)$ and $\mu\in\Nl_0^n$. Then
    $\partial_p^\mu F$, $\partial_x^\mu F$, $(Mx)^{\mu}F$, $(M^Tp)^\mu
    F$ lie in $\IntSymbol(\Rl^{2n},V)$, and
    \begin{align}
        \label{eq:IntegrationByParts}
        I(\partial_p^\mu F)
        &=
        (-i)^{|\mu|}I((Mx)^\mu F)
        \,,\qquad
        I(\partial_x^\mu F)
        =
        (-i)^{|\mu|}I((M^Tp)^\mu F)\,.
    \end{align}
\end{proposition}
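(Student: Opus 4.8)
The plan is to dispatch the membership claims with the symbol calculus from Section~\ref{sec:VectorValuedSymbols}, and then to obtain the two integration-by-parts identities by reducing, via the cutoff approximation, to the trivial case of compactly supported integrands. Since $\IntSymbol(\Rl^{2n},V)$ is the union of the spaces $\Symbol^{\bm,\brho}(\Rl^{2n},V)$ with $-1<\brho\leq1$, it suffices throughout to treat a fixed such $F$.

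First I would check that the four functions are symbols of an admissible type. Differentiating in the $p$- or $x$-block, Proposition~\ref{proposition:PartialDerivativesOnSymbols} gives $\partial_p^\mu F,\ \partial_x^\mu F\in\Symbol^{\bm-\brho|\mu|,\brho}(\Rl^{2n},V)$, so these are again symbols of a type in $(-1,1]$ and hence lie in $\IntSymbol(\Rl^{2n},V)$. The coordinate functions of $x\mapsto Mx$ and $p\mapsto M^Tp$ are linear, so $(Mx)^\mu$ and $(M^Tp)^\mu$ are scalar symbols in $\Symbol^{|\mu|,1}(\Rl^{2n},\Cl)$; Corollary~\ref{corollary:SymbolModuleStuff}~\refitem{item:SymbolModule} together with $\brho\leq1$ then gives $(Mx)^\mu F,\ (M^Tp)^\mu F\in\Symbol^{\bm+|\mu|,\brho}(\Rl^{2n},V)\subset\IntSymbol(\Rl^{2n},V)$.

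For the identities, fix an auxiliary order $\bm'>\bm$ and type $-1<\brho'\leq\brho$ and a cutoff $\chi\in\Cinfty_0(\Rl^{2n},\Rl)$ with $\chi=1$ near the origin, and put $(\chi_\eps F)(p,x):=\chi(\eps p,\eps x)F(p,x)\in\Cinfty_0(\Rl^{2n},V)$. By Proposition~\ref{proposition:ApproximateSymbols}~\refitem{item:LimChiEpsilonVectorValued}, $\chi_\eps F\to F$ in the $\Symbol^{\bm',\brho'}$-topology, and by Proposition~\ref{proposition:integral-explicit}~\refitem{item:IntegralExplicit} (or directly by the construction of $I$) $I(F)=\lim_{\eps\to0}I_0(\chi_\eps F)$. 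Now $\partial_p^\mu$ and multiplication by the fixed scalar symbol $(Mx)^\mu$ are continuous linear maps $\Symbol^{\bm',\brho'}(\Rl^{2n},V)\to\Symbol^{\bm'-\brho'|\mu|,\brho'}(\Rl^{2n},V)$ and $\Symbol^{\bm',\brho'}(\Rl^{2n},V)\to\Symbol^{\bm'+|\mu|,\brho'}(\Rl^{2n},V)$, so $\partial_p^\mu(\chi_\eps F)\to\partial_p^\mu F$ and $(Mx)^\mu\chi_\eps F\to(Mx)^\mu F$ in these topologies. Because $\bm'-\brho'|\mu|>\bm-\brho|\mu|$, $\bm'+|\mu|>\bm+|\mu|$, and $-1<\brho'\leq\brho$, Theorem~\ref{Theorem:OscillatoryIntegral} shows that the corresponding restrictions of $I$ are continuous in exactly these auxiliary topologies, whence
\begin{align*}
    I(\partial_p^\mu F)=\lim_{\eps\to0}I_0(\partial_p^\mu(\chi_\eps F))\,,\qquad
    I((Mx)^\mu F)=\lim_{\eps\to0}I_0((Mx)^\mu\chi_\eps F)\,,
\end{align*}
and likewise with $x$ and $M^Tp$ in place of $p$ and $Mx$. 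Thus it is enough to prove the identities for $G\in\Cinfty_0(\Rl^{2n},V)$.

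For such $G$, $I_0(G)$ is an ordinary absolutely convergent vector-valued Riemann integral and the assertion is plain integration by parts: writing $\SP{p,x}=(p,Mx)=(M^Tp,x)$ with the Euclidean inner product, one has $\partial_{p_j}e^{i\SP{p,x}}=i(Mx)_je^{i\SP{p,x}}$ and $\partial_{x_j}e^{i\SP{p,x}}=i(M^Tp)_je^{i\SP{p,x}}$, so $|\mu|$ integrations by parts give $I_0(\partial_p^\mu G)=(-i)^{|\mu|}I_0((Mx)^\mu G)$ and $I_0(\partial_x^\mu G)=(-i)^{|\mu|}I_0((M^Tp)^\mu G)$, with no boundary terms since $G$ is compactly supported and no change of variables involved. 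The step from the scalar to the vector-valued statement is routine: apply an arbitrary $\varphi\in V'$, use that it commutes with $I_0$ (Lemma~\ref{lemma:AinI} with $U=\Cl$) to reduce to Fubini and one-dimensional integration by parts, and conclude by Hahn--Banach, $V$ being Hausdorff. I expect the only mild obstacle to be the bookkeeping in the reduction step, namely choosing the auxiliary order and type so that the approximating sequence converges in a topology on which $I$ at the appropriate order and type, as well as $\partial_p^\mu$ and multiplication by $(Mx)^\mu$, are simultaneously continuous; the genuine analytic input is just the elementary identity for $I_0$.
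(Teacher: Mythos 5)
Your proposal is correct and follows essentially the same route as the paper: establish the symbol membership via Proposition~\ref{proposition:PartialDerivativesOnSymbols} and the multiplication corollaries, approximate by compactly supported symbols, track the convergence of $\partial_p^\mu$ and $(Mx)^\mu\,\cdot$ in suitable auxiliary $\Symbol^{\bm',\brho'}$-topologies so that $I$ commutes with the limit, and reduce to elementary integration by parts for the compactly supported case. The only differences are cosmetic (the explicit cutoff sequence in place of an abstract approximating sequence, and the Hahn--Banach reduction for the vector-valued $I_0$, which the paper takes for granted).
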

\begin{proof}
    For $F\in\Cinfty_0(\Rl^{2n},V)$, the claimed equations amount to
    an integration by parts against the oscillating factor
    $e^{i\SP{p,x}}=e^{i(p,Mx)}$, since
    \begin{align*}
	\partial_p^\mu e^{i(p,Mx)}
	=
	i^{|\mu|}\,(Mx)^\mu\,e^{i(p,Mx)}
	\,,
	\qquad
	\partial_x^\mu e^{i(p,Mx)}
	=
	i^{|\mu|}\,(M^Tp)^\mu\,e^{i(p,Mx)}
	\,.
    \end{align*}
    Thus we only have to show that the functions on the left and right
    hand side in \eqref{eq:IntegrationByParts} are symbols in
    $\IntSymbol(\Rl^{2n},V)$, and can be approximated by compactly
    supported symbols.  So let $F\in \Symbol^{\bm,\brho}(\Rl^{2n},V)$
    for some order $\bm$ and type $-1<\brho\leq1$ for a defining
    system of seminorms $\mathcal Q$ for $V$, and pick a sequence
    $F_n\in \Cinfty_0(\Rl^{2n},V)$ converging to $F$ in the
    $\Symbol^{\bm',\brho'}$-topology for some $\bm'>\bm$,
    $-1<\brho'\leq\brho$.  Then, according to
    Proposition~\ref{proposition:PartialDerivativesOnSymbols},
    $\partial_p^\mu F_n\to \partial_p^\mu F$ in the
    $\Symbol^{\bm'-\brho'|\mu|,\brho'}$-topology.  But as $\brho'>-1$,
    we can apply Proposition~\ref{proposition:SymbolFirstProperties}
    \refitem{item:SymbolsInSymbols} to see that this sequence also
    converges in the $\Symbol^{\bm'+|\mu|,\brho'}$-topology. Hence, by
    definition of $I$, we have $I(\partial_p^\mu F)=\lim_n
    I_0(\partial_p^\mu F_n)$.  Concerning the right hand side in
    \eqref{eq:IntegrationByParts}, note that $(Mx)^\mu$ is a scalar
    symbol of order $|\mu|$ and type $1$. Hence, using
    Corollary~\ref{corollary:SymbolAlgebra} and $\brho\leq1$, we see
    that $(Mx)^\mu F$ is a symbol of order $\bm+|\mu|$ and type
    $\brho$, and thus an element of
    $\IntSymbol(\Rl^{2n},V)$. Moreover, $(Mx)^\mu F_n \to (Mx)^\mu F$
    in the $\Symbol^{\bm'+|\mu|,\brho'}$-topology. Thus $I((Mx)^\mu
    F)=\lim_n I_0((Mx)^\mu F_n)$, and the first identity in
    \eqref{eq:IntegrationByParts} follows.  The proof of the second
    identity is completely analogous.
\end{proof}

We next compute the oscillatory integrals of symbols $(p,x)\mapsto
F(p,x)$ which are constant in either $x$ or $p$. The following result
also explains our choice of normalization factor $(2\pi)^{-n}$ in the
definition \eqref{eq:Inull} of the oscillatory integral.
\begin{proposition}\label{proposition:normalization}
    Let $F\in \IntSymbol(\Rl^{2n},V)$ satisfy $F(p,x)=F(0,x)$ or
    $F(p,x)=F(p,0)$ for all $p,x\in\Rl^n$. Then
    \begin{align}
        I(F)=F(0)\,.
    \end{align}
    In particular, constant symbols $v:(p,x)\mapsto v$, $v\in V$, have
    oscillatory integral $I(v)=v$.
\end{proposition}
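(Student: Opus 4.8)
The plan is to reduce the statement to a one‑dimensional computation via the tensor/restriction structure of symbols, and then invoke the explicit Riemann‑integral formula of Proposition~\ref{proposition:integral-explicit}. First I would treat the case $F(p,x)=F(p,0)$; the case $F(p,x)=F(0,x)$ is entirely analogous (and can in fact be obtained from it by the substitution $p\leftrightarrow x$ together with Lemma~\ref{lemma:affine-substitutions}, since $\SP{p,x}$ is nondegenerate). So suppose $F(p,x)=G(p)$ for a symbol $G\in\IntSymbol(\Rl^n,V)$, i.e.\ $G\in\Symbol^{\bm,\brho}(\Rl^n,V)$ for some order $\bm$ and some type $-1<\brho\le1$; then $(p,x)\mapsto G(p)$ is a symbol in $\Symbol^{\bm,\min(0,\brho)}(\Rl^{2n},V)$ by Proposition~\ref{proposition:SymbolProducts}\refitem{item:ProductInTwoVariables} (tensoring $G$ with the constant symbol $1\in\Symbol^{0,\rho}(\Rl^n,\Cl)$), hence lies in $\IntSymbol(\Rl^{2n},V)$ and $I(F)$ makes sense.

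Next I would use Proposition~\ref{proposition:integral-explicit}\refitem{item:IntegralExplicit} to write $I(F)$ as the limit of ordinary Riemann integrals. Choosing a product cutoff $\chi(p,x)=\phi(p)\psi(x)$ with $\phi,\psi\in\Cinfty_0(\Rl^n,\Rl)$ equal to $1$ near the origin, and taking $p_0=x_0=0$, we get
\begin{align*}
    (2\pi)^n I(F)
    &=
    \lim_{\eps\to0}\int_{\Rl^{2n}} dp\,dx\; e^{i\SP{p,x}}\,\phi(\eps p)\,\psi(\eps x)\,G(p)\,.
\end{align*}
Writing $\SP{p,x}=(p,Mx)$ with $|\det M|=1$ as in Proposition~\ref{proposition:integration-by-parts}, the substitution $x\mapsto M^{-1}x$ turns the phase into the standard pairing $(p,x)$, and for fixed $\eps$ the inner $x$‑integral is $\int_{\Rl^n}\psi(\eps M^{-1}x)\,e^{i(p,x)}\,dx = \eps^{-n}\widehat{(\psi\circ M^{-1})}(-p/\eps)$ up to constants; here one uses that for $V$‑valued integrands of this product form the Riemann integral is computed variable by variable exactly as the scalar Fourier transform, since $G(p)$ factors out. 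Thus the $p$‑integral becomes $\eps^{-n}\int \phi(\eps p)\,\widehat{(\psi\circ M^{-1})}(-p/\eps)\,G(p)\,dp$, and substituting $p\mapsto \eps p$ gives $\int \phi(\eps^2 p)\,\widehat{(\psi\circ M^{-1})}(-p)\,G(\eps p)\,dp$.

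Finally I would pass to the limit $\eps\to0$. Since $\widehat{(\psi\circ M^{-1})}$ is a Schwartz function and $G$ has at most polynomial growth (it is a symbol), dominated convergence applies seminorm‑wise: for every $\halbnorm q\in\mathcal Q$ one has $\halbnorm q\big(\phi(\eps^2 p)\widehat{(\psi\circ M^{-1})}(-p)G(\eps p)\big)\le C\,(1+\|p\|^2)^{N}|\widehat{(\psi\circ M^{-1})}(-p)|\cdot\halbnorm q\text{-bound}$, which is integrable uniformly in $\eps\le1$. Hence the limit equals $G(0)\int_{\Rl^n}\widehat{(\psi\circ M^{-1})}(-p)\,dp = (2\pi)^n\,(\psi\circ M^{-1})(0) = (2\pi)^n\,\psi(0) = (2\pi)^n$, using the Fourier inversion normalization and $\psi(0)=1$. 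Dividing by $(2\pi)^n$ yields $I(F)=G(0)=F(0)$. The statement for constant symbols is the special case $G\equiv v$. The main obstacle is the justification of the seminorm‑wise dominated convergence and of the variable‑by‑variable evaluation of the $V$‑valued Riemann integral; both are routine but must be done carefully because $V$ is only sequentially complete and the integrand is vector‑valued — the key point is that the product structure $\phi(\eps p)\psi(\eps x)G(p)$ lets one peel off the scalar Fourier computation and keep $G(p)$ as a mere multiplier, so no genuinely vector‑valued oscillatory estimate beyond Lemma~\ref{lemma:InullSymbolContinuity} is needed.
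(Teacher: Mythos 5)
Your proposal is correct and follows essentially the same route as the paper's proof: apply Proposition~\ref{proposition:integral-explicit} with a product cutoff, integrate out the variable on which $F$ does not depend to produce the (rescaled) Fourier transform of the cutoff, pass to the limit $\eps\to0$ by seminorm-wise dominated convergence, and recover the factor $(2\pi)^n$ via Fourier inversion and $\chi(0)=1$. The only cosmetic differences are that you treat the $x$-independent case first while the paper treats the $p$-independent case, and that you make the matrix $M$ explicit whereas the paper defines the Fourier transform directly with respect to $\SP{\cdot,\cdot}$.
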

\begin{proof}
    We present only the proof for the claims about a symbol $F\in
    \IntSymbol(\Rl^{2n},V)$ satisfying $F(p,x)=F(0,x)$; the arguments
    for the other case are analogous. To evaluate the oscillatory
    integral of $F$, let $\chi\in\Cinfty_0(\Rl^n,\Rl)$, with
    $\chi(x)=1$ for $|x|\leq 1$, and let
    $\tilde{\chi}(x):=(2\pi)^{-n/2}\int_{\mathbb{R}^{2n}}
    dp\,e^{i\SP{p,x}}\chi(p)$ denote the Fourier transform of $\chi$
    with respect to the chosen inner product.  Then
    Proposition~\ref{proposition:integral-explicit} can be applied
    with the product cutoff function $(p,x)\mapsto\chi(p)\chi(x)$, and
    we obtain
    \begin{align*}
        I(F)
        &=
        \lim_{\eps\to0}(2\pi)^{-n}
        \int_{\mathbb{R}^{n}} dp
        \int_{\mathbb{R}^{n}} dx
        \,e^{i\SP{p,x}}\,\chi(\eps p)\chi(\eps x)F(0,x)
        \\
        &=
        \lim_{\eps\to0}(2\pi)^{-n}\eps^{-n}\int_{\mathbb{R}^{n}} dp
        \int_{\mathbb{R}^{n}} dx
        \,e^{i\SP{p,x/\eps}}\,\chi(p)\chi(\eps x)F(0,x)
        \\
        &=
        \lim_{\eps\to0}(2\pi)^{-n/2}\eps^{-n}
        \int_{\mathbb{R}^{n}} dx\,
        \tilde{\chi}(x/\eps)\chi(\eps x)F(0,x)
        \\
        &=
        \lim_{\eps\to0}(2\pi)^{-n/2}\int_{\mathbb{R}^{n}} dx
        \,\tilde{\chi}(x)\chi(\eps^2 x)F(0,\eps x)
        \,.
    \end{align*}
    To show that this limit coincides with $F(0)$, let $\mathcal Q$ be
    a defining system of seminorms on $V$, with order $\bm$ and type
    $-1<\brho\leq1$, such that
    $F\in\Symbol^{\bm,\brho}(\Rl^{2n},V)$. For any $\qn\in\mathcal Q$,
    we have the estimate
    \begin{align*}
        &\qn\left(
            \int_{\mathbb{R}^{n}} dx
            \,\tilde{\chi}(x)\chi(\eps^2 x)F(0,\eps x)
            -
            \int_{\mathbb{R}^{n}} dx
            \,\tilde{\chi}(x)F(0)
        \right) \\
        &\qquad\leq
        \int_{\mathbb{R}^{n}} dx\,
        |\tilde{\chi}(x)|\,\qn\left(\chi(\eps^2 x)F(0,\eps x)-F(0)\right)
        \,.
    \end{align*}
    As the Fourier transform of a smooth, compactly supported
    function, $\tilde{\chi}$ is an element of $\Ss(\Rl^n,\Cl)$, and
    since $F\in \Symbol^{\bm,\brho}(\Rl^{2n},V)$, and $\chi$ is
    bounded, we easily find a scalar integrable function $g$ such that
    $|\tilde{\chi}(x)|\,\qn\left(\chi(\eps^2 x)F(\eps
        x)-F(0)\right)\leq g(x)$ for all $\eps\leq1$. Hence the limit
    $\eps\to0$ of the right hand side of the above estimate can be
    evaluated by dominated convergence, and since $\chi(0)=1$, this
    limit is zero. As $\qn$ was arbitrary, we have in the topology of
    $V$
    \begin{align*}
        I(F)
        &=
        \lim_{\eps\to0}(2\pi)^{-n/2}\int_{\mathbb{R}^{n}} dx
        \,\tilde{\chi}(x)\chi(\eps^2 x)F(0,\eps x)
        =
        (2\pi)^{-n/2}\int_{\mathbb{R}^{n}} dx
        \,\tilde{\chi}(x) F(0)
        \,.
    \end{align*}
    But in view of our normalization of the inner product
    $\SP{\,\cdot\,,\,\cdot\,}$, the inverse Fourier transform gives
    $(2\pi)^{-n/2}\int dx\,\tilde{\chi}(x)=\chi(0)=1$. So we arrive at
    the claimed identity
    \begin{align*}
        I(F)
        =
        \chi(0) F(0)
        =
        F(0)
        \,.
    \end{align*}
    The statement about constant symbols follows by choosing $F$
    constant.
\end{proof}

As the last property of oscillatory integrals needed in our
applications, we discuss a Fubini type theorem for multiple
oscillatory integrals. To this end, we consider symbols $F\in
\IntSymbol(\Rl^{2n_1}\oplus\Rl^{2n_2},V)$ depending on two pairs of
variables $(p_1,x_1), (p_2,x_2)$, with $p_j,x_j\in\Rl^{n_j}$,
$j=1,2$. Our discussion of multiple oscillatory integrals is greatly
facilitated by Proposition~\ref{proposition:symbol-valued-symbols},
stating that for $F\in\IntSymbol(\Rl^{2n_1}\oplus\Rl^{2n_2},V)$, the
maps
\begin{equation}
    \label{eq:F1F2}
    F_1 \colon \Rl^{2n_1} \longrightarrow
    \IntSymbol(\Rl^{2n_2},V)
    \quad
    \textrm{and}
    \quad
    F_2\colon \Rl^{2n_2} \longrightarrow
    \IntSymbol(\Rl^{2n_1},V),
\end{equation}
given by
\begin{equation*}
    F_1(p_1,x_1) \colon (p_2,x_2) \mapsto
    F(p_1,x_1;p_2,x_2)
    \quad
    \textrm{and}
    \quad
    F_2(p_2,x_2) \colon (p_1,x_1) \mapsto
    F(p_1,x_1;p_2,x_2),
\end{equation*}
respectively, are symbols in their own right.

Subsequently we have to distinguish different kinds of oscillatory
integrals.  On $\Rl^{2n_1}\oplus\Rl^{2n_2}$ we use the induced pairing
\begin{align}\label{sum-of-bilinear-forms}
    \SP{p_1\oplus p_2,\,x_1\oplus x_2} = \SP{p_1,x_1} + \SP{p_2,x_2}\,.
\end{align}
Hence the oscillatory integrals over symbols
$F\in\IntSymbol(\Rl^{2n_1}\oplus\Rl^{2n_2},V)$ will be carried out
with respect to the oscillating factor
$e^{i\SP{p_1,x_1}}e^{i\SP{p_2,x_2}}$, and denoted by $I$ as before. On
the other hand, oscillatory integrals over the symbols \eqref{eq:F1F2}
which are defined on $\Rl^{2n_1}$ (respectively $\Rl^{2n_2}$), and
take values in some other symbol space
$\Symbol^{\bm,\brho}(\Rl^{2n_2},V)$ (respectively
$\Symbol^{\bm,\brho}(\Rl^{2n_1},V)$), will be carried out with respect
to the oscillating factors $e^{i\SP{p_1,x_1}}$ (respectively
$e^{i\SP{p_2,x_2}}$), and denoted $\hat{I}_1$ (respectively
$\hat{I}_2$).
\begin{proposition}
    \label{proposition:Fubini}
    Let $V$ be a sequentially complete locally convex space. For any
    $F\in\IntSymbol(\Rl^{2n_1}\oplus\Rl^{2n_2},V)$, we have
    $\hat{I}_1(F_1)\in\IntSymbol(\Rl^{2n_2},V)$,
    $\hat{I}_2(F_2)\in\IntSymbol(\Rl^{2n_1},V)$, with
    \begin{align}\label{eq:fubini}
        I_2(\hat{I}_1(F_1)) = I_1(\hat{I}_2(F_2)) = I(F) \,.
    \end{align}
    Equivalently,
    \begin{align*}
        &(2\pi)^{-n_2}\int_{\Rl^{2n_2}} dp_2\,dx_2\,e^{i\SP{p_2,x_2}}
        \left(
            (2\pi)^{-n_1}
            \int_{\Rl^{2n_1}} dp_1\,dx_1\,e^{i\SP{p_1,x_1}}
            \,F(p_1,x_1;p_2,x_2)
        \right)
        \\
        =
        &(2\pi)^{-n_1}\int_{\Rl^{2n_1}} dp_1\,dx_1\,e^{i\SP{p_1,x_1}}
        \left(
            (2\pi)^{-n_2}
            \int_{\Rl^{2n_2}} dp_2\,dx_2\,e^{i\SP{p_2,x_2}}
            \,F(p_1,x_1;p_2,x_2)
        \right)
        \\
        =
        &(2\pi)^{-(n_1+n_2)}\int_{\Rl^{2n_1}\oplus\Rl^{2n_2}}
        dp\,dx\,e^{i\SP{p,x}}
        \,F(p,x)
    \end{align*}
\end{proposition}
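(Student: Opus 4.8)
The plan is to reduce everything to the case of a compactly supported smooth $F$, where all oscillatory integrals collapse to ordinary (absolutely convergent) Riemann integrals so that the classical Fubini theorem applies, and then to transport the resulting identity to all of $\IntSymbol(\Rl^{2n_1}\oplus\Rl^{2n_2},V)$ by density and continuity. To set up the statement, write $F\in\Symbol^{\bm,\brho}(\Rl^{2n_1}\oplus\Rl^{2n_2},V)$ for some order $\bm$ and type $-1<\brho\leq1$. By Proposition~\ref{proposition:symbol-valued-symbols}, $F_1$ is then a symbol in $\Symbol^{\hat{\bm},\hat{\brho}}(\Rl^{2n_1},\Symbol^{\bm,\brho}(\Rl^{2n_2},V))$ with $\hat{\brho}=\brho$ on the relevant seminorms, hence $-1<\hat{\brho}\leq1$, so that the oscillatory integral $\hat I_1=I_{\Rl^{2n_1},\Symbol^{\bm,\brho}(\Rl^{2n_2},V)}$ is defined on $F_1$ and returns an element of $\Symbol^{\bm,\brho}(\Rl^{2n_2},V)\subseteq\IntSymbol(\Rl^{2n_2},V)$; symmetrically for $F_2$. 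In particular the two iterated integrals $I_2(\hat I_1(F_1))$ and $I_1(\hat I_2(F_2))$ are well-defined.

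First I would treat $G\in\Cinfty_0(\Rl^{2n_1}\oplus\Rl^{2n_2},V)$, regarded as a symbol of constant order $0$ and type $1$. Then $G_1$ is compactly supported and smooth from $\Rl^{2n_1}$ into $\Symbol^{0,1}(\Rl^{2n_2},V)$, with values in $\Cinfty_0(\Rl^{2n_2},V)$, so $G_1\in\Cinfty_0(\Rl^{2n_1},\Symbol^{0,1}(\Rl^{2n_2},V))$ and Theorem~\ref{Theorem:OscillatoryIntegral}, \refitem{item:IntegralIsIntegral} identifies $\hat I_1(G_1)$ with the corresponding $\Symbol^{0,1}(\Rl^{2n_2},V)$-valued Riemann integral. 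Since pointwise evaluation at $(p_2,x_2)$ is a continuous linear map on $\Symbol^{0,1}(\Rl^{2n_2},V)$, it commutes with this Riemann integral, giving $(\hat I_1(G_1))(p_2,x_2)=(2\pi)^{-n_1}\int dp_1\,dx_1\,e^{i\SP{p_1,x_1}}G(p_1,x_1;p_2,x_2)$, which is again a $\Cinfty_0$-function of $(p_2,x_2)$. Applying $I_2$ (once more an ordinary Riemann integral), using the splitting $\SP{p_1\oplus p_2,x_1\oplus x_2}=\SP{p_1,x_1}+\SP{p_2,x_2}$ of the bilinear form and the classical Fubini theorem for this continuous, compactly supported integrand, yields $I_2(\hat I_1(G_1))=I_0(G)=I(G)$; the identity $I_1(\hat I_2(G_2))=I(G)$ follows in exactly the same way.

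For general $F\in\Symbol^{\bm,\brho}(\Rl^{2n_1}\oplus\Rl^{2n_2},V)$ I would fix an auxiliary order $\bm'>\bm$ and type $-1<\brho'\leq\brho$ and, by Proposition~\ref{proposition:ApproximateSymbols}, \refitem{item:CinftyDenseInSymbols}, a sequence $G^{(k)}\in\Cinfty_0$ with $G^{(k)}\to F$ in the $\Symbol^{\bm',\brho'}$-topology. Since $I$ restricted to $\Symbol^{\bm,\brho}$ is by construction the $\Symbol^{\bm',\brho'}$-continuous extension of $I_0$, this gives $I(G^{(k)})=I_0(G^{(k)})\to I(F)$. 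On the iterated side I would chain three continuity statements: $F\mapsto F_1$ is continuous into the appropriate symbol-valued symbol space over the weaker target $\Symbol^{\bm',\brho'}(\Rl^{2n_2},V)$ (Proposition~\ref{proposition:symbol-valued-symbols}); $\hat I_1$ is continuous there, and by Lemma~\ref{lemma:AinI} applied to the continuous inclusion $\Symbol^{\bm,\brho}(\Rl^{2n_2},V)\hookrightarrow\Symbol^{\bm',\brho'}(\Rl^{2n_2},V)$ it consistently extends the original $\hat I_1$; and $I_2$ is continuous on $\Symbol^{\bm',\brho'}(\Rl^{2n_2},V)$ and agrees there, on the intersection, with the original $I_2$ by Theorem~\ref{Theorem:OscillatoryIntegral}, \refitem{item:IntegralIsConsistent}. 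Hence $I_2(\hat I_1(G^{(k)}_1))\to I_2(\hat I_1(F_1))$, and comparing with the previous paragraph gives $I_2(\hat I_1(F_1))=I(F)$; the argument for $I_1(\hat I_2(F_2))$ is symmetric.

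I expect the main obstacle to be precisely this topological bookkeeping in the last step: one must keep track of which symbol space over $\Rl^{2n_2}$ the values of $F_1$ live in once the order and type are relaxed to $\bm',\brho'$, and verify that the oscillatory integral over the weaker target restricts consistently to the one over the original target — which is exactly where the consistency results Theorem~\ref{Theorem:OscillatoryIntegral}, \refitem{item:IntegralIsConsistent} and Lemma~\ref{lemma:AinI} enter. An alternative route that sidesteps symbol-valued symbols is to apply Proposition~\ref{proposition:integral-explicit}, \refitem{item:IntegralExplicit} with a product cutoff $\chi^{(1)}\otimes\chi^{(2)}$: for each fixed $\eps>0$ the integrand is compactly supported and classical Fubini applies verbatim, and one then interchanges $\lim_{\eps\to0}$ with the outer integration by dominated convergence, the integrable majorant being supplied by the uniform-in-$\eps$ symbol estimates from the proof of Lemma~\ref{lemma:InullSymbolContinuity}; producing such a majorant for the iterated integral, uniformly in $\eps$, is the delicate point of that approach.
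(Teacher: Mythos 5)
Your proposal is correct and follows essentially the same route as the paper: establish well-definedness of the iterated integrals via Proposition~\ref{proposition:symbol-valued-symbols}, verify the identity for compactly supported symbols by the classical Fubini theorem, and extend by the density and continuity statements of Proposition~\ref{proposition:ApproximateSymbols}, Proposition~\ref{proposition:symbol-valued-symbols} and Theorem~\ref{Theorem:OscillatoryIntegral}. The paper's own proof merely invokes ``the usual continuity and approximation techniques''; your write-up supplies exactly the topological bookkeeping that phrase hides.
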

\begin{proof}
    Let $\mathcal Q$ be a defining system of seminorms for $V$, and
    $\bm$, $-1<\brho\leq1$ an order and a type for $\mathcal Q$ such
    that
    $F\in\Symbol^{\bm,\brho}(\Rl^{2n_1}\oplus\Rl^{2n_2},V)$. According
    to Proposition~\ref{proposition:symbol-valued-symbols},
    $F_1\in\Symbol^{\hat{\bm},\hat{\brho}} (\Rl^{2n_1},
    \Symbol^{\bm,\brho}(\Rl^{2n_2}, V))$,
    $F_2\in\Symbol^{\hat{\bm},\hat{\brho}} (\Rl^{2n_2},
    \Symbol^{\bm,\brho}(\Rl^{2n_1}, V))$ with the order $\hat{\bm}$
    and type $\hat{\brho}$ defined in
    \eqref{order-and-type-for-S}. Note that since
    $-1<\hat{\brho}\leq1$, the oscillatory integrals
    $\hat{I}_1(F_1)\in\Symbol^{\bm,\brho}(\Rl^{2n_2},V)\subset\IntSymbol(\Rl^{2n_2},
    V )$ and
    $\hat{I}_2(F_2)\in\Symbol^{\bm,\brho}(\Rl^{2n_1},V)\subset\IntSymbol(\Rl^{2n_1}
    , V )$ exist. Hence all integrals in \eqref{eq:fubini} are
    well-defined. To show that they coincide, we can argue with the
    usual continuity and approximation techniques: for compactly
    supported symbols the integrals coincide by the Fubini theorem for
    Riemann integrals. Then the continuity statements of
    Proposition~\ref{proposition:symbol-valued-symbols} and
    Theorem~\ref{Theorem:OscillatoryIntegral} give the equality for
    all symbols.
\end{proof}

\section{Rieffel deformations for polynomially bounded $\Rl^n$-actions}
\label{section:rieffel}

We now apply the symbol calculus developed so far to extend Rieffel's
deformation of Fr\'echet algebras with isometric $\Rl^n$-actions
\cite{rieffel:1993a} to a more general setting. As before, we will
consider functions taking values in locally convex sequentially
complete vector spaces $V$. We will in this chapter always assume a
filtrating defining system $\cal Q$ of seminorms for $V$. The symbols
we are interested in will be generated with the help of suitable
$\Rl^n$-actions, and we introduce some standard notation first.

For an $\Rl^n$-action $\alpha\colon \Rl^n\times V\longrightarrow V$,
we consider the functions
\begin{align}
	\alpha(v)\colon \Rl^n\longrightarrow V,\qquad x\mapsto\alpha_x(v)
\end{align}
for $v \in V$.  The action will be called strongly smooth if
$\alpha(v)\in\Cinfty(\Rl^n,V)$ for all $v\in V$. Its derivatives at
$x=0$ are denoted by
\begin{align}
	X^\mu\colon V\longrightarrow V\,,
        \qquad
        X^\mu v := \partial_x^\mu\alpha_x(v)|_{x=0}\,,
\end{align}
where $\mu \in \mathbb{N}_0^n$ as usual.  All actions will be assumed
to act by linear maps $\alpha_x\colon V\longrightarrow V$. If $\alpha$
is strongly smooth and the $\alpha_x$ are continuous for all
$x\in\Rl^n$, then one has $\partial_x^\mu\alpha_x(v)=X^\mu\alpha_x v =
\alpha_x X^\mu v$.
\begin{definition}\label{definition:SmoothSymbolAction}
    Let $V$ be a sequentially complete locally convex space with
    defining system of seminorms $\mathcal Q$, and let $\bm$ be an
    order for $\cal Q$. A smooth polynomially bounded $\Rl^n$-action
    (of order $\bm$) is an action $\alpha\colon \Rl^n\times
    V\longrightarrow V$ such that
    \begin{definitionlist}
    \item \label{item:AlphavIstSymbol} $\alpha(v)\in
        \Symbol^{\bm,0}(\Rl^n,V)$ for each $v\in V$.
    \item\label{item:VtoAlphaVContinuous} $V\ni v\mapsto
        \alpha(v)\in\Symbol^{\bm,0}(\Rl^n,V)$ is continuous, i.e.\ for
        any $\qn\in\mathcal Q$, $\mu\in\Nl_0^n$, there exists
        $\qn'\in\mathcal Q$, such that
        \begin{align}
            \|\alpha(v)\|^{\bm,0}_{\qn,\mu}
            \leq
            \qn'(v)
        \end{align}
        for all $v\in V$.
    \end{definitionlist}
\end{definition}
Smooth polynomially bounded actions can equivalently be characterized
as follows.
\begin{lemma}\label{lemma:SmoothSymbolActionAlternative}
    Let $\alpha$ be a strongly smooth action on $V$. Then $\alpha$ is
    polynomially bounded of order $\order{m}$ in the sense of
    Definition~\ref{definition:SmoothSymbolAction} if and only if the
    following two conditions are satisfied:
    \begin{lemmalist}
    \item \label{item:EquivalentToSmoothActionI} For each $\qn\in\cal
        Q$, there exists $\qn'\in\cal Q$ such that
        \begin{align}\label{eq:BoundAlphaX}
            \qn(\alpha_x(v))
            \leq
            (1+\|x\|^2)^{\frac{1}{2}\order{m}(\qn)}\,\qn'(v)
        \end{align}
        for all $x\in\Rl^n$, $v\in V$.
    \item \label{item:EquivalentToSmoothActionII} The derivatives
        $X^\mu\colon V\longrightarrow V$ are continuous.
    \end{lemmalist}
    If $V$ is a Fr\'echet space and $\mathcal{Q}$ is chosen countable
    then $\alpha$ is polynomially bounded of order $\order{m}$ if and
    only if just the first condition is satisfied.
\end{lemma}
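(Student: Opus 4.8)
The plan is to prove the two directions of the equivalence, together with the Fréchet-space refinement, by relating the symbol estimate in Definition~\ref{definition:SmoothSymbolAction} to the two pointwise conditions via the cocycle identity $\partial_x^\mu\alpha_x(v)=\alpha_x X^\mu v$.

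\textbf{From Definition~\ref{definition:SmoothSymbolAction} to \refitem{item:EquivalentToSmoothActionI}--\refitem{item:EquivalentToSmoothActionII}.} Suppose $\alpha$ is polynomially bounded of order $\order{m}$. Condition \refitem{item:EquivalentToSmoothActionI} is just the special case $\mu=0$ of the defining estimate: for $\qn\in\mathcal Q$, condition \refitem{item:VtoAlphaVContinuous} gives $\qn'\in\mathcal Q$ with $\|\alpha(v)\|^{\bm,0}_{\qn,0}\le\qn'(v)$, which unravels to $\qn(\alpha_x(v))\le(1+\|x\|^2)^{\frac12\order{m}(\qn)}\qn'(v)$ for all $x$. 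For \refitem{item:EquivalentToSmoothActionII}, evaluate the symbol estimate for $\|\alpha(v)\|^{\bm,0}_{\qn,\mu}$ at the single point $x=0$: since $\partial_x^\mu\alpha_x(v)|_{x=0}=X^\mu v$ and the prefactor $(1+\|x\|^2)^{-\frac12\order{m}(\qn)}$ equals $1$ at $x=0$, we get $\qn(X^\mu v)\le\|\alpha(v)\|^{\bm,0}_{\qn,\mu}\le\qn'(v)$ with the $\qn'$ furnished by \refitem{item:VtoAlphaVContinuous}; hence each $X^\mu$ is continuous.

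\textbf{From \refitem{item:EquivalentToSmoothActionI}--\refitem{item:EquivalentToSmoothActionII} to Definition~\ref{definition:SmoothSymbolAction}.} Assume both conditions. Since $\alpha$ is strongly smooth with continuous $\alpha_x$, the identity $\partial_x^\mu\alpha_x(v)=\alpha_x(X^\mu v)$ holds. Fix $\qn\in\mathcal Q$ and $\mu\in\Nl_0^n$. Apply \refitem{item:EquivalentToSmoothActionI} to get $\qn''\in\mathcal Q$ with $\qn(\alpha_x(w))\le(1+\|x\|^2)^{\frac12\order{m}(\qn)}\qn''(w)$ for all $x,w$; substitute $w=X^\mu v$ and then use continuity of $X^\mu$ (condition \refitem{item:EquivalentToSmoothActionII}) together with the fact that $\mathcal Q$ is filtrating to find $\qn'\in\mathcal Q$ with $\qn''(X^\mu v)\le\qn'(v)$. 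This yields
\begin{align*}
    (1+\|x\|^2)^{-\frac12\order{m}(\qn)}\,\qn\!\left(\partial_x^\mu\alpha_x(v)\right)
    \le
    \qn''(X^\mu v)
    \le
    \qn'(v)
\end{align*}
for all $x$, so $\|\alpha(v)\|^{\bm,0}_{\qn,\mu}\le\qn'(v)<\infty$. Taking the supremum over $x$ shows $\alpha(v)\in\Symbol^{\bm,0}(\Rl^n,V)$ (item~\refitem{item:AlphavIstSymbol}), and the bound $\|\alpha(v)\|^{\bm,0}_{\qn,\mu}\le\qn'(v)$ is exactly the continuity condition \refitem{item:VtoAlphaVContinuous}.

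\textbf{The Fréchet refinement.} Here $\mathcal Q=\{\qn_1,\qn_2,\dots\}$ is countable, and we must show that \refitem{item:EquivalentToSmoothActionI} alone forces continuity of all $X^\mu$. The idea is a closed-graph argument: a linear map between Fréchet spaces is continuous iff it is sequentially closed. Suppose $v_k\to 0$ in $V$ and $X^\mu v_k\to w$ in $V$; we want $w=0$. For this, use the integral representation of $X^\mu$ in terms of the action: iterating the mean value theorem (as in the proof of Lemma~\ref{lemma:TauySymbolDiffAtyNull}), one expresses $X^\mu v$ as an iterated Riemann integral of $\partial_x^\mu\alpha_x(v)$ over a small cube, but more directly one writes each first derivative via $\partial_i\alpha_x(v)=\lim_{t\to0}t^{-1}(\alpha_{x+te_i}(v)-\alpha_x(v))$ and controls it uniformly in $v_k$ using the bound \refitem{item:EquivalentToSmoothActionI}; the uniform polynomial bound $\qn(\alpha_x(v_k))\le(1+\|x\|^2)^{\frac12\order m(\qn)}\qn'(v_k)\to0$ locally uniformly in $x$, combined with the smoothness of each $\alpha(v_k)$, forces $\partial_x^\mu\alpha_x(v_k)\to 0$ pointwise, in particular $X^\mu v_k\to 0$, so $w=0$. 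The main obstacle is making this last step rigorous: one needs that pointwise-in-$x$ convergence $\alpha_x(v_k)\to 0$ with uniform local bounds actually propagates to convergence of derivatives. The cleanest route is to note that \refitem{item:EquivalentToSmoothActionI} applied to the \emph{family} $\{v_k\}$ shows $\{\alpha(v_k)\}$ is bounded in $\Symbol^{\bm,0}(\Rl^n,V)$ after rescaling, then invoke that on a Fréchet space the symbol space $\Symbol^{\bm,0}(\Rl^n,V)$ embeds continuously into $\Cinfty(\Rl^n,V)$ (Proposition~\ref{proposition:SymbolFirstProperties}~\refitem{item:SymbolInclusion}), where bounded-plus-pointwise-convergent sequences converge in all $\Fun$-seminorms; evaluating at $x=0$ gives $X^\mu v_k\to 0$. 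This establishes the closed graph of each $X^\mu$ and hence its continuity, reducing the Fréchet case to the general equivalence already proved.
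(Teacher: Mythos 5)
Your proof of the main equivalence (both directions) is correct and is essentially the paper's own argument: the forward direction specializes the symbol estimate to $\mu=0$ and to the point $x=0$, and the converse uses $\partial_x^\mu\alpha_x(v)=\alpha_x(X^\mu v)$ together with the two hypotheses. (Minor point: continuity of $X^\mu$ gives $\qn''(X^\mu v)\le C\,\qn'(v)$ with a constant, which you and the paper both silently absorb.)

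The Fr\'echet refinement, however, has a genuine gap. Reducing continuity of $X^\mu$ to closedness of its graph is a legitimate strategy, but both of your attempts to verify closedness fail. First, the claim that the first condition alone makes $\{\alpha(v_k)\}$ bounded in $\Symbol^{\order{m},0}(\Rl^n,V)$ is unjustified: that condition controls only the seminorms $\|\alpha(v)\|^{\order{m},0}_{\qn,0}$ with no derivatives, whereas boundedness in the symbol space requires bounds on $\|\alpha(v)\|^{\order{m},0}_{\qn,\mu}$ for all $\mu$ --- which is exactly the control over the $X^\mu$ that the refinement is supposed to produce, so the argument is circular. Second, the step ``locally uniform convergence $\alpha_x(v_k)\to0$ plus smoothness forces $\partial_x^\mu\alpha_x(v_k)\to0$'' is false as stated: uniform convergence of smooth functions does not propagate to derivatives (already $f_k(x)=k^{-1}\sin(k^2x)$ defeats it), and for $V$-valued maps one cannot rescue it by Arzel\`a--Ascoli, since bounded subsets of a general Fr\'echet space are not relatively compact.

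A correct closed-graph verification is available and stays close to your outline: for $|\mu|=1$, if $v_k\to0$ and $X^{e_i}v_k\to w$, write $\alpha_{te_i}(v_k)-v_k=\int_0^t\alpha_{se_i}(X^{e_i}v_k)\,\D s$; the first condition gives $\qn\bigl(\alpha_{se_i}(X^{e_i}v_k-w)\bigr)\le(1+s^2)^{\frac{1}{2}\order{m}(\qn)}\qn'(X^{e_i}v_k-w)\to0$ uniformly for $s\in[0,t]$, so the right-hand side tends to $\int_0^t\alpha_{se_i}(w)\,\D s$ while the left-hand side tends to $0$; differentiating at $t=0$ yields $w=0$, and general $\mu$ follows by induction from $X^{\mu+e_i}=X^{e_i}X^\mu$. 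The paper instead retopologizes $V$ with the seminorms $\qn\circ X^\mu$ to obtain a Fr\'echet space $V^\infty$, observes that the identity $V^\infty\to V$ is a continuous linear bijection, and applies the open mapping theorem to conclude that all $X^\mu$ are continuous. Either route closes the gap; as written, yours does not.
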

\begin{proof}
    Let $\alpha$ be a smooth polynomially bounded action of order
    $\order{m}$. Then its defining properties imply that for any
    $\qn\in\cal Q$ there exists $\qn'\in\cal Q$ such that for all
    $x\in\Rl^n$, $v\in V$,
    \begin{align*}
        \qn(\alpha_x(v))
        \leq
        (1+\|x\|^2)^{\frac{1}{2}\order{m}(\qn)}\|\alpha(v)\|^{\order{m},0}_{\qn,0}
        \leq
        (1+\|x\|^2)^{\frac{1}{2}\order{m}(\qn)}\qn'(v)\,,
    \end{align*}
    i.e. \eqref{eq:BoundAlphaX} holds. Furthermore, given $\qn\in\cal
    Q$, $\mu\in\Nl_0^n$, there exists $\qn'\in\cal Q$ such that
    \begin{align*}
        \qn(X^\mu v)
        &=
        \qn(\partial_x^\mu\alpha_x(v)|_{x=0})
        \leq
        \sup_{x\in\Rl^n}\frac{\qn(\partial_x^\mu\alpha_x(v))}{(1+\|x\|^2)^{\frac{1}{2}
            \order{m}(\qn)}}
        =
        \|\alpha(v)\|^{\bm,0}_{\qn,\mu}
        \leq
        \qn'(v)
    \end{align*}
    for all $v\in V$. Hence $X^\mu\colon V\longrightarrow V$ is
    continuous.  Now assume that $\alpha$ is a strongly smooth action
    satisfying the two conditions listed in this lemma. Then to any
    $\qn\in\cal Q$, $\mu\in\Nl_0^n$, there exist $\qn',\qn''\in\cal Q$
    such that
    \begin{align*}
        \qn(\partial_x^\mu\alpha_x(v))
        =
        \qn(\alpha_x(X^\mu v))
        \leq
        (1+\|x\|^2)^{\frac{1}{2}\order{m}(\qn)}\qn'(X^\mu v)
        \leq
        (1+\|x\|^2)^{\frac{1}{2}\order{m}(\qn)}\qn''(v)
    \end{align*}
    for all $x\in\Rl^n$, $v\in V$. This shows both,
    $\alpha(v)\in\Symbol^{\order{m},0}(V)$, and the continuity of
    $v\mapsto\alpha(v)$. Hence
    Definition~\ref{definition:SmoothSymbolAction} and the two
    conditions in this lemma are equivalent.  We now consider the
    special but important case that $V$ is a Fr\'echet space. Thus
    assume that $\mathcal{Q}$ is countable. When equipped with the
    family of seminorms $\mathcal{Q}^\infty:=\{\qn_\mu:=\qn\circ
    X^\mu\,|\,\qn\in{\cal Q},\,\mu\in\Nl_0^n\}$, this space will be
    called $V^\infty$. Also $V^\infty$ is a Fr\'echet space.  As
    linear spaces, $V=V^\infty$, and clearly, the identity $\id\colon
    V^\infty\longrightarrow V$ is linear, continuous and
    bijective. Hence we can apply the open mapping theorem for
    Fr\'echet spaces to conclude that $\id\colon V\longrightarrow
    V^\infty$ is continuous as well, i.e.\ $V=V^\infty$ as Fr\'echet
    spaces. But this is equivalent to the derivatives $X^\mu\colon
    V\longrightarrow V$ being continuous, i.e.\ condition
    \refitem{item:EquivalentToSmoothActionII} is automatically
    satisfied.
\end{proof}
\begin{remark}\label{remark:Actions}
    \begin{remarklist}
    \item \label{item:AlphaXContinuous} A polynomially bounded action
        $\alpha$ acts by continuous maps $\alpha_x$, as can be read
        off from \eqref{eq:BoundAlphaX}.
    \item \label{item:OnlyTypeRhoNullNeeded} The arguments in the
        above lemma also explain why we consider only actions of type
        $\type{\rho}=0$ here. For if
        $\alpha(v)\in\Symbol^{\order{m},\type{\rho}}(V)$ and the
        derivatives $X^\mu$ are continuous, we can argue as above to
        show that $\alpha(v)$ is actually of type $0$.
    \end{remarklist}
\end{remark}

In Rieffel's original approach, $V$ is taken to be a Fr\'echet algebra
with a strongly continuous $\Rl^n$-action $\alpha$ by automorphisms
$\alpha_x$ which are isometric for all $\qn\in\mathcal Q$. On the
subspace $V^\infty$ of all smooth vectors for $\alpha$, the
$\alpha(v)$ are then symbols of order $0$ and type $0$, see
\cite{rieffel:1993a}. Using the symbol calculus of the preceding
sections, we will now extend many of the results of Rieffel to the
case where $\alpha(v)\in\Symbol^{\bm,0}(\Rl^{n},V)$ for an arbitrary
order $\bm$. In Section~\ref{section:examples}, we will then provide
various examples of smooth polynomially bounded $\Rl^n$-actions.

In comparison to \cite{rieffel:1993a}, we will take here also a
somewhat more general point of view concerning the algebraic
structure, which involves three sequentially complete locally convex
spaces $V,W,U$ with filtrating defining systems of seminorms ${\cal
  Q}^V, {\cal Q}^W, {\cal Q}^U$. Each of these spaces is equipped with
a smooth polynomially bounded $\Rl^n$-action
$\alpha^V,\alpha^W,\alpha^U$ of order $\order{m}^V$, $\order{m}^W$,
$\order{m}^U$, respectively, and the derivatives with respect to these
actions will be denoted $X_U^\mu$, $X_V^\mu$, $X_W^\mu$.

In this setting, we consider a bilinear map
\begin{align}\label{eq:BilinearMapMu}
    \mu\colon V\times W \longrightarrow U
\end{align}
which is required to be covariant in the sense that
\begin{align}\label{eq:Covariance}
    \alpha_x^U\mu(v,w)=\mu(\alpha^V_x v,\alpha^W_x w)
    \,,\qquad
    v\in V,\,w\in W,\,x\in\Rl^n\,.
\end{align}
In many applications $\mu$ will be jointly continuous, but in some
cases we also need to work with a bilinear map $\mu$ which is only
separately continuous. In the following we will therefore always only
assume that $\mu$ is separately continuous, and explicitly point out
when we consider the special case that $\mu$ is jointly continuous.

This setting includes the case where $\A:=V=W=U$ is an algebra with
(separately) continuous product $\mu$, and
$\alpha:=\alpha^V=\alpha^W=\alpha^U$ acts by automorphisms. But the
more general formulation allows, for example, to also consider
covariant modules, where $\A:=V$ is taken to be an algebra and
$\E:=W=U$ is a left $\A$-module with a smooth $\Rl^n$-action
$\beta:=\alpha^W=\alpha^U$, and (separately) continuous module
structure $\mu\colon \A \times \E \longrightarrow \E$ satisfying
\eqref{eq:Covariance}.  This setup will therefore be suitable for the
discussion of deformations of algebras and their covariant modules. In
the following, we will always assume without further mentioning that
spaces $V,W,U$, actions $\alpha^V,\alpha^W,\alpha^U$, and a bilinear
map $\mu$ with the specified properties are given.

Following Rieffel, we now consider a real $(n\times n)$-matrix $\te$
as our deformation parameter, and introduce the functions, $v\in V$,
$w\in W$,
\begin{align}\label{eq:MuSymbols}
 \mu^\te_{vw}\colon \Rl^n\times\Rl^n \longrightarrow U,
\qquad
 \mu^\te_{vw}(p,x) &:= \mu(\alpha^V_{\te p}v,\alpha^W_x w)
\,.
\end{align}
As $\alpha^V$, $\alpha^W$ are smooth and polynomially bounded, it
follows from Proposition~\ref{proposition:SymbolProducts},
\refitem{item:ProductInTwoVariables}, that these functions are symbols
in $\IntSymbol(\Rl^{2n},U)$ if $\mu$ is jointly continuous. If $\mu$
is however only separately continuous, more work is needed to arrive
at this conclusion. We begin with the following lemma.
\begin{lemma}\label{lemma:Falpha}
    Let $\alpha$ be a smooth polynomially bounded action on
    $V$. Moreover, let $F\in\Cinfty(\mathbb{R}^n, V)$. Then
    \begin{align*}
        F^\alpha:\Rl^n\times\Rl^n \longrightarrow V\,,\qquad
        F^\alpha(p,x) := \alpha_p(F(x))
    \end{align*}
    is smooth.
\end{lemma}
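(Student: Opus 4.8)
The plan is to reduce the claim to two observations: first, that post-composing the continuous linear operators $X^\mu$ with the smooth map $F$ stays inside $\Cinfty(\Rl^n,V)$; second, that a map of the shape $G^\alpha\colon(p,x)\mapsto\alpha_p(G(x))$ with $G\in\Cinfty(\Rl^n,V)$ is jointly continuous and has first-order partial derivatives that are again of this same shape. Granting these, $F^\alpha=F^\alpha_{\phantom{x}}$ (with $G=F$) has iterated partial derivatives of every order, all continuous, hence is $\Cinfty$ by the usual criterion of locally convex calculus.

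For the joint continuity of $G^\alpha$ I would fix $(p_0,x_0)$ and $\qn\in\mathcal Q$ and split
\[
\qn\big(\alpha_p(G(x))-\alpha_{p_0}(G(x_0))\big)
\le
\qn\big(\alpha_p(G(x)-G(x_0))\big)
+
\qn\big(\alpha(G(x_0))(p)-\alpha(G(x_0))(p_0)\big).
\]
The second summand tends to $0$ as $p\to p_0$ because $\alpha(G(x_0))\in\Symbol^{\bm,0}(\Rl^n,V)\subseteq\Cinfty(\Rl^n,V)$ is in particular continuous. For the first summand I would use the polynomial bound \eqref{eq:BoundAlphaX} from Lemma~\ref{lemma:SmoothSymbolActionAlternative} (equivalently Definition~\ref{definition:SmoothSymbolAction}) to pick $\qn'\in\mathcal Q$ with $\qn(\alpha_p(v))\le(1+\|p\|^2)^{\bm(\qn)/2}\,\qn'(v)$; restricting $p$ to the ball $\|p-p_0\|\le1$ makes the prefactor bounded by a constant, and $\qn'(G(x)-G(x_0))\to0$ as $x\to x_0$ by continuity of $G$. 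Hence $G^\alpha$ is continuous at $(p_0,x_0)$, and since $(p_0,x_0)$ was arbitrary, on all of $\Rl^n\times\Rl^n$.

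Next I would compute the first partials. Since $G\in\Cinfty(\Rl^n,V)\subseteq\Fun[1]$, the difference quotients $t^{-1}(G(x+te_j)-G(x))$ converge in $V$ to $\partial_{x_j}G(x)$, and continuity of the linear map $\alpha_p$ gives $\partial_{x_j}G^\alpha(p,x)=\alpha_p(\partial_{x_j}G(x))=(\partial_{x_j}G)^\alpha(p,x)$. In the $p$-direction, $q\mapsto\alpha_q(G(x))=\alpha(G(x))(q)$ is smooth because it is a symbol, and its $i$-th partial at $p$ equals $\alpha_p(X^{e_i}G(x))$ by the identity $\partial_x^\mu\alpha_x(v)=\alpha_x X^\mu v$ recalled just before Definition~\ref{definition:SmoothSymbolAction} (applicable since a smooth polynomially bounded action is strongly smooth and, by Remark~\ref{remark:Actions}~\refitem{item:AlphaXContinuous}, acts by continuous maps). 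Thus $\partial_{p_i}G^\alpha=(X^{e_i}\circ G)^\alpha$. Both $\partial_{x_j}G$ and $X^{e_i}\circ G$ again lie in $\Cinfty(\Rl^n,V)$, the latter because $X^{e_i}$ is continuous and linear; so the class $\{G^\alpha : G\in\Cinfty(\Rl^n,V)\}$ is stable under all first-order partial differentiations in the $2n$ variables $(p,x)$.

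Iterating the previous paragraph shows that every iterated partial derivative of $F^\alpha$ of any order exists on $\Rl^n\times\Rl^n$ and is of the form $H^\alpha$ for a suitable $H\in\Cinfty(\Rl^n,V)$, hence is continuous by the second step. The standard fact that a map into a sequentially complete locally convex space all of whose iterated partial derivatives exist and are continuous is of class $\Cinfty$ — proved via the integral form of the mean value theorem, the required $V$-valued Riemann integrals existing by sequential completeness — then yields $F^\alpha\in\Cinfty(\Rl^n\times\Rl^n,V)$. The only genuinely delicate point is the joint continuity estimate: it is exactly the polynomial bound on $\alpha$ that supplies the uniform control of $\alpha_p$ over a neighbourhood of $p_0$ needed to pass to the limit; everything else is routine bookkeeping with continuous linear maps and difference quotients.
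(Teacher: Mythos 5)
Your proof is correct and follows essentially the same strategy as the paper's: establish joint continuity, observe that the first partial derivatives exist and are again of the form $H^\alpha$ with $H\in\Cinfty(\Rl^n,V)$ (using $\partial_{x_j}G^\alpha=(\partial_{x_j}G)^\alpha$, $\partial_{p_i}G^\alpha=(X^{e_i}\circ G)^\alpha$ and the continuity of the $X^\mu$ from Lemma~\ref{lemma:SmoothSymbolActionAlternative}), and iterate. The only small deviation is in the continuity step, where your splitting $\alpha_p(G(x))-\alpha_{p_0}(G(x_0))=\alpha_p(G(x)-G(x_0))+\bigl(\alpha(G(x_0))(p)-\alpha(G(x_0))(p_0)\bigr)$, controlled by the polynomial bound locally uniformly in $p$ plus mere continuity of $p\mapsto\alpha_p(v)$, neatly avoids the quantitative estimate $\qn(\alpha_p(v)-\alpha_{p'}(v))\le\|p-p'\|\,f(p,p')\,\qn'(v)$ that the paper invokes (and defers to the method of Lemma~\ref{lemma:tauySymbolContinuous}); both routes are valid.
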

\begin{proof}
    First we show that $F^\alpha$ is continuous. To this end, we make
    use of the differentiability of $\alpha$, which implies that for
    any $\qn\in\cal Q$, there exists $\qn'\in\cal Q$ and a continuous
    function $f\colon \Rl^n\times\Rl^n \longrightarrow \Rl_+$ such
    that for all $p,p'\in\Rl^n$, $v\in V$
    \begin{align*}
        \qn(\alpha_p(v)-\alpha_{p'}(v))
        \leq
        \|p-p'\|\,f(p,p')\,\qn'(v)\,.
    \end{align*}
    The proof of this estimate can be carried out along the same lines
    as in Lemma~\ref{lemma:tauySymbolContinuous}. With this bound we
    find, $p,p',x,x'\in\Rl^n$,
    \begin{align*}
        &\qn\left(F^\alpha(p,x)-F^\alpha(p',x')\right) \\
        &\quad\leq
        \qn(\alpha_p(F(x))-\alpha_{p'}(F(x))) + \qn(\alpha_{p'}(F(x))-\alpha_{p'}(F(x')))
        \\
        &\quad\leq
        \|p-p'\|\,f(p,p')\,\qn'(F(x)) + (1+\|p'\|^2)^{\frac{1}{2}\order{m}(\qn)}\qn''(F(x)-F(x'))
        \,,
    \end{align*}
    with some $\qn',\qn''\in\cal Q$. The continuity of $F^\alpha$ is
    then clear.  Furthermore, $(p,x) \mapsto F^\alpha(p,x)$ is
    separately smooth in $p$ and $x$ (in $x$ because the $\alpha_p$
    are linear and continuous), with partial derivatives
    \begin{align*}
        \partial_p^\nu F^\alpha(p,x)
        &=
        \alpha_p X^\nu F(x)
        \,,\qquad
        \partial_x^\nu F^\alpha(p,x)
        =
        \alpha_p \partial_x^\nu F(x)
        \,.
    \end{align*}
    According to Lemma~\ref{lemma:SmoothSymbolActionAlternative}, the
    derivatives $X^\nu\colon V\longrightarrow V$ are continuous. Thus
    $x\mapsto X^\nu F(x)$ is smooth, and clearly,
    $x\mapsto \partial_x^\nu F(x)$ is smooth as well. Hence the
    partial derivatives of $F^\alpha$ are of the same form as
    $F^\alpha$, and thus in particular continuous. This implies that
    $F^\alpha$ is smooth.
\end{proof}
\begin{lemma}\label{lemma:FvwGvw}
    Let $v\in V$, $w\in W$ and consider
    $F_{vw}, G_{vw}\colon \Rl^n\times\Rl^n \longrightarrow U$ defined by
    \begin{equation}
        F_{vw}(p,x)
        :=
        \alpha^U_p\left(\mu\left(v, \alpha^W_x(w)\right)\right)
        \quad
        \textrm{and}
        \quad
        G_{vw}(p,x)
        :=
        \alpha^U_x\left(\mu\left(\alpha^V_p(v) ,w\right)\right)\,.
    \end{equation}
    Then there exists an order $\order{\hat{m}}$ on ${\cal Q}^U$ such
    that $F_{vw},G_{vw}\in\Symbol^{\order{\hat{m}},0}(\Rl^{2n},U)$,
    and for fixed $v_0\in V$, $w_0\in W$, the mappings
    \begin{align*}
        W \ni w \mapsto F_{v_0w} \in \Symbol^{\order{\hat{m}},0}(\Rl^{2n},U)\\
        V \ni v \mapsto G_{vw_0} \in \Symbol^{\order{\hat{m}},0}(\Rl^{2n},U)
    \end{align*}
    are linear and continuous.
\end{lemma}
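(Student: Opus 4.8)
The plan is to deduce smoothness of $F_{vw}$ and $G_{vw}$ from Lemma~\ref{lemma:Falpha}, and then to obtain the symbol bounds — and with them both the membership in $\Symbol^{\order{\hat{m}},0}(\Rl^{2n},U)$ and the two continuity statements — by differentiating separately in the two $\Rl^n$-variables and chaining together the polynomial growth bounds of the actions with the continuity of their generators and the separate continuity of $\mu$.

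\emph{Smoothness.} Since $\mu$ is separately continuous, $\mu(v,\argument)\colon W\to U$ is continuous and linear, so $c_{vw}\colon x\mapsto\mu(v,\alpha^W_x(w))$ is the composition of the smooth map $\alpha^W(w)\in\Cinfty(\Rl^n,W)$ (recall $\alpha^W$ is strongly smooth) with a continuous linear map, hence $c_{vw}\in\Cinfty(\Rl^n,U)$. As $F_{vw}(p,x)=\alpha^U_p(c_{vw}(x))=(c_{vw})^{\alpha^U}(p,x)$ in the notation of Lemma~\ref{lemma:Falpha}, that lemma gives $F_{vw}\in\Cinfty(\Rl^{2n},U)$. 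Symmetrically, $d_{vw}\colon p\mapsto\mu(\alpha^V_p(v),w)$ lies in $\Cinfty(\Rl^n,U)$ (now using continuity of $\mu(\argument,w)$), and $G_{vw}(p,x)=\alpha^U_x(d_{vw}(p))=(d_{vw})^{\alpha^U}(x,p)$ is smooth by Lemma~\ref{lemma:Falpha} followed by the (linear, hence smooth) exchange of the two blocks of variables.

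\emph{Derivatives and estimates.} Using that $\alpha^U_p$ and $\mu(v,\argument)$ are continuous linear and the identity $\partial_p^\mu\alpha^U_p=\alpha^U_p X_U^\mu$ (valid because $\alpha^U$ is strongly smooth and acts by continuous maps), and that the mixed partials are unambiguous by the joint smoothness just established, one finds
\[
\partial_p^\nu\partial_x^{\nu'}F_{vw}(p,x)=\alpha^U_p\big(X_U^\nu\,\mu(v,\alpha^W_x(X_W^{\nu'}w))\big),\qquad
\partial_p^\nu\partial_x^{\nu'}G_{vw}(p,x)=\alpha^U_x\big(X_U^{\nu'}\,\mu(\alpha^V_p(X_V^\nu v),w)\big).
\]
Fix $\halbnorm{r}\in{\cal Q}^U$ and apply successively to the first expression: the polynomial bound \eqref{eq:BoundAlphaX} for $\alpha^U$ (extracting a factor $(1+\|p\|^2)^{\frac12\order{m}^U(\halbnorm{r})}$ and a new seminorm $\halbnorm{r}_1\in{\cal Q}^U$ depending only on $\halbnorm{r}$); continuity of $X_U^\nu\colon U\to U$ from Lemma~\ref{lemma:SmoothSymbolActionAlternative} (giving $\halbnorm{r}_2\in{\cal Q}^U$ — by filtration of ${\cal Q}^U$ one may take $\halbnorm{r}_2$ depending only on $\halbnorm{r}_1$ and $|\nu|$, iterating the finitely many generators $X_U^{e_1},\dots,X_U^{e_n}$); continuity of $\mu(v,\argument)\colon W\to U$ at $\halbnorm{r}_2$ (producing a constant $c_v>0$ and $\halbnorm{q}\in{\cal Q}^W$); the bound \eqref{eq:BoundAlphaX} for $\alpha^W$ (a factor $(1+\|x\|^2)^{\frac12\order{m}^W(\halbnorm{q})}$ and $\halbnorm{q}'\in{\cal Q}^W$); and continuity of $X_W^{\nu'}\colon W\to W$ ($\halbnorm{q}'$ becomes $\halbnorm{q}''$). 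This yields
\[
\halbnorm{r}\big(\partial_p^\nu\partial_x^{\nu'}F_{vw}(p,x)\big)\le c_v\,(1+\|p\|^2)^{\frac12\order{m}^U(\halbnorm{r})}(1+\|x\|^2)^{\frac12\order{m}^W(\halbnorm{q})}\,\halbnorm{q}''(w).
\]
The inequality $(1+a^2)^k(1+b^2)^{k'}\le(1+a^2+b^2)^K$ for $K\ge\max\{k,k',k+k'\}$ — the reciprocal of \eqref{eq:kkkInequality} — then gives $\symnorm{F_{vw}}^{\order{\hat{m}},0}_{\halbnorm{r},\nu\oplus\nu'}\le c_v\halbnorm{q}''(w)<\infty$ as soon as $\order{\hat{m}}(\halbnorm{r})\ge\max\{\order{m}^U(\halbnorm{r}),\order{m}^W(\halbnorm{q}),\order{m}^U(\halbnorm{r})+\order{m}^W(\halbnorm{q})\}$ for every $\halbnorm{q}$ that can occur for this $\halbnorm{r}$. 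Running the same chain for $G_{vw}$ — with the outer action $\alpha^U$ now carrying the $x$-variable and the inner action $\alpha^V$ the $p$-variable, and using continuity of $\mu(\argument,w)$ — produces the analogous bound with $\order{m}^V$ in place of $\order{m}^W$. Since $c_v,\halbnorm{q}''$ do not involve $w$, fixing $v=v_0$ and using linearity of $w\mapsto F_{v_0w}$ gives the continuity of $w\mapsto F_{v_0w}\in\Symbol^{\order{\hat{m}},0}(\Rl^{2n},U)$, and likewise for $v\mapsto G_{vw_0}$.

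\emph{Main obstacle.} The delicate point is producing a single order $\order{\hat{m}}$ on ${\cal Q}^U$ that simultaneously dominates all the exponents above: the seminorm $\halbnorm{q}\in{\cal Q}^W$ coming from the continuity of $\mu(v,\argument)$ depends on $\halbnorm{r}$, on $|\nu|$ (through the composition $X_U^\nu$), and a priori on $v$, precisely because $\mu$ is only separately — not jointly — continuous (this is the reason the clean product‑type estimate of Proposition~\ref{proposition:SymbolProducts}, \refitem{item:ProductInTwoVariables}, is unavailable). One removes the $\nu$-dependence in favour of an $|\nu|$-dependence by filtration as above, and then sets $\order{\hat{m}}(\halbnorm{r}):=\order{m}^U(\halbnorm{r})+\sup_k\big[\order{m}^W(\halbnorm{q}_k)\big]^+$ (together with its $\order{m}^V$-counterpart), where $\halbnorm{q}_k\in{\cal Q}^W$ is the seminorm arising at differentiation depth $k$; checking that this supremum is finite, so that $\order{\hat{m}}$ is a genuine order, is the technical heart of the proof and relies on the standing hypotheses on the defining systems and on the orders $\order{m}^V,\order{m}^W,\order{m}^U$ of the polynomially bounded actions. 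Once $\order{\hat{m}}$ has been fixed in this way, the estimates of the previous paragraph apply uniformly and complete the argument.
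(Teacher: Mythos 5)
Your argument is, up to notation, the same as the paper's: smoothness of $F_{vw}$ via Lemma~\ref{lemma:Falpha} applied to the smooth map $x\mapsto\mu(v,\alpha^W_x(w))$ (using only separate continuity of $\mu$), the identical derivative formula $\partial_p^\nu\partial_x^\kappa F_{vw}(p,x)=\alpha^U_p\bigl(X_U^\nu\,\mu(v,\alpha^W_x(X_W^\kappa w))\bigr)$, and the same chain of estimates --- polynomial bound for $\alpha^U$, continuity of $X_U^\nu$, continuity of $\mu(v,\argument)$, polynomial bound for $\alpha^W$, continuity of $X_W^\kappa$ --- combined at the end with the reverse of \eqref{eq:kkkInequality}. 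The continuity of $w\mapsto F_{v_0w}$ then drops out exactly as you say, since the final seminorm of $w$ does not involve $p,x$.

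The one point where you go beyond the paper is your ``main obstacle'' paragraph, and you should know two things about it. First, the difficulty you identify is real: the seminorm $\halbnorm{q}\in\mathcal{Q}^W$ produced by the continuity of $\mu(v,\argument)$ sits \emph{after} the composition with $X_U^\nu$, so it (and hence the exponent $\order{m}^W(\halbnorm{q})$ governing the growth in $x$) depends on $\nu$, and the candidate order must dominate all these exponents simultaneously. Second, the paper's own proof does not resolve this either --- it simply sets $\order{\hat{m}}(\halbnorm{q}):=|\order{m}^U(\halbnorm{q})|+|\order{m}^W(\halbnorm{q}'')|$ with a $\halbnorm{q}''$ that implicitly depends on $\nu,\kappa$ (and on $v$), without comment. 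Your reduction of the $\nu$-dependence to a dependence on $k=|\nu|$ via filtration and iteration of the generators $X_U^{e_i}$ is correct, but your final step --- defining $\order{\hat{m}}(\halbnorm{r})$ through $\sup_k$ and asserting its finiteness ``by the standing hypotheses'' --- is not a proof: nothing in Definition~\ref{definition:SmoothSymbolAction} or the separate continuity of $\mu$ prevents $\order{m}^W(\halbnorm{q}_k)$ from being unbounded in $k$ (the orders arising in Section~\ref{section:examples} are typically unbounded over $\mathcal{Q}$). So this step remains a genuine gap in your write-up; it is, however, precisely the gap the published proof also leaves open, and you are more transparent about its location than the paper is.
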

\begin{proof}
    We will only prove the statements about $F_{vw}$ as the discussion
    of $G_{vw}$ is completely analogous. In view of the separate
    continuity of $\mu$, the map $w\mapsto\mu(v,w)$ is continuous for
    fixed $v$, and as $\alpha^W$ is a smooth action, we see that
    $F_{vw}$ is of the form considered in the previous lemma and hence
    smooth. Its partial derivatives are, $\nu,\kappa\in\Nl_0^n$,
    \begin{align*}
        \partial_p^\nu\partial_x^\kappa F_{vw}(p,x)
        =
        \alpha_p^U\left(
            X_U^\nu \mu\left(
                v,
                \alpha_x^W (X_W^\kappa w)
            \right)
        \right)
        \,.
    \end{align*}
    To estimate these derivatives, let $\qn\in{\cal Q}^U$. Then there
    exists $\qn'\in{\cal Q}^U$ such that
    \begin{align*}
        \qn(\partial_p^\nu\partial_x^\kappa F_{vw}(p,x))
        &\leq
        (1+\|p\|^2)^{\frac{1}{2}\order{m}^U(\qn)}\qn'(X_U^\nu \mu(v,\alpha_x^W X_W^\kappa w))
        \,.
    \end{align*}
    As $w\mapsto\mu(v,w)$ is continuous and $U=U^\infty$, $W=W^\infty$
    as locally convex spaces, we now find
    $\qn'',\qn''',\qn''''\in{\cal Q}^W$, $\sigma\in\Nl_0^n$, and
    constants $c, C_v>0$ such that
    \begin{align*}
        \qn(\partial_p^\nu\partial_x^\kappa F_{vw}(p,x))
        &\leq
        (1+\|p\|^2)^{\frac{1}{2}\order{m}^U(\qn)}\,C_v\,\qn''(X_W^\sigma X_W^\kappa \alpha_x^W w)
        \\
        &\leq
        C_v\,(1+\|p\|^2)^{\frac{1}{2}\order{m}^U(\qn)}(1+\|x\|^2)^{\frac{1}{2}\order{m}^W(\qn'')}\qn'''(X_W^\sigma X_W^\kappa w)
        \\
        &\leq
        C_v\,c\,(1+\|p\|^2+\|x\|^2)^{\frac{1}{2}(|\order{m}^U(\qn)|+|\order{m}^W(\qn'')|)}\qn''''(w)
    \end{align*}
    for all $p,x\in\Rl^n$, $w\in W$. Here $c$ depends on
    $\kappa,\nu,\qn$, but not on $v,w,p,x$. This estimate shows that
    $F_{vw}$ is a symbol in $\Symbol^{\order{\hat{m}},0}(\Rl^{2n},U)$,
    with
    $\order{\hat{m}}(\qn):=|\order{m}^U(\qn)|+|\order{m}^W(\qn'')|$
    (cf. Proposition~\ref{proposition:SymbolProducts}). Moreover,
    \begin{align*}
        \|F_{vw}\|^{\order{\hat{m}},0}_{\qn,\nu\oplus\kappa}
        \leq
        C_v\,c\,\qn''''(w)\,,
    \end{align*}
    that is, $w\mapsto F_{vw}$ is continuous for fixed $v$.
\end{proof}

After these preparations we can derive the following basic statement
about the deformation of $\mu$.
\begin{proposition}\label{proposition:DeformedMuBasicProperties}
    Let $v\in V$, $w\in W$, and $\te\in\Rl^{n\times n}$.
    \begin{propositionlist}
    \item The functions $\mu^\te_{vw}$ are symbols in
        $\underline{\Symbol}(\Rl^{2n},U)$.
    \item The maps defined by their oscillatory integrals
        \begin{align}\label{eq:MuTheta}
            V\times W \ni (v,w)
            \mapsto
            \mu_\te(v,w)
            :=
            I_U(\mu^\te_{vw})
            \in U
        \end{align}
        are bilinear and (separately) continuous if $\mu$ is
        (separately) continuous.
    \item $\mu_\te$ satisfies the covariance property
        \eqref{eq:Covariance}.
    \end{propositionlist}
\end{proposition}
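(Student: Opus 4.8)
The plan is to derive parts (i) and (ii) from the two preparatory Lemmas~\ref{lemma:Falpha} and~\ref{lemma:FvwGvw} (when $\mu$ is only separately continuous) or from Proposition~\ref{proposition:SymbolProducts}, \refitem{item:ProductInTwoVariables} (when $\mu$ is jointly continuous), and to obtain part~(iii) from Lemma~\ref{lemma:AinI} together with the covariance~\eqref{eq:Covariance}. For part~(i) the starting point is that the covariance and the group law of $\alpha^V,\alpha^W$ allow one to rewrite, pointwise,
\begin{align*}
    \mu^\te_{vw}(p,x)
    =
    \mu(\alpha^V_{\te p}v,\alpha^W_x w)
    =
    \alpha^U_{\te p}\bigl(\mu(v,\alpha^W_{x-\te p}w)\bigr)
    =
    \alpha^U_x\bigl(\mu(\alpha^V_{\te p-x}v,w)\bigr)\,,
\end{align*}
i.e.\ $\mu^\te_{vw}=F_{vw}\circ L_\te=G_{vw}\circ L'_\te$ with $F_{vw},G_{vw}$ the symbols of Lemma~\ref{lemma:FvwGvw} and $L_\te(p,x):=(\te p,\,x-\te p)$, $L'_\te(p,x):=(\te p-x,\,x)$ linear self-maps of $\Rl^{2n}$. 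By Lemma~\ref{lemma:FvwGvw}, $F_{vw},G_{vw}\in\Symbol^{\order{\hat{m}},0}(\Rl^{2n},U)$ for an order $\order{\hat{m}}$ which, enlarging it if necessary via Proposition~\ref{proposition:SymbolFirstProperties}, \refitem{item:SymbolsInSymbols}, we may take $\ge 0$. Since here the type is $0$ and $\order{\hat{m}}\ge0$, the chain-rule estimate behind the proof of Lemma~\ref{lemma:GlnActsOnSymbols} goes through for the (in general \emph{non-invertible}) linear substitutions $L_\te$, $L'_\te$ as well, reducing to a bound $\|H\circ L\|^{\order{\hat{m}},0}_{\qn,\mu}\le C(L)\max_{|\nu|=|\mu|}\|H\|^{\order{\hat{m}},0}_{\qn,\nu}$; hence $\mu^\te_{vw}\in\Symbol^{\order{\hat{m}},0}(\Rl^{2n},U)\subset\IntSymbol(\Rl^{2n},U)$. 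If $\mu$ is jointly continuous one may instead write $\mu^\te_{vw}=\underline\mu\bigl((\alpha^V(v))\circ\te,\,\alpha^W(w)\bigr)$ and invoke Proposition~\ref{proposition:SymbolProducts}, \refitem{item:ProductInTwoVariables}, after first passing to $\max\{0,\order{m}^V\}$, $\max\{0,\order{m}^W\}$ so that precomposition with $\te$ is admissible.

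For part~(ii), bilinearity of $(v,w)\mapsto\mu_\te(v,w)=I_U(\mu^\te_{vw})$ is immediate, since $\mu$ is bilinear, each $\alpha^V_{\te p}$, $\alpha^W_x$ is linear, and $I_U$ is linear. For separate continuity fix $v_0\in V$: then $w\mapsto F_{v_0 w}$ is continuous into $\Symbol^{\order{\hat{m}},0}(\Rl^{2n},U)$ by Lemma~\ref{lemma:FvwGvw}, the pullback $L_\te^*\colon H\mapsto H\circ L_\te$ is a continuous linear endomorphism of $\Symbol^{\order{\hat{m}},0}(\Rl^{2n},U)$ by the estimate just used, and $I_U$ restricted to $\Symbol^{\order{\hat{m}},0}(\Rl^{2n},U)$ equals $I^{\order{\hat{m}},0}$, which is continuous by Theorem~\ref{Theorem:OscillatoryIntegral}, \refitem{item:IntegralContinuuosLinear}. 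Composing, $w\mapsto\mu_\te(v_0,w)=I_U(L_\te^* F_{v_0 w})$ is continuous; continuity of $v\mapsto\mu_\te(v,w_0)$ follows analogously from the $G_{vw_0}$-representation. If $\mu$ is jointly continuous, the identity $\mu^\te_{vw}=\underline\mu((\alpha^V(v))\circ\te,\alpha^W(w))$ exhibits $(v,w)\mapsto\mu^\te_{vw}$ as the composition of the continuous linear map $(v,w)\mapsto((\alpha^V(v))\circ\te,\alpha^W(w))$ with the continuous bilinear map $\underline\mu$, whence $(v,w)\mapsto\mu_\te(v,w)=I_U(\mu^\te_{vw})$ is jointly continuous.

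For part~(iii), apply Lemma~\ref{lemma:AinI} with the continuous linear map $\alpha^U_y$ (continuous by Remark~\ref{remark:Actions}, \refitem{item:AlphaXContinuous}): for $y\in\Rl^n$,
\begin{align*}
    \alpha^U_y\,\mu_\te(v,w)
    &=
    \alpha^U_y\, I_U(\mu^\te_{vw})
    =
    I_U\bigl((p,x)\mapsto\alpha^U_y\mu(\alpha^V_{\te p}v,\alpha^W_x w)\bigr)
    \\
    &=
    I_U\bigl((p,x)\mapsto\mu(\alpha^V_{\te p+y}v,\alpha^W_{x+y}w)\bigr)\,,
\end{align*}
the last step using the covariance~\eqref{eq:Covariance} of $\mu$ and the group law. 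On the other hand $\mu^\te_{\alpha^V_y v,\alpha^W_y w}(p,x)=\mu(\alpha^V_{\te p}\alpha^V_y v,\alpha^W_x\alpha^W_y w)=\mu(\alpha^V_{\te p+y}v,\alpha^W_{x+y}w)$, so this last oscillatory integral is exactly $I_U(\mu^\te_{\alpha^V_y v,\alpha^W_y w})=\mu_\te(\alpha^V_y v,\alpha^W_y w)$, which is~\eqref{eq:Covariance} for $\mu_\te$.

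The step requiring the most care is parts~(i)/(ii) in the merely separately continuous case: there Proposition~\ref{proposition:SymbolProducts} is not available, and since the deformation matrix $\te$ need not be invertible the affine-invariance results of Section~\ref{subsec:FurtherPropertiesSymbols} do not apply directly. The key point is that the chain-rule bound underlying Lemma~\ref{lemma:GlnActsOnSymbols} remains valid for a non-invertible linear substitution provided the order is non-negative and the type is $0$ — precisely the setting supplied by Lemma~\ref{lemma:FvwGvw} — so that the reductions $\mu^\te_{vw}=F_{vw}\circ L_\te=G_{vw}\circ L'_\te$ can be carried through.
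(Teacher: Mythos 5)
Your proof is correct and follows essentially the same route as the paper: part \textit{(i)} and the separately continuous case of \textit{(ii)} via the decomposition $\mu^\te_{vw}=F_{vw}\circ L_\te=G_{vw}\circ L'_\te$ together with Lemma~\ref{lemma:FvwGvw}, the jointly continuous case via Proposition~\ref{proposition:SymbolProducts}, \refitem{item:ProductInTwoVariables}, and part \textit{(iii)} via Lemma~\ref{lemma:AinI} and the covariance of $\mu$. You are in fact more careful than the paper on one point: the paper justifies the substitution step by appealing to Lemma~\ref{lemma:affine-substitutions}, whose hypotheses require an invertible matrix, whereas your explicit observation that the chain-rule bound survives a \emph{non-invertible} linear substitution when the order is non-negative and the type is $0$ is precisely what is needed for singular $\te$ (and your identity $\mu^\te_{vw}=G_{vw}\circ L'_\te$ with $L'_\te(p,x)=(\te p-x,\,x)$ also fixes a transposition of arguments in the paper's displayed formula).
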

\begin{proof}
    Let $v\in V$, $w\in W$ and $\te$ be fixed. Thanks to the
    covariance of $\mu$ \eqref{eq:Covariance}, we have
    \begin{align*}
	\mu^\te_{vw}(p,x)
	=
	F_{vw}(\te p,x-\te p)
	=
	G_{vw}(x,\te p-x)
	\,.
    \end{align*}
    As $F_{vw}, G_{vw}\in\underline{\Symbol}(\mathbb{R}^{2n}, U)$, an
    application of Lemma~\ref{lemma:affine-substitutions} shows
    $\mu^\te_{vw}\in\underline\Symbol(\mathbb{R}^{2n}, U)$ and hence
    the first part. By the preceding lemma, we see that
    \begin{align*}
	V\times W \ni (v,w)
        \mapsto
        \mu^\te_{vw} \in \Symbol^{\order{\hat{m}},0}(\Rl^{2n},U)
    \end{align*}
    is separately continuous for some order $\order{\hat{m}}$ on
    ${\cal Q}^U$. Hence the oscillatory integral $I_U(\mu^\te_{vw})$
    exists and depends separately continuous on $v,w$. The bilinearity
    of $\mu^\te$ is clear.  In case $\mu$ is jointly continuous, note
    that by assumption, $\alpha^V_\te(v) \in
    \Symbol^{\bm^V,0}(\Rl^n,V)$ and $\alpha^W(w) \in
    \Symbol^{\bm^W,0}(\Rl^n,W)$ with orders $\bm^V,\bm^W$ for ${\cal
      Q}^V$, ${\cal Q}^W$. The maps $v \mapsto \alpha^V_\te(v)$ and $w
    \mapsto \alpha^W(w)$ are continuous from $V$ (respectively $W$) to
    $\Symbol^{\bm^V,0}(\Rl^n, V)$ (respectively
    $\Symbol^{\bm^W,0}(\Rl^n, W)$) by
    Definition~\ref{definition:SmoothSymbolAction},
    \refitem{item:VtoAlphaVContinuous}.  Furthermore, according to
    Proposition~\ref{proposition:SymbolProducts},
    \refitem{item:ProductInTwoVariables}, $\alpha^V_\te(v),
    \alpha^W(w) \mapsto \mu(\alpha^V_\te(v), \alpha^W(w))$ maps
    $\Symbol^{\bm^V,0}(\Rl^n,V)\times \Symbol^{\bm^W,0}(\Rl^n,W)$
    continuously into $\Symbol^{\bm',0}(\Rl^{2n},U)$, with some order
    $\bm'$ for ${\cal Q}^U$. Finally, the oscillatory integral maps
    $\mu^\te_{vw}\mapsto \mu_\te(v,w)$ continuously from
    $\Symbol^{\bm',0}(\Rl^{2n},U)$ to $U$ by
    Theorem~\ref{Theorem:OscillatoryIntegral}. As a composition of
    these continuous maps, $\mu_\te\colon V\times W\to U$ is therefore
    continuous, too. This completes the second part.  To check the
    covariance property \eqref{eq:Covariance}, note that since
    $\alpha_x^U$ is continuous for each $x\in\Rl^n$ by
    Remark~\ref{remark:Actions} \refitem{item:AlphaXContinuous}, it
    can be pulled inside the oscillatory integral defining $\mu_\te$
    according to Lemma~\ref{lemma:AinI}. Since \eqref{eq:Covariance}
    holds for $\mu$, and $\alpha^V$, $\alpha^W$ are $\Rl^n$-actions,
    it follows that $\mu_\te$ satisfies \eqref{eq:Covariance} as well:
    \begin{align*}
	\alpha_x^U\mu_\te(v,w)
        &=
        (2\pi)^{-n}\int_{\mathbb{R}^{2n}}
        dp\,dy\,e^{i\SP{p,y}}\,
        \alpha^U_x\mu(\alpha^V_{\te q}(v),\,\alpha^W_y(w))
        \\
        &=
        (2\pi)^{-n}\int_{\mathbb{R}^{2n}}
        dp\,dy\,e^{i\SP{p,y}}\,
        \mu(\alpha^V_{\te q+x}(v),\,\alpha^W_{y+x}(w))
        \\
        &=
        \mu_\te(\alpha^V_x(v),\,\alpha^W_x(w))
        \,.
    \end{align*}
\end{proof}

Depending on the context, $\mu_\te$ from \eqref{eq:MuTheta} will be
referred to as the {\em deformed product}, {\em deformed module
  structure}, or just {\em deformed bilinear map}.

In the next proposition we justify these names by demonstrating the
most basic feature of a deformation, namely that it reduces to the
identity for vanishing deformation parameter.
\begin{proposition}\label{proposition:DeformationBasics}
    Let $\te,\te'\in\Rl^{n\times n}$.
    \begin{propositionlist}
    \item \label{item:MuThetaIsMu}For $\te=0$, we have $\mu_0=\mu$.
    \item \label{item:MuAdditiveInTheta}
        $(\mu_\te)_{\te'}=\mu_{\te+\te'}$.
    \end{propositionlist}
\end{proposition}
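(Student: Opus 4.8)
The plan is to prove both parts by reducing everything to the defining formula $\mu_\te(v,w)=I_U(\mu^\te_{vw})$ with $\mu^\te_{vw}(p,x)=\mu(\alpha^V_{\te p}v,\alpha^W_x w)$, and then manipulating the oscillatory integrals using the calculational rules established in Section~\ref{section:integrals}. For part~\refitem{item:MuThetaIsMu}, setting $\te=0$ makes the symbol $\mu^0_{vw}(p,x)=\mu(v,\alpha^W_x w)=F_{vw}(0,x)$ independent of the first variable $p$. Hence Proposition~\ref{proposition:normalization} applies directly and gives $I_U(\mu^0_{vw})=F_{vw}(0,0)=\mu(v,\alpha^W_0 w)=\mu(v,w)$, which is exactly $\mu_0=\mu$.

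For part~\refitem{item:MuAdditiveInTheta}, I would start from the left side $(\mu_\te)_{\te'}(v,w)=I_U\big((\mu_\te)^{\te'}_{vw}\big)$ where $(\mu_\te)^{\te'}_{vw}(p_2,x_2)=\mu_\te(\alpha^V_{\te' p_2}v,\alpha^W_{x_2}w)$, and then expand the inner $\mu_\te$ as another oscillatory integral over variables $(p_1,x_1)$. This produces a double oscillatory integral over $\Rl^{2n}\oplus\Rl^{2n}$ with integrand
\[
\mu\big(\alpha^V_{\te p_1}\alpha^V_{\te' p_2}v,\ \alpha^W_{x_1}\alpha^W_{x_2}w\big)
=\mu\big(\alpha^V_{\te p_1+\te' p_2}v,\ \alpha^W_{x_1+x_2}w\big),
\]
using that $\alpha^V,\alpha^W$ are $\Rl^n$-actions. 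To make the inner integral pull out I would invoke the Fubini theorem for oscillatory integrals, Proposition~\ref{proposition:Fubini}, together with Proposition~\ref{proposition:symbol-valued-symbols} (so that the inner integral really defines a symbol-valued symbol of the outer variables, with appropriate order $\hat{\bm}$ and type $\hat{\brho}\geq0$). The continuity of $\alpha^V_x$ for each $x$ (Remark~\ref{remark:Actions}, \refitem{item:AlphaXContinuous}) and Lemma~\ref{lemma:AinI} let one move the inner action factors freely inside the integral.

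The remaining task is then a change of variables in the $2n+2n$ integration variables to reach the single oscillatory integral defining $\mu_{\te+\te'}(v,w)=I_U\big(\mu^{\te+\te'}_{vw}\big)$, whose integrand is $\mu(\alpha^V_{(\te+\te')q}v,\alpha^W_y w)$. Concretely, with the bilinear form written as $\SP{p,x}=(p,Mx)$ one substitutes in the $p_1,x_1,p_2,x_2$ variables so that the combined oscillating phase $e^{i\SP{p_1,x_1}}e^{i\SP{p_2,x_2}}$ and the argument $(\te p_1+\te' p_2,\ x_1+x_2)$ of $\mu$ are simultaneously brought into the form $e^{i\SP{q,y}}$ times $\mu(\alpha^V_{(\te+\te')q}v,\alpha^W_y w)$; this is precisely the kind of affine substitution covered by Lemma~\ref{lemma:affine-substitutions}. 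I expect the main obstacle to be bookkeeping: verifying that the intermediate functions genuinely lie in $\IntSymbol$ with the orders and types demanded by Proposition~\ref{proposition:Fubini} (using Lemma~\ref{lemma:FvwGvw} and Proposition~\ref{proposition:symbol-valued-symbols} for the symbol estimates), and then organizing the affine substitution so that only the admissible moves of Lemma~\ref{lemma:affine-substitutions}—translations, $\GL(n,\Rl)$-transformations absorbing $\te,\te'$, and compensating exponential factors of type and order $0$—are used. Once the substitution is identified, the equality $(\mu_\te)_{\te'}=\mu_{\te+\te'}$ follows, and symmetry of the argument (or applying it with the roles of $\te,\te'$ swapped) is not even needed since the computation is already symmetric in the two pairs of variables.
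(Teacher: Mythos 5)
Part~\refitem{item:MuThetaIsMu} of your proposal is exactly the paper's argument: the symbol is constant in $p$, so Proposition~\ref{proposition:normalization} gives $\mu_0(v,w)=\mu^0_{vw}(0,0)=\mu(v,w)$. Nothing to add there.

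For part~\refitem{item:MuAdditiveInTheta} your overall strategy (expand into a double oscillatory integral with integrand $\mu(\alpha^V_{\te p_1+\te' p_2}v,\alpha^W_{x_1+x_2}w)$, justify the symbol properties via Lemma~\ref{lemma:FvwGvw} and Proposition~\ref{proposition:symbol-valued-symbols}, then use Fubini and affine substitutions) is the paper's, but the concluding step as you describe it cannot work: no invertible affine substitution of the $4n$ integration variables turns a $4n$-dimensional oscillatory integral into the $2n$-dimensional integral defining $\mu_{\te+\te'}$, because the map $(p_1,x_1,p_2,x_2)\mapsto(\te p_1+\te'p_2,\,x_1+x_2)$ collapses dimensions. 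The missing ingredient is Proposition~\ref{proposition:normalization} applied a second time. Concretely, one first uses Lemma~\ref{lemma:affine-substitutions} to trade $\alpha^W_{x_1+x_2}$ for $\alpha^W_{x_1}$ at the price of a phase $e^{-i\SP{p_1,x_2}}$, then swaps the order of integration by Proposition~\ref{proposition:Fubini}; the inner integral over $(p_2,x_2)$ then has integrand $e^{-i\SP{p_1,x_2}}\mu(\alpha^V_{\te'p_2+\te p_1}v,\alpha^W_{x_1}w)$, and a second application of Lemma~\ref{lemma:affine-substitutions} (shifting $p_2\mapsto p_2+p_1$ to absorb the extra phase) leaves a symbol that is \emph{constant in $x_2$}; only Proposition~\ref{proposition:normalization} then collapses this residual $2n$-dimensional integral to its value at $p_2=0$, namely $\mu(\alpha^V_{(\te+\te')p_1}v,\alpha^W_{x_1}w)$, after which the outer integral is $\mu_{\te+\te'}(v,w)$ by definition. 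You invoked the normalization result for $\te=0$ but not here, where it is equally indispensable; without it the reduction you describe does not go through. With that step inserted (and the substitution made explicit), your argument coincides with the paper's proof.
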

\begin{proof}
    Let $v\in V, w\in W$.  For $\te=0$, the symbol $(p,x) \mapsto
    \mu(\alpha^V_{\te p}v,\alpha^W_x w)$ \eqref{eq:MuSymbols} is
    independent of $p$. Hence
    Proposition~\ref{proposition:normalization} applies, and we have
    $\mu_0(v,w)=\mu_\te(\alpha^V_{0}(v),\alpha_0^W(w)) = \mu(v,w)$,
    showing the first part. For the second part, by
    Proposition~\ref{proposition:DeformedMuBasicProperties}, $\mu_\te$
    has the same properties as $\mu$, so $(\mu_\te)_{\te'}$ is
    well-defined. Using successively the definition of $\mu_\te$, the
    substitution $x\to x-x'$ according to
    Lemma~\ref{lemma:affine-substitutions} and Fubini's theorem in
    form of Proposition~\ref{proposition:Fubini}, we compute
    \begin{align*}
        &(\mu_\te)_{\te'}(v,w) \\
        &\quad=
        (2\pi)^{-n}\int_{\mathbb{R}^{2n}}
        dp'\,dx'\,e^{i\SP{p',x'}}\,
        \mu_\te(\alpha^V_{\te'p'}(v),\alpha^W_{x'}(w))
        \\
        &\quad=
        (2\pi)^{-n}\int_{\mathbb{R}^{2n}}
        dp'\,dx'\,e^{i\SP{p',x'}}\,\left(
            (2\pi)^{-n}\int_{\mathbb{R}^{2n}}
            dp\,dx\,e^{i\SP{p,x}}\,
            \mu(\alpha^V_{\te p +\te'p'}(v),\alpha^W_{x+x'}(w))
        \right)
        \\
        &\quad=
        (2\pi)^{-n}\int_{\mathbb{R}^{2n}}
        dp'\,dx'\,e^{i\SP{p',x'}}\,\left(
            (2\pi)^{-n}\int_{\mathbb{R}^{2n}}
            dp\,dx\,e^{i\SP{p,x}}e^{-i\SP{p,x'}}\,
            \mu(\alpha^V_{\te p +\te'p'}(v),\alpha^W_{x}(w))
        \right)
        \\
        &\quad=
        (2\pi)^{-n}\int_{\mathbb{R}^{2n}}
        dp\,dx\,e^{i\SP{p,x}}
        \left(
            (2\pi)^{-n}\int_{\mathbb{R}^{2n}}
            dp'\,dx'\,e^{i\SP{p',x'}}
            \,e^{-i\SP{p,x'}}
            \mu(\alpha^V_{\te'p'+\te p}(v), \alpha^W_{x}(w))
        \right)
        .
    \end{align*}
    Lemma~\ref{lemma:affine-substitutions} and
    Proposition~\ref{proposition:normalization} show that the inner
    oscillatory integral has the value
    $\mu(\alpha^V_{(\te+\te')p}(v),\alpha^W_{x}(w))$. Plugging this
    result into the above computation gives the desired answer
    $(\mu_\te)_{\te'}(v,w)=\mu_{\te+\te'}(v,w)$ by definition of
    $\mu_\te$.
\end{proof}

The following lemma shows two further invariance properties of the
deformation which are helpful in many situations.
\begin{lemma}\label{lemma:InvariancesOfMu}
    \begin{propositionlist}
    \item\label{item:MuOnInvariantVectors} Let $v\in V$ and $w\in
        W$. If either $v$ is $\alpha^V$-invariant or $w$ is
        $\alpha^W$-invariant, then $\mu_\te(v,w)=\mu(v,w)$.
    \item\label{item:MuComposedWithInvariantMap} Let $Y$ be another
        sequentially complete locally convex vector space, and
        $T\colon U\longrightarrow Y$ linear and continuous. If
        $\te\in\Rl^{n\times n}$ is skew-symmetric and $T$ is
        $\alpha^U$-invariant, i.e. $T\circ\alpha^U_x=T$ for all
        $x\in\Rl^n$, then
        \begin{align}
            T \mu_\te(v,w) = T \mu(v,w)\,.
        \end{align}
    \end{propositionlist}
\end{lemma}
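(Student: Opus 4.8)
The plan is to compute the oscillatory integral defining $\mu_\te(v,w)$ directly, exploiting the constancy or invariance hypotheses so that Proposition~\ref{proposition:normalization} (the statement that an oscillatory integral of a symbol constant in $p$ or in $x$ returns its value at the origin) applies. For part~\refitem{item:MuOnInvariantVectors}, suppose first that $v$ is $\alpha^V$-invariant, i.e.\ $\alpha^V_{\te p}v = v$ for all $p$. Then the symbol $\mu^\te_{vw}(p,x) = \mu(\alpha^V_{\te p}v,\alpha^W_x w) = \mu(v,\alpha^W_x w)$ is independent of $p$, so by Proposition~\ref{proposition:normalization} its oscillatory integral equals its value at $p=0,x=0$, namely $\mu(v,\alpha^W_0 w) = \mu(v,w)$. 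If instead $w$ is $\alpha^W$-invariant, then $\mu^\te_{vw}(p,x) = \mu(\alpha^V_{\te p}v, w)$ is independent of $x$, and the same proposition gives $\mu_\te(v,w) = \mu(\alpha^V_0 v, w) = \mu(v,w)$. This part is essentially immediate.

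For part~\refitem{item:MuComposedWithInvariantMap} I would first move $T$ inside the oscillatory integral: since $T\colon U\to Y$ is linear and continuous and $\mu^\te_{vw}\in\IntSymbol(\Rl^{2n},U)$, Lemma~\ref{lemma:AinI} gives
\begin{align*}
    T\mu_\te(v,w)
    =
    T\,I_U(\mu^\te_{vw})
    =
    I_Y(T\circ\mu^\te_{vw})
    =
    (2\pi)^{-n}\int_{\Rl^{2n}} dp\,dx\,e^{i\SP{p,x}}\,T\mu(\alpha^V_{\te p}v,\alpha^W_x w)\,.
\end{align*}
Now use the covariance \eqref{eq:Covariance}, $\mu(\alpha^V_{\te p}v,\alpha^W_x w) = \alpha^U_{\te p}\mu(\alpha^V_{x - \te p}v,\alpha^W_{x-\te p}w)$ — more precisely, write $\mu(\alpha^V_{\te p}v,\alpha^W_x w)$ using covariance with group element $y$ chosen so that one argument's action becomes trivial relative to $T$. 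Concretely: $T\circ\alpha^U_x = T$ for all $x$, so applying $\alpha^U_{-\te p}$ (which $T$ annihilates) and covariance,
\begin{align*}
    T\mu(\alpha^V_{\te p}v,\alpha^W_x w)
    =
    T\,\alpha^U_{-\te p}\,\alpha^U_{\te p}\mu(\alpha^V_{\te p}v,\alpha^W_x w)
    =
    T\mu(\alpha^V_{0}v,\alpha^W_{x-\te p}w)
    =
    T\mu(v,\alpha^W_{x-\te p}w)\,.
\end{align*}
Substituting $x\mapsto x+\te p$ in the integral by Lemma~\ref{lemma:affine-substitutions} then leaves an oscillatory integral of $(p,x)\mapsto e^{i\SP{p,\te p}}T\mu(v,\alpha^W_x w)$. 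Here skew-symmetry of $\te$ is the key point: $\SP{p,\te p} = (p, M\te p)$, and for $\te$ skew-symmetric this phase is constant — indeed $\SP{p,\te p} = 0$ when $\SP{\argument,\argument}$ is the standard symplectic form paired with $\te$, or more generally the quadratic form $p\mapsto\SP{p,\te p}$ vanishes — so the extra oscillating factor disappears. The integrand is then constant in $p$, and Proposition~\ref{proposition:normalization} yields $T\mu(v,\alpha^W_0 w) = T\mu(v,w)$.

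The main obstacle I anticipate is bookkeeping the substitution and the phase algebra in part~\refitem{item:MuComposedWithInvariantMap}: one must track carefully how the affine substitution in Lemma~\ref{lemma:affine-substitutions} transforms the oscillating factor $e^{i\SP{p,x}}$ and produces the residual factor $e^{-i\SP{p,\te p}}$ (or its analogue), and then verify that skew-symmetry of $\te$ together with the chosen normalization $\SP{p,x}=(p,Mx)$, $|\det M|=1$, forces this residual phase to be identically $1$. Once the phase is seen to be trivial, the remaining integral is constant in $p$ and Proposition~\ref{proposition:normalization} closes the argument; checking that $(p,x)\mapsto T\mu(v,\alpha^W_x w)$ is indeed a symbol in $\IntSymbol(\Rl^{2n},Y)$ (so that the proposition applies) follows from Lemma~\ref{lemma:FvwGvw} and Lemma~\ref{lemma:AinI} and is routine.
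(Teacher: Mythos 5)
Your proof is correct and follows essentially the same route as the paper: part \textit{i.)} is the paper's argument verbatim, and part \textit{ii.)} uses the identical chain (Lemma~\ref{lemma:AinI}, covariance plus $\alpha^U$-invariance of $T$, the affine substitution with $\SP{p,\te p}=0$ by skew-symmetry, then Proposition~\ref{proposition:normalization}). The only difference is cosmetic: you use covariance to shift all of the action onto the $w$-slot, ending with an integrand constant in $p$, whereas the paper shifts it onto the $v$-slot and ends with an integrand constant in $x$.
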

\begin{proof}
    For part~\refitem{item:MuOnInvariantVectors}, note that under the
    specified circumstances, the symbol $(p,x)\mapsto\mu(\alpha^V_{\te
      p}(v),\alpha^W_x(w))$ depends only on one of its two variables
    $p,x$. Hence Proposition~\ref{proposition:normalization} applies,
    and we have $\mu_\te(v,w)=\mu(\alpha^V_0(v),\alpha^W_0(w)) =
    \mu(v,w)$.  For part~\refitem{item:MuComposedWithInvariantMap},
    let $v\in V$, $w\in W$. Using the continuity and linearity of $T$
    as in Lemma~\ref{lemma:AinI}, as well as the covariance
    \eqref{eq:Covariance} and the invariance of $T$ gives
    \begin{align*}
        (2\pi)^n\,T\mu_\te(v,w)
        &=
        \int_{\mathbb{R}^{2n}}
        dp\,dx\,e^{i\SP{p,x}}
        \,T\mu(\alpha^V_{\te p}(v),\alpha^W_x(w)) \\
        &=
        \int_{\mathbb{R}^{2n}}
        dp\,dx\,e^{i\SP{p,x}} \,T\mu(\alpha^V_{\te p-x}(v), w)
        \,.
    \end{align*}
    Now we use Lemma~\ref{lemma:affine-substitutions} to carry out the
    substitution $x\to x+\te p$. As $\te$ is skew-symmetric,
    $\SP{p,\te p}=0$, and we get
    \begin{align*}
        T\mu_\te(v,w)
        &=
        (2\pi)^{-n}\int_{\mathbb{R}^{2n}}
        dp\,dx\,e^{i\SP{p,x}} \,T\mu(\alpha^V_{\te p}(v), w)
        \,.
    \end{align*}
    This is again an oscillatory integral over a symbol which is
    constant in one variable, and by
    Proposition~\ref{proposition:normalization}, we arrive at
    $T\mu_\te(v,w) = T\mu(\alpha^V_0(v),w) = T\mu(v,w)$.
\end{proof}

We now consider the deformation of algebras and modules. Let $\A:=V$
be an algebra with separately continuous product $\mu\colon
\A\times\A\longrightarrow \A$, and assume that the (smooth,
polynomially bounded) $\Rl^n$-action $\alpha$ acts by
automorphisms. Furthermore, let $\E:=W=U$ be a left $\A$-module with
separately continuous module map $\tilde{\mu}\colon
\A\times\E\longrightarrow \E$ and a (smooth, polynomially bounded)
$\Rl^n$-action $\beta$ satisfying \eqref{eq:Covariance} with $V=\A$,
$W=U=\E$, $\alpha^V=\alpha$, and $\alpha^W=\alpha^U=\beta$.  In this
situation, we can deform the product $\mu$ according to
\begin{align}\label{eq:DeformedProduct}
    \mu_\te(a,b)
    :=
    a\times_\te b
    :=
    (2\pi)^{-n}\int_{\mathbb{R}^{2n}}
    dp\,dx\,e^{i\SP{p,x}}\,\tilde{\mu}(\alpha_{\te
      p}(a),\alpha_x(b))
    \,,\qquad
    a,b\in\A,
\end{align}
and the module structure $\tilde{\mu}$ according to
\begin{align}\label{eq:DeformedModuleMap}
    \tilde{\mu}_\te(a,\psi)
    :=
    a_\te \psi
    :=
    (2\pi)^{-n}\int_{\mathbb{R}^{2n}}
    dp\,dx\,e^{i\SP{p,x}}\,\tilde{\mu}(\alpha_{\te
      p}(a),\beta_x(\psi))
    \,,\qquad
    a\in\A,\;\psi\in\E\,,
\end{align}
with the same deformation parameter $\te\in\Rl^{n\times n}$. We will
write $\A_\te$ for the algebra given by the linear space $\A$ and the
product $\times_\te$.
\begin{theorem}\label{theorem:AssociativityOfDeformedMu}
    Let $\A$ be a sequentially complete locally convex algebra with
    separately continuous product $\mu$, and let $\E$ be a
    sequentially complete locally convex left $\A$-module with
    separately continuous module structure $\tilde{\mu}$. Let $\alpha$
    be a smooth polynomially bounded $\Rl^n$-action by automorphisms
    on $\A$, and $\beta$ a smooth polynomially bounded $\Rl^n$-action
    on $\E$ such that
    \begin{align}\label{eq:CovariantModule}
        \beta_x(\tilde{\mu}(a,\psi))
        &=
        \tilde{\mu}(\alpha_x(a),\,\beta_x(\psi))\,,\qquad
        a\in\A,\,\psi\in\E,\,x\in\Rl^n\,.
    \end{align}
    \begin{theoremlist}
    \item In this case $(\E,\tilde{\mu}_\te)$ is a left
        $\A_\te$-module, i.e.
        \begin{align}\label{eq:DeformedModule}
            (a\times_\te b)_\te\psi
            &=
            a_\te b_\te \psi
            \,,\qquad a,b\in\A,\psi\in\E\,.
        \end{align}
    \item If the product $\mu$ in $\A$ is associative, then so is the
        deformed product $\mu_\te$ \eqref{eq:DeformedProduct}.
    \end{theoremlist}
\end{theorem}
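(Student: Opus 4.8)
The plan is to deduce part~(ii) from part~(i). Specializing part~(i) to $\E:=\A$, $\tilde\mu:=\mu$ and $\beta:=\alpha$, the covariance condition \eqref{eq:CovariantModule} becomes precisely the hypothesis that $\alpha$ acts by automorphisms, the $\A$-module identity $\tilde\mu(\mu(a,b),\psi)=\tilde\mu(a,\tilde\mu(b,\psi))$ becomes associativity of $\mu$, and \eqref{eq:DeformedModule} becomes associativity of $\mu_\te$ (since $\tilde\mu_\te=\times_\te$ in this case). Hence everything rests on establishing \eqref{eq:DeformedModule}, that is
\begin{align*}
    \tilde\mu_\te\big(\mu_\te(a,b),\psi\big)
    =
    \tilde\mu_\te\big(a,\tilde\mu_\te(b,\psi)\big)\,,
    \qquad a,b\in\A,\ \psi\in\E\,.
\end{align*}

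I would prove this by the same method as in Proposition~\ref{proposition:DeformationBasics}, \refitem{item:MuAdditiveInTheta}: expand both sides into fourfold oscillatory integrals and match them by affine changes of variables. For the right-hand side, starting from \eqref{eq:DeformedModuleMap}, use that $\beta_x$ is continuous (Remark~\ref{remark:Actions}, \refitem{item:AlphaXContinuous}) to pull it inside the oscillatory integral defining $\tilde\mu_\te(b,\psi)$ via Lemma~\ref{lemma:AinI}, apply the covariance \eqref{eq:CovariantModule} of the undeformed module map, pull the continuous linear map $\tilde\mu(\alpha_{\te p}a,\,\cdot\,)$ inside by Lemma~\ref{lemma:AinI} once more, use the $\A$-module identity for $\tilde\mu$, and finally invoke the Fubini theorem (Proposition~\ref{proposition:Fubini}); this yields
\begin{align*}
    (2\pi)^{2n}\,\tilde\mu_\te\big(a,\tilde\mu_\te(b,\psi)\big)
    =
    \int_{\Rl^{4n}}\! dp\,dx\,dp'\,dx'\;
    e^{i\SP{p,x}+i\SP{p',x'}}\,
    \tilde\mu\big(\mu(\alpha_{\te p}a,\alpha_{x+\te p'}b),\,\beta_{x+x'}\psi\big)\,.
\end{align*}
For the left-hand side, use the covariance of the deformed product $\mu_\te$ (Proposition~\ref{proposition:DeformedMuBasicProperties}) to rewrite $\alpha_{\te p}\mu_\te(a,b)=\mu_\te(\alpha_{\te p}a,\alpha_{\te p}b)$ before expanding, and proceed analogously to obtain
\begin{align*}
    (2\pi)^{2n}\,\tilde\mu_\te\big(\mu_\te(a,b),\psi\big)
    =
    \int_{\Rl^{4n}}\! dp\,dx\,dp'\,dx'\;
    e^{i\SP{p,x}+i\SP{p',x'}}\,
    \tilde\mu\big(\mu(\alpha_{\te p+\te p'}a,\alpha_{\te p+x'}b),\,\beta_{x}\psi\big)\,.
\end{align*}

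The two expressions are transformed into one another by the substitutions $x'\mapsto x'-x$ and $p\mapsto p+p'$ (the oscillating phase $e^{-i\SP{p',x}}$ produced by the first being exactly the one absorbed by the second), followed by relabelling the two variable pairs $(p,x)\leftrightarrow(p',x')$. Each substitution is carried out on the appropriate inner $2n$-dimensional integral (after using Proposition~\ref{proposition:Fubini} to pass between the fourfold integral and the iterated ones) and is an instance of Lemma~\ref{lemma:affine-substitutions}, which trades a linear substitution in the $p$-slot for one in the $x$-slot at the cost of an oscillating factor; notably, no use is made of skew-symmetry of $\te$. This gives \eqref{eq:DeformedModule}, and hence the theorem.

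The step I expect to be the main obstacle is not this bookkeeping but the verification that all the functions of two and of four variables occurring above genuinely lie in the relevant $\IntSymbol$ spaces, so that the oscillatory integrals, Lemma~\ref{lemma:affine-substitutions}, and Proposition~\ref{proposition:Fubini} apply --- which is delicate precisely because $\mu$ and $\tilde\mu$ are only assumed separately continuous. The way around this is to iterate the argument of Lemma~\ref{lemma:FvwGvw}: after each application of covariance one is looking at a composition of the separately continuous bilinear maps with translates of the polynomially bounded actions $\alpha,\beta$ evaluated at the fixed vectors $a,b,\psi$, and the estimates of Lemma~\ref{lemma:FvwGvw}, combined with the smoothness statement of Lemma~\ref{lemma:Falpha} and the product bounds of Proposition~\ref{proposition:SymbolProducts}, \refitem{item:ProductInTwoVariables}, show that the result is again a symbol of finite order and type $0$. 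The constants in these bounds depend on the fixed $a,b,\psi$, which is harmless since in the end only the separate continuity of $\mu_\te$ and $\tilde\mu_\te$ is required, and that was already secured in Proposition~\ref{proposition:DeformedMuBasicProperties}. Once the symbol membership is in place, Proposition~\ref{proposition:symbol-valued-symbols} ensures the inner integrals define symbol-valued symbols, and Proposition~\ref{proposition:Fubini} justifies all the interchanges of integration order.
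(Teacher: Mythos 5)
Your proposal is correct and follows essentially the same route as the paper: both sides are expanded into fourfold oscillatory integrals of the common integrand $\tilde\mu(\mu(\alpha_{\te p+\te p'}a,\alpha_{\te p+x'}b),\beta_x\psi)$ using covariance, the module identity, separate continuity, Fubini (Proposition~\ref{proposition:Fubini}) and the affine substitutions of Lemma~\ref{lemma:affine-substitutions} with cancelling phases, and part~(ii) is obtained by specializing $\E=\A$, $\tilde\mu=\mu$, $\beta=\alpha$ exactly as in the paper. Your closing remark on verifying symbol membership via iterating Lemma~\ref{lemma:FvwGvw} is also precisely how the paper handles the separate-continuity subtlety.
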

\begin{proof}
    Let $a,b\in\A$, $\psi\in\E$, and $\te\in\Rl^{n\times n}$. By
    applying repeatedly the arguments from Lemma~\ref{lemma:FvwGvw},
    one sees that
    \begin{align*}
        \Rl^{4n}\ni(p,x,p',x')
        \longmapsto
        \tilde{\mu}\left(
            \alpha_{\te p'+\te p}(a),
            \tilde{\mu}(\alpha_{\te p + x'}(b),\beta_x(\psi))
        \right)
    \end{align*}
    is a symbol in $\underline{\Symbol}(\Rl^{4n},\E)$. Its oscillatory
    integral can be written with the help of Fubini's theorem, the
    module property
    $\tilde{\mu}(\mu(a,b),\psi)=\tilde{\mu}(a,\tilde{\mu}(b,\psi))$,
    the separate continuity of $\tilde{\mu}$, and
    \eqref{eq:Covariance} as
    \begin{align*}
        &(2\pi)^{-2n}\int_{\mathbb{R}^{4n}}
        dp\,dx\,dp'\,dx'\,e^{i\SP{p,x}+i\SP{p',x'}}\,
        \tilde{\mu}\left(
            \alpha_{\te p'+\te p}(a),
            \tilde{\mu}(\alpha_{\te p + x'}(b),\beta_x (\psi))
        \right)
        \\
        &\quad=
        (2\pi)^{-2n}\int_{\mathbb{R}^{2n}}
        dp\,dx\,e^{i\SP{p,x}}\,
        \left(
            \int_{\mathbb{R}^{2n}}
            dp'\,dx'\,e^{i\SP{p',x'}}\,
            \tilde{\mu}\left(
                \alpha_{\te p'+\te p}(a),
                \tilde{\mu}(\alpha_{\te p + x'}(b),
                \beta_x(\psi))
            \right)
        \right)
        \\
        &\quad=
        (2\pi)^{-2n}\int_{\mathbb{R}^{2n}}
        dp\,dx\,e^{i\SP{p,x}}\,
        \left(
            \int_{\mathbb{R}^{2n}}
            dp'\,dx'\,e^{i\SP{p',x'}}\,
            \tilde{\mu}\left(
                \mu(\alpha_{\te p'+\te p}(a),
                \alpha_{\te p + x'}(b)),
                \beta_x(\psi)
            \right)
        \right)
        \\
        &\quad=
        (2\pi)^{-2n}\int_{\mathbb{R}^{2n}}
        dp\,dx\,e^{i\SP{p,x}}\,\tilde{\mu}
        \left(
            \alpha_{\te p} \int_{\mathbb{R}^{2n}}
            dp'\,dx'\,e^{i\SP{p',x'}}\, 
            \mu(\alpha_{\te p'}(a),
            \alpha_{x'}(b)),
            \beta_x(\psi)
        \right)
        \\
        &\quad=
        (2\pi)^{-n}\int_{\mathbb{R}^{2n}}
        dp\,dx\,e^{i\SP{p,x}}\,
        \tilde{\mu}\left(
            \alpha_{\te p}(\mu_\te(a,b)),\beta_x(\psi)
        \right)
        \\
        &\quad=
        \tilde{\mu}_\te(\mu_\te(a,b),\psi)\,.
    \end{align*}
    On the other hand, we can use
    Lemma~\ref{lemma:affine-substitutions} to carry out the
    substitutions $p'\to p'-p$ and $x\to x+x'$ in the first
    oscillatory integral. This gives
    \begin{align*}
        &\tilde{\mu}_\te(\mu_\te(a,b),\psi) \\
        &\quad=
        (2\pi)^{-2n}\int_{\mathbb{R}^{4n}}
        dp\,dx\,dp'\,dx'\,e^{i\SP{p,x}+i\SP{p',x'}}\,
        \tilde{\mu}\left(
            \alpha_{\te p'+\te p}(a),
            \tilde{\mu}(\alpha_{\te p + x'}(b),
            \beta_x(\psi))
        \right)
        \\
        &\quad=
        (2\pi)^{-2n}\int_{\mathbb{R}^{4n}}
        dp\,dx\,dp'\,dx'\,e^{i\SP{p,x+x'}+i\SP{p'-p,x'}}\,
        \tilde{\mu}\left(
            \alpha_{\te p'}(a),
            \tilde{\mu}(\alpha_{\te p + x'}(b),
            \beta_{x+x'}(\psi))
        \right)
        \,.
    \end{align*}
    Notice that the exponential appearing here equals
    $e^{i\SP{p,x}+i\SP{p',x'}}$ because the term $\SP{p,x'}$ drops
    out. So we can again use the covariance and separate continuity of
    $\tilde{\mu}$, and split the double oscillatory integral into two
    single oscillatory integrals, to arrive at the desired result,
    \begin{align*}
        &\tilde{\mu}_\te(\mu_\te(a,b),\psi) \\
        &\quad=
        (2\pi)^{-2n}\int_{\mathbb{R}^{4n}}
        dp\,dx\,dp'\,dx'\,e^{i\SP{p,x}+i\SP{p',x'}}\,
        \tilde{\mu}\left(
            \alpha_{\te p'}(a),
            \beta_{x'}\left(
                \tilde{\mu}(\alpha_{\te p}(b), 
                \beta_{x}(\psi))
            \right)
        \right)
        \\
        &\quad=
        (2\pi)^{-2n}\int_{\mathbb{R}^{2n}}
        dp'\,dx'\,e^{i\SP{p',x'}}\,
        \tilde{\mu}\left(
            \alpha_{\te p'}(a),
            \beta_{x'}\left(
                \int_{\mathbb{R}^{2n}}
                dp\,dx\,e^{i\SP{p,x}}
                \tilde{\mu}(\alpha_{\te p}(b),\beta_{x}(\psi))
            \right)
        \right)
        \\
        &\quad=
        (2\pi)^{-n}\int_{\mathbb{R}^{2n}}
        dp'\,dx'\,e^{i\SP{p',x'}}\, 
        \tilde{\mu}\left(
            \alpha_{\te p'}(a),
            \beta_{x'}\left(
                \tilde{\mu}_\te(b,\psi)
            \right)
        \right)
        \\
        &\quad=
        \tilde{\mu}_\te(a,\tilde{\mu}_\te(b,\psi))
        \,.
    \end{align*}
    Rewriting $\mu_\te$ and $\tilde{\mu}_\te$ according to
    \eqref{eq:DeformedProduct} and \eqref{eq:DeformedModuleMap} yields
    \eqref{eq:DeformedModule}. The second part follows by considering
    the special case $\E=\A$, $\tilde{\mu}=\mu$, $\beta=\alpha$.
\end{proof}

For isometric actions on Fr\'echet algebras, the associativity of the
deformed product is known from Rieffel's work
\cite{rieffel:1993a}. The deformation of the module structure can also
be viewed as an alternative deformation of an algebra $\A$ represented
on $\E$, which changes the elements $a\in\A$ according to $a \mapsto
a_\te$, but keeps the product unchanged. This deformation has been
introduced under the name of {\em warped convolution} in the context
of $C^*$-algebras \cite{BuchholzSummers:2008,
  BuchholzLechnerSummers:2010}, it is equivalent to the deformation of
the product according to \eqref{eq:DeformedModule}.

Sticking to the setting of an algebra $\A$ with product $\mu$ and a
left $\A$-module with module structure $\tilde{\mu}$, and actions
$\alpha,\beta$ satisfying the assumptions of
Theorem~\ref{theorem:AssociativityOfDeformedMu}, we next show how
identities and star involutions behave under the deformation.
\begin{proposition}\label{Proposition:IdentityAndStarVsDeformation}
    Let $\A$ be a locally convex sequentially complete algebra with
    separately continuous associative product, and $\alpha\colon
    \Rl^n\times \A\longrightarrow \A$ a smooth, polynomially bounded
    $\Rl^n$-action by automorphisms.
    \begin{propositionlist}
    \item \label{item:IdentityUndeformed} If $\A$ has an identity $1$,
        this is also an identity for the deformed product
        \eqref{eq:DeformedProduct}.
    \item \label{item:StarInvolutionUndeformed} If $\A$ is a
        ${}^*$-algebra with continuous $^*$-involution and $\te$ is
        skew-symmetric with respect to the inner product used in the
        oscillatory integrals defining the deformed product, then
        $a\mapsto a^*$ is also a star involution for the deformed
        product,
        i.e.,
        \begin{align}\label{eq:InvolutionForDeformedProduct}
            (a \times_\te b)^*
            =
            b^*\times_\te a^*\,,\qquad a,b\in \A\,.
        \end{align}
    \end{propositionlist}
\end{proposition}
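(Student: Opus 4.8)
The first assertion follows at once from the invariance properties already available. Since $\alpha$ acts by automorphisms of $\A$, one has $\alpha_x(1)=1$ for every $x\in\Rl^n$, so the unit $1$ is an $\alpha$-invariant vector. Applying Lemma~\ref{lemma:InvariancesOfMu}, \refitem{item:MuOnInvariantVectors} once with $v=1$ and once with $w=1$ therefore gives $1\times_\te b=\mu(1,b)=b$ and $a\times_\te 1=\mu(a,1)=a$ for all $a,b\in\A$, which is \refitem{item:IdentityUndeformed}.

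For \refitem{item:StarInvolutionUndeformed} we use throughout that the automorphisms $\alpha_x$ of the ${}^*$-algebra $\A$ commute with the involution, i.e.\ $\alpha_x(a^*)=\alpha_x(a)^*$ (as is the case for ${}^*$-automorphisms). The plan is to apply the involution to the oscillatory integral defining $a\times_\te b$ and then to identify the result by a single change of variables in the integration domain. Since $a\mapsto a^*$ is a continuous \emph{antilinear} map, formula~\eqref{antilinear-map-in-I} together with the anti-multiplicativity $(cd)^*=d^*c^*$ and the identity $\mu^\te_{ab}(-p,x)=\alpha_{-\te p}(a)\,\alpha_x(b)$ from~\eqref{eq:MuSymbols} gives
\begin{align*}
    (a\times_\te b)^*
    =\big(I_\A(\mu^\te_{ab})\big)^*
    =I_\A\big((p,x)\mapsto(\mu^\te_{ab}(-p,x))^*\big)
    =I_\A\big((p,x)\mapsto\alpha_x(b^*)\,\alpha_{-\te p}(a^*)\big)\,.
\end{align*}
Denote the integrand by $H$. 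On the other hand, $\mu^\te_{b^*a^*}(p,x)=\alpha_{\te p}(b^*)\,\alpha_x(a^*)$, hence $b^*\times_\te a^*=I_\A(\mu^\te_{b^*a^*})$, and a direct check shows $H=\mu^\te_{b^*a^*}\circ S$ with the linear map $S(p,x):=(-\te^{-1}x,\,\te p)$ (assume $\te$ invertible for the moment). The key observation is that $S$ leaves the oscillating factor unchanged: skew-symmetry of $\te$ with respect to $\SP{\argument,\argument}$ gives $\SP{-\te^{-1}x,\te p}=\SP{p,x}$, and moreover $|\det S|=1$. Therefore the change-of-variables rule for the oscillatory integral --- valid for any phase-preserving, volume-preserving linear substitution, by a change of variables in the approximating Riemann integrals of Proposition~\ref{proposition:integral-explicit} (just as in the proof of Lemma~\ref{lemma:affine-substitutions}) --- yields $I_\A(H)=I_\A(\mu^\te_{b^*a^*})$, which is~\eqref{eq:InvolutionForDeformedProduct}.

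To remove the invertibility hypothesis on $\te$, observe that both $H$ and $\mu^\te_{b^*a^*}$ are constant in the $p$-variables lying in $\ker\te$, the argument $\te p$ being insensitive to them. One may thus use the Fubini theorem (Proposition~\ref{proposition:Fubini}) to split off these variables and Proposition~\ref{proposition:normalization} to carry out the corresponding partial integrations, reducing both sides of~\eqref{eq:InvolutionForDeformedProduct} to oscillatory integrals over a complement of $\ker\te$ on which $\te$ acts invertibly; the argument of the previous paragraph then applies.

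The only genuinely delicate point is the bookkeeping of this change of variables: one has to check that the reflection $p\mapsto-p$ forced by the antilinearity of the involution through~\eqref{antilinear-map-in-I}, the order reversal coming from $(ab)^*=b^*a^*$, and the skew-symmetry of $\te$ combine so that $S$ is exactly a phase-preserving substitution carrying $H$ onto $\mu^\te_{b^*a^*}$. It is precisely the skew-symmetry that makes the residual phase $\SP{-\te^{-1}x,\te p}-\SP{p,x}$ vanish; for a general deformation matrix one is left with an uncompensated oscillating factor and~\eqref{eq:InvolutionForDeformedProduct} no longer holds, consistently with the fact that already for the Weyl product on $\Cinfty_0(\Rl^n)$ the ${}^*$-property requires a skew-symmetric parameter.
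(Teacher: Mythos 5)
Your part~\refitem{item:IdentityUndeformed} is exactly the paper's argument. For part~\refitem{item:StarInvolutionUndeformed} you start the same way (via \eqref{antilinear-map-in-I}), but then diverge: the paper applies Lemma~\ref{lemma:affine-substitutions} in its stated form, with $F(p,x)=\alpha_x(b^*)\alpha_p(a^*)$ and $A=-\te$, to move the matrix from the $p$-slot to the $x$-slot; this yields the identity $(a\times_\te b)^*=b^*\times_{-\te^T}a^*$ for \emph{arbitrary} $\te$, and skew-symmetry is only invoked at the very end. Your route instead performs one global linear substitution $S$ mixing the $p$- and $x$-variables. That is workable -- the justification you sketch (substituting in the approximating Riemann integrals of Proposition~\ref{proposition:integral-explicit} and noting that $\chi\circ S$ is again an admissible cutoff, with $F\circ S$ a symbol by Lemma~\ref{lemma:GlnActsOnSymbols}) is sound, even though no such general phase-preserving substitution rule is stated in the paper -- but it costs you two things the paper's argument gets for free: you must assume $\te$ invertible and then patch the singular case by a Fubini/normalization reduction over $\ker\te$ (which itself needs the pairing to block-diagonalize along $\ker\te\oplus(\ker\te)^\perp$), and you lose the intermediate identity $(a\times_\te b)^*=b^*\times_{-\te^T}a^*$.

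One concrete slip: your $S$ has the wrong signs. With $H(p,x)=\alpha_x(b^*)\alpha_{-\te p}(a^*)$ and $\mu^\te_{b^*a^*}(u,v)=\alpha_{\te u}(b^*)\alpha_v(a^*)$, the equation $H=\mu^\te_{b^*a^*}\circ S$ forces $\te u=x$ and $v=-\te p$, i.e.\ $S(p,x)=(\te^{-1}x,\,-\te p)$, not $(-\te^{-1}x,\,\te p)$; your choice produces $\alpha_{-x}(b^*)\alpha_{\te p}(a^*)\neq H(p,x)$. Since both sign conventions preserve the phase (for skew-symmetric $\te$, $\SP{\te^{-1}x,-\te p}=\SP{\te\te^{-1}x,p}=\SP{x,p}=\SP{p,x}$ for a symmetric form), the error is purely in the bookkeeping and the corrected $S$ makes your argument go through. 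Note also that identifying the transformed integrand with $b^*\times_\te a^*$ (in your approach, and in the paper's last line as well) tacitly uses the symmetry of $\SP{\argument,\argument}$ under the swap $p\leftrightarrow x$.
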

\begin{proof}
    The first part is clear: since $\alpha$ acts by automorphisms, we
    have $\alpha_x(1)=1$ for all $x\in\Rl^n$. Hence, by
    Lemma~\ref{lemma:InvariancesOfMu},
    \refitem{item:MuOnInvariantVectors},
    \begin{align*}
        a\times_\te 1 = a1 = a\,,\qquad 1\times_\te a=1a=a
    \end{align*}
    for any $a\in\A$.  For the second part, we note that as the
    involution $a\mapsto a^*$ is antilinear and continuous, we can use
    \eqref{antilinear-map-in-I} and
    Lemma~\ref{lemma:affine-substitutions} to compute for $a,b\in\A$
    \begin{align*}
        (a\times_\te b)^*
        &=
        (2\pi)^{-n}\int_{\mathbb{R}^{2n}}
        dp\,dx\,e^{i\SP{p,x}}\,(\alpha_{-\te p}a\,\alpha_x b)^*
        \\
        &=
        (2\pi)^{-n}\int_{\mathbb{R}^{2n}}
        dp\,dx\,e^{i\SP{p,x}}\,\alpha_x b^*\, \alpha_{-\te p}a^*
        \\
        &=
        (2\pi)^{-n}\int_{\mathbb{R}^{2n}}
        dp\,dx\,e^{i\SP{p,x}}\,\alpha_{-\te^T x} b^*\, \alpha_{p}a^*
        \\
        &=
        b^* \times_{-\te^T} a^*
        \,.
    \end{align*}
    In case $\te$ is skew-symmetric, {\em i.e.}, $\te^T=-\te$,
    \eqref{eq:InvolutionForDeformedProduct} follows.
\end{proof}

Again, these statements are well-known in Rieffel's setting
\cite{rieffel:1993a}. Analogous to the preceding proposition, there
exist two closely related properties in the module deformation setting
of Theorem~\ref{theorem:AssociativityOfDeformedMu}: First, if a vector
$\psi\in\E$ is $\beta$-invariant, then we have $a_\te\psi =
a\psi$. This is again a straightforward consequence of
Lemma~\ref{lemma:InvariancesOfMu},
\refitem{item:MuOnInvariantVectors}.

Second, in the case of a covariant Hilbert space representation of a
${}^*$-algebra $\A$, we have a compatability between the
${}^*$-operation and the deformation similar to Proposition
\ref{Proposition:IdentityAndStarVsDeformation}
\refitem{item:StarInvolutionUndeformed}. To describe this, consider a
locally convex sequentially complete ${}^*$-algebra $\A$ with a smooth
polynomially bounded $\mathbb{R}^n$-action $\alpha^\A$ by
${}^*$-automorphisms.  Let furthermore $\Hil$ be a Hilbert space
carrying a strongly continuous unitary representation $u$ of $\Rl^n$,
and let $\E\subset\Hil$ denote the subspace of smooth vectors for $u$.
We consider a covariant representation of $\A$, i.e. a
${}^*$-representation $\pi$ of $\A$ by (closable) operators defined on
$\E$ such that $\pi(\alpha^\A_x(a))\psi=u(x)\pi(a)u(x)^{-1}\psi$ for
all $a\in\A$, $x\in\Rl^n$, $\psi\in\E$.

Then we can apply our deformation formula to the module map
$\mu(a,\psi):=\pi(a)\psi$. In case of a skew-symmetric deformation
parameter $\te$, the map $\pi_\te$ defined by the deformed module map,
$\pi_\te(a)\psi:=\mu_\te(a,\psi)$, then gives a ${}^*$-representation
of $\A_\te$ on $\E$, i.e.
\begin{align}
    \pi_\te(a^*)\psi=\pi(a)^*\psi\,,\qquad a\in\A,\,\psi\in\E\,.
\end{align}
In a $C^*$-framework with order 0 actions, this fact has been
established in \cite[Lemma~2.2]{BuchholzLechnerSummers:2010}. Since
the proof is essentially the same in the present situation, we refrain
from giving the details here.

\section{Examples}\label{section:examples}

In this section we present a number of explicit examples of
polynomially bounded $\Rl^n$-actions complying with the conditions in
Definition~\ref{definition:SmoothSymbolAction}. In particular, we show
how target spaces with unbounded orders appear naturally when studying
compactly supported $\Rl^n$-actions.

\subsection{The canonical $\Rl^n$-action on symbol spaces}

The first example is the action studied in
Section~\ref{subsec:FurtherPropertiesSymbols} on the symbol spaces.
\begin{proposition}\label{proposition:PullbackAction}
    Let $V$ be a sequentially complete locally convex space with a
    defining system of seminorms $\mathcal Q$, and let $\bm$, $\brho$
    be an order and a type for $\mathcal Q$, with $\brho\geq0$. Then
    $(\alpha_x F)(y):=F(x+y)$ is a smooth polynomially bounded
    $\Rl^n$-action on $\Symbol^{\bm,\brho}(\Rl^n,V)$ of order
    $\hat{\bm}(\|\cdot\|^{\bm,\brho}_{\qn,\mu}):=|\bm(\qn)-\brho(\qn)|\mu||$
    and type $\hat{\brho}(\|\cdot\|^{\bm,\brho}_{\qn,\mu}):=0$ where
    $\qn\in\mathcal Q,\;\mu\in\Nl_0^n$, as defined in
    Definition~\ref{definition:SmoothSymbolAction}. More precisely,
    for any $\qn\in\mathcal Q$, $\mu\in\Nl_0^n$, there exists
    $C_{\qn,\mu}>0$ such that
    \begin{align}\label{eq:FtoTauFBound}
	\|\alpha(F)\|^{\hat{\bm},0}_{\|\cdot\|^{\bm,\brho}_{\qn,\mu},\nu}
        \leq
        C_{\qn,\mu}
        \|F\|^{\bm,\brho}_{\qn,\mu+\nu}
    \end{align}
    for all $F\in\Symbol^{\bm,\brho}(\Rl^n,V)$, $\nu\in\Nl_0^n$.
\end{proposition}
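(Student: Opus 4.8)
The plan is to observe that $\alpha_x$ is exactly the pull-back $\tau_x^*$ studied in Section~\ref{subsec:FurtherPropertiesSymbols}, so that the statement is a quantitative refinement of Proposition~\ref{proposition:TranslationActSmoothOnSymbols}, made with respect to the canonical defining system of seminorms of $W:=\Symbol^{\bm,\brho}(\Rl^n,V)$. First I would record that $W$ is a sequentially complete locally convex space by Proposition~\ref{proposition:SymbolFirstProperties}, \refitem{item:SymbolsComplete}, with defining system $\hat{\mathcal Q}=\{\hat{\qn}_\mu:=\|\cdot\|^{\bm(\qn),\brho(\qn)}_{\qn,\mu}:\qn\in\mathcal Q,\ \mu\in\Nl_0^n\}$, and that $\alpha_xF:=\tau_x^*F$ defines a linear $\Rl^n$-action on $W$ (linearity of $\tau_x^*$, and $\tau_x^*\tau_{x'}^*=\tau_{x+x'}^*$, $\tau_0^*=\id$, being immediate). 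It then remains to verify the two conditions of Definition~\ref{definition:SmoothSymbolAction} with order $\hat{\bm}$ and type $0$, i.e.\ (a) $\alpha(F)\in\Symbol^{\hat{\bm},0}(\Rl^n,W)$ for every $F$, and (b) the explicit bound \eqref{eq:FtoTauFBound}, which in particular exhibits $F\mapsto\alpha(F)$ as continuous.

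For smoothness of $\alpha(F)\colon x\mapsto\tau_x^*F$ I would invoke Proposition~\ref{proposition:TranslationActSmoothOnSymbols} (applicable since $\brho\geq0$), which also yields $\partial_x^\nu\alpha(F)(x)=\tau_x^*(\partial^\nu F)$ for all $\nu\in\Nl_0^n$. With this the core computation is
\[
\|\alpha(F)\|^{\hat{\bm},0}_{\hat{\qn}_\mu,\nu}
=\sup_{x\in\Rl^n}\bigl(1+\|x\|^2\bigr)^{-\frac12\hat{\bm}(\hat{\qn}_\mu)}\,
\|\tau_x^*(\partial^\nu F)\|^{\bm(\qn),\brho(\qn)}_{\qn,\mu}\,,
\]
where $\hat{\bm}(\hat{\qn}_\mu)=|\bm(\qn)-\brho(\qn)|\mu||$. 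Here I would apply Lemma~\ref{lemma:TranslationsActOnSymbols} to $\partial^\nu F\in\Cinfty(\Rl^n,V)$ with $m=\bm(\qn)$, $\rho=\brho(\qn)$ and the multiindex $\mu$: its proof produces a constant $C^{m,\rho}_\mu>0$ with $\|\tau_x^*(\partial^\nu F)\|^{m,\rho}_{\qn,\mu}\le C^{m,\rho}_\mu(1+\|x\|^2)^{\frac12|m-\rho|\mu||}\|\partial^\nu F\|^{m,\rho}_{\qn,\mu}$, and the crucial point is that the growth exponent $\tfrac12|m-\rho|\mu||$ equals $\tfrac12\hat{\bm}(\hat{\qn}_\mu)$, so that the prefactor in the display cancels and one is left with
\[
\|\alpha(F)\|^{\hat{\bm},0}_{\hat{\qn}_\mu,\nu}\le C^{\bm(\qn),\brho(\qn)}_\mu\,\|\partial^\nu F\|^{\bm(\qn),\brho(\qn)}_{\qn,\mu}\,.
\]

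To finish I would bound $\|\partial^\nu F\|^{\bm(\qn),\brho(\qn)}_{\qn,\mu}$ by $\|F\|^{\bm,\brho}_{\qn,\mu+\nu}$. Since $\brho\geq0$ gives $\bm-\brho|\nu|\le\bm$, Proposition~\ref{proposition:PartialDerivativesOnSymbols} together with Proposition~\ref{proposition:SymbolFirstProperties}, \refitem{item:SymbolsInSymbols}, gives
\[
\|\partial^\nu F\|^{\bm(\qn),\brho(\qn)}_{\qn,\mu}
\le\|\partial^\nu F\|^{\bm(\qn)-\brho(\qn)|\nu|,\brho(\qn)}_{\qn,\mu}
=\|F\|^{\bm(\qn),\brho(\qn)}_{\qn,\mu+\nu}
=\|F\|^{\bm,\brho}_{\qn,\mu+\nu}\,;
\]
equivalently, one can argue directly, since both suprema are taken of $\qn(\partial^{\mu+\nu}F(z))$ against the prefactors $(1+\|z\|^2)^{-\frac12(\bm(\qn)-\brho(\qn)|\mu|)}$ and $(1+\|z\|^2)^{-\frac12(\bm(\qn)-\brho(\qn)|\mu+\nu|)}$, the former being pointwise smaller because $|\mu|\le|\mu+\nu|$ and $\brho(\qn)\geq0$. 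Putting $C_{\qn,\mu}:=C^{\bm(\qn),\brho(\qn)}_\mu$ then establishes \eqref{eq:FtoTauFBound}. Letting $\nu$ range over all multiindices shows every $\hat{\bm}$-symbol seminorm of $\alpha(F)$ is finite, hence $\alpha(F)\in\Symbol^{\hat{\bm},0}(\Rl^n,W)$; and since the right-hand side of \eqref{eq:FtoTauFBound} is, up to the constant $C_{\qn,\mu}$, the $W$-seminorm $\hat{\qn}_{\mu+\nu}(F)$, the map $F\mapsto\alpha(F)$ is continuous into $\Symbol^{\hat{\bm},0}(\Rl^n,W)$. The only real obstacle is bookkeeping: one has to inspect the proof of Lemma~\ref{lemma:TranslationsActOnSymbols} to confirm that the translation constant grows no faster than $(1+\|x\|^2)^{\frac12|m-\rho|\mu||}$, which is precisely the exponent built into the definition of $\hat{\bm}$, so that the two occurrences of it cancel and no residual growth in $x$ survives.
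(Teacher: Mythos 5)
Your proposal is correct and follows essentially the same route as the paper: smoothness and the identity $\partial_x^\nu\alpha_x F=\alpha_x(\partial^\nu F)$ from Proposition~\ref{proposition:TranslationActSmoothOnSymbols}, the translation bound of Lemma~\ref{lemma:TranslationsActOnSymbols} with growth exponent $\tfrac12|\bm(\qn)-\brho(\qn)|\mu||$ cancelling against the prefactor built into $\hat{\bm}$, and the reduction of $\|\partial^\nu F\|^{\bm,\brho}_{\qn,\mu}$ to $\|F\|^{\bm,\brho}_{\qn,\mu+\nu}$ via Propositions~\ref{proposition:PartialDerivativesOnSymbols} and~\ref{proposition:SymbolFirstProperties} using $\brho\geq0$. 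The only cosmetic difference is that the paper packages the translation constant as the scalar symbol $c\in\Symbol^{|\bm(\qn)-\brho(\qn)|\mu||,1}(\Rl^n,\Rl)$ and sets $C_{\qn,\mu}=\|c\|^{|\bm(\qn)-\brho(\qn)|\mu||,1}_0$, which is exactly the bound you extract; the statement of Lemma~\ref{lemma:TranslationsActOnSymbols} already guarantees this, so no re-inspection of its proof is needed.
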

\begin{proof}
    It has been shown in
    Proposition~\ref{proposition:AffineInvarianceOfSymbols} that
    $\alpha$ is an $\Rl^n$-action on $\Symbol^{\bm,\brho}(\Rl^n,V)$,
    and in Proposition~\ref{proposition:TranslationActSmoothOnSymbols}
    that $\Rl^n\ni x\mapsto \alpha_x(F)
    \in\Symbol^{\bm,\brho}(\Rl^n,V)$ is smooth for each
    $F\in\Symbol^{\bm,\brho}(\Rl^n,V)$ if $\brho\geq0$.  To derive the
    statements about the polynomial bounds, let $\qn\in\mathcal Q$,
    $\mu,\nu\in\Nl_0^n$, and $F\in\Symbol^{\bm,\brho}(\Rl^n,V)$ with
    $\brho\geq0$.  The derivatives $\partial_x^\nu \alpha_x(F) =
    \alpha_x(\partial_x^\nu F)$, see \eqref{eq:DerivativesTaufSymbol},
    satisfy according to Lemma~\ref{lemma:TranslationsActOnSymbols}
    \begin{align*}
	\|\partial_x^\nu \alpha_x(F)\|^{\bm,\brho}_{\qn,\mu}
        &=
	\|\alpha_x(\partial_x^\nu F)\|^{\bm,\brho}_{\qn,\mu}
        \leq
        c(x) \,\|\partial_x^\nu F\|^{\bm,\brho}_{\qn,\mu}
    \end{align*}
    with a positive scalar symbol
    $c\in\Symbol^{|\bm(\qn)-\brho(\qn)|\mu||,1}(\Rl^n,\Rl)$.
    Furthermore, we have by application of
    Proposition~\ref{proposition:PartialDerivativesOnSymbols} and
    Proposition~\ref{proposition:SymbolFirstProperties},
    \refitem{item:SymbolsInSymbols}
    \begin{align*}
        \|\partial_x^\nu F\|^{\bm,\brho}_{\qn,\mu}
        =
        \|F\|^{\bm+\brho|\nu|,\brho}_{\qn,\mu+\nu}
        \leq
        \|F\|^{\bm,\brho}_{\qn,\mu+\nu}
    \end{align*}
    since $\brho\geq0$. With these two bounds, we arrive at
    \begin{align*}
	\|\alpha(F)\|^{\hat{\bm},0}_{\|\cdot\|^{\bm,\brho}_{\qn,\mu},\nu}
        &=
        \sup_{x\in\Rl^n}
        \frac{
          \|\partial_x^\nu \alpha_x(F)\|^{\bm,\brho}_{\qn,\mu}
        }
        {(1+\|x\|^2)^{\frac{1}{2}\hat{\bm}}}
        \leq
        \sup_{x\in\Rl^n}
        \frac{c(x)}{(1+\|x\|^2)^{\frac{1}{2}|\bm(\qn)-\brho(\qn)|\mu||}
        } \,
        \|F\|^{\bm,\brho}_{\qn,\mu+\nu}
        \,,
    \end{align*}
    which establishes \eqref{eq:FtoTauFBound} with the constant
    $C_{\qn,\mu}:=\|c\|^{|\bm(\qn)-\brho(\qn)|\mu||, 1 } _ {0 }
    <\infty$. Hence $\alpha(F) \in
    \Symbol^{\hat{\bm},0}(\Rl^n,\Symbol^{\bm,\brho}(\Rl^n,V))$ and
    $F\mapsto \alpha(F)$ is continuous, as required in
    Definition~\ref{definition:SmoothSymbolAction}.
\end{proof}

By the same arguments, one checks that $(\alpha_x(F))(y)=F(x+y)$ gives
a smooth polynomially bounded $\Rl^n$-action on the vector valued
Schwartz space $\Schwartz(\Rl^n,V)$
(Definition~\ref{definition:VectorValuedSchwartzSpace}), topologized
by the seminorms $\qn_{m,\mu}(\cdot):=\|\cdot \|^{-m,0}_{\qn,\mu}$,
with $\qn\in\mathcal Q$, $m\in\Nl_0$, $\mu\in\Nl_0^n$
\eqref{eq:SchwartzSeminormqmmu}. Here the order is
$\hat{\bm}(\qn_{m,\mu})=m$, and again $\hat{\brho}(\qn_{m,\mu})=0$.
\begin{remark}
    \label{remark:StuffIsReallyUnbounded}%
    Remarkably, in both examples the orders $\hat{\order{m}}$ of the
    induced action is necessarily unbounded, even if we started with
    symbols of bounded order. Only in the particular case where
    $\order{m}(\qn) = 0 = \type{\rho}(\qn)$ we get again a bounded
    order $\hat{\order{m}} = 0$. This was the particular case of an
    \emph{isometric} action as discussed by Rieffel in
    \cite{rieffel:1993a}.
\end{remark}

If $V = \mathcal{A}$ is an algebra with continuous product, we can use
the action $\alpha$ to deform the pointwise product in
$\Symbol^{\bm,\brho}(\Rl^n, \mathcal{A})$ as in
\eqref{eq:DeformedProduct}, with some deformation parameter
$\te\in\Rl^{n\times n}$. As the evaluation maps
$\Symbol^{\bm\brho}(\Rl^n, \mathcal{A})\ni F\mapsto F(x)\in
\mathcal{A}$ are continuous, we have the explicit formula
\begin{align}\label{eq:MoyalProduct}
    (F\times_\te^\alpha G)(x)
    =
    (2\pi)^{-n}\int_{\mathbb{R}^{2n}}
    dp\,dy\,e^{i\SP{p,y}}\,F(y+\te p)G(y+x)
\end{align}
as a $\mathcal{A}$-valued oscillatory integral.

In addition to $\alpha$, we have on the Schwartz space also smooth
polynomially bounded $\Rl^n$-actions of the form
\begin{align}\label{eq:ActionByMultiplicativePhase}
	(\beta_xF)(y) := e^{i(x,y)} F(y)\,,
\end{align}
where $(\cdot,\cdot)$ denotes a bilinear form on $\Rl^n$. Considered
on a symbol space $\Symbol^{\bm,\brho}(\Rl^n,V)$ of fixed order, these
actions are not smooth, but on $\Schwartz(\Rl^n,V)$, they comply with
Definition~\ref{definition:SmoothSymbolAction}, with order
$\hat{\bm}(\qn_{m,\mu})=|\mu|$ and type
$\hat{\brho}(\qn_{m,\mu})=0$. Taking $V = \mathcal{A}$ to be an
algebra, the deformation of the pointwise product in
$\Ss(\Rl^n,\mathcal{A})$ with the action
\eqref{eq:ActionByMultiplicativePhase} is however almost trivial; one
has $(F\times^\beta_\te G)(x)=e^{i(\te Ax,x)}F(x)G(x)$ with a matrix
$A$ depending on the choice of inner product on $\Rl^n$.

We now explain how some deformations of algebras of scalar-valued
functions discussed in the literature fit into our framework. The
first and best-known example is clearly the scalar Schwartz space
$\Schwartz(\Rl^n,\Cl)$ with pointwise product. Here the deformed
product \eqref{eq:MoyalProduct} even exists pointwise as a Riemann
integral because of the decay of the integrand. It is usually referred
to as Moyal product or twisted product, see e.g.
\cite{Gracia-BondiaVarilly:1988}.

Another version of this is to consider $\Schwartz(\Rl^n,\Cl)$ as an
algebra with convolution $(f*g)(x)=\int_{\mathbb{R}^n} dy\,f(y)g(x-y)$
as product, and the multiplicative action
\eqref{eq:ActionByMultiplicativePhase}. Taking all inner products of
$\Rl^n$ as the usual Euclidean inner product, and $\te$ to be
antisymmetric, we find
\begin{align}\label{eq:TwistedConvolution}
    (f *_\te^\beta g)(x)
    =
    \int_{\mathbb{R}^n} dy\,e^{i\SP{x,\te y}}\,f(y)g(x-y)\,.
\end{align}
This deformed product is usually referred to as a {\em twisted
  convolution} according to \cite{Gracia-BondiaVarilly:1988}. Since
the Fourier transform $\F\colon \Schwartz\longrightarrow \Schwartz$
intertwines the pointwise product and convolution as well as the
actions $\alpha$ and $\beta$, the twisted convolution product is
equivalent to the product $\times_\te^\alpha$.

The deformed products $\times_\te^\alpha$ and $*_\te^\beta$ can be
extended from $\Ss(\Rl^n,\Cl)$ to spaces of distributions
\cite{Gracia-BondiaVarilly:1988,
  Dubois-VioletteKrieglMaedaMichor:2001}. In particular, in
\cite{Dubois-VioletteKrieglMaedaMichor:2001} it is explained how
$\times^\alpha_\te$ can be defined on the distribution space
$\mathcal{O}_M'(\Rl^n)$, the dual of the space $\mathcal{O}_M(\Rl^n)$
of tempered smooth functions. Recall that
$\mathcal{O}_M(\mathbb{R}^n)$ is defined as the set of all smooth
$f\colon \Rl^n \longrightarrow \Cl$ such that for each multiindex
$\mu$, there exists some $k\in\Zl$ such that
$x\mapsto(1+\|x\|^2)^k|(\partial^\mu_xf)(x)|$ is bounded. In our
notation, that is $\mathcal{O}_M =
\cup_{m,\rho}\Symbol^{m,\rho}(\Rl^n,\Cl)$, where the union runs over
all orders and types $m, \rho\in\Rl$. Similarly, the classical
function space $\mathcal{O}_C(\mathbb{R}^n)$ is in our notation
$\mathcal{O}_C(\Rl^n) = \cup_{m}\Symbol^{m,0}(\Rl^n,\Cl) =
\Symbol^{\infty,0}(\Rl^n, \Cl)$. Clearly $\Schwartz(\mathbb{R}^n)
\subset\mathcal{O}_C(\mathbb{R}^n) \subset \mathcal{O}_M(\mathbb{R}^n)
\subset \Schwartz'(\mathbb{R}^n)$ and $\Schwartz(\mathbb{R}^n) \subset
\mathcal{O}_M'(\mathbb{R}^n) \subset \mathcal{O}_C'(\mathbb{R}^n)
\subset \Schwartz'(\mathbb{R}^n)$, and the Fourier transform $\F$ on
$\Schwartz'(\mathbb{R}^n)$ restricts to isomorphisms
$\mathcal{O}_C(\mathbb{R}^n) \longrightarrow
\mathcal{O}_M'(\mathbb{R}^n)$ and $\mathcal{O}_M(\mathbb{R}^n)
\longrightarrow \mathcal{O}_C'(\mathbb{R}^n)$.  Since
$\mathcal{O}_C(\mathbb{R}^n)$ contains only symbols of type $\rho=0$,
we can form the deformed products $f\times_\te^\alpha g$
\eqref{eq:MoyalProduct} for $f,g\in\mathcal{O}_C(\mathbb{R}^n)$. (For
$f,g\in\mathcal{O}_M(\mathbb{R}^n)$, this is not possible because we
need restrictions on the type for $\alpha$ to be smooth and the
oscillatory integrals to exist.)  Making use of the Fourier transform
$\F\colon \mathcal{O}_C(\mathbb{R}^n) \longrightarrow
\mathcal{O}_M'(\mathbb{R}^n)$, this also gives us a product on
$\mathcal{O}_M'(\mathbb{R}^n)$,
\begin{align}\label{eq:TwistedConvolutionForDistributions}
    T \times S := \F(\F^{-1}T\times_\te^\alpha \F^{-1}S)
    \,.
\end{align}
As $\F$ intertwines the actions $\alpha$ and $\beta$, it is easy to
see that \eqref{eq:TwistedConvolutionForDistributions} coincides with
the ``other twisted convolution'' constructed in
\cite{Dubois-VioletteKrieglMaedaMichor:2001}.

\subsection{Compactly supported $\Rl^n$-actions}

In this subsection we construct and study a different class of smooth
polynomially bounded $\Rl^n$-actions on function spaces. The actions
we are interested in here are given by pullbacks of $\Rl^n$-actions
$\tau$ on $\Rl^n$ which act non-trivially only in a compact set $K$,
i.e.\ satisfy $\tau_x(y)=y$ for all $y\notin K$, $x\in\Rl^n$. We want
to construct $\tau$ in such a way that $\alpha_x^K(f) := f\circ
\tau_x$ is smooth and polynomially bounded in the sense of
Definition~\ref{definition:SmoothSymbolAction}, say on
$\Cinfty(\Rl^n,\Cl)$. For simplicity, we restrict to scalar-valued
functions here. It is clear that we cannot hope for an isometric
action as required by Rieffel's original construction as soon as we
leave the $C^0$-framework: controlling also derivatives as needed in
the $C^\infty$-topology will necessarily lead to a non-isometric
action.

To check what kind of condition on $\tau$ is necessary for this,
consider a function $f_j$ which coincides with a coordinate $x\mapsto
x_j$, $j=1,\ldots,n$, on $K$. If $\alpha^K(f_j) \in
\Symbol^{\hat{\bm},0}(\Rl^n,\Cinfty(\Rl^n,\Cl))$ for some appropriate
$\hat{\order{m}}$, then the supremum
\begin{align}
    \|
    \alpha^K(f_j)
    \|^{\hat{\bm}(\halbnorm{p}_{K,l}),0}_{\halbnorm{p}_{K,l},\mu}
    &=
    \sup_{x\in\Rl^n}\frac{\halbnorm{p}_{K,l}(\partial_x^\mu \alpha^K_x
      f_j)}{(1+\|x\|^2)^{\frac{1}{2}\hat{\bm}(\halbnorm{p}_{K,l})}}
    =
    \sup_{x\in\Rl^n\atop{y\in K,
        |\nu|\leq l}}\frac{|\partial_x^\mu\partial_y^\nu
      \tau_x(y)_j|}{(1+\|x\|^2)^{\frac{1}{2}\hat{\bm}(\halbnorm{p}_{K,l})}}
\end{align}
must be finite. Hence we need bounds of the form
$|\partial_y^\nu \partial_x^\mu \tau_x(y)_j| \leq c_{\mu
  l}(1+\|x\|^2)^{\frac{1}{2}b_l}$ for all $\nu\in\Nl_0^n$ with
$|\nu|\leq l$. Taking into account that $\tau$ satisfies $\tau_x(K)=K$
for all $x\in\Rl^n$ by its support property, that $K$ is compact, and
that $\tau$ is an $\Rl^n$-action, it follows that we can choose
$b_0=0$. These observations motivate the following definition.
\begin{definition}\label{definition:ActionOfCompactSupport}
    Let $K\subset\Rl^n$ be compact, and
    $\boldsymbol{b}:=\{b_l\}_{l\in\Nl_0}\subset\Rl_+$ a sequence
    starting with $b_0=0$. A smooth $\Rl^n$-action with support in $K$
    and order $\boldsymbol{b}$ is a smooth function
    $\tau\colon\Rl^n\times\Rl^n\longrightarrow\Rl^n$ such that
    \begin{definitionlist}
    \item $\tau_x(\tau_{x'}(y))=\tau_{x+x'}(y)$ for all
        $x,x',y\in\Rl^n$.
    \item $\tau_x(y)=y$ for all $x\in\Rl^n$ and all
        $y\in\Rl^n\backslash K$.
    \item For each $\mu\in\Nl_0^n$, $l\in\Nl_0$, there exists a
        constant $c_{l\mu}>0$ such that
        \begin{align}\label{eq:BoundOnDMuNuTau}
            \sup_{y\in K,|\nu|\leq l\atop j\in\{1,\ldots,n\}}
            |\partial_y^\nu \partial_x^\mu \tau_x(y)_j|
            &\leq
            c_{l\mu} (1+\|x\|^2)^{\frac{1}{2}b_l}
        \end{align}
        holds for all $x\in\Rl^n$.
    \end{definitionlist}
\end{definition}

We will later construct explicit examples of actions satisfying these
assumptions. Postponing this construction for a moment, we first show
that such $\tau$ do indeed define smooth polynomially bounded
$\Rl^n$-actions by pullback. To begin with, we note the following
elementary lemma.
\begin{lemma}\label{lemma:CompactlySupportedAction}
    Let $\tau$ be a smooth $\Rl^n$-action with support in a compact
    set $K\subset\Rl^n$, and order $\boldsymbol{b}$. Then for each
    $\mu\in\Nl_0^n$, $l\in\Nl_0$, there exists a constant $C_{l\mu}>0$
    such that
    \begin{align}\label{eq:BoundOnDMuNuFTau}
        \sup_{y\in K, |\nu|\leq l}
        |\partial_y^\nu\partial_x^\mu f(\tau_x(y))|
        &\leq
        C_{l\mu}\,(1+\|x\|^2)^{\frac{1}{2}(b_1+\cdots+b_l)}
        \cdot
        \halbnorm{p}_{K,l+|\mu|}(f)
    \end{align}
    for all $f\in\Cinfty(\Rl^n,\Cl)$, $x\in\Rl^n$. Here
    $\halbnorm{p}_{K,l+|\mu|}$ denotes the usual $\Cinfty$-seminorms
    \eqref{eq:CinftyNorms} with $\qn=|\cdot|$.
\end{lemma}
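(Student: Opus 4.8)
The plan is to prove the estimate \eqref{eq:BoundOnDMuNuFTau} by the Faà di Bruno / chain rule applied to the composition $x \mapsto f(\tau_x(y))$, together with the polynomial bounds on the derivatives of $\tau$ assumed in Definition~\ref{definition:ActionOfCompactSupport}~(iii). The key observation is that $\tau_x(K) = K$ for all $x$ by the support property, so that all the inner derivatives $(\partial^\alpha f)(\tau_x(y))$ that appear are controlled by $\halbnorm{p}_{K,l+|\mu|}(f)$ uniformly in $x$ and $y$, and only the outer factors involving derivatives of $\tau$ carry the $x$-dependence.

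\textbf{Key steps.} First I would fix $f \in \Cinfty(\Rl^n,\Cl)$, $\mu\in\Nl_0^n$, $l\in\Nl_0$, $y\in K$, and a multiindex $\nu$ with $|\nu|\le l$, and expand $\partial_y^\nu\partial_x^\mu \big(f(\tau_x(y))\big)$ by the multivariate chain rule. Each term of this (finite) expansion is a product of one factor $(\partial^\beta f)(\tau_x(y))$ with $|\beta|\le |\mu|+|\nu| \le |\mu|+l$, times a bounded number of factors of the form $\partial_y^{\sigma_i}\partial_x^{\rho_i}\tau_x(y)_{j_i}$, where the multiindices satisfy $\sum_i \rho_i = \mu$ (in particular $|\rho_i|\le|\mu|$) and $\sum_i \sigma_i \le \nu$ (so each $|\sigma_i| \le l$), with the total number of such factors bounded by $|\mu|+|\nu| \le |\mu|+l$. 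Second, for the $f$-factor I would use $\tau_x(y)\in K$ to bound $|(\partial^\beta f)(\tau_x(y))| \le \halbnorm{p}_{K,l+|\mu|}(f)$ for every $x\in\Rl^n$. Third, for each $\tau$-factor I would apply \eqref{eq:BoundOnDMuNuTau}: since $|\sigma_i|\le l$ one gets $|\partial_y^{\sigma_i}\partial_x^{\rho_i}\tau_x(y)_{j_i}| \le c_{l,\rho_i}(1+\|x\|^2)^{\frac{1}{2}b_l}$ — but I want the sharper bound $(1+\|x\|^2)^{\frac{1}{2}(b_1+\cdots+b_l)}$ overall, so I would instead bound each factor with $|\sigma_i|\le l$ by $(1+\|x\|^2)^{\frac{1}{2}b_{\max(1,|\sigma_i|)}}$ and note that since $\sum_i|\sigma_i|\le|\nu|\le l$ and the $b_k$ are nonnegative with $b_0=0$, the product of these powers is at most $(1+\|x\|^2)^{\frac{1}{2}(b_1+\cdots+b_l)}$. (One has to observe that at most $l$ of the $\sigma_i$ can be nonzero when $\sum|\sigma_i|\le l$, and each nonzero $|\sigma_i|$ lies between $1$ and $l$; the factors with $\sigma_i=0$ contribute $(1+\|x\|^2)^{\frac{1}{2}b_0}=1$.) Multiplying out, collecting the finitely many combinatorial constants $c_{l,\rho_i}$ and the multinomial coefficients into a single constant $C_{l\mu}$, and taking the supremum over $y\in K$ and $|\nu|\le l$, yields \eqref{eq:BoundOnDMuNuFTau}.

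\textbf{Main obstacle.} The routine part is the chain-rule bookkeeping; the one point requiring genuine care is extracting the power $(1+\|x\|^2)^{\frac12(b_1+\cdots+b_l)}$ rather than the cruder $(1+\|x\|^2)^{\frac12 l b_l}$. This forces the argument sketched above: one must exploit that the $y$-derivative orders $|\sigma_i|$ appearing in a single term of the $\nu$-derivative sum to at most $|\nu|\le l$, so that the relevant $b$-exponents are $b_{|\sigma_1|},\dots$ with $\sum_i|\sigma_i|\le l$, and then bound $\sum_i b_{|\sigma_i|}\le b_1+\cdots+b_l$ using only nonnegativity of the $b_k$ (a partition of an integer $\le l$ into parts, each part $k$ contributing $b_k$, has total at most $b_1+\cdots+b_l$ since every part is $\ge 1$ and the $b_k$ are nonnegative — here monotonicity of $\boldsymbol b$ is not even needed, only $b_k\ge0$). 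Everything else — the finiteness of the chain-rule expansion, uniformity of the $f$-bound via $\tau_x(y)\in K$, absorbing constants — is standard.
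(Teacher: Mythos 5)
Your overall strategy --- expand $\partial_y^\nu\partial_x^\mu f(\tau_x(y))$ by the multivariate chain rule, control the $f$-factor uniformly in $x$ via $\tau_x(K)=K$, and put all the $x$-growth into the $\tau$-factors via \eqref{eq:BoundOnDMuNuTau} --- is sound and close in spirit to the paper's proof, which handles the $x$-derivatives by a direct chain rule (the case $l=0$) and then inducts in the number $l$ of $y$-derivatives. However, the step you yourself single out as the main obstacle is resolved incorrectly. You bound the factor carrying the $y$-derivative multiindex $\sigma_i$ by a constant times $(1+\|x\|^2)^{\frac12 b_{|\sigma_i|}}$ and then claim that $\sum_i b_{|\sigma_i|}\le b_1+\cdots+b_l$ for any collection of parts $|\sigma_i|\ge 1$ summing to at most $l$, asserting that ``monotonicity of $\boldsymbol b$ is not even needed, only $b_k\ge 0$.'' That is false: Definition~\ref{definition:ActionOfCompactSupport} only requires $\boldsymbol b\subset\Rl_+$ with $b_0=0$, not monotonicity, and for the partition $1+1$ (two first-order $y$-derivatives landing on two different $\tau$-factors, $l=2$) your estimate produces the exponent $2b_1$, which exceeds $b_1+b_2$ whenever $b_1>b_2$. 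So, as written, the argument does not deliver the exponent $b_1+\cdots+b_l$ claimed in \eqref{eq:BoundOnDMuNuFTau}; it only yields the cruder bound you explicitly wanted to avoid.

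The gap is repairable inside your framework, because \eqref{eq:BoundOnDMuNuTau} with index $m$ bounds \emph{all} derivatives with $|\nu|\le m$, not only those of exact order $m$. Order the factors with $\sigma_i\neq 0$ so that $|\sigma_1|\ge\cdots\ge|\sigma_r|\ge 1$; since the $i-1$ larger parts each contribute at least $1$ to the total $\le l$, one has $|\sigma_i|\le l-i+1$, so the $i$-th factor may be estimated using \eqref{eq:BoundOnDMuNuTau} with index $l-i+1$. This gives the total exponent $\tfrac12\sum_{i=1}^{r} b_{l-i+1}\le\tfrac12(b_1+\cdots+b_l)$, using only $r\le l$ and $b_k\ge 0$. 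The paper's induction on $l$ performs exactly this reindexing automatically: each additional $y$-derivative costs a single factor $(1+\|x\|^2)^{\frac12 b_{l+1}}$, applied to every newly produced $\tau$-derivative (all of which have $y$-order $\le l+1$), while the induction hypothesis carries the exponent $b_1+\cdots+b_l$. Either fix makes your proof complete; without one of them, the key inequality fails for non-monotone $\boldsymbol b$.
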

\begin{proof}
    We first consider the case $l=0$ without derivatives with respect
    to $y$. By the chain rule, we have
    \begin{align*}
	\partial_x^\mu f(\tau_x(y)) = \sum_{\la\leq\mu} (\partial^\la
        f)(\tau_x(y))\cdot g_\la(x,y)\,,
    \end{align*}
    where the $g_\la(x,y)$ are polynomials in partial derivatives of
    the $\tau_x(y)_j$ with respect to the components of $x$. According
    to \eqref{eq:BoundOnDMuNuTau} with $l=0$ (and $b_0=0$), these
    functions are uniformly bounded in $x\in\Rl^n$ and $y\in
    K$. Furthermore, we have $\tau_x(y)\in K$ for all $x\in\Rl^n$,
    since $y\in K$. Hence \eqref{eq:BoundOnDMuNuFTau} follows by
    straightforward estimate.  We now proceed by induction and assume
    that \eqref{eq:BoundOnDMuNuFTau} holds for some $l\in\Nl_0^n$, and
    all $\mu\in\Nl_0^n$, $f\in\Cinfty(\Rl^n,\Cl)$. Then,
    $j\in\{1,\ldots,n\}$, $|\nu|\leq l$,
    \begin{align*}
        \partial_y^{\nu+e_j}\partial_x^\mu f(\tau_x(y))
        &=
        \partial_y^\nu\partial_x^\mu\sum_{j'=1}^n
        \partial_y^{e_j}\tau_x(y)_{j'}\cdot (\partial^{e_{j'}}f)(\tau_x(y))
        \\
        &=
        \sum_{j'=1}^n\sum_{\nu'\leq\nu\atop\mu'\leq\mu}
        \left(\nu \atop \nu' \right)\left(\mu \atop \mu' \right)
        \left(
            \partial_y^{\nu-\nu'+e_j}\partial_x^{\mu-\mu'}\tau_x(y)_{j'}\right)
        \left(\partial_y^{\nu'}\partial_x^{\mu'}(\partial^{e_{j'}}f)(\tau_x(y))\right)
        .
    \end{align*}
    In this sum, the derivatives of $\tau_x(y)_{j'}$ can be estimated
    directly with \eqref{eq:BoundOnDMuNuTau}, taking into account
    $|\nu-\nu'+e_j|\leq l+1$. For the derivatives of $f$, we can use
    \eqref{eq:BoundOnDMuNuFTau} by our induction hypothesis, since
    $|\nu'|\leq|\nu|\leq l$. This yields constants $C_{j'\nu'\mu'}>0$
    such that
    \begin{align*}
        |\partial_y^{\nu+e_j}\partial_x^\mu f(\tau_x(y))|
        &\leq
        \sum_{j',\nu',\mu'}
        C_{j'\nu'\mu'}
        (1+\|x\|^2)^{\frac{1}{2}b_{l+1}}
        (1+\|x\|^2)^{\frac{1}{2}(b_1+\cdots+b_l)}
        \halbnorm{p}_{K,|\mu'|+|\nu'|}(\partial^{e_{j'}}f)
        \\
        &\leq
        C'_{j\mu\nu}(1+\|x\|^2)^{\frac{1}{2}(b_1+\cdots+b_l+b_{l+1})}
        \cdot
        \halbnorm{p}_{K,l+1+|\mu|}(f)\,.
    \end{align*}
    Since $j$ was arbitrary, \eqref{eq:BoundOnDMuNuFTau} follows by
    induction in $l$.
\end{proof}
\begin{proposition}
    Let $\tau$ be a smooth $\Rl^n$-action on $\Rl^n$, with order
    $\boldsymbol{b}$ and support in some compact set
    $K\subset\Rl^n$. Then its pullback
    $(\alpha^K_xf)(y):=f(\tau_x(y))$ is a smooth polynomially bounded
    $\Rl^n$-action on $\Cinfty(\Rl^n,\Cl)$, on each symbol space
    $\Symbol^{m,\rho}(\Rl^n,\Cl)$, $m,\rho\in\Rl$, and on the Schwartz
    space $\Schwartz(\Rl^n,\Cl)$.
\end{proposition}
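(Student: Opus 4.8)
The plan is to verify, in each of the three cases, that $\alpha^K$ is a smooth polynomially bounded $\Rl^n$-action in the sense of Definition~\ref{definition:SmoothSymbolAction}, by reducing everything to the estimate of Lemma~\ref{lemma:CompactlySupportedAction} together with the fact that $\tau_x$, and hence $\alpha^K_x$, acts trivially off the compact set $K$. Since $\Cinfty(\Rl^n,\Cl)$, every symbol space $\Symbol^{m,\rho}(\Rl^n,\Cl)$, and the Schwartz space $\Schwartz(\Rl^n,\Cl)$ are Fr\'echet spaces, the last part of Lemma~\ref{lemma:SmoothSymbolActionAlternative} reduces the task to checking that $\alpha^K$ is a \emph{strongly smooth} action by linear maps and that it obeys the bound \eqref{eq:BoundAlphaX} for a suitable order; the type then automatically equals $0$ (cf.\ Remark~\ref{remark:Actions}, \refitem{item:OnlyTypeRhoNullNeeded}).

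First I would record the elementary facts. As $\tau$ is an $\Rl^n$-action, $\tau_0=\id_{\Rl^n}$ and each $\tau_x$ is a diffeomorphism with inverse $\tau_{-x}$, so $\alpha^K_xf:=f\circ\tau_x$ defines linear maps with $\alpha^K_x\alpha^K_{x'}=\alpha^K_{x+x'}$ and $\alpha^K_0=\id$. Because $\tau_x$ equals the identity on the open set $\Rl^n\setminus K$, the function $\alpha^K_xf-f$ together with all its partial derivatives vanishes there, so $\alpha^K_xf-f\in\Cinfty_K(\Rl^n,\Cl)$; by Proposition~\ref{proposition:SymbolFirstProperties}, \refitem{item:SymbolInclusion}, and Proposition~\ref{proposition:VectorValuedSchwartz}, \refitem{item:SchwartzInSymbols}, this shows that $\alpha^K_x$ also maps $\Symbol^{m,\rho}(\Rl^n,\Cl)$ and $\Schwartz(\Rl^n,\Cl)$ into themselves. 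For strong smoothness I would fix a cutoff $\chi\in\Cinfty_0(\Rl^n,\Cl)$ with $\chi\equiv1$ on $K$, put $K'':=\supp\chi$, and use the decomposition $\alpha^K_xf=\alpha^K_x(\chi f)+(1-\chi)f$, which is valid because $(1-\chi)f$ is $\alpha^K$-invariant ($(1-\chi)(\tau_x(y))=0$ for $y\in K$, and $\tau_x(y)=y$ for $y\notin K$). The second summand does not depend on $x$, while $x\mapsto\alpha^K_x(\chi f)$ is valued in the fixed space $\Cinfty_{K''}(\Rl^n,\Cl)$ and is smooth there, with $\partial_x^\nu\alpha^K_x(\chi f)=\alpha^K_xX^\nu(\chi f)$; this is the analogue in the compactly supported setting of Lemmas~\ref{lemma:tauySymbolContinuous}--\ref{lemma:TauySymbolDiffAtyNull}, proved by a Taylor-remainder argument using that all $x$- and $y$-derivatives of $(x,y)\mapsto(\chi f)(\tau_x(y))$ are bounded for $x$ in a compact set and $y\in K''$. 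Composing with the continuous inclusions of $\Cinfty_{K''}(\Rl^n,\Cl)$ into $\Cinfty(\Rl^n,\Cl)$, $\Symbol^{m,\rho}(\Rl^n,\Cl)$, and $\Schwartz(\Rl^n,\Cl)$, one obtains that $x\mapsto\alpha^K_xf$ is smooth into each of these spaces.

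For the polynomial bound on $\Cinfty(\Rl^n,\Cl)$, equipped with the seminorms $\halbnorm{p}_{K',\ell}$ of \eqref{eq:CinftyNorms}, I would split $K'$ into $K'\cap K$ and $K'\setminus K$: on the latter $\partial_y^\nu\alpha^K_xf=\partial_y^\nu f$ by the support property of $\tau$, while on $K'\cap K\subseteq K$ Lemma~\ref{lemma:CompactlySupportedAction} (with $\mu=0$) gives
\begin{align*}
    \halbnorm{p}_{K',\ell}(\alpha^K_xf)
    \leq
    c_\ell\,(1+\|x\|^2)^{\frac{1}{2}(b_1+\cdots+b_\ell)}\,\halbnorm{p}_{K\cup K',\ell}(f),
\end{align*}
which is \eqref{eq:BoundAlphaX} with the order $\bm(\halbnorm{p}_{K',\ell}):=b_1+\cdots+b_\ell$. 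For $\Symbol^{m,\rho}(\Rl^n,\Cl)$ and $\Schwartz(\Rl^n,\Cl)$ I would use that $\alpha^K_xf-f$ is supported in $K$: its defining seminorm ($\|\cdot\|^{m,\rho}_\mu$ or $\halbnorm{q}_{m,\mu}$ of \eqref{eq:SchwartzSeminormqmmu}, respectively) equals a factor that is bounded on the compact $K$ times $\sup_{y\in K}|\partial_y^\mu(\alpha^K_xf-f)(y)|$, which by Lemma~\ref{lemma:CompactlySupportedAction} is at most $c\,(1+\|x\|^2)^{\frac{1}{2}(b_1+\cdots+b_{|\mu|})}\halbnorm{p}_{K,|\mu|}(f)$; finally $\halbnorm{p}_{K,|\mu|}(f)$ is dominated by finitely many symbol- resp.\ Schwartz-seminorms of $f$, since $(1+\|y\|^2)^{s}$ is bounded on $K$ for every $s\in\Rl$. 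This yields \eqref{eq:BoundAlphaX} with order $b_1+\cdots+b_{|\mu|}$ and type $0$ in both cases, and Lemma~\ref{lemma:SmoothSymbolActionAlternative} then gives the assertion. I expect the only genuinely non-routine point to be the strong smoothness of the orbit maps; the cutoff $\chi$ localises this to $\Cinfty_{K''}(\Rl^n,\Cl)$, where it is handled exactly as in the earlier lemmas, and the remaining work is bookkeeping with Lemma~\ref{lemma:CompactlySupportedAction}.
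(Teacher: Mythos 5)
Your argument is correct and rests on the same two pillars as the paper's proof --- Lemma~\ref{lemma:CompactlySupportedAction} and the splitting of a compact set $J$ into $J\cap K$ and $J\setminus K$ --- but it is organized differently in two respects. First, you invoke the Fr\'echet-space part of Lemma~\ref{lemma:SmoothSymbolActionAlternative} to reduce polynomial boundedness to strong smoothness plus the $\mu=0$ instance of the bound \eqref{eq:BoundAlphaX}; the paper instead verifies Definition~\ref{definition:SmoothSymbolAction} directly, estimating $\sup_{x}\halbnorm{p}_{J,l}(\partial_x^\mu\alpha^K_x f)\,(1+\|x\|^2)^{-\frac{1}{2}(b_1+\cdots+b_l)}$ for \emph{all} $\mu$ with the full strength of Lemma~\ref{lemma:CompactlySupportedAction}. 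Your route saves explicit computation with the $x$-derivatives but leans on the open mapping theorem hidden in that lemma; both are legitimate since $\Cinfty(\Rl^n,\Cl)$, $\Symbol^{m,\rho}(\Rl^n,\Cl)$ and $\Schwartz(\Rl^n,\Cl)$ are indeed Fr\'echet with countable defining systems. Second, for strong smoothness of the orbit maps you use the decomposition $\alpha^K_x f=\alpha^K_x(\chi f)+(1-\chi)f$ --- valid because $\tau_x(K)=K$ makes $(1-\chi)f$ invariant --- which localizes the problem to the fixed space $\Cinfty_{K''}(\Rl^n,\Cl)$ and then transfers smoothness to all three target spaces at once along the continuous inclusions; the paper instead cites general smooth Lie group action theory for the $\Cinfty$ case and then observes that $\alpha^K_xf-f$ and the difference-quotient remainders are supported in $K$, so their symbol seminorms are dominated by $\halbnorm{p}_{K,l}$-seminorms tending to zero. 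Both devices exploit the same fact, namely that everything nontrivial happens inside a fixed compact set, and your cutoff version has the advantage of treating $\Cinfty$, the symbol spaces and $\Schwartz$ uniformly. The one step you should make explicit is the passage from your estimate by finitely many seminorms $\|f\|^{m,\rho}_\nu$, $|\nu|\le|\mu|$, to the \emph{single} seminorm $\qn'$ required in \eqref{eq:BoundAlphaX}: this needs the filtrating system of finite maxima as in Proposition~\ref{proposition:ExtendSymbolMaxMinSystem} (a point the paper also leaves implicit), after which the argument closes.
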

\begin{proof}
    Let us first consider $\alpha^K$ acting on
    $\Cinfty(\Rl^n,\Cl)$. It is clear that it is an $\Rl^n$-action on
    this space because $\tau$ is an action and smooth. To estimate its
    seminorms, let $J\subset\Rl^n$ be compact, $\mu,\nu\in\Nl_0^n$,
    and $f\in\Cinfty(\mathbb{R}^n, \mathbb{C})$. Taking into account
    that $\tau$ acts trivially outside $K$, and using the bounds of
    Lemma~\ref{lemma:CompactlySupportedAction}, we find
    \begin{align*}
        &\sup_{x\in\Rl^n \atop y\in J,
          |\nu|\leq l}\frac{|\partial_y^\nu\partial_x^\mu
          f(\tau_x(y))|}{(1+\|x\|^2)^{\frac{1}{2}(b_1+ \cdots +b_l)}} \\
        &\quad\leq
        \sup_{x\in\Rl^n \atop y\in J\backslash K,
          |\nu|\leq l}\frac{|\partial_y^\nu\partial_x^\mu
          f(y)|}{(1+\|x\|^2)^{\frac{1}{2}(b_1+\cdots+b_l)}}
        +
        \sup_{x\in\Rl^n \atop y\in J\cap K,
          |\nu|\leq l}\frac{|\partial_y^\nu\partial_x^\mu
          f(\tau_x(y))|}{(1+\|x\|^2)^{\frac{1}{2}(b_1+\cdots+b_l)}}
        \\
        &\quad\leq
        \delta_{\mu,0}\sup_{x\in\Rl^n \atop y\notin
          J\backslash K, |\nu|\leq l}\frac{ |\partial_y^\nu
          f(y)|}{(1+\|x\|^2)^{\frac{1}{2}(b_1+\cdots+b_l)}}
        +
        C_{l\mu}\halbnorm{p}_{K,l+|\mu|}(f)
        \\
        &\quad=
        \delta_{\mu,0}\,\halbnorm{p}_{J\backslash
          K,l}(f) + 	C_{l\mu}\,\halbnorm{p}_{K,l+|\mu|}(f)
        \,,
    \end{align*}
    where the last step relies on $b_l\geq0$. This estimate shows in
    particular that for any $x\in\Rl^n$, the map
    $\alpha^K_x\colon\Cinfty(\mathbb{R}^n, \mathbb{C}) \longrightarrow
    \Cinfty(\mathbb{R}^n, \mathbb{C})$ is continuous, a fact which is
    known to be true for \emph{any} diffeomorphism. Moreover, once we
    have checked that $x\mapsto\alpha^K_x f$ is smooth in the topology
    of $\Cinfty(\mathbb{R}^{n}, \mathbb{C})$, the estimate also shows
    that $\alpha^K(f)$ is a symbol in
    $\Symbol^{\hat{\bm},0}(\Rl^n,\Cinfty(\mathbb{R}^{n},
    \mathbb{C}))$, of order $\hat{\bm}(\halbnorm{p}_{J,l}):=b_1+
    \cdots +b_l$, and that $f\mapsto \alpha^K(f)$ is continuous. So in
    order to verify all conditions of
    Definition~\ref{definition:SmoothSymbolAction}, it only remains to
    establish the smoothness of $\alpha^K$: but this is true for
    arbitrary smooth Lie group actions on smooth manifolds.  We now
    consider $\alpha^K$ on the symbol and Schwartz subspaces of
    $\Cinfty(\mathbb{R}^n, \mathbb{C})$. Since $\tau$ acts
    non-trivially only in a compact set, it is clear that these
    subspaces are invariant under $\alpha^K$, and $\alpha^K$ restricts
    to $\Rl^n$-actions on all these spaces. Concerning smoothness,
    note that the functions $\alpha_x^K(f)-f$ and
    $\eps^{-1}(\alpha_{\eps
      e_j}^K(f)-f)-\partial_{t}\alpha^K_{te_j}(f)|_{t=0}$ have compact
    support in $K$ for all $f\in\Symbol^{m,\rho}$, $\eps>0$, $j\in\{1,
    \ldots, n\}$, $x\in\Rl^n$. So their symbol seminorms
    $\|\cdot\|^{m,\rho}_\nu$ can be estimated against some
    $\halbnorm{p}_{K,l}(\cdot)$. But the latter seminorms converge to
    zero for $x\to0$ respectively $\eps\to0$, by the preceding results
    about $\alpha^K$ on $\Cinfty(\mathbb{R}^n, \mathbb{C})$. Thus we
    conclude that $\alpha^K$ is also smooth on the symbol spaces
    $\Symbol^{m,\rho}(\mathbb{R}^n, \mathbb{C})$, $m,\rho\in\Rl$, and
    the Schwartz space $\Schwartz(\mathbb{R}^n, \mathbb{C})$. The
    symbol property of $\alpha^K(f)$, and the continuity of
    $f\mapsto\alpha^K(f)$ for these spaces can now be estimated as
    before, by splitting $f = f_0+f_1\in\Symbol^{m,\rho}(\mathbb{R}^n,
    \mathbb{C})$ into a compactly supported symbol $f_0$, and a symbol
    $f_1$ with support disjoint from $K$ which is fixed by $\alpha^K$.
\end{proof}

We now turn to the construction of examples of smooth compactly
supported $\Rl^n$-actions, and consider the one-dimensional case $n=1$
first. As a starting point, we use the same idea as in
\cite{heller.neumaier.waldmann:2007a} and take a diffeomorphism
$\gamma\colon(-1,1)\longrightarrow\Rl$ to define
\begin{align}\label{def:tux}
    \tau_x(y)
    :=
    \left\{
        \begin{array}{rcl}
            \gamma^{-1}(\gamma(y)+x) &;& |y|<1\\
            y &;& |y|\geq 1
        \end{array}
    \right.
    \,,\qquad x\in\Rl\,.
\end{align}
It is clear that $\tau$ is an $\Rl$-action, {\em i.e.},
$\tau_x(\tau_{x'}(y))=\tau_{x+x'}(y)$ for all $x,x',y\in\Rl$, and
$\tau$ acts non-trivially only inside the interval $K:=[-1,1]$. But we
have to choose $\gamma$ in such a way that also the smoothness and
boundedness assumptions of
Definition~\ref{definition:ActionOfCompactSupport} are satisfied. For
the discussion of these two properties, it is instructive to view
$\tau$ as the flow of an autonomous ordinary differential equation
$d\phi/dx=L(\phi(x))$ with initial condition $\phi(0)=y$.
Differentiation of $\phi(x)=\tau_x(y)$ \eqref{def:tux} with respect to
$x$ then shows that
\begin{align}\label{def:L}
    L(x)
    &=
    \left\{
        \begin{array}{rcl}
            \frac{1}{\gamma'(x)} &;& |x|<1\\
            0 &;& |x|\geq 1
        \end{array}
    \right.
    \,.
\end{align}
It is a well-known fact that the solutions $x\mapsto
\phi(x)=\tau_x(y)$ will depend smoothly on $x$ and the initial
condition $y$ if $L$ is smooth. Thus smoothness of $\tau$ is
guaranteed if $\gamma'(x)$ diverges fast enough as $x\to\pm1$, such
that \eqref{def:L} is smooth. On the other hand, the bounds on
$\partial_x^k\partial_y^l \tau_x(y)$ that can be obtained by
exploiting that $\tau$ is the flow of a differential equation with
compactly supported $L$ are only of \emph{exponential}
type. Therefore, we show in the following lemma that by a careful
adjustment of the diffeomorphism $\gamma$, one can achieve
\emph{polynomial} bounds on $\partial_x^k\partial_y^l \tau_x(y)$.

\begin{lemma}\label{lemma:1DAction}
    There exist smooth $\Rl$-actions on $\Rl$ with support in $[-1,1]$
    and order $b_l=2l+1$, which act transitively on $(-1,1)$.
\end{lemma}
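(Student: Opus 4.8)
The plan is to realise $\tau$ as the flow of a compactly supported smooth vector field, as indicated in the discussion preceding the lemma, and then to extract \emph{polynomial} bounds by choosing the underlying diffeomorphism $\gamma$ so that it grows super-exponentially near the endpoints. Concretely, I would fix a smooth $L\colon\Rl\to\Rl$ with $L>0$ on $(-1,1)$, $L\equiv0$ on $\Rl\setminus(-1,1)$, and all derivatives of $L$ vanishing at $\pm1$ (the choice $L(y)=e^{-1/(1-y^2)}$ on $(-1,1)$ works), and put $\gamma(y):=\int_0^yL(s)^{-1}\,ds$, an increasing diffeomorphism $(-1,1)\to\Rl$ since $1/L$ is smooth, positive and non-integrable at $\pm1$. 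Then the $\tau$ of \eqref{def:tux} is the flow of $\dot\phi=L(\phi)$, i.e.\ of a $\Cinfty$, bounded vector field on all of $\Rl$ which vanishes outside $[-1,1]$. Hence $\tau\in\Cinfty(\Rl\times\Rl,\Rl)$ by smooth dependence of ODE solutions on time and initial data (there is no issue at $|y|=1$, since the vector field is globally smooth), the group law and the support property $\tau_x(y)=y$ for $|y|\ge1$ are automatic, and transitivity on $(-1,1)$ is immediate from surjectivity of $\gamma$: $\tau_{\gamma(y')-\gamma(y)}(y)=y'$. What remains is to verify the derivative bounds \eqref{eq:BoundOnDMuNuTau} with $b_0=0$ and $b_l=2l+1$.

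Since $\tau_x=\gamma^{-1}\circ(\,\cdot\,+x)\circ\gamma$, every $x$-derivative of $\tau_x(y)$ is a universal polynomial in $L,L',L'',\dots$ evaluated at $\tau_x(y)\in[-1,1]$, hence uniformly bounded; and after the Leibniz / higher chain rule expansion, $\partial_y^\nu\partial_x^\mu\tau$ is a bounded-coefficient combination of products of the pure $y$-derivatives $\psi_j:=\partial_y^j\tau_x(y)$ with $j\le|\nu|$. So everything reduces to estimating the $\psi_l$. For $l=1$ the chain rule gives $\psi_1(x,y)=\gamma'(y)/\gamma'(\tau_x(y))=L(\tau_x(y))/L(y)$. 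Writing $h:=\gamma^{-1}$, so that $h'=L\circ h$ and $x=\gamma(\tau_x(y))-\gamma(y)$, the desired bound on $\psi_1$ is exactly a two-sided ``reverse-doubling'' estimate $h'(b)\le c\,(1+(b-a)^2)^{3/2}h'(a)$, uniformly in $a,b\in\Rl$. This is the one place where the choice of $L$ is used: from $h'(t)\asymp|t|^{-1}(\log|t|)^{-2}$ as $|t|\to\infty$ (and $h'$ bounded above and below on compacta) one obtains $|(\log h')'(t)|=|L'(h(t))|\lesssim(1+|t|)^{-1}$, and integrating this between $a$ and $b$ yields the reverse-doubling estimate with room to spare; in fact $\psi_1$ is at most of order $|x|\,(\log|x|)^2$. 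The conceptual point is that, although $L(y)$ is super-polynomially small in $1-|y|$ (which is forced by smoothness of $L$), it is only polynomially small \emph{as a function of $\gamma(y)$}, precisely because $\gamma$ blows up super-exponentially at $\pm1$; this reconciles the apparent tension between smoothness and polynomial growth.

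For $l\ge2$ I would argue by induction using the variational equations. Differentiating $\partial_x\tau=L(\tau)$ repeatedly in $y$ gives $\partial_x\psi_l=L'(\tau)\psi_l+R_l$ with $\psi_l(0,y)=\delta_{l,1}$ and $R_l$ a universal polynomial in $L',L'',\dots$ at $\tau$ and in $\psi_1,\dots,\psi_{l-1}$; variation of parameters, using $\psi_1(x,y)=\exp\big(\int_0^xL'(\tau_s(y))\,ds\big)=L(\tau_x(y))/L(y)$, gives
\[
    \psi_l(x,y)=\psi_1(x,y)\int_0^x\frac{R_l(s,y)}{\psi_1(s,y)}\,ds\,,\qquad l\ge2\,.
\]
Here $\psi_1(x,y)/\psi_1(s,y)=h'(\gamma(y)+x)/h'(\gamma(y)+s)$ is controlled by the reverse-doubling estimate (with $|x-s|\le|x|$ for $s$ between $0$ and $x$), and $R_l$ is bounded using the inductive estimates for $\psi_1,\dots,\psi_{l-1}$ together with the boundedness of $|\tau_x(y)-y|$ and of the derivatives of $L$. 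The key mechanism is that the inductive bounds on the $\psi_j$ themselves all carry a factor of $\psi_1(s,y)$, which is cancelled against the $\psi_1(s,y)^{-1}$ in the formula, so one never has to estimate the $\psi_j$ by their much larger worst-case sup norms in isolation. Carrying this through, one obtains $|\psi_l(x,y)|\le c_l(1+x^2)^{(2l+1)/2}$: relative to $\psi_1$, each additional $y$-derivative costs one further power of $1+|x|$ (from differentiating $\gamma$) and one from integrating the variational equation, while the logarithmic factors accumulated over $l$ steps are absorbed into the stated exponent.

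The main obstacle is precisely this last point -- organising the induction, and the choice of $L$ (equivalently $\gamma$), so that the estimates remain polynomial in $x$: a naive Gronwall-type treatment of the flow produces only exponential-in-$x$ bounds, and a crude termwise induction produces a power of $1+|x|$ growing much faster than $2l+1$, because it ignores the cancellations that make $\psi_l$ small when $x$ is small and the correlations between $\psi_1$ and the higher $\psi_j$. Getting the bookkeeping right -- while only once invoking a nontrivial property of $L$, namely the reverse-doubling estimate -- is the heart of the argument; the smoothness, the group and support properties, and transitivity are routine by comparison.
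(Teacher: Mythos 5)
Your construction is correct in outline and rests on the same essential analytic input as the paper's proof --- a diffeomorphism $\gamma$ blowing up super--exponentially at $\pm1$, and the resulting ``reverse-doubling'' bound $\gamma'(y)/\gamma'(\tau_x(y))\le c(1+x^2)^\delta$ with $\delta$ arbitrarily close to $\tfrac12$ (this is \eqref{gamma'-bound}) --- but it organises the derivative estimates genuinely differently. The paper expands $\partial_x^k\partial_y^l\,\gamma^{-1}(\gamma(y)+x)$ by an explicit double Fa\`a di Bruno formula \eqref{monster} and then needs the auxiliary estimate $|\gamma^{(j)}(y)|\le C_{j,\eps}\,\gamma'(y)^{1+\eps}$ to tame the unbounded derivatives of $\gamma$; your variational--equation induction instead works with the vector field $L=1/\gamma'$, whose derivatives are \emph{globally bounded}, and the refined inductive hypothesis that each $\psi_l$ carries a factor of $\psi_1(s,y)$, cancelled against the $\psi_1(s,y)^{-1}$ in the Duhamel formula, is precisely the mechanism that prevents the powers of $(1+x^2)$ from exploding. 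This is a clean and arguably more transparent bookkeeping than the paper's; with the recursion $a_l=\tfrac12+\delta(j-1)+\sum_m a_{i_m}$ one finds $a_l=(l-1)(\tfrac12+\delta)$, hence $|\psi_l|\lesssim(1+x^2)^{l\delta+(l-1)/2}$, which is $\le(1+x^2)^{(2l+1)/2}$ once $\delta$ is close enough to $\tfrac12$.

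Two points need tightening before this lands on $b_l=2l+1$. First, integrating $|(\log h')'(t)|\lesssim(1+|t|)^{-1}$ na\"ively doubles the exponent when $a$ and $b$ straddle the origin; you should either use the evenness of $h'$ or factor the ratio through $h'(0)$ (noting that $L$ attains its maximum at $0$, so $h'(b)/h'(0)\le1$) to keep $\delta$ arbitrarily close to $\tfrac12$. With your stated exponent $\tfrac32$, i.e.\ $\delta=\tfrac34$, the induction gives $|\psi_l|\lesssim(1+x^2)^{(5l-2)/4}$, which exceeds $(1+x^2)^{(2l+1)/2}$ for $l\ge5$. Second, the mixed derivatives $\partial_x^k\partial_y^l\tau$ for $k\ge1$ must be folded in explicitly: as you indicate, $\partial_x^k\tau=G_k(\tau)$ with $G_k$ smooth and compactly supported, so $\partial_y^l\partial_x^k\tau$ is a bounded combination of $G_k^{(j)}(\tau)\,\psi_{i_1}\cdots\psi_{i_j}$ with $i_1+\cdots+i_j=l$, and the same inductive bounds close the argument --- but this step, and the $k=0$, $l\ge1$ case where an extra factor $|x|$ appears (as in the paper's final integration step), should be written out. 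Neither issue is structural.
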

\begin{proof}
    The action will be constructed in the form \eqref{def:tux} with
    appropriately chosen $\gamma$. It is already clear from
    \eqref{def:tux} that $\tau$ is an action with support in $[-1,1]$,
    acting transitively on $(-1,1)$. To verify the crucial bounds
    \eqref{eq:BoundOnDMuNuTau},
    \begin{align}\label{eq:BoundsOn1DAction}
	\sup_{|y|\leq 1}|\partial_x^k\partial_y^l \tau_x(y)|
        \leq
        c_{lk}(1+x^2)^{\frac{1}{2}(2l+1)}
        \,,\qquad
        x\in\Rl\,,
    \end{align}
    we first derive a formula for the derivatives of
    $\tau_x(y)=\gamma^{-1}(\gamma(y)+x)$, $|y|<1$, for generic
    diffeomorphisms $\gamma$. This is done with the help of two
    differentiation identities, the first of which states that
    multiple derivatives of $\gamma^{-1}$ have the form
    \begin{align}
        \partial_y^l\gamma^{-1}(y)
        &=
        \sum_n c_n\, \frac{\gamma'(\gamma^{-1}(y))^{n_1}\cdots
          \gamma^{(l)}(\gamma^{-1}(y))^{n_l}}{\gamma'(\gamma^{-1}(y))^{2l-1}}
        \,,
    \end{align}
    where the sum runs over finitely many terms with numerical
    coefficients $c_n$.  In the above formula, the powers $n_j$
    satisfy $n_1+\cdots +n_l=l-1$ in each term, a fact that can easily be
    proven by induction in $l$.

    The second identity is an iterated chain rule for smooth functions
    $f,g\colon \Rl\longrightarrow \Rl$,
    \begin{align}\label{chainrule}
        \partial_y^l(f(g(y))
        &=
        \sum_{r=1}^l 
        c_r'\,g'(y)^{s_1}\cdots g^{(l)}(y)^{s_l}\cdot f^{(r)}(g(y))
        \,,
    \end{align}
    where the $c_r'$ are numerical coefficients and the powers $s_j$
    satisfy $s_1 + \cdots + s_l = r$ in each term. Again, the proof by
    induction is straightforward.

    Application of these two differentiation rules to
    $\tau_x(y)=\gamma^{-1}(\gamma(y)+x)$, $|y|<1$, yields
    \begin{align}
        \partial_x^k  \partial_y^l \tau_x(y)
        &=
        \partial_x^k
        \sum_{r=1}^l c_r'  \gamma'(y)^{s_1}\cdots \gamma^{(l)}(y)^{s_l}\cdot
        (\gamma^{-1})^{(r)}(\gamma(y)+x)
        \nonumber
        \\
        &=
        \sum_{r=1}^l  c_r' \gamma'(y)^{s_1}\cdots \gamma^{(l)}(y)^{s_l}\cdot
        (\gamma^{-1})^{(r+k)}(\gamma(y)+x)
        \nonumber
        \\
        &=
        \sum_{r=1}^l  c_r' \sum_n c_n
        \frac{\gamma'(y)^{s_1}\cdots
          \gamma^{(l)}(y)^{s_l}\cdot\gamma'(\tau_x(y))^{n_1}\cdots
          \gamma^{(r+k)}(\tau_x(y))^{n_{r+k}} }
        {\gamma'(\tau_x(y))^{2(r+k)-1}}\,.
        \label{monster}
    \end{align}
    To obtain useful bounds on this expression, we have to estimate
    the higher derivatives $|\gamma^{(m)}(y)|$ for $m=1, \ldots, l$,
    and $|\gamma^{(j)}(\tau_x(y))|$ for $j=1, \ldots, r+k$, in terms
    of the first derivatives $|\gamma'(\tau_x(y))|$. Simple estimates
    of this form do apparently not exist for generic $\gamma$. We
    therefore choose $\gamma$ of a special form, which will allow for
    convenient computations.

    So let $\gamma$ be antisymmetric, {\em i.e.},
    $\gamma(-y)=-\gamma(y)$, and choose it to be equal to
    $e(y):=\exp\frac{1}{1-y}$ for $y\geq\frac{1}{2}$. Note that this
    choice already implies that $\tau\colon \Rl^2\longrightarrow \Rl$
    is smooth, since all derivatives of
    $L(y):=1/\gamma'(y)=(1-y)^2e^{-1/(1-y)}$ converge to zero for
    $y\to1$. Moreover, a short calculation shows that the tangent of
    $\gamma$ in $y=\frac{1}{2}$ has its zero at $y=\frac{1}{4}$ and
    consequently, we can choose $\gamma$ on
    $[-\frac{1}{2},\frac{1}{2}]$ in such a way that $\gamma'(y)$
    increases monotonically as $|y|$ increases. In particular, we then
    have the lower bound $\gamma'(y)\geq\gamma'(0)>0$, $|y|<1$.

    We now turn to estimating the derivatives $|\gamma^{(j)}(y)|$ for
    a diffeomorphism $\gamma$ with the specified properties. Since
    $\gamma'$ is bounded from below and $\gamma^{(j)}$ is continuous,
    there exist constants $C_j<\infty$ such that
    $|\gamma^{(j)}(y)/\gamma'(y)|\leq C_j$ for all
    $y\in[-\frac{1}{2},\frac{1}{2}]$. For $y>\frac{1}{2}$, we can use
    the explicit form $\gamma(y)=\exp\frac{1}{1-y}$, which implies
    that $\gamma^{(j)}(y)$ is the product of $\gamma(y)$ and a
    polynomial of order $2j$ in $\frac{1}{1-y}$. Hence
    $\gamma^{(j)}(y)/\gamma'(y)$ coincides for $y>\frac{1}{2}$ with a
    rational function of $y$ which diverges polynomially as
    $y\to1$. Because of the symmetry properties of $\gamma'$ and
    $\gamma^{(j)}$, the same is true for the region $y<-\frac{1}{2}$
    and the limit $y\to-1$. But as $\gamma'(y)$ has no zeros and
    diverges exponentially for $|y|\to\pm1$, we find for any $\eps>0$
    a constant $C_{j,\eps}$ such that
    \begin{align}\label{bound-to-gamma-1}
        \frac{|\gamma^{(j)}(y)|}{\gamma'(y)}
        \leq
        C_{j,\eps}\,\gamma'(y)^\eps
        \,,\qquad
        y\in(-1,1)\,.
    \end{align}
    Since $\tau_x$ leaves the interval $(-1,1)$ invariant for any
    $x\in\Rl$, we also have
    \begin{align}\label{bound-to-gamma}
        |\gamma^{(j)}(\tau_x(y))|
        \leq
        C_{j,\eps}\,\gamma'(\tau_x(y))^{1+\eps}
        \,,\qquad
        x\in\Rl,\,y\in(-1,1)\,.
    \end{align}
    
    Next we derive a bound on the ratios
    $\frac{\gamma'(y)}{\gamma'(\tau_x(y))}$.  For this it is
    sufficient to consider $y\in[0,1)$ because of the symmetry of
    $\gamma'$, and for fixed $x\in\Rl$, we split this interval at
    \begin{align}
        \xi(x)
        :=
        \tau_{2|x|}(\tfrac{1}{2})
        =
        e^{-1}(e(\tfrac{1}{2})+2|x|)
        \geq
        \tfrac{1}{2}
        \,,
    \end{align}
    and estimate in the two regions $y\in[0,\xi(x)]$ and
    $y\in(\xi(x),1)$ separately. Note that $e^{-1}(y)=1-1/\log y$ and
    $e'(y)=(1-y)^{-2}e(y)$.

    In the inner region $[0,\xi(x))$, the monotonicity of $\gamma'$
    and the explicit form of our diffeomorphism around
    $\xi(x)\geq\frac{1}{2}$ yield
    \begin{align}
        \frac{\gamma'(y)}{\gamma'(\tau_x(y))}
        &\leq
        \frac{\gamma'(\xi(x))}{\gamma'(0)}
        =
        \frac{e(\xi(x))}{\gamma'(0)\,(1-\xi(x))^2}
        \,,\qquad\qquad
        |y|\leq\xi(x)\,,
        \nonumber
        \\
        &=
        \gamma'(0)^{-1}\,
        \left(e(\tfrac{1}{2})+2|x|\right)\cdot\log(e(\tfrac{1}{2})+2|x|)
        \nonumber\\
        &\leq c(1+x^2)^\delta
        \,,
        \label{inner-bound}
    \end{align}
    where the power $\delta>\frac{1}{2}$ can be chosen arbitrarily
    close to $\frac{1}{2}$ (it will be fixed at the end of the proof),
    and the numerical constant $c$ depends on $\delta$.

    For the estimate in the outer region $(\xi(x),1)$, we use the
    inequalities
    \begin{align}\label{ineq1}
        e(y)+x
        \geq
        e(\xi(x))-|x|
        =
        e(\tfrac{1}{2})+|x|
        \geq
        e(\tfrac{1}{2})\,,
        \qquad
        y\in[\xi(x),1)\,,
    \end{align}
    and $e(y)\geq e(\xi(x))>2|x|$, implying $e(y)-|x|\geq
    \frac{1}{2}e(y)>1$ for $y>\xi(x)$. It follows from \eqref{ineq1}
    that $\tau_x(y)=e^{-1}(e(y)+x)$ in this region. The explicit form
    of $\gamma$ and these inequalities lead to a uniform bound on
    $\frac{\gamma'(y)}{\gamma'(\tau_x(y))}$,
    \begin{align}
        \frac{\gamma'(y)}{\gamma'(\tau_x(y))}
        &=
        \frac{1}{(1-y)^2\log(e(y)+x)^2}\frac{e(y)}{e(y)+x}
        \,,\qquad\qquad
        y\in(\xi(x),1)\,,
        \nonumber
        \\
        &\leq
        \frac{1}{(1-y)^2\log(e(y)-|x|)^2}\frac{e(y)}{e(y)-|x|}
        \,.
        \nonumber
        \\
        &\leq
        \frac{2}{(1-y)^2(\frac{1}{1-y}-\log 2)^2}
        \leq
        \frac{2}{(1-\frac{1}{2}\log 2)^2}
        \,.
        \label{outer-bound}
    \end{align}
    After a possible readjustment of the constant $c$ in
    \eqref{inner-bound} we therefore obtain
    \begin{align}\label{gamma'-bound}
        \gamma'(y)
        &\leq
        c(1+x^2)^\delta
        \,
        \gamma'(\tau_x(y))
        \,,\qquad x\in\Rl\,,y\in(-1,1)\,,
    \end{align}
    where $\delta>\frac{1}{2}$ can still be chosen. Combining this
    bound with \eqref{bound-to-gamma-1}, we also have
    \begin{align}\label{bound-2}
        \left|\gamma^{(m)}(y)\right|
        =
        \frac{|\gamma^{(m)}(y)|}{\gamma'(y)}
        \cdot
        \gamma'(y)
        \leq
        C_{m,\eps}'\,(1+x^2)^{\delta(1+\eps)}
        \cdot
        \gamma'(\tau_x(y))^{1+\eps}
        \,.
    \end{align}
    We can now apply \eqref{bound-to-gamma} and \eqref{bound-2} to
    estimate \eqref{monster}. Taking into account that in each term in
    that sum, we have $s_1+\cdots +s_l=r$ and $n_1+\cdots
    +n_{r+k}=r+k-1$, we get after collecting all factors
    \begin{align}\label{bound-sum}
        |\partial_x^k\partial_y^l \tau_x(y)|
        &\leq
        \sum_n \sum_{r=1}^l
        C(1+x^2)^{\delta(1+\eps)r}\,
        \gamma'(\tau_x(y))^{\eps(2r+k-1)-k}
        \,,
    \end{align}
    where $C$ represents all the numerical constants appearing in the
    various bounds, depending on $l,k,m,n$ and some arbitrary
    $\eps>0$, $\delta>\frac{1}{2}$. For $k\geq 1$, the exponent of
    $\gamma'(\tau_x(y))$ is negative for sufficiently small $\eps$,
    and then
    \begin{align*}
        |\partial_x^k\partial_y^l \tau_x(y)|
        &\leq
        \sum_n \sum_{r=1}^l
        C (1+x^2)^{\delta(1+\eps)r}\,\gamma'(0)^{\eps(2r+k-1)-k}
        \leq
        C' (1+x^2)^{\delta(1+\eps)l}
        \,,\qquad
        |y|<1\,.
    \end{align*}
    For the case $k=0$, note that since $\tau_0(y)=y$ and $l\geq1$, we
    have $\partial_y^l \tau_0(y)=0$ and can estimate via
    \begin{align}
        |\partial_y^l \tau_x(y)|
        =
        \left|\int_0^x dx'\, \partial_{x'} \partial_y^l \tau_{x'}(y) \right|
        \leq
        |x|\cdot C'(1+x^2)^{\delta(1+\eps)l}
        \leq
        C''(1+x^2)^{\delta(1+\eps)l+\frac{1}{2}}\,.
    \end{align}
    Choosing $\delta=\frac{2}{3}$ and $\eps=\frac{1}{2}$ now gives the
    claimed bounds \eqref{eq:BoundsOn1DAction}.
\end{proof}

Next we show how the above constructed $\Rl$-action on $\Rl$ can be
promoted to suitable $\Rl^n$-action on $\Rl^n$ based on the ideas from
\cite[Ex.~4.5]{waldmann:2007a}.
\begin{lemma}\label{lemma:nDAction}
    Let $\tau^1$ be a smooth polynomially bounded $\Rl$-action on
    $\Rl$ with support in $[-1,1]$, as constructed in
    Lemma~\ref{lemma:1DAction}. Furthermore, let $\eps>0$ and
    $\chi:\Rl\to\Rl$ be a smooth function which is equal to $1$ on
    $[-1,1]$, and has support in $[-1-\eps,1+\eps]$. Then
    \begin{align}
	\tau^n_x(y) :=
        (\tau^1_{x_1\cdot\chi(y_1)\cdots\chi(y_n)}(y_1),\ldots,\tau^1_{
          x_n\cdot\chi(y_1)\cdots\chi(y_n)}(y_n))
    \end{align}
    is a smooth polynomially bounded $\Rl^n$-action on $\Rl^n$, with
    support in $[-1-\eps,1+\eps]^{\times n}$.
\end{lemma}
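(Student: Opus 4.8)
The plan is to verify the three requirements of Definition~\ref{definition:ActionOfCompactSupport} for $\tau^n$ with the compact set $K:=[-1-\eps,1+\eps]^{\times n}$. Write $c(y):=\chi(y_1)\cdots\chi(y_n)$, so that $\tau^n_x(y)_j=\tau^1_{x_j\,c(y)}(y_j)$ for $j=1,\dots,n$. The whole argument rests on one observation: since $\tau^1$ has support in $[-1,1]$ it fixes $\Rl\setminus(-1,1)$ pointwise, and being a diffeomorphism of $\Rl$ it therefore maps $(-1,1)$ onto itself; as $\chi\equiv1$ on $[-1,1]\supseteq(-1,1)$, it follows that $\chi\circ\tau^1_a=\chi$ for every $a\in\Rl$, and hence
\begin{align*}
    c(\tau^n_x(y))
    =
    \prod_{i=1}^n\chi\bigl(\tau^1_{x_i c(y)}(y_i)\bigr)
    =
    \prod_{i=1}^n\chi(y_i)
    =
    c(y)\,,\qquad x,y\in\Rl^n\,.
\end{align*}

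First I would record the structural properties. Using this invariance together with the $\Rl$-action property of $\tau^1$ and the fact that the parameter $x_j c(y)$ is linear in $x_j$, one obtains componentwise $\tau^n_x(\tau^n_{x'}(y))_j=\tau^1_{x_j c(y)}\bigl(\tau^1_{x'_j c(y)}(y_j)\bigr)=\tau^1_{(x_j+x'_j)c(y)}(y_j)=\tau^n_{x+x'}(y)_j$, so $\tau^n$ is an $\Rl^n$-action. If $y\notin K$ then $|y_j|>1+\eps$ for some $j$, so $\chi(y_j)=0$, hence $c(y)=0$ and $\tau^n_x(y)_i=\tau^1_0(y_i)=y_i$ for all $i$; thus $\tau^n$ has support in $K$. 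Smoothness of $\tau^n\colon\Rl^n\times\Rl^n\to\Rl^n$ is clear, each component being a composition of the smooth maps $\tau^1$ (Lemma~\ref{lemma:1DAction}) and $\chi$.

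The bulk of the work is the polynomial bound \eqref{eq:BoundOnDMuNuTau}. I would first note that $\tau^n_x(y)_j$ depends on $x$ only through $x_j$, so $\partial_x^\mu\tau^n_x(y)_j$ vanishes unless $\mu=k\,e_j$, in which case $\partial_x^\mu\tau^n_x(y)_j=c(y)^k\,(\partial_1^k\tau^1)(x_j c(y),y_j)$ by the chain rule. Applying $\partial_y^\nu$ and expanding with the Leibniz rule and the higher-order (multivariate) chain rule yields a finite sum of terms, each a product of: a derivative of $c(y)^k$, bounded on the compact set $K$ uniformly in $x$; some $y$-derivatives of the inner argument $x_j c(y)$, each equal to $x_j$ times a function bounded on $K$ (so contributing a factor $|x_j|^p$ with $p\le|\nu|$); some $y$-derivatives of the inner argument $y_j$, which vanish unless they are of first order (so "using up" $q\le|\nu|$ of the $\partial_{y_j}$'s); and one factor $(\partial_1^{p+k}\partial_2^{q}\tau^1)(x_j c(y),y_j)$. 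On $1\le|y_j|\le1+\eps$ this last factor vanishes or is trivially bounded because $\tau^1_a(z)=z$ there, and on $|y_j|\le1$ it is controlled by the bound \eqref{eq:BoundsOn1DAction} of Lemma~\ref{lemma:1DAction} by $c\,(1+(x_j c(y))^2)^{\frac12(2q+1)}\le c'\,(1+\|x\|^2)^{\frac12(2q+1)}$, using that $|c|$ is bounded on $K$. Collecting the powers of $(1+\|x\|^2)^{1/2}$, each term is $\le c_{l\mu}(1+\|x\|^2)^{\frac12(p+2q+1)}$, and since $p+q\le|\nu|\le l$ one has $p+2q+1\le2l+1$; this gives \eqref{eq:BoundOnDMuNuTau} with order $b_l=2l+1$. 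For $\nu=0$ there are no chain-rule factors and the remaining terms are uniformly bounded on $K$ (exactly as for $n=1$), so $b_0=0$ as required.

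The main obstacle is twofold. Conceptually, the key point is to notice the invariance $c\circ\tau^n_x=c$: without it $\tau^n$ would fail to be a group action, and this invariance is precisely where the hypotheses "$\chi\equiv1$ on $[-1,1]$" and "$\tau^1$ trivial outside $[-1,1]$" are used. Technically, the work is in the combinatorial bookkeeping of the third step: one must track how the powers of $x_j$ produced by differentiating $x_j c(y)$ combine with the growth $(1+(x_j c(y))^2)^{\frac12(2q+1)}$ inherited from Lemma~\ref{lemma:1DAction}, and check that the total exponent never exceeds $2l+1$. Everything else is routine.
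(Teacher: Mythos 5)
Your proposal is correct and follows essentially the same route as the paper: verify the three conditions of Definition~\ref{definition:ActionOfCompactSupport}, with the action property reduced to the invariance $\chi(y_1)\cdots\chi(y_n)=\chi(\tau^n_x(y)_1)\cdots\chi(\tau^n_x(y)_n)$ (your identity $\chi\circ\tau^1_a=\chi$ is just a cleaner packaging of the paper's case distinction $y_k\in I$ versus $y_k\notin I$), and the growth bound obtained by Leibniz/chain-rule bookkeeping of the extra factors of $x_j$ against the bounds of Lemma~\ref{lemma:1DAction}. Your accounting is in fact slightly sharper — since each $y$-derivative either produces a factor $x_j$ \emph{or} a second-slot derivative of $\tau^1$, you get order $2l+1$, whereas the paper only records the cruder estimate $b_l+l$ — but both suffice for the statement.
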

\begin{proof}
    Let $I:=[-1,1]$ and $I_\eps:=[-1-\eps,1+\eps]$. If $y\notin
    I_\eps^{\times n}$, there exists $j\in\{1,\ldots,n\}$ such that
    $y_j\notin I_\eps$, and hence $\chi(y_1)\cdots\chi(y_n)=0$. Thus
    $\tau_x^n(y)=(\tau^1_0(y_1),\ldots,\tau^1_0(y_n))=y$ in this case,
    which shows that $\tau^n$ has support in $B$. As a composition of
    smooth functions $\tau^n$ is smooth. To show that it is an action,
    let $x,x',y\in\Rl^n$, $j\in\{1,\ldots,n\}$, and compute
    \begin{align*}
        \tau^n_x(\tau^n_{x'}(y))_j
        &=
        \tau^1_{x_j\cdot\chi(\tau^n_{x'}(y)_1)\cdots\chi(\tau^n_{x'}(y)_n)}(\tau^n_{x'}
        (y)_j)
        \\
        &=
        \tau^1_{x_j\cdot\chi(\tau^n_{x'}(y)_1)\cdots\chi(\tau^n_{x'}(y)_n)}(\tau^1_{
          x'_j\cdot\chi(y_1)\cdots\chi(y_n)}
        (y_j))
        \\
        &=
        \tau^1_{x_j\cdot\chi\big(\tau^1_{x'_1
            \chi(y_1)\cdots\chi(y_n)}(y_1)\big)\cdots\chi\big(\tau^1_{x'_n
            \chi(y_1)\cdots\chi(y_n)}(y_1)\big)
          +
          x'_j\cdot\chi(y_1)\cdots\chi(y_n)}
        (y_j)\,.
    \end{align*}
    This coincides with
    $\tau^n_{x+x'}(y)_j=\tau^1_{(x_j+x_j')\chi(y_1)\cdots\chi(y_n)}(y_j)$
    if
    \begin{align}\label{eq:chichi}
        \chi(y_1)\cdots\chi(y_n)
        =
	\chi\big(\tau^1_{x'_1
          \chi(y_1)\cdots\chi(y_n)}(y_1)\big)\cdots\chi\big(\tau^1_{x'_n
          \chi(y_1)\cdots\chi(y_n)}(y_1)\big)
        \,.
    \end{align}
    Assume some component $y_k$ does not lie in $I$. Then
    $\tau^1_{x'_k\chi(y_1)\cdots\chi(y_n)}(y_k)=y_k$ by the support
    properties of $\tau^1$. If, on the other hand, $y_k\in I$, then
    $\tau^1_{x'_k\chi(y_1)\cdots\chi(y_n)}(y_k)\in I$ as well, and
    since $\chi=1$ on $I$, we find also in this case
    $\chi(y_k)=\chi(\tau^1_{x'_k\chi(y_1)\cdots\chi(y_n)}(y_k))$. Hence
    \eqref{eq:chichi} holds for all $x,y,y'$, and it follows that
    $\tau^n$ is an $\Rl^n$-action.

    It remains to verify the bounds \eqref{eq:BoundOnDMuNuTau} on the
    derivatives of $\tau^n$, i.e.\ we have to estimate
    $\partial_x^\mu\partial_y^\nu
    \tau^1_{x_j\chi(y_1)\cdots\chi(y_n)}(y_j)$. In comparison to
    $\partial_x^\mu\partial_y^\nu \tau^1_{x_j}(y_j)$, the
    $y$-derivatives produce finitely many extra factors of $x_j$ and
    derivatives of $\chi(y_1)\cdots\chi(y_n)$, and the $x$-derivatives
    produce extra factors of $\chi(y_1)\cdots\chi(y_n)$. All
    $y$-dependence can be uniformly estimated because of the compact
    support of (the derivatives of) $\chi$. So we arrive at a finite
    sum of the form
    \begin{align*}
	|\partial_x^\mu\partial_y^\nu\tau^n_x(y)_j|
        &\leq
        \sum_{\nu'\leq\nu,\mu'\leq\mu} c_{\nu'\mu'} |x_j|^{s(\nu',\mu')}\,
        |(\partial_x^{\mu'}\partial_y^{\nu'}\tau^1)_{x_j\chi(y_1)\cdots\chi(y_n)}(y_j)|
        \,,
    \end{align*}
    with $s(\nu',\mu')\leq|\nu|$. For $y_j\in I$, the derivatives of
    $\tau^1$ can now be estimated with \eqref{eq:BoundsOn1DAction},
    and $(1+x_j^2)\leq(1+\|x\|^2)$. For $y_j\in I_\eps\backslash I$,
    we can use the invariance $\tau_t(y_j)=y_j$, $t\in\Rl$, and
    $|y_j|\leq 1+\eps$, to estimate the derivatives of $\tau^1$. This
    shows that if $\tau^1$ was of order $\boldsymbol{b}$, then
    $\tau^n$ is of order at most $b_l+l<\infty$.
\end{proof}

After these constructions, it is now easy to show the existence of
smooth polynomially bounded $\Rl^n$-actions supported in arbitrarily
small regions.
\begin{theorem}
    Let $K\subset\Rl^n$ be open. Then there exist non-trivial smooth
    polynomially bounded $\Rl^n$-actions on $\Rl^n$, with support in
    $K$.
\end{theorem}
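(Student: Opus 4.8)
The plan is to reduce the general statement to the one-dimensional building block already constructed. The key observation is that it suffices to produce, for \emph{one} specific open set, a non-trivial smooth polynomially bounded $\Rl^n$-action supported in it; an arbitrary open $K\subset\Rl^n$ contains a small closed box $\prod_{j=1}^n[a_j-\delta,a_j+\delta]$, and after an affine change of coordinates (translation by $(a_1,\dots,a_n)$ and rescaling by $\delta$) this box becomes $[-1,1]^{\times n}$. Affine substitutions of this kind act on the symbol classes by continuous isomorphisms (Lemma~\ref{lemma:GlnActsOnSymbols} and Lemma~\ref{lemma:TranslationsActOnSymbols}), so the polynomial-bound conditions of Definition~\ref{definition:ActionOfCompactSupport} are preserved. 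Hence it is enough to construct a non-trivial smooth polynomially bounded $\Rl^n$-action supported in $[-1-\eps,1+\eps]^{\times n}$ for $\eps$ small enough that this box still fits inside $K$ (after the coordinate change).

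First I would invoke Lemma~\ref{lemma:1DAction} to obtain a smooth $\Rl$-action $\tau^1$ on $\Rl$ with support in $[-1,1]$, order $b_l=2l+1$, acting transitively on $(-1,1)$ — in particular it is non-trivial. Then I would feed $\tau^1$ into Lemma~\ref{lemma:nDAction}, choosing the cutoff $\chi$ with $\supp\chi\subseteq[-1-\eps,1+\eps]$ and $\chi|_{[-1,1]}=1$, to get a smooth polynomially bounded $\Rl^n$-action $\tau^n$ on $\Rl^n$ with support in $[-1-\eps,1+\eps]^{\times n}$ and order at most $b_l+l$. This $\tau^n$ is non-trivial: on the diagonal set where all components lie in $(-1,1)$ the first component of $\tau^n_x(y)$ equals $\tau^1_{x_1\chi(y_1)\cdots\chi(y_n)}(y_1)$, and taking $y$ in the interior where all $\chi(y_i)=1$ reduces this to $\tau^1_{x_1}(y_1)$, which is genuinely $x_1$-dependent.

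Finally, transporting $\tau^n$ back through the affine coordinate change yields a non-trivial smooth polynomially bounded $\Rl^n$-action supported in $K$, completing the proof. The only point requiring a word of care is that the support condition must be met with the original $K$: one picks the center $(a_1,\dots,a_n)\in K$ and the radius $\delta>0$ so that the closed box of side $2\delta(1+\eps)$ centered there is contained in $K$; since $K$ is open and $\eps$ can be taken arbitrarily small, this is always possible. I do not anticipate a genuine obstacle here — all the analytic work (the polynomial rather than merely exponential bounds on the flow) was already done in Lemma~\ref{lemma:1DAction}, and the promotion to $n$ dimensions and the affine transfer are routine. If anything, the main thing to be explicit about is simply assembling the three ingredients (affine reduction, the one-dimensional action, the $n$-dimensional lifting) in the correct logical order.
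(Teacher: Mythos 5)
Your proposal is correct and follows essentially the same route as the paper: take the $n$-dimensional action from Lemma~\ref{lemma:nDAction} (built from the one-dimensional action of Lemma~\ref{lemma:1DAction}) supported in a cube, and transport it into $K$ by an affine change of coordinates, noting that affine maps only rescale the polynomial bounds. The extra care you take about non-triviality and fitting the enlarged box $[-1-\eps,1+\eps]^{\times n}$ inside $K$ is a welcome elaboration of what the paper leaves implicit.
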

\begin{proof}
    In Lemma~\ref{lemma:nDAction}, we have constructed a non-trivial
    smooth polynomially bounded $\Rl^n$-action $\tau^n$ with support
    in a cube $[-r,r]^{\times n}$ centered at the origin. Clearly, the
    polynomial estimates are at most rescaled by affine
    transformations of $\mathbb{R}^n$ which allows to squeeze and move
    the support into any given compact subset.
\end{proof}


\bibliographystyle{plain}

\end{document}